


\documentclass[a4paper,10pt,reqno]{amsart}
\numberwithin{equation}{section}
\parskip1mm
\usepackage{latexsym,amssymb}
\usepackage{hyperref}
\usepackage{amsmath,amsthm}
\usepackage{amsfonts}
\usepackage[american]{babel}
\usepackage{mathrsfs}
\usepackage{psfrag}
\usepackage{epsfig,inputenc}
\usepackage{graphpap,latexsym,epsf}
\usepackage{color}
\usepackage{amssymb,eucal,paralist,color,enumerate}
\usepackage{graphicx}


\setlength{\voffset}{-.7truein} \setlength{\textheight}{9.8truein}
\setlength{\textwidth}{6.05truein} \setlength{\hoffset}{-.7truein}

\addtolength{\hoffset}{-0,5cm} \addtolength{\textwidth}{1,5cm}

\newcommand{\bbM}{\mathbb{M}}


\newcommand{\R}{\mathbb{R}}
\newcommand{\N}{\mathbb{N}}

\mathchardef\emptyset="001F

\numberwithin{equation}{section}
\newtheorem{maintheorem}{Theorem}
\newtheorem{theorem}{Theorem}[section]

\newtheorem{lemma}[theorem]{Lemma}
\newtheorem{remark}[theorem]{Remark}

\newtheorem{definition}[theorem]{Definition}
\newtheorem{proposition}[theorem]{Proposition}
\newtheorem{problem}[theorem]{Problem}

\newtheorem{notation}[theorem]{Notation}


\newcommand{\eps}{\varepsilon}

\newcommand{\weakto}{\rightharpoonup} 

\newcommand{\forae}{\text{for a.a. }}
\newcommand{\aein}{\text{a.e.\ in }}



\newcommand{\down}{\downarrow}
\newcommand{\weaksto}{\overset{*}{\rightharpoonup}}

\newcommand{\AC}{\mathrm{AC}}

\newcommand{\dualoperator}

\def\calA{{\mathcal A}}  
 \def\calE{{\mathcal E}} \def\calF{{\mathcal F}}
\def\calG{{\mathcal G}} \def\calH{{\mathcal H}} 
 \def\calK{{\mathcal K}} \def\calL{{\mathcal L}}
  
 \def\calQ{{\mathcal Q}} \def\calR{{\mathcal R}}
 \def\calT{{\mathcal T}}


  \def\rmC{{\mathrm C}}

\def\BS{\boldsymbol} 

    \def\bfmu{{\BS\mu}}

\def\dd{\;\!\mathrm{d}} 

\newcommand{\pairing}[4]{ \sideset{_{ #1 }}{_{ #2 }}  {\mathop{\langle #3 , #4
\rangle}}}

\newcommand{\teta}{\vartheta}

\newcommand{\nchi}{{\raise.2ex\hbox{$\chi$}}}

\definecolor{ddcyan}{rgb}{0,0.1,0.9}
\definecolor{ddmagenta}{rgb}{0.8,0,0.8}
\definecolor{orange}{rgb}{0.6,0.2,0}
\definecolor{vgreen}{rgb}{0.1,0.5,0.2}
\definecolor{dred}{rgb}{.8,0,0}
\definecolor{Turk}{rgb}{0,0.7,0.4}

\newcommand{\piecewiseConstant}[2]{\overline{#1}_{\kern-1pt#2}}
\newcommand{\pwc}{\piecewiseConstant}

\newcommand{\upiecewiseConstant}[2]{\underline{#1}_{\kern-1pt#2}}

\newcommand{\upwc}{\upiecewiseConstant}
\newcommand{\piecewiseLinear}[2]{{#1}_{\kern-1pt#2}}
\newcommand{\pwl}{\piecewiseLinear}
\newcommand{\pwwll}[2]{\widehat{#1}_{\kern-1pt#2}}

\newcommand{\piecewiseVariational}[2]{\tilde{#1}_{\kern-1pt#2}}

\newcommand{\DDDn}[2]{\begin{array}[t]{c}#1\vspace*{-1em}\\_{#2}\end{array}}

\newcommand{\dddn}[2]{\DDDn{\begin{array}[t]{c}\underbrace{#1}\vspace*{.6em}\end{array}}{\text{\footnotesize #2}}}

\newcommand{\foraa}{\text{for a.a. }}

\newcommand{\ue}{u_\varepsilon}




\newcommand{\sig}[1]{E(#1)}

\newcommand{\BV}{\mathrm{BV}}
\newcommand{\BD}{\mathrm{BD}}
\newcommand{\Var}{\mathrm{Var}}

\newcommand{\Dir}{\mathrm{Dir}}
\newcommand{\Neu}{\mathrm{Neu}}

 \def\trait #1 #2 #3 {\vrule width #1pt height #2pt depth #3pt}

 \def\fin{\hfill
         \trait .3 5 0
         \trait 5 .3 0
         \kern-5pt
         \trait 5 5 -4.7
         \trait 0.3 5 0
 \medskip}
 
\newcommand{\QED}{\mbox{}\hfill\rule{5pt}{5pt}\medskip\par}

\newcommand{\bsL}{\boldsymbol{L}}

\newcommand{\bsU}{\boldsymbol{U}}
\newcommand{\bsE}{\boldsymbol{E}}

\newcommand{\bbC}{\mathbb{C}}
\newcommand{\bbI}{\mathbb{I}}
\newcommand{\bbD}{\mathbb{D}}
\newcommand{\bbE}{\mathbb{E}}
\newcommand{\bsX}{\mathbf{X}}
\newcommand{\bsV}{\mathbf{V}}
\newcommand{\bsY}{\mathbf{Y}}

\newcommand{\bbB}{\mathbb{B}}

\newcommand{\mt}{\bbM}

\newcommand{\sym}{\mathrm{sym}}
\newcommand{\dev}{\mathrm{D}}
\newcommand{\uu}{u}

\newcommand{\condu}{\kappa}

\newcommand{\dip}[3]{\mathrm{H}(#1,#2;#3)}
\newcommand{\dipname}{\mathrm{H}}
\newcommand{\Dipname}{\mathcal{H}}
\newcommand{\dipx}[4]{\mathrm{H}(#1,#2, #3;#4)}
\newcommand{\Dip}[3]{\mathcal{H}(#1,#2;#3)}
\newcommand{\did}[1]{\mathrm{R}(#1)}
\newcommand{\Did}[1]{\mathcal{R}(#1)}
\newcommand{\Didv}[1]{\mathcal{R}_{2}(#1)}
\newcommand{\didname}{\mathrm{R}}
\newcommand{\Didname}{\mathcal{R}}
\newcommand{\Didvname}{\mathcal{R}_2}

\newcommand{\Gdir}{\Gamma_{\Dir}}
\newcommand{\Gneu}{\Gamma_{\Neu}}
\newcommand{\subd}{\partial}
\newcommand{\spz}{H^{\mathrm{s}}(\Omega)}
\newcommand{\As}{A_{\mathrm{s}}}
\newcommand{\ass}{a_{\mathrm{s}}}
\newcommand{\spt}{X_\theta}
\newcommand{\bsp}{\mathbf{X}}
\newcommand{\rbsp}{\mathbf{B}}
\newcommand{\rbspv}{\mathbf{V}}

\newcommand{\Ftau}[1]{F_\tau^{#1}}
\newcommand{\Ltau}[1]{\mathcal{L}_\tau^{#1}}
\newcommand{\Gtau}[1]{G_\tau^{#1}}
\newcommand{\gtau}[1]{g_\tau^{#1}}
\newcommand{\ftau}[1]{f_\tau^{#1}}
\newcommand{\wtau}[1]{w_\tau^{#1}}
\newcommand{\utau}[1]{u_\tau^{#1}}

\newcommand{\vtau}[1]{v_\tau^{#1}} 
\newcommand{\btau}[1]{\mathfrak{h}_\tau^{#1}}

\newcommand{\ptau}[1]{p_\tau^{#1}}
\newcommand{\ztau}[1]{z_\tau^{#1}}
\newcommand{\etau}[1]{e_\tau^{#1}}
\newcommand{\tetau}[1]{\teta_\tau^{#1}}
\newcommand{\sitau}[1]{\sigma_{\tau}^{#1}}
\newcommand{\simtau}[1]{\sigma_{M,\tau}^{#1}}
\newcommand{\sidevtau}[1]{(\sigma_{\tau}^{#1})_\dev}
\newcommand{\simdevtau}[1]{(\sigma_{M,\tau}^{#1})_\dev}
\newcommand{\dtau}[2]{\mathrm{D}_{#1,\tau}(#2)}
\newcommand{\Dtau}[2]{\mathrm{D}_{#1,\tau}(#2)}
\newcommand{\Ddtau}[2]{\mathrm{D}_{#1,\tau}^2(#2)}
\newcommand{\tetaum}[1]{\teta_{M,\tau}^{#1}}
\newcommand{\utaum}[1]{u_{M,\tau}^{#1}}
\newcommand{\ptaum}[1]{p_{M,\tau}^{#1}}
\newcommand{\etaum}[1]{e_{M,\tau}^{#1}}
\newcommand{\zetau}[1]{\zeta_{\tau}^{#1}}
\newcommand{\omegatau}[1]{\omega_\tau^{#1}}
\newcommand{\zetaum}[1]{\zeta_{\tau,M}^{#1}}

\newcommand{\sie}{\sigma_\eps}
\newcommand{\siedev}{(\sigma_{\eps})_\dev}
\newcommand{\tetae}{\teta_\eps}
\newcommand{\pe}{p_\eps}

\newcommand{\uek}{u_{\eps_k}}
\newcommand{\tetaek}{\teta_{\eps_k}}
\newcommand{\pek}{p_{\eps_k}}

\newcommand{\limte}{\Theta}
\newcommand{\EEE}{\color{black}}
\newcommand{\MMM}{\color{black}}




\begin{document}
\title[A coupled viscoplastic-damage system with temperature]{Existence results for a coupled viscoplastic-damage model in thermoviscoelasticity}

\author{Riccarda Rossi}
\address{R.\ Rossi, DIMI, Universit\`a degli studi di Brescia,
via Branze 38, 25133 Brescia - Italy}
\email{riccarda.rossi\,@\,unibs.it}

\thanks{The author has been partially supported by the  Gruppo Nazionale per  l'Analisi Matematica, la
  Probabilit\`a  e le loro Applicazioni (GNAMPA)
of the Istituto Nazionale di Alta Matematica (INdAM)}

\date{December 29, 2016} 
\begin{abstract}
In this paper we address  a model coupling viscoplasticity  with damage  in thermoviscoelasticity.
The associated PDE system 
consists of the momentum balance with viscosity and inertia for the displacement variable, at small strains, of the plastic and damage flow rules,
and of the heat equation. It 
has a strongly nonlinear character and in particular features quadratic terms on the right-hand side of
the heat equation and of the damage flow rule, which have to be handled carefully. We propose two weak solution concepts for the related initial-boundary value problem, namely `entropic' and `weak energy' solutions.
Accordingly, we prove two existence results
 by passing to the limit in a carefully devised time discretization scheme. Finally, \MMM in the case of   a \emph{prescribed} temperature profile, \EEE and under a strongly simplifying condition, we provide a continuous   dependence result, yielding uniqueness of weak energy solutions. 
\end{abstract}
\maketitle

\noindent
\textbf{2010 Mathematics Subject Classification:}  
35Q74, 
74H20, 
74C05, 
74C10, 
74F05. 
74R05. 
\par
\noindent
\textbf{Key words and phrases:} 
Thermoviscoelastoplasticity, Damage,  Entropic Solutions, Time Discretization.
\medskip

\centerline{\emph{Dedicated to Tom\'a\v{s} Roub\'{\i}\v{c}ek on the occasion of his 60th birthday}}

\section{\bf Introduction}
\noindent
This paper focuses on two phenomena related to inelastic behavior in materials, namely
damage and plasticity. Damage  can be interpreted as a degradation of  the  elastic
properties of a material due to the failure of its microscopic structure. Such macroscopic mechanical
 effects take
their origin from the formation of cracks and cavities at the microscopic scale. 
They may be described in terms of  an internal variable, the damage parameter,
 on which the elastic modulus depends, in such a way that
  stiffness decreases with ongoing
damage. Plasticity produces residual deformations that remain after complete unloading.
  \par
 Recently,  models combining   plasticity with damage  have been proposed in the context of geophysical modeling
 \cite{Roub-Souc-Vodicka2013,Roub-Valdman2016} and, more in general, within the 
 study of the thermomechanics of damageable materials under diffusion \cite{Roub-Tomassetti}.  Perfect plasticity is featured in \cite{Roub-Souc-Vodicka2013,Roub-Valdman2016}, where the evolution of the damage variable is governed by viscosity, i.e.\ it is \emph{rate-dependent}. 
 Conversely, in \cite{Roub-Tomassetti} damage evolves rate-independently, while the evolution of plasticity is rate-dependent. 
  In a different spirit,   a fully rate-independent system for the evolution of  the damage parameter, coupled with a tensorial variable
  which stands for the trans\-for\-ma\-tion strain arising during damage evolution, is analyzed in 
  \cite{Bonetti-Rocca-Rossi-Thomas16}.
  Finally, let us mention the coupled elastoplastic damage model from \cite{AMV14,AMV15},  analyzed in 
  \cite{Crismale,Crismale16-strgr,Crismale-Lazzaroni}. While the first two papers deal with 
   the fully rate-independent case in \cite{Crismale}, in \cite{Crismale-Lazzaroni},
  the model is regularized by adding viscosity to the damage flow rule, while keeping the evolution of the
  plastic tensor rate-independent. 
   A vanishing-viscosity analysis is then carried out, leading to   an alternative solution concept (`rescaled quasistatic viscosity evolution')
   for the  rate-independent elastoplastic damage system.
   A common feature in  \cite{Roub-Souc-Vodicka2013,Roub-Valdman2016,Crismale,Crismale-Lazzaroni} is that the plastic yield surface, and thus the plastic dissipation potential, depends on the damage variable. 
   \par
  In this paper  \underline{we aim to bring temperature into the picture}. Thermoplasticity models, both in the case of rate-independent evolution of the plastic variable and of rate-dependent one, have been the object of several studies, cf.\ e.g.\ \cite{KS97,KSS02,KSS03,Roub-Bartels-1,Roub-Bartels-2,Roub-PP,HMS,Rossi2016}. In recent years, there has also been a growing literature on the analysis of (rate-independent or rate-dependent) damage models with thermal effects: We quote among others \cite{Bonetti-Bonfanti,Roub10TRIP,Rocca-Rossi2014,Heinemann-Rocca,Rocca-Rossi,LRTT}. 
  \MMM As for models  coupling  plasticity and damage with temperature, 
  one of the examples illustrating the general theory developed in  \cite{Roub-Tomassetti} concerns geophysical models of lithospheres in short time scales, which couple the  (small-strain) momentum balance, damage, \emph{rate-dependendent} plasticity, the heat equation,  as well as the porosity and the water concentration variables. Here we shall neglect the  latter two variables and 
  \EEE
tackle the weak solvability and the existence of solutions,
     for a  (\emph{fully rate-dependent}) viscoplastic  (gradient) damage model,  with viscosity and inertia in the momentum balance (the first  according to Kelvin-Voigt rheology), and with thermal effects  encompassed through the heat equation, \MMM whereas in \cite{Roub-Tomassetti} 
     the enthalpy equation  was analyzed, after a transformation of variables. \EEE
    We plan to address the  vanishing-viscosity and inertia analysis for our model, and discuss the weak solution concept thus obtained, in a future contribution.
    \par
    In what follows, we  shortly comment on the model. 
     Then, we illustrate the mathematical challenges posed by its analysis,
    motivate a suitable regularization, and
    introduce the two solution concepts, for the original system and its regularized version, for which we will prove two existence results.
  \subsection{The thermoviscoelastoplastic damage system}
\label{ss:1.1}
The PDE system, posed in  $\Omega \times (0,T)$, where  the  reference configuration  $\Omega$ is a    bounded, open, Lipschitz domain
in  $ \R^d$, $d\in \{2, 3\}$, and  $(0,T)$ is a given time interval, consists of 
\begin{itemize}
\item[-] the kinematic admissibility condition
\begin{subequations}
\label{PDE-INTRO} 
\begin{align}
\label{decomp-intro}
&
\sig u = e+p  &&  \text{ in } \Omega \times (0,T), 
\end{align}
which provides a decomposition of the linearized strain tensor $\sig{u} = \tfrac12 (\nabla u {+} \nabla u^{\top})$ into the sum of the elastic and plastic strains $e$ and $p$. In fact, $e \in \mt_\sym^{d\times d}$ (the space of symmetric $(d{\times} d)$-matrices), while $p \in  \mt_\dev^{d\times d}$ (the space of symmetric $(d{\times} d)$-matrices with null trace).
\item[-] The momentum balance 
\begin{align}
\label{mom-balance-intro}
\begin{aligned}
&
 \rho \ddot{u} - \mathrm{div}\sigma = F  && \text{ in } \Omega \times (0,T),
\\
 \text{with the stress given by } \quad  & \sigma = \mathbb{D}(z) \dot{e} + \mathbb{C}(z)e - \bbC(z)\mathbb{E}\teta  &&  \text{ in } \Omega \times (0,T)
\end{aligned}
\end{align}
according to Kelvin-Voigt rheology for materials subject to thermal expansion (with $\bbE$ the matrix of the thermal expansion coefficients). Here, $F$ is a given body force. Observe that  both the elasticity and viscosity tensors $\bbC$ and $\bbD$ depend on the damage parameter $z$,
 but we restrict to \emph{incomplete} damage. Namely, the tensors $\bbC$ and $\bbD$ are definite positive uniformly w.r.t.\ $z$, meaning that the system retains its elastic properties even when damage is maximal.
 \item[-] The damage flow  rule for $z$
 \begin{align}
\label{flow-rule-dam-intro}
&
\partial\did{\dot{z}} + \dot{z} + \As (z)+ W'(z) \ni - \tfrac12\bbC'(z)e : e +\teta  && \text{ in } \Omega \times (0,T),
\end{align}
where the   dissipation potential (density)
\[
\didname:\R \to [0,+\infty]  \  \text{ defined by } \   \did{\eta}:= \left\{ \begin{array}{ll}
|\eta| & \text{ if } \eta\leq 0,
\\
+\infty & \text{ otherwise},
\end{array}
\right.
\]
encompasses the unidirectionality in the evolution of damage. We denote by 
$\partial\didname: \R \rightrightarrows \R$ its subdifferential in the sense of convex analysis.
As in several other damage models, we confine ourselves to a \emph{gradient theory}. 
 However, along the lines of \cite{KnRoZa13VVAR} and \cite{Crismale-Lazzaroni}, we 
 adopt a special choice of the gradient regularization, i.e.\ through the  $\mathrm{s}$-Laplacian operator $\As$, with $\mathrm{s}>\tfrac d2$. This technical condition 
 ensures the compact embedding $\spz \Subset \rmC^0(\overline\Omega)$ for the associated Sobolev-Slobodeckij space $\spz:=   W^{\mathrm{s},2}(\Omega)$. Furthermore, a key role in the analysis, especially for  the regularized system with the flow rule \eqref{visc-flow-rule-regu} ahead, will be played by the \emph{linearity} of the operator $\As$.   As in  \cite{Crismale-Lazzaroni},  the term $W'(z)$ will have a singularity at $z=0$, which will  enable us to prove
 the positivity of the damage variable. Combining this with the unidirectionality constraint $\dot{z} \leq 0$ a.e.\ in $\Omega \times (0,T)$, we will ultimately infer that all $z$ emanating from an initial datum $z_0 \leq 1$  take values in the physically admissible interval $[0,1]$.
  \item[-] The flow rule for the plastic tensor reads
 \begin{align}
&
\label{flow-rule-plast-intro}
\partial_{\dot{p}}  \dip{z}{\teta}{\dot{p}} + \dot{p} \ni \sigma_{\mathrm{D}}   && \text{ in } \Omega \times (0,T),
\end{align}
with $\sigma_\dev$ the deviatoric part of the stress tensor $\sigma$. Here, the plastic dissipation potential  (density) $\dipname $ depends on 
the plastic strain rate $\dot{p}$ but also (on the space variable $x$), on the temperature, and on the  damage variable;
the symbol $\partial_{\dot{p}} \dipname$ denotes its convex  subdifferential w.r.t.\ the plastic rate.
 Typically, one may assume that the plastic yield surface decreases as damage increases, although this monotonicity property will not be needed for the analysis developed in this paper.
Observe that \eqref{flow-rule-plast-intro} is in fact a viscous  regularization of the flow rule
\[
\partial_{\dot{p}}  \dip{z}{\teta}{\dot{p}}  \ni \sigma_{\mathrm{D}}   \qquad \text{ in } \Omega \times (0,T)
\]
of perfect plasticity.
\item[-] The heat equation
\begin{align}
\label{heat-intro}
&
\begin{aligned}
\dot{\teta} - \mathrm{div}(\condu(\teta)\nabla \teta)  = & G+
\mathbb{D} \dot{e} : 
\dot{e} -\teta \mathbb{C}(z)\mathbb{E}  : \dot{e}
+ \did {\dot z} + |\dot z|^2 -\teta \dot z 
+ \dip{z}{\teta}{\dot{p}} + \dot{p}: \dot{p}
 \end{aligned}
 && \text{ in } \Omega \times (0,T),
\end{align}
with
$\condu \in \mathrm{C}^0(\R^+)$ the heat conductivity coefficient and 
 $G$ a given, positive, heat source. 
\end{subequations}
\end{itemize}
We will supplement system  \eqref{PDE-INTRO} with the boundary conditions 
\begin{subequations}
\label{BC-INTRO}
\begin{align}
\label{bc-4-u}
& 
u = w   && \text{ on } \Gdir \times (0,T),  && \sigma n = f &&  \text{ on } \Gneu \times (0,T), 
\\
\label{bc-4-z+teta}
&
\partial_n z =0 &&  \text{ on } \partial\Omega \times (0,T),  &&  \condu(\teta) \nabla \teta n = g   && \text{ on } \partial\Omega \times (0,T),
\end{align}
\end{subequations}
where $n$ is the outward unit normal to $\partial\Omega$, e $\Gdir$, $\Gneu$ the Dirichlet/Neumann parts of the boundary, respectively, and with initial conditions. 
\par
In fact, system \eqref{PDE-INTRO} can be seen as the extension of the model considered in \cite{Crismale-Lazzaroni}, featuring  the static momentum balance and a mixed rate-dependent/rate-independent character in the evolution laws for the damage/plastic variables, respectively,
to the case where viscosity is included in the plastic flow rule and in the momentum balance, the latter also with inertia, and the evolution of temperature is also encompassed.
\par
System (\ref{PDE-INTRO}, \ref{BC-INTRO}) can be  rigorously derived following, e.g., the thermomechanical modeling approach by \textsc{M.\ Fr\'emond} \cite[Chap.\ 12]{Fre02}. In this way one can also verify its compliance with the first and second principle of Thermodynamics, hence its thermodynamical consistency.
\subsection{Analytical challenges and  weak solution concepts}
\label{ss:1.2}
Despite the fact that the `viscous' plastic flow rule \eqref{flow-rule-plast-intro} does not bring all the technical difficulties attached to perfect plasticity
 (cf.\ \cite{DMDSMo06QEPL}, see also \cite{Roub-PP} for the coupling with temperature), the analysis of system (\ref{PDE-INTRO}, \ref{BC-INTRO})  still poses some mathematical difficulties.
 Namely,
 \begin{itemize}
 \item[\textbf{(1):}] The overall nonlinear character of \eqref{PDE-INTRO} and, in particular, the quadratic terms on the right-hand side of the 
 damage flow rule \eqref{flow-rule-dam-intro},  and on the right-hand side of the heat equation \eqref{heat-intro}. Both sides are, thus, only estimated in $L^1(\Omega{\times}(0,T))$ as soon as $\dot{e}$, $\dot{z}$, and $\dot{p}$ are estimated in $L^2(\Omega{\times}(0,T);\mt_\sym^{d\times d})$, $L^2(\Omega)$, and $L^2(\Omega{\times}(0,T);\mt_\dev^{d\times d})$, respectively,  as guaranteed by the \emph{dissipative estimates} associated with \eqref{PDE-INTRO}.
 \par
Observe that the particular character of the momentum balance, where the elasticity and the viscosity contributions only involve the elastic part of the strain $e$ and its rate $\dot{e}$, instead of the full strain $\sig{u}$ and strain rate $\sig{\dot u}$, does not allow for elliptic regularity arguments which could at least enhance the spatial regularity/summability of the right-hand side of the damage flow rule.
 \item[\textbf{(2):}] 
 Another obstacle is given by the presence of the \emph{unbounded} maximal monotone operator $\partial\didname$ in the damage flow rule. Because of this, no comparison estimates can be performed. In particular, a pointwise formulation of \eqref{flow-rule-dam-intro} would require a separate estimate of the terms $\As(z)$ and of  (a selection in) $\partial\didname(\dot z)$. This cannot be obtained by standard monotonicity arguments due to the nonlocal character of the operator $\As$.
 \end{itemize}
 All of these  issues shall be reflected in the weak solution concept for system  (\ref{PDE-INTRO}, \ref{BC-INTRO})  proposed in the forthcoming  Definition 
 \ref{def:entropic-sols} and referred to as `entropic solution'. This solvability notion 
  consists of the so-called \emph{entropic formulation} of the heat equation, and  of a weak formulation of the damage flow rule, in the spirit of the \emph{Karush-Kuhn-Tucker} conditions. The entropic formulation originates from the work by \textsc{E.\ Feireisl} in fluid mechanics \cite{Feireisl2007} and has been first adapted to the context of phase transition systems in \cite{FPR09}, and later extended to damage models in \cite{Rocca-Rossi}. It is given by an \emph{entropy inequality}, formally obtained by dividing  the heat equation by $\teta$ and testing the resulting relation by  a sufficiently regular, \emph{positive} test function (cf.\ the calculations at the beginning of Section \ref{ss:2.3}), combined with a \emph{total energy inequality}.  The weak formulation of the damage flow rule has been first proposed in the context of damage modeling in 
 \cite{HeiKra-1,HeiKra-2}: the subdifferential inclusion for damage is replaced by a one-sided variational inequality, with test functions reflecting  the sign constraint imposed by the dissipation potential $\didname$,
 joint with a (mechanical) energy-dissipation inequality also incorporating contributions from the momentum balance and the plastic flow rule.
\par
Clearly, one of the analytical advantages of the entropy inequality for the heat equation, and of the one-sided inequality for the damage flow rule, is that
  the troublesome quadratic terms on the right-hand sides of \eqref{flow-rule-dam-intro} and of \eqref{heat-intro}
feature as multiplied by a negative test function, \MMM cf.\ \eqref{1-sided-intro} and \eqref{entropy-ineq} ahead, respectively. \EEE  This  allows for upper semicontinuity arguments in the limit passage in  suitable
approximations of such inequalities. 
 Instead,  the total and mechanical energy inequalities can be obtained by lower semicontinuity techniques. 
 \par
 We will also consider a 
\emph{regularized version} of system (\ref{PDE-INTRO}, \ref{BC-INTRO}), where the damage flow rule features the additional term $\As(\dot z)$, modulated by a positive constant $\nu$, and, accordingly, the term $\nu \ass(\dot z, \dot z) $ 
(with $\ass $ the bilinear form associated with $\As$) occurs on the right-hand side of  the heat equation. This leads to 
  the \emph{regularized} thermoviscoelastoplastic damage system
  \begin{subequations}
 \label{regularized-system}
 \begin{align}
\label{decomp-intro-bis}
&
\sig u = e+p  &&  \text{ in } \Omega \times (0,T), 
\\
\label{mom-balance-intro-bis}
&
 \rho \ddot{u} - \mathrm{div}\sigma = F  && \text{ in } \Omega \times (0,T),
\\
\nonumber
  & \sigma = \mathbb{D}(z) \dot{e} + \mathbb{C}(z)e - \bbC(z)\mathbb{E}\teta  &&  \text{ in } \Omega \times (0,T), 
\\
 \label{visc-flow-rule-regu}
 &
 \partial\did{\dot{z}} + \dot{z} + \nu \As (\dot z) +  \As (z)+ W'(z) \ni - \tfrac12\bbC'(z)e : e +\teta &&     \text{ in } \Omega \times (0,T),
 \\
 &
  \label{pl-flow-rule-bis}
 \partial_{\dot{p}}  \dip{z}{\teta}{\dot{p}} + \dot{p} \ni \sigma_{\mathrm{D}}   && \text{ in } \Omega \times (0,T),
 \\
 \label{heat-regu}
&
\begin{aligned}
\dot{\teta} - \mathrm{div}(\condu(\teta)\nabla \teta)  = & G+
\mathbb{D} \dot{e} : 
\dot{e} -\teta \mathbb{C}(z)\mathbb{E}  : \dot{e}
\\ & 
\quad 
+ \did {\dot z} + |\dot z|^2 +\bar{\nu} \ass(\dot z, \dot z)  -\teta \dot z 
+ \dip{z}{\teta}{\dot{p}} + \dot{p}: \dot{p}
 \end{aligned}
 && \text{ in } \Omega \times (0,T),
\end{align}
\end{subequations}
with $\bar{\nu} = \nu/|\Omega|$, 
supplemented with the
boundary conditions  
\begin{subequations}
\label{more-reg-bc}
\begin{align}
& 
u = w   && \text{ on } \Gdir \times (0,T),  && \sigma n = f &&  \text{ on } \Gneu \times (0,T),  && &&
\\
&
\partial_n z =0  &&  \text{ on } \partial\Omega \times (0,T), 
&&
\partial_n \dot{z} =0   && \text{ on } \partial\Omega \times (0,T),
 &&  \condu(\teta) \nabla \teta n = g  &&   \text{ on } \partial\Omega \times (0,T).
\end{align}
\end{subequations}
For this  regularized system we will  be able to show the existence of \MMM  an enhanced type of solution. 
It features \EEE a conventional weak formulation of the heat equation with suitable test functions, and the formulation of the damage flow rule as a subdifferential inclusion in $\spz^*$. To obtain the latter, a key role is played by the regularizing term 
$\As (\dot z) $, ensuring that $\dot z \in \spz$  a.e.\ in $(0,T)$.
Thanks to this feature, it is admissible to test the subdifferential inclusion rendering \eqref{visc-flow-rule-regu} by $\dot {z}$ itself. This is at the core of the validity of the total energy \emph{balance}.  \MMM That is why, also in accordance with the nomenclature from 
\cite{Rocca-Rossi, Rossi2016}, we shall refer to these enhanced solutions as \EEE
\emph{weak energy solutions}.
\subsection{Our results}
\label{ss:1.3}
We will  prove the existence  of  \emph{entropic} solutions, see \underline{\textbf{Theorem \ref{mainth:1}}}, and of \emph{weak energy} solutions,    see \underline{\textbf{Theorem \ref{mainth:2}}},   to   (the Cauchy problems for) 
systems (\ref{PDE-INTRO}, \ref{BC-INTRO})  and (\ref{regularized-system}, \ref{more-reg-bc}), respectively, by passing to the limit in a  time-discretization scheme carefully devised in such a way as to ensure the validity of  discrete versions of the  entropy and   energy inequalities along suitable interpolants of the discrete solutions. 
One of our standing assumptions will be a suitable growth of the heat conductivity coefficient $\condu$, namely
\begin{equation}
\label{growth-condu}
\condu(\teta) \sim \teta^\mu \quad \text{for some  } \mu >1.
\end{equation}
Mimicking the calculations from \cite{FPR09, Rocca-Rossi}, we will exploit \eqref{growth-condu} in a key way to derive an estimate for $\teta$ in 
$L^2(0,T;H^1(\Omega))$ by testing the (discrete) heat equation   by a suitable negative power of $\teta$. 
\par
Under a more restrictive condition on $\mu$, in fact depending on the space dimension $d$, 
\MMM (i.e., $\mu\in (1,2)$ if $d=2$, $\mu \in (1,\tfrac53)$ if $d=3$), \EEE
we will also be able to obtain a $\mathrm{BV}$-in-time estimate for $\teta$, with values in a suitable dual space, which will be at the core of the proof of the enhanced formulation of the heat equation for the weak energy solutions to system (\ref{regularized-system}, \ref{more-reg-bc}).
\MMM Concerning the physical interpretation of our growth conditions, we refer to \cite{Klein2012} for a discussion of 
 experimental findings suggesting that
a class of polymers exhibit a subquadratic growth for $\condu$. \EEE 
\par
Finally, with \underline{\textbf{Proposition \ref{prop:continuous-dependence}}} we will provide a continuous dependence estimate, yielding uniqueness, for the weak energy solutions to  (\ref{regularized-system}, \ref{more-reg-bc}) in the \MMM case of a  \emph{prescribed} temperature profile, \EEE and with a plastic dissipation potential \emph{independent} of the state variables $z$ and $\teta$. 
\paragraph{\bf Plan of the paper.}
In Section \ref{s:2} we fix all our assumptions, motivate and state our two weak solvability notions for systems  (\ref{PDE-INTRO}, \ref{BC-INTRO}) \& (\ref{regularized-system}, \ref{more-reg-bc}),  and finally give the existence  Theorems  \ref{mainth:1} \& \ref{mainth:2}, and the continuous dependence result Proposition  \ref{prop:continuous-dependence}.
In Section \ref{s:3} we set up a  common  time discretization scheme for  systems (\ref{PDE-INTRO}, \ref{BC-INTRO}) and (\ref{regularized-system}, \ref{more-reg-bc}), 
and prove the existence of discrete solutions, while Section \ref{s:aprio} is devoted to the derivation of all the
a priori estimates on the approximate solutions, obtained by interpolation of the discrete ones. In Section \ref{s:4} we conclude the proofs of Thms.\  \ref{mainth:1} \& \ref{mainth:2} by passing to the time-continuous limit, while in Section \ref{s:6} we perform the proof of Prop.\  \ref{prop:continuous-dependence}.
\par
We conclude by fixing some notation that shall be used in the paper. 
\begin{notation}[General notation]
\label{not:2.1} \upshape
Throughout the paper,  $\R^+$ shall stand for $(0,+\infty)$. 
For a given $z\in \R$, we will use the notation $(z)^+$ for its positive part $\max\{z,0\}$. 
We will denote by $\mt^{d\times d}$  ($\mt^{d\times d\times d \times d}$) the space of $(d{\times} d)$ ($(d {\times} d{\times} d {\times} d)$, respectively) matrices. We will  consider $\mt^{d\times d}$   endowed with the  Frobenius inner product 
$A : B : = \sum_{i j} a_{ij} b_{ij}$ for two matrices $A = (a_{ij})$ and $B = (b_{ij})$, which induces the matrix norm $|\cdot|$. 
Therefore, we will often write $|A|^2$ in place of $A:A$. The symbol
$\mt_\sym^{d\times d}$ stands for the subspace of symmetric matrices, and $\mt_\dev^{d\times d}$ for the subspace of symmetric matrices with null trace. We recall that 
$\mt_\sym^{d\times d} = \mt_\dev^{d\times d} \oplus \R \bbI$ ($\bbI$ denoting the identity matrix), since every $\eta \in \mt_\sym^{d\times d}$ can be written as 
$
\eta = \eta_\dev+ \tfrac{\mathrm{tr}(\eta)}d \bbI
$
with $\eta_\dev$ the orthogonal projection of $\eta$ into $\mt_\dev^{d\times d} $. We will refer to $\eta_\dev$ as the deviatoric part of $\eta$.  
\par
For a given 
Banach space $X$,
 the symbol $\pairing{}{X}{\cdot}{\cdot}$  will stand for the duality
pairing between $X^*$ and $X$; if $X$ is a Hilbert space, $(\cdot,\cdot)_X$ will denote its inner product. For simpler notation, we shall often write $\| \cdot\|_X$ both for the norm on $X$, and on the product space  $X \times  \ldots \times X$.  With the symbol $\overline{B}_{1,X}(0)$ we will denote the closed unitary ball in $X$. 
We  shall use the symbols
 \[
\text{(i)} \  \mathrm{B}([0,T]; X), \, \qquad \text{(ii)} \  \mathrm{C}^0_{\mathrm{weak}}([0,T];X), \, \qquad \text{(iii)} \  \BV ([0,T]; X)
 \]
 for the spaces
of functions from $[0,T]$ with values in $ X$ that are defined at
\emph{every}  $t \in [0,T]$ and  (i) are measurable; (ii) are  \emph{weakly} continuous   on  $[0,T]$; (iii)  have  bounded variation on  $[0,T]$.
\par
Finally, throughout the paper we will denote various  positive constants depending only on
known quantities  by the symbols
$c,\,c',\, C,\,C'$,  whose meaning may vary even within the same   line.      Furthermore, the symbols $I_i$,  $i = 0, 1,... $,
will be used as place-holders for several integral terms (or sums of integral terms) occurring in
the various estimates: we warn the reader that we will not be
self-consistent with the numbering, so that, for instance, the
symbol $I_1$ will have  different meanings.
\end{notation}

\section{\bf Setup and main results for the thermoviscoelastoplastic damage system}
\label{s:2}
\noindent
After fixing the setup for our analysis in Section \ref{ss:2.1}, in Sec.\ \ref{ss:2.2} we motivate the notion of `weak energy' solution to system 
(\ref{regularized-system}, \ref{more-reg-bc}) by unveiling its underlying energetics. This concept is then precisely fixed in Definition \ref{def:weak-sols}. Sec.\ \ref{ss:2.3} is devoted to the introduction of the considerably weaker concept of `entropic' solutions. Our existence theorems are stated in Sec.\ \ref{ss:2.4}, while in Sec.\ \ref{ss:2.5} we confine the discussion to the  \MMM case of a  given 
temperature profile, \EEE   and give a continuous dependence result for weak energy solutions. 
\subsection{Setup}
\label{ss:2.1}
\paragraph{{\em The reference configuration}.} 
Let $\Omega \subset \R^d$ be a bounded domain, with Lipschitz boundary; we set $Q: =  \Omega  \times  (0,T) $.
 The boundary $\partial\Omega $ is given by  
\begin{equation}
\label{Omega-s2}
\tag{2.$\Omega$}
\begin{gathered}
\partial \Omega = \Gamma_\Dir \cup
\Gamma_\Neu \cup \partial\Gamma \quad \text{ with $\Gamma_\Dir, \,\Gamma_\Neu, \, \partial\Gamma$ pairwise disjoint,}
\\
\text{
 $\Gamma_\Dir$ and $\Gamma_\Neu$ relatively open in $\partial\Omega$, and $ \partial\Gamma$ their relative boundary in $\partial\Omega$,}
 \\
\text{
  with Hausdorff measure $\calH^{d-1}(\partial\Gamma)=0$.}
  \end{gathered}
  \end{equation}
  We will denote by $|\Omega|$ the Lebesgue measure of $\Omega$. 
  On the Dirichlet part $\Gamma_\Dir$, assumed  with $\calH^{d-1}(\Gamma_\Dir)>0, $   we shall prescribe the displacement, while on $\Gamma_\Neu$ we will impose a Neumann condition on the displacement.  The trace of a function $v$ on $\Gamma_\Dir$ or $\Gamma_\Neu$ shall be still  denoted by the symbol $v$. 
    \paragraph{{\em Sobolev spaces, the $\mathrm{s}$-Laplacian, Korn's inequality}.}
  In what follows,  we will use the notation 
  $
  H_\Dir^1(\Omega;\R^d): = \{ u \in H^1(\Omega;\R^d)\, : \ u{|}_{\Dir} = 0 \}.
  $
  The symbol $W_\Dir^{1,p}(\Omega;\R^d)$, $p>1,$  shall denote the analogous $W^{1,p}$-space.
    Further,  we will use the notation \begin{equation}
\label{label-added}
 W_+^{1,p}(\Omega):= \left\{\zeta \in
W^{1,p}(\Omega)\, : \ \zeta(x) \geq 0  \quad \foraa x \in
\Omega \right\}, \quad \text{ and analogously for }
W_-^{1,p}(\Omega).
\end{equation} 
Throughout the paper,
 we shall extensively resort to Korn's inequality (cf.\ \cite{GeySu86}): for every $1<p<\infty$ there exists a constant $C_K = C_K(\Omega, p)>0$ such that there holds
\begin{equation}
\label{Korn}
\| u \|_{W^{1,p}(\Omega;\R^d)} \leq C_K \| \sig u \|_{L^p (\Omega;\mt_\sym^{d \times d})} 
\qquad \text{for all } u \in  W_\Dir^{1,p}(\Omega;\R^d)\,.
\end{equation}
\par
  We will denote by 
\[
  \spz  \   \text{ the  Sobolev--Slobodeckij space }  \  W^{\mathrm{s},2}(\Omega), \text{ with } \mathrm{s} \in \left( \tfrac d2, 2\right).
 \]
      We will also use the notation
      $  H_+^{\mathrm{s}}(\Omega) = \{ z \in \spz\, : z \geq 0 \ \MMM \text{in } \EEE  \Omega\}$ and the analogously defined notation
       $  H_-^{\mathrm{s}}(\Omega) $. 
We recall that $\spz$ is a Hilbert space, with  inner product $(z_1,z_2)_{\spz}: = (z_1,z_2)_{L^2(\Omega)} + \ass(z_1,z_2)$, where    the bilinear form $\ass(\cdot,\cdot)$ is defined by 
   \begin{equation}
   \label{a-s-form}
   \ass(z_1,z_2): = \iint_{\Omega\times\Omega} \frac{\left( \nabla z_1(x){-} \nabla z_1(y)\right) \left( \nabla z_2(x){-} \nabla z_2(y)\right) }{|x{-}y|^{d+2(\mathrm{s}{-}1)}} \dd x \dd y\,.
   \end{equation}
   We denote by  $\As: \spz \to \spz^*$ the associated operator
   \begin{equation}
 \label{s-Laplacian}
 \pairing{}{\spz}{\As(z)}{v}: = \ass(z,v) \qquad \text{for all } v \in \spz\,.
 \end{equation}
\paragraph{{\em Kinematic admissibility and stress}.}  
Given a function $w \in H^1(\Omega;\R^d)$,  we say that a triple $(u,e,p)$ is \emph{kinematically admissible with boundary datum $w$}, and write $(u,e,p) \in \mathcal{A}(w)$,  if
\begin{subequations}
\label{kin-adm}
\begin{align}
&
u \in H^1(\Omega;\R^d), \quad e \in L^2(\Omega;\mt_\sym^{d\times d}), \quad p \in L^2(\Omega;\mt_\dev^{d\times d}),
\\
& \sig u = e+p \quad \aein\, \Omega,
\\
& 
u = w \quad \text{on } \Gamma_\Dir.
\end{align}
\end{subequations}
\par
 As for the elasticity and  viscosity 
   tensors,  we will suppose that 
\begin{subequations}
\label{elast-visc-tensors} 
\begin{equation}
\label{elast-visc-tensors-1} 
\tag{2.${\small (\bbC,\bbD)_1}$}
\begin{aligned}
&
\bbC , \,  \bbD
  \in \mathrm{C}^0(\overline{\Omega}\times \R; \mathrm{Lin}(\mt_\sym^{d \times d}))\,,  \text{ and } 
  \\
& \ \exists\, C_{\bbC}^1,\,  C_{\bbC}^2, \,  C_{\bbD}^1, \,  C_{\bbD}^2>0  \  \forall x \in \Omega \ \forall z \in \R  \ \forall A \in \mt_\sym^{d\times d} \, : \quad \begin{cases}
  &  C_{\bbC}^1 |A|^2 \leq  \bbC(x,z) A : A \leq  C_{\bbC}^2 |A|^2,
   \\
    & C_{\bbD}^1 |A|^2 \leq  \bbD(x,z) A : A \leq  C_{\bbD}^2 |A|^2,
  \end{cases}
  \end{aligned}  \end{equation}
  where $\mathrm{Lin}(\mt_\sym^{d \times d})$ denotes the space of linear operators from $\mt_\sym^{d \times d}$ to $\mt_\sym^{d \times d}$.
  Furthermore, we will suppose that for every $x \in \Omega$ the map $z \mapsto \bbC(x,z)$ is continuously  differentiable on $\R$ and fulfills 
  \begin{equation}
\label{elast-visc-tensors-2} 
\tag{2.${\small (\bbC,\bbD)_2}$}
\bbC'(x,0) =0 \quad \text{for all } x \in \Omega \quad \text{ and } \quad  
\forall\, C_Z>0  \ 
\exists\, L_{\bbC}>0 \  \forall\, x\in \Omega \, :
 \ |z| \leq C_Z \ \Rightarrow 
 \  | \bbC'(x,z) | \leq L_{\bbC}\,.
\end{equation}
Finally, for technical reasons (cf.\ Remark \ref{rmk:tech-comments} later on) it will be convenient to require that the map $z \mapsto \bbC(x,z)$ is convex, i.e.\ for every $A 
\in \mt_\sym^{d \times d}$ there holds
  \begin{equation}
\label{elast-visc-tensors-3} 
\tag{2.${\small (\bbC,\bbD)_3}$}
\begin{aligned}
\bbC(x,(1{-}\theta)z_1{+}\theta z_2)A : A  &  \leq (1-\theta) \bbC(x,z_1) A: A + \theta \bbC(x,z_2)A: A
\\
& 
 \quad \quad  \text{ for all } \theta \in [0,1], \, x\in \Omega, \, z_1,\,z_2 \in \R.
 \end{aligned}
\end{equation}
It follows from  
the convexity \eqref{elast-visc-tensors-3}  that 
\begin{equation}
\label{monotonicity}
\bbC'(x,z_1)(z_1{-}z_2)  A : A \geq  \bbC(x,z_1)  A : A  -   \bbC(x,z_2)  A : A  \qquad \text{for all } x\in \Omega, \, z_1,\, z_2 \in \R, \, A \in  \mt_\sym^{d \times d}\,,
\end{equation}
whence 
$(\bbC'(x,z_1){-} \bbC'(x,z_2)) (z_1{-}z_2)  A : A \geq0$. 
In particular, 
due to the first of \eqref{elast-visc-tensors-3}, 
we find that
\begin{equation}
\label{posbbC}
\bbC'(x,z) A : A \geq 0 \quad \text{for all } x \in \Omega, \, z \in [0,+\infty),  \, A \in  \mt_\sym^{d \times d}\,.
\end{equation}
Finally, we  also  suppose that the thermal expansion tensor fulfills
  \begin{equation}
  \label{thermal-expansion}
  \tag{2.${\small \bbE}$}
  \bbE \in L^\infty(\Omega; \mathrm{Lin}(\mt_\sym^{d \times d}))\,.
  \end{equation}
  \end{subequations}
Observe that with $(2.(\bbC,\bbD,\bbE))$  and \eqref{thermal-expansion}
 we  encompass in our analysis the case of
an anisotropic and inhomogeneous material. 
\paragraph{{\em External heat sources}.}
For the volume and boundary  heat sources $G$ and $g$ we require
\begin{align}
 \label{heat-source} 
 \tag{2.$\mathrm{G}_1$}
 &  G \in L^1(0,T;L^1(\Omega)) \cap L^2 (0,T; H^1(\Omega)^*), &&  G\geq 0 \quad\hbox{a.e.  in } Q\,,
 \\
 \label{dato-h}
  \tag{2.$\mathrm{G}_2$}
 & g \in L^1 (0,T; L^2(\partial \Omega)),  &&  g \geq 0 \quad\hbox{a.e.  in } (0,T)\times \partial \Omega\,.
\end{align}
Indeed, the positivity of $G$ and $g$ is crucial for obtaining the strict positivity of the temperature $\teta$. 
\paragraph{{\em Body force and traction}.}
Our basic conditions on the volume force $F$ and the assigned traction $f$ are
 \begin{equation}
\label{data-displ}
 \tag{2.$\mathrm{L}_1$}
F\in L^2(0,T; H_\Dir^1(\Omega;\R^d)^*), \qquad f \in L^2(0,T; H_{00,\Gamma_\Dir}^{1/2}(\Gamma_\Neu; \R^d)^*),
\end{equation}
where $ H_{00,\Gamma_\Dir}^{1/2}(\Gamma_\Neu; \R^d)$ is the space of functions $\gamma \in H^{1/2} (\Gamma_\Neu;\R^d)$ such that there exists $\tilde\gamma \in H_\Dir^1(\Omega;\R^d)$ with $\tilde\gamma = \gamma $ in $\Gamma_\Neu$.
\par
For technical reasons,   in order to allow for a non-zero traction $f$, 
we will need to additionally require  a  \emph{uniform safe load} type condition. Observe that  this kind of assumption
 usually occurs in the analysis of perfectly plastic systems. 
 In the present context, it will play a pivotal role in the derivation of the \emph{First a priori estimate}  for the approximate solutions 
 constructed by time discretization, cf.\ the proof of Proposition  \ref{prop:aprio} later on as well as \cite[Rmk.\ 4.4]{Rossi2016} for more detailed comments.
 Namely, we impose that there exists a function $\varrho: [0,T] \to L^2(\Omega;\mt_\sym^{d\times d})$,
with $\varrho \in W^{1,1}(0,T;  L^2(\Omega;\mt_\sym^{d\times d}))  $ and $  \varrho_\dev \in L^1(0,T;L^\infty (\Omega; \mt_\dev^{d\times d}))$,
 solving for almost all $t\in (0,T)$ the following elliptic problem 
\begin{equation}
\label{safe-load}
 \tag{2.$\mathrm{L}_2$}
- \mathrm{div}(\varrho(t)) = F(t)  \text{ in } \Omega, \qquad 
\varrho(t) \nu = f(t)  \text{ on } \Gamma_\Neu\,.
\end{equation}
\par
 When not explicitly using \eqref{safe-load}, to shorten notation we will incorporate the volume force $F$ and the traction $f$
  into the induced  total load, namely the function
$\mathcal{L}: (0,T) \to H_\Dir^1(\Omega;\R^d)^*$ given at $t\in (0,T)$ by 
\begin{equation}
\label{total-load}
\pairing{}{H_\Dir^1(\Omega;\R^d)}{\mathcal{L}(t)}{u}: = \pairing{}{H_\Dir^1(\Omega;\R^d)}{F(t)}{u} + \pairing{}{H_{00,\Gamma_\Dir}^{1/2}(\Gamma_\Neu; \R^d)}{f(t)}{u} \qquad \text{for all } u \in H_\Dir^1(\Omega;\R^d),
\end{equation} 
which fulfills $\mathcal{L} \in L^2(0,T; H_\Dir^1(\Omega;\R^d)^*)$ in view of \eqref{data-displ}.
\paragraph{{\em Dirichlet loading}.}
We will suppose that the hard device  $w$ to which the body is subject on $\Gamma_\Dir$  is the trace on $\Gamma_\Dir$  of a function, denoted by the same symbol,  fulfilling
\begin{equation}
\label{Dirichlet-loading}
\tag{2.$\mathrm{w}$} 
w \in  L^1(0,T; W^{1,\infty} (\Omega;\R^d)) \cap W^{2,1} (0,T;H^1(\Omega;\R^d)) \cap H^2(0,T; L^2(\Omega;\R^d))\,.
\end{equation}
Also condition   \eqref{Dirichlet-loading} will be used in the proof of Prop.\  \ref{prop:aprio}  ahead; we again
  refer to \cite[Rmk.\ 4.4]{Rossi2016} for more comments. 
\paragraph{{\em The weak formulation of the momentum balance}.} The variational formulation of  \eqref{mom-balance-intro}, supplemented with the boundary conditions
\eqref{bc-4-u},
reads for almost all $t\in (0,T)$
\begin{equation}
\label{w-momentum-balance}
\begin{aligned}
\rho\int_\Omega \ddot{u}(t) v \dd x + \int_\Omega \big(\bbD(z(t)) \dot{e}(t) + \bbC(z(t)) e(t)  & - \teta(t) \bbC(z(t)) \bbE \big): \sig v \dd x     = \pairing{}{H_\Dir^1(\Omega;\R^d)}{\calL(t)}{v}  \\ & \qquad \qquad \quad \forall\, v \in H_\Dir^1(\Omega;\R^d)\,.
\end{aligned}
\end{equation}
We will often use the short-hand notation $-\mathrm{div}_{\Dir}$ for the elliptic operator  defined by 
\begin{equation}
\label{div-Gdir}
\pairing{}{ H_\Dir^1(\Omega;\R^d) }{-\mathrm{div}_{\Dir}(\sigma)}{v}: = \int_\Omega \sigma : \sig v \dd x \qquad \text{for all } v \in  H_\Dir^1(\Omega;\R^d) \,.
\end{equation}
\paragraph{{\em The plastic dissipation potential}.}
Our  assumptions on the multifunction $K: \Omega \times \R \times  \R^+ \rightrightarrows \mt_\dev^{d \times d}$ involve the notions of measurability, lower semicontinuity, 
and upper semicontinuity  for general multifunctions. For such concepts and the related results, we refer, e.g., to 
  \cite{Castaing-Valadier77}. 
Hence,  we suppose that
\begin{equation}
\label{measutab-cont-K}
\tag{2.$\mathrm{K}_1$}
\begin{aligned}
&
K : \Omega \times \R \times   \R^+ \rightrightarrows \mt_\dev^{d \times d}  &&  \text{ is measurable w.r.t.\ the variables $(x,\teta,z)$,}
\\
&
K(x, \cdot,\cdot) : \R \times  \R^+ \rightrightarrows \mt_\dev^{d \times d}   && \text{ is continuous for almost all } x \in \Omega.
\end{aligned}
\end{equation}
Furthermore, we require that 
\begin{equation}
\label{elastic-domain}
\tag{2.$\mathrm{K}_2$}
\begin{aligned}
K(x,z,\teta) \text{ is a convex and compact set in }  \mt_\dev^{d \times d} \text{ for all } (z,\teta) \in \R \times  \R^+ \text{ and  for almost all } x \in \Omega,
\\
\exists\, 0<c_r<C_R \quad  \foraa x \in \Omega, \ \forall\, z \in \R, \  \forall\, \teta \in  \R^+ \, : \quad B_{c_r}(0) \subset  K(x,z,\teta) \subset B_{C_R}(0).
\end{aligned}
\end{equation}
\par
Therefore, the support function associated with the multifunction $K$, i.e.
\begin{equation}
\label{1-homogeneous-dissip}
\dipname: \Omega  \times \R \times  \R^+ \times  \mt_\dev^{d \times d} \to [0,+\infty) \quad \text{defined by } \dipx{x}z{\teta}{\dot p}: = \sup_{\pi \in K(x,z,\teta)} \pi : \dot p 
\end{equation}
is positive, with $\dipx{x}z{\teta}{ \cdot} :  \mt_\dev^{d \times d} \to   [0,+\infty)  $ convex and $1$-positively homogeneous for almost all $x \in \Omega$ and for all $(z,\teta) \in \R \times \R^+$. 
By the first of \eqref{measutab-cont-K},  the function $\dipname: \Omega \times \R  \times \R^+ \times  \mt_\dev^{d \times d} \to [0,+\infty) $ is measurable.
 Moreover,  by the second of   \eqref{measutab-cont-K},  in view of \cite[Thms.\ II.20, II.21]{Castaing-Valadier77} (cf.\ also  \cite[Prop.\ 2.4]{Sol09}) the function
\begin{subequations}
\label{hypR}
\begin{align}
& 
\label{hypR-lsc}
\dipx{x}{\cdot}{\cdot}{ \cdot}: \R \times   \R^+ \times  \mt_\dev^{d \times d} \to [0,+\infty)   \text{ is (jointly) lower semicontinuous}, 
\intertext{for almost all $x \in \Omega$, i.e.\ $\dipname$ is a \emph{normal integrand},  and} 
& \label{hypR-cont}
\dipx{x}{\cdot}{\cdot}{ \dot p}:  \R \times   \R^+ \to \R^+ \text{ is continuous for every $\dot p\in \mt_\dev^{d\times d}$}. 
\end{align}
\end{subequations}
Finally, 
it follows from the second of \eqref{elastic-domain}
that   for almost all $x\in \Omega$ and for all $ (z,\teta, \dot p) \in \R \times   \R^+ \times  \mt_\dev^{d \times d}  $ there holds
\begin{subequations}
\label{cons-lin-growth}
\begin{align}
&
\label{linear-growth}
c_r|\dot p| \leq \dipx{x}{z}{\teta}{ \dot p } \leq C_R |\dot p|,
\\
&
\label{bounded-subdiff} \partial_{\dot p}  \dipx{x}{z}{\teta}{ \dot p } \subset  \partial_{\dot p}  \dipx{x}{z}{\teta}{ 0 }  
= K(x,z,\teta) \subset B_{C_R}(0)\,.
\end{align}
\end{subequations}
\par 
Finally, we also introduce the \emph{plastic dissipation  potential} $\Dipname:L^1(\Omega) \times L^1(\Omega; \R^+) \times L^1(\Omega;\mt_\dev^{d\times d})$ given by
\begin{equation}
\label{plastic-dissipation-functional}
\Dip{z}{\teta}{ \dot p}: = \int_\Omega \dipx{x}{z(x)}{\teta(x)}{\dot p(x)} \dd x\,.
\end{equation}
 From now on, throughout the paper we will most often omit the $x$-dependence of the tensors $\bbC,\, \bbD,\, \bbE$, and of the dissipation density $\dipname$. 
\paragraph{{\em Nonlinearities in  the damage flow rule}.}
Along the footsteps of   \cite{Crismale-Lazzaroni}, we will suppose that 
\begin{equation}
\label{hyp-phi-1}
\tag{2.$\mathrm{W}_1$} 
W  \in \mathrm{C}^2 (\R^+)  \text{ is bounded from below and fulfills } z^{2d} W(z) \to +\infty \text{ as } z \down 0.
\end{equation}
The latter coercivity condition will play a key role in the proof  that the damage variable  $z$ takes values in the feasible interval $[0,1]$. In this way, we will not have to include the  indicator term  $I_{[0,1]}$ in the potential energy. 
 This will greatly simplify the analysis of the damage flow rule.
\par
Furthermore, 
we shall require that 
\begin{equation}
\label{hyp-phi-2}
\tag{2.$\mathrm{W}_2$}
\exists\, \lambda_W>0 \ \forall\, z \in \R^+ \, : \quad W''(z) \geq -\lambda_W\,.
\end{equation}
Observe that  \eqref{hyp-phi-2} is equivalent to imposing that the function $z\mapsto W(z) + \tfrac{\lambda_W}2|z|^2 =: {\beta}(z)$  is convex. Therefore, 
we have the \emph{convex/concave} decomposition
\begin{equation}
\label{decomp-W}
W(z) = {\beta}(z) - \frac{\lambda_W}2|z|^2  \text{ with } \beta  \in \mathrm{C}^2 (\R^+), \text{ convex, and fulfilling  }  z^{2d} \beta(z) \to +\infty \text{ as } z \down 0.
\end{equation}
Let us point out that  \eqref{decomp-W}  will be expedient in devising the time discretization scheme for the (regularized) thermoviscoelastoplastic
damage system, in such a way that its solutions comply with the discrete version of the total energy inequality. We refer to Remark \ref{rmk:tech-comments} ahead for more comments.
\paragraph{{\em Cauchy data}.} We will supplement the thermoviscoelastoplastic damage system with 
initial data
\begin{subequations}
\label{Cauchy-data}
\begin{align}
&
\label{initial-u}
u_0 \in H_\Dir^{1} (\Omega;\R^d), \ \dot{u}_0 \in L^2 (\Omega;\R^d),
\\
& 
\label{initial-p}
e_0 \in   L^2(\Omega;\mt_\sym^{d\times d}), \quad  p_0 \in L^2(\Omega;\mt_\dev^{d\times d}) \quad \text{such that } (u_0, e_0, p_0 ) \in \mathcal{A}(w(0)),
\\
&\label{initial-z}
z_0 \in \spz \text{ with } W(z_0) \in L^1(\Omega) \text{ and } z_0(x) \leq 1  \text{ for every } x \in \Omega\,,
\\
\label{initial-teta}
&
\begin{aligned}
\teta_0 \in L^1(\Omega),  &  \text{ fulfilling the strict positivity condition }  \exists\, \teta_*>0: \ \inf_{x\in \Omega} \teta_0(x) \geq \teta_*, \\ &  \text{ and such that } \log(\teta_0) \in L^1(\Omega).
\end{aligned}
\end{align}
\end{subequations} 
\par
In the remainder of  this section,  we shall suppose that the functions $\bbC, \ldots, W$, the data $G,\ldots, w$, and the initial data $(u_0, \dot{u}_0, e_0, p_0,z_0,\teta_0)$
 fulfill the conditions stated in Section \ref{ss:2.1}.
  We will first address the weak solvability of the regularized system in Sec.\ \ref{ss:2.2}, and then turn to examining the non-regularized one in Sec.\ \ref{ss:2.3}. We will then state our existence results for both in Sec.\ \ref{ss:2.4}. 
\subsection{Energetics and weak solvability for the  (regularized) thermoviscoelastoplastic damage system}
\label{ss:2.2}
 Prior to stating the precise notion of weak solution 
 for the regularized thermoviscoelastoplastic damage system in
 Definition \ref{def:weak-sols} ahead,
 we 
 formally derive the mechanical \& total energy balances associated with 
  systems (\ref{PDE-INTRO}, \ref{BC-INTRO}) and 
  (\ref{regularized-system}, \ref{more-reg-bc}) (in the ensuing discussion, we shall take the parameter $\nu\geq 0$). 
\paragraph{\emph{The mechanical and total energy balances.}}
The \MMM free \EEE  energy of the system is given by 
\begin{equation}
\label{stored-energy}
\calE(\teta, u, e, p,z)= \calE(\teta,e,z):=  \calF(\teta) + \calQ(e,z) + \calG(z) \quad \text{with } \quad \begin{cases} \calF(\teta) : = \int_\Omega  \teta \dd x, 
\\
\calQ(e,z): = 
 \int_\Omega \tfrac12 \bbC(z) e{:} e   \dd x  
 \\
 \calG(z): = 
   \frac12 \ass(z,z)  + \int_\Omega W(z) \dd x\,.
 \end{cases}
\end{equation}
The total energy balance can be (formally) obtained 
by 
 testing the momentum balance \eqref{mom-balance-intro-bis} by $(\dot u {-} \dot w)$,  the damage flow rule 
 \eqref{visc-flow-rule-regu}
   by $\dot z$, 
 the plastic flow rule  \eqref{pl-flow-rule-bis} by $\dot p$, 
 the heat equation \eqref{heat-regu} by $1$,  adding the resulting relations and  integrating in space and over a generic interval $(s,t) \subset (0,T)$.  
 \par
 Indeed, the tests of  \eqref{mom-balance-intro-bis}, \eqref{visc-flow-rule-regu}, and \eqref{pl-flow-rule-bis}  yield
 \begin{equation}
\label{intermediate-mech-enbal}
\begin{aligned}
& 
\frac{\rho}2 \int_\Omega |\dot{u}(t)|^2 \dd x + \int_s^t\int_\Omega \left(  \bbD(z) \dot{e}  + \bbC(z) e - \teta \bbC(z)\bbE \right)  : \sig{\dot{u}} \dd x \dd r +\nu \int_s^t \ass (\dot{z},\dot{z}) \dd r 
\\
& \quad 
+ \int_s^t \int_\Omega \left(   \did {\dot z} {+}  |\dot z|^2  \right) \dd x \dd r +  \frac12 \ass(z(t),z(t)) + \int_\Omega W(z(t)) \dd x +\int_s^t \int_\Omega  \tfrac12 \dot {z}  \bbC'(z) e{:} e \dd x \dd r 
\\
& \quad 
-\int_s^t \int_\Omega \teta \dot z \dd x \dd r
+ \int_s^t \int_\Omega \left(   \dip z{\teta}{\dot p} {+}  |\dot p|^2  \right) \dd x \dd r
\\
&  = \frac{\rho}2 \int_\Omega |\dot{u}(s)|^2 \dd x  + \int_s^t \pairing{}{H_\Dir^1 (\Omega;\R^d)}{\mathcal{L}}{\dot u- \dot w} \dd r  +
\int_s^t \int_\Omega  \left(  \bbD(z) \dot{e}  + \bbC(z) e - \teta \bbC(z) \bbE  \right)  : \sig{\dot w} \dd x \dd r 
\\
&\quad  +\rho \left( \int_\Omega \dot{u}(t) \dot{w}(t) \dd x -  \int_\Omega \dot{u}_0 \dot{w}(0) \dd x - \int_s^t \int_\Omega \dot{u}\ddot w \dd x \dd r \right) +
 \int_s^t \int_\Omega \sigma_\dev : \dot{p} \dd x \dd r\,.
\end{aligned}
\end{equation}
Now, taking into account that $ \sig{\dot u}  =\dot e + \dot p$ by the kinematical admissibility condition,  rearranging some terms one has that 
\[
\begin{aligned}
 \int_s^t\int_\Omega \left(  \bbD(z) \dot{e}  + \bbC(z) e  -  \teta \bbC(z) \bbE \right)  : \sig{\dot{u}} \dd x \dd r  = &  
 \int_s^t \int_\Omega \left( \bbD (z)\dot e : \dot e  + \bbC(z) \dot e : e  \right) \dd x \dd r   -  \int_s^t \int_\Omega \teta \bbC(z) \bbE : \dot e \dd x \dd  r
 \\
 & \quad + \int_s^t \int_\Omega \left(  \bbD(z) \dot e+ \bbC(z) e  - 
\teta \bbC(z) \bbE \right)   : \dot p   \dd x \dd r \,. 
\end{aligned}
\]
We substitute  this in \eqref{intermediate-mech-enbal} and  note that $  \int_s^t \int_\Omega \left(\bbD(z) \dot e  + \bbC(z) e - \teta \bbC(z) \bbE \right)  : \dot{p} \dd x \dd r =
 \int_s^t \int_\Omega \sigma_\dev : \dot{p} \dd x \dd r   $ since $\dot{p} \in \mt_\dev^{d\times d}$, so that the last term on the right-hand side of \eqref{intermediate-mech-enbal} cancels out.
 Furthermore, by the chain rule we have that 
 \[
 \int_s^t \int_\Omega  \left( \bbC(z) \dot e : e {+}  \tfrac12 \dot {z}  \bbC'(z) e{:} e \right)  \dd x \dd r = 
 \int_\Omega \tfrac12 \bbC(z(t))e(t){:}e(t)  \dd x -  \int_\Omega \tfrac12 \bbC(z(s))e(s){:}e(s)  \dd x \,.
 \]
 Collecting all of the above calculations, we obtain 
 the \emph{mechanical energy balance}, featuring the kinetic, dissipated, and mechanical energies
\begin{equation}
\label{mech-enbal}
\begin{aligned}
& 
\dddn{\frac{\rho}2 \int_\Omega |\dot{u}(t)|^2 \dd x}{kinetic} +\dddn{ \int_s^t\int_\Omega   \left(
 \bbD(z) \dot e: \dot e  + \did{\dot{z}} + |\dot{z}|^2 +  \dip{z}{\teta}{\dot p}  + |\dot p|^2 \right)  \dd x \dd r + \nu \int_s^t  \ass(\dot{z}, \dot{z}) \dd r  }{dissipated}
 \\
 & \quad 
+ \dddn{\calQ(e(t),z(t)) + \calG(z(t))}{mechanical}
\\
&  =  \frac{\rho}2 \int_\Omega |\dot{u}(s)|^2 \dd x   + \calQ(e(s),z(s)) + \calG(z(s)) + \int_s^t \pairing{}{H_\Dir^1 (\Omega;\R^d)}{\mathcal{L}}{\dot u{-} \dot w} \dd r  +  
 \int_s^t \int_\Omega  \left( \teta \bbC(z) \bbE : \dot e + \teta \dot{z} \right) \dd x \dd  r
 \\
& \quad   +\rho \left( \int_\Omega \dot{u}(t) \dot{w}(t) \dd x -  \int_\Omega \dot{u}(s) \dot{w}(s) \dd x - \int_s^t \int_\Omega \dot{u}\ddot w \dd x \dd r \right)  
  + 
\int_s^t \int_\Omega  \sigma : \sig{\dot w} \dd x \dd r\,. 
 \end{aligned}
\end{equation}
Let us highlight that \eqref{mech-enbal}  will also have a significant role for our analysis. 
\par
Hence, we sum \eqref{mech-enbal} with the heat  equation \eqref{heat-regu} tested by $1$ and integrated in time and space. We observe the cancelation of some terms, in particular
noting that 
\[
\bar\nu\int_s^t \int_\Omega \ass (\dot{z},\dot{z}) \cdot 1 \dd x \dd r = \nu \int_s^t \ass (\dot{z},\dot{z})  \dd r\,.
\]
All in all, we conclude
   the \emph{total energy balance} 
\begin{equation}
\label{total-enbal}
\begin{aligned}
& 
\frac{\rho}2 \int_\Omega |\dot{u}(t)|^2 \dd x +\mathcal{E}(\teta(t), e(t), z(t)) 
\\
&  = \frac{\rho}2 \int_\Omega |\dot{u}(s)|^2 \dd x +\mathcal{E}(\teta(s), e(s),z(s))  + \int_s^t \pairing{}{H_\Dir^1 (\Omega;\R^d)}{\mathcal{L}}{\dot u{-} \dot w} 
  +\int_s^t \int_\Omega G \dd x \dd r + \int_s^t \int_{\partial\Omega} g \dd S \dd r
\\
& \quad   +\rho \left( \int_\Omega \dot{u}(t) \dot{w}(t) \dd x -  \int_\Omega \dot{u}_0 \dot{w}(0) \dd x - \int_s^t \int_\Omega \dot{u}\ddot w \dd x \dd r \right)     + 
\int_s^t \int_\Omega \sigma: \sig{\dot w} \dd x \dd r
 \,.\end{aligned}
\end{equation}
\paragraph{\emph{Weak energy solutions for the regularized system.}}
     With the following definition (where the conditions from Sec.\ \ref{ss:2.1} are tacitly assumed), 
     we fix the properties of the weak solution concept for the regularized thermoviscoplastic damage system.
     Let us mention in advance that, in addition to the conventional weak formulations of the momentum balance and of the heat equation
     (in the latter case, with test functions with suitable regularity and summability properties), we will require the validity of the plastic flow rule 
     \emph{pointwise} (almost everywhere) in $\Omega \times (0,T)$, and that of the damage flow rule  as a subdifferential inclusion in $H^{\mathrm{s}}(\Omega)^*$.  
     \MMM It will be in fact possible to obtain the latter by exploiting the additional, strongly  regularizing  term $\As(\dot z)$ in the damage flow rule
     \eqref{visc-flow-rule-regu}. \EEE 
     In this connection, we now
     introduce the  dissipation potential, defined on
 $\spz$ and  induced by $\didname$, namely
\begin{equation}
\label{abstract-dissip-potential}
\Didname: \spz \to [0,+\infty], \qquad \Did{\dot z}: = \int_\Omega \did{\dot z} \dd x,
\end{equation}
and its `viscous' regularization, where $\Didname$ is augmented by the (squared) $L^2(\Omega)$-norm
\begin{equation}
\label{didvname}
\Didvname: \spz \to [0,+\infty], \qquad \Didv{\dot z}: = \int_\Omega \did{\dot z} \dd x + \frac12 \| \dot{z}\|_{L^2(\Omega)}^2.
\end{equation}
We will denote by  $\partial \Didname: \spz \rightrightarrows \spz^*$ and 
 $\partial \Didvname: \spz \rightrightarrows \spz^*$ the 
 the convex analysis  subdifferentials  of $\Didname$ and $\Didvname$, respectively.
   \begin{definition}[Weak energy solutions to the regularized thermoviscoelastoplastic  damage system]
   \label{def:weak-sols}
   $ $\\
 Given initial data $(u_0, \dot{u}_0, e_0,z_0, p_0,\teta_0)$ fulfilling \eqref{Cauchy-data}, we call a quintuple $(u,e,z,p,\teta)$
a \emph{weak energy solution} to the Cauchy problem for system (\ref{regularized-system}, \ref{more-reg-bc}),
 supplemented with the boundary conditions 
 (\ref{bc-4-u}, \ref{more-reg-bc}),   
   if
\begin{subequations}
\label{regularity}
\begin{align}
& \label{reg-u} u \in H^1(0,T; H_\Dir^{1}(\Omega;\R^d)) \cap W^{1,\infty}(0,T; L^2(\Omega;\R^d)) \cap H^2(0,T; H_\Dir^{1}(\Omega;\R^d)^*),
\\
 & \label{reg-e} e \in H^1(0,T; L^2(\Omega;\mt_\sym^{d\times d})),
 \\
 & \label{reg-z} z \in L^\infty(0,T;\spz) \cap H^1(0,T;L^2(\Omega)),
\\
& \label{reg-p} p \in H^1(0,T; L^2(\Omega;\mt_\dev^{d\times d})),
\\
& 
\label{reg-teta}  \teta \in  L^2(0,T; H^1(\Omega))\cap L^\infty(0,T;L^1(\Omega)),
\\
&
\label{further-reg-z}
z\in H^1(0,T;\spz),
\\
&
 \label{enh-teta-W11}
 \teta \in W^{1,1}(0,T; W^{1,\infty}(\Omega)^*) \text{ and }  \condu(\teta) \nabla \teta \in L^1(Q;\R^d),
\end{align}
\end{subequations}
$(u,e,z,p,\teta)$ comply with
 the initial conditions
 \begin{subequations}
 \label{initial-conditions}
\begin{align}
 \label{iniu}  & u(0,x) = u_0(x), \ \ \dot{u}(0,x) = \dot{u}_0(x) & \forae\, x \in
 \Omega,
 \\
  \label{inie}  & e(0,x) = e_0(x)  & \forae\, x \in
 \Omega,
 \\
  \label{iniz}  & z(0,x) = z_0(x) & \forae\, x \in
 \Omega,
 \\
 \label{inip}  & p(0,x) = p_0(x) & \forae\, x \in
 \Omega,
 \\
 \label{initeta}
 &
\teta(0) = \teta_0  &  \text{ in } W^{1,\infty}(\Omega)^*,
\end{align}
\end{subequations}
and with  
\begin{itemize}
\item[-] the \emph{kinematic admissibility} condition
\begin{equation}
\label{kin-admis}
(u(t,x), e(t,x), p(t,x)) \in \mathcal{A}(w(t,x)) \qquad \foraa (t,x) \in Q;
\end{equation}
\item[-] the weak formulation \eqref{w-momentum-balance}
of the \emph{momentum balance} \eqref{mom-balance-intro-bis};
\item[-]  the  feasibility and unidirectionality constraints 
   \begin{equation}
   \label{unidir-feasib}
     z \in [0,1]  \ \text{ and } \  \dot z \leq 0 \text{ a.e.\ in $ \Omega \times (0,T)$}
   \end{equation}
  and the subdifferential inclusion for \emph{damage evolution}
  \begin{equation}
\label{dam-Hs-star-intro}
\subd \Didv {\dot z} + \nu \As(\dot z)+ \As (z) + W'(z) \ni - \frac12 \bbC'(z) e :e +\teta \qquad \text{in }\spz^* \ \aein\, (0,T);
\end{equation}
\item[-] 
the \emph{plastic flow rule}
\begin{equation}
\label{plastic-flow-ptw}
\partial_{\dot{p}}  \dip{z}{\teta}{\dot{p}} + \dot{p} \ni (\bbD(z) \dot{e} + \bbC(z) e - \teta \bbC(z) \bbE)_{\mathrm{D}}   \qquad \aein\, \Omega \times (0,T),
\end{equation}
\item[-]  the 
\emph{strict positivity} of $\teta$: 
\begin{equation}
\label{teta-strict-pos}
\exists\, \bar\teta>0  \  \foraa (t,x) \in Q\, : \quad \teta(t,x) > \bar\teta;
\end{equation}
and the weak formulation  of the \emph{heat equation} \eqref{heat-regu} for every test function $ \varphi \in   W^{1,\infty}(\Omega)$:
\begin{equation} \label{eq-teta}
\begin{aligned}
   &
\pairing{}{W^{1,\infty}(\Omega)}{\dot\teta}{\varphi}
+ \int_\Omega \condu(\teta) \nabla \teta\nabla\varphi \dd
x
\\
& = \int_\Omega \left(G+\mathbb{D}(z) \dot e {:} \dot{e} {-} \teta \bbC(z) \bbE {:}  \dot{e} {+} \did{\dot z} {+}|\dot z|^2 {+} \bar\nu\ass(\dot{z}, \dot{z}) 
{+}
\dip z{\teta}{\dot p} {+} |\dot p |^2   \right) \varphi  \dd x  + \int_{\partial\Omega} g \varphi   \dd S\,.
\end{aligned}
\end{equation}
\end{itemize}
\end{definition}
\noindent
Since the `viscous' contribution $\dot{z} \mapsto  \frac12 \| \dot{z}\|_{L^2(\Omega)}^2 $ is differentiable on $\spz$, by the sum rule
 we have that 
$\subd \Didv {\dot z}  = \subd \Did{\dot z} + J(\dot{z})$, where $J : L^2(\Omega) \to \spz^*$ is the embedding operator. Nevertheless, as  the spaces 
$(\spz, L^2(\Omega),\spz^*)$  form a Hilbert triple, in what follows we will omit the symbol $J$. Therefore, \eqref{dam-Hs-star-intro} rewrites as 
  \begin{equation}
\label{no-duality}
\begin{cases}
\omega+\dot{z} + \nu \As(\dot z)+ \As (z) + W'(z) = - \frac12 \bbC'(z) e :e +\teta,
\\
\omega \in \subd \Did {\dot z} 
\end{cases}
 \qquad \text{in }\spz^* \ \aein\, (0,T)\,.
\end{equation}
\par
We 
refer to the previously developed calculations, leading to the mechanical and total energy balances \eqref{mech-enbal} and \eqref{total-enbal}, for the proof of 
the following result. 
\begin{lemma}
\label{cor:total-enid} 
Let $(u,e,z,p,\teta)$ be a   \emph{weak energy solution} to (the Cauchy problem for) system
(\ref{regularized-system}, \ref{more-reg-bc}).
 Then,    for every $0\leq s \leq t \leq T$ the functions
  $(u,e,z,p,\teta)$
 comply
 with 
 the mechanical energy balance \eqref{mech-enbal} and with 
 \begin{equation}
\label{total-enbal-delicate}
\begin{aligned}
& 
\frac{\rho}2 \int_\Omega |\dot{u}(t)|^2 \dd x + \pairing{}{W^{1,\infty}(\Omega)}{\teta(t)}{1} + \calQ(e(t),z(t)) + \calG(z(t)) \\
&  = \frac{\rho}2 \int_\Omega |\dot{u}(s)|^2 \dd x + \pairing{}{W^{1,\infty}(\Omega)}{\teta(s)}{1} + \calQ(e(s),z(s)) + \calG(z(s))
\\
& \quad 
  + \int_s^t \pairing{}{H_\Dir^1 (\Omega;\R^d)}{\mathcal{L}}{\dot u{-} \dot w} 
    +\int_s^t \int_\Omega G \dd x \dd r + \int_s^t \int_{\partial\Omega} g \dd S \dd r
\\
& \quad   +\rho \left( \int_\Omega \dot{u}(t) \dot{w}(t) \dd x -  \int_\Omega \dot{u}_0 \dot{w}(0) \dd x - \int_s^t \int_\Omega \dot{u}\ddot w \dd x \dd r \right)     + 
\int_s^t \int_\Omega \sigma: \sig{\dot w} \dd x \dd r\,.
 \end{aligned}
\end{equation}
\end{lemma}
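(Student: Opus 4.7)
The strategy is to make rigorous the formal derivation of \eqref{mech-enbal} and \eqref{total-enbal} carried out before Definition \ref{def:weak-sols}, exploiting the enhanced regularity \eqref{regularity} of a weak energy solution $(u,e,z,p,\teta)$ to justify the variational testings and the chain rules involved.

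First I would test the weak momentum balance \eqref{w-momentum-balance} by $v = \dot u(r) - \dot w(r)$, which lies in $L^2(0,T; H_\Dir^1(\Omega;\R^d))$ thanks to \eqref{reg-u} and \eqref{Dirichlet-loading}, and integrate in time over $(s,t)$; the regularity $\dot u \in L^2(0,T; H_\Dir^1) \cap W^{1,\infty}(0,T; L^2)$ together with $\ddot u \in L^2(0,T; H_\Dir^1(\Omega;\R^d)^*)$ legitimizes the integration-by-parts yielding the kinetic-energy boundary terms and the last parenthesis of \eqref{mech-enbal}. Next, thanks to the enhanced regularity \eqref{further-reg-z}, the subdifferential inclusion \eqref{no-duality} can be paired with $\dot z \in L^2(0,T;\spz)$ in the $\pairing{}{\spz}{\cdot}{\cdot}$-duality, and the chain rule $\tfrac{\dd}{\dd r}\calG(z) = \ass(z,\dot z) + \int_\Omega W'(z) \dot z \dd x$ is available because $z \in H^1(0,T;\spz)$ takes values in $[0,1]$ by \eqref{unidir-feasib}. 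Finally the pointwise plastic flow rule \eqref{plastic-flow-ptw} is tested by $\dot p$ and integrated over $\Omega\times(s,t)$; the positive $1$-homogeneity of $\dipx{x}{z}{\teta}{\cdot}$ converts $\partial_{\dot p}\dipname(\dot p) \cdot \dot p$ into $\dip{z}{\teta}{\dot p}$.

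Summing the three identities and invoking the chain rule
\[
\tfrac{\dd}{\dd r} \calQ(e,z) = \int_\Omega \bigl( \bbC(z) \dot e : e + \tfrac12 \dot z \, \bbC'(z) e : e\bigr) \dd x
\]
— licit because $z\in[0,1]$ makes $\bbC'(z)$ uniformly bounded by \eqref{elast-visc-tensors-2}, $e \in L^\infty(0,T; L^2(\Omega;\mt_\sym^{d\times d}))$ from \eqref{reg-e}, and the Sobolev embedding $\spz \Subset \rmC^0(\overline\Omega)$ lifts $\dot z$ to $L^2(0,T; L^\infty(\Omega))$ — one recovers \eqref{mech-enbal} after the cancellation enabled by kinematic admissibility: the contribution $\int_s^t\!\int_\Omega (\bbD(z)\dot e + \bbC(z) e - \teta \bbC(z)\bbE) : \dot p \dd x \dd r$ arising from the splitting $\sig{\dot u} = \dot e + \dot p$ coincides with $\int_s^t\!\int_\Omega \sigma_\dev : \dot p \dd x \dd r$ produced by the plastic flow rule, since $\dot p \in \mt_\dev^{d\times d}$. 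To pass from \eqref{mech-enbal} to the total energy balance \eqref{total-enbal-delicate}, I would choose $\varphi \equiv 1 \in W^{1,\infty}(\Omega)$ in the weak heat equation \eqref{eq-teta}, so that $\pairing{}{W^{1,\infty}(\Omega)}{\dot\teta}{1} = \tfrac{\dd}{\dd r}\pairing{}{W^{1,\infty}(\Omega)}{\teta}{1}$ by \eqref{enh-teta-W11} and the conduction integral vanishes; integrating over $(s,t)$ and adding to the mechanical balance, every non-conservative right-hand side of \eqref{eq-teta} cancels term-by-term against its counterpart in \eqref{mech-enbal} — noting in particular $\bar\nu\int_\Omega \ass(\dot z,\dot z) \cdot 1 \dd x = \nu \ass(\dot z,\dot z)$ by the definition $\bar\nu = \nu/|\Omega|$ — delivering \eqref{total-enbal-delicate}.

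The main obstacle lies in the rigorous handling of the damage subdifferential inclusion and of the chain rule for $\calQ(e,z)$: both hinge on the strongly regularizing term $\nu \As(\dot z)$ in \eqref{visc-flow-rule-regu}, which upgrades $\dot z$ from $L^2(0,T; L^2(\Omega))$ to $L^2(0,T; \spz) \hookrightarrow L^2(0,T; L^\infty(\Omega))$ and in turn renders the pointwise product $\dot z \,\bbC'(z) e{:}e$ integrable over $Q$, while at the same time giving meaning to $\pairing{}{\spz}{\As(z)+\nu\As(\dot z)}{\dot z}$. This is precisely the feature that distinguishes weak energy solutions of the regularized system from the merely entropic ones of (\ref{PDE-INTRO}, \ref{BC-INTRO}).
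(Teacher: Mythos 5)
Your proof is correct and takes essentially the same approach as the paper: the paper's own ``proof'' of Lemma \ref{cor:total-enid} is a reference to the formal calculations developed just before Definition \ref{def:weak-sols} (leading to \eqref{mech-enbal} and \eqref{total-enbal}), and you have rendered those steps rigorous precisely by exploiting the enhanced regularity \eqref{further-reg-z} (to justify the $\spz$--$\spz^*$ pairing of \eqref{no-duality} with $\dot z$ and the chain rule for $\calQ(e,z)$ via $\dot z \in L^2(0,T;L^\infty(\Omega))$) and \eqref{enh-teta-W11} (to give meaning to $\pairing{}{W^{1,\infty}(\Omega)}{\teta(\cdot)}{1}$ at every time instant). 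Your closing paragraph also correctly identifies the mechanism — the term $\nu\As(\dot z)$ in \eqref{visc-flow-rule-regu} — that distinguishes weak energy solutions from entropic ones, which the paper discusses in Remark \ref{rmk:clarification}.
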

\noindent Observe that, since $\teta\in L^\infty(0,T; L^1(\Omega))$, there holds $ \pairing{}{W^{1,\infty}(\Omega)}{\teta(t)}{1}  = \int_\Omega \teta(t) \dd x  = \calF(\teta(t))$
 for almost all $t\in (0,T)$ and for $t=0$, and in that case \eqref{total-enbal-delicate}
coincides with the total energy balance \eqref{total-enbal}.
\subsection{Entropic solutions for the thermoviscoelastoplastic damage system}
\label{ss:2.3}
For the (Cauchy problem associated with the)  non-regularized system (\ref{PDE-INTRO}, \ref{BC-INTRO}),
we will be able to prove an existence result only for a solution concept containing much less information than that
from Def.\ \ref{def:weak-sols}. In particular, we will  notably weaken the formulations of the heat equation \eqref{heat-intro}, given in terms of an entropy inequality joint with the total energy inequality, and of the damage and plastic flow rules. In order  to motivate Definition 
\ref{def:entropic-sols} ahead, we develop some preliminary considerations \MMM on the weak formulation of the damage and plastic flow rules, and on the entropy inequality. The latter will be formally obtained from the heat equation 
\eqref{heat-intro} assuming the strict positivity of the temperature  $\teta$, which   shall be rigorously proved  (cf.\ Prop.\ \ref{prop:exist-discr} ahead). \EEE
\paragraph{{\em The entropy inequality}.}
It
can be formally obtained by multiplying the heat equation 
\eqref{heat-intro}
by $\varphi/\teta$, with $\varphi $
 a smooth and \emph{positive} test function. Integrating in  space and over  a generic interval $(s,t) \subset (0,T)$  leads to the identity
\begin{equation}
\label{formal-entropy-eq}
\begin{aligned}
  & \int_s^t \int_\Omega \partial_t \log(\teta) \varphi \dd x \dd r + 
    \int_s^t \int_\Omega \left( \condu(\teta) \nabla \log(\teta) \nabla \varphi - \condu(\teta) \frac\varphi\teta \nabla \log(\teta) \nabla   \teta  \right) \dd x \dd r  
  \\ & = \int_s^t \int_\Omega 
   \left( G+ 
    \mathbb{D}(z) \dot{e} : \dot{e} -\teta \mathbb{C}(z)\mathbb{E}  : \dot{e}
+ \did {\dot z} + |\dot z|^2  
 -\teta \dot z 
+ \dip{z}{\teta}{\dot{p}} + \dot{p}: \dot{p}
 \right) \frac{\varphi}\teta \dd x \dd r
 \\
 & \qquad 
   + \int_s^t \int_{\partial\Omega} g \frac\varphi\teta \dd x \dd r
\end{aligned}
\end{equation}
The entropy inequality \eqref{entropy-ineq} ahead  is indeed the $\geq$-estimate in \eqref{formal-entropy-eq}, with \emph{positive} test functions, and where the first integral on the left-hand side is integrated  in time.
\paragraph{{\em Weak solvability of the damage and plastic flow rules}.}
Setting
  $
   \xi: = \teta  - \tfrac12\bbC'(z)e : e - \dot{z}   - \As (z) - W'(z)$, 
     the subdifferential inclusion \eqref{flow-rule-dam-intro} for damage evolution reformulates as 
  $
  \xi \in \partial\did{\dot{z}} $ in $\Omega \times (0,T)$. By the $1$-homogeneity of $\didname$,    the latter  is in turn equivalent to the system of inequalities
  \begin{subequations}
  \label{heuristic-4-damage}
  \begin{align}
      \label{heuristic-4-damage-1}
  &
  \xi \zeta \leq \did{\zeta} && \text{ in } \Omega \times (0,T)  &&  \text{for all } \zeta \in \mathrm{dom}(\didname) = (-\infty,0],
  \\
    \label{heuristic-4-damage-2}
    &
  \xi \dot{z} \geq \did{\dot z}  && \text{ in } \Omega \times (0,T)\,. &&
  \end{align}
  \end{subequations}
  Therefore, the damage flow rule  \eqref{flow-rule-dam-intro} could be formulated in terms 
  of the  constraints 
  \eqref{unidir-feasib},
  of the integrated (in space) version of \eqref{heuristic-4-damage-1} with test functions $\zeta \in H_-^{\mathrm{s}}(\Omega)$,
   and of the integrated (in space and time) version of \eqref{heuristic-4-damage-2}, which can be interpreted as an  \emph{energy-dissipation} inequality. 
   Namely,  
   \begin{subequations}
   \label{KKT}
   \begin{itemize}
   \item[-] 
the \emph{one-sided} variational inequality
  \begin{equation}
  \label{1-sided-intro}
  \begin{aligned}
  \ass(z(t),\zeta) +
  \int_\Omega &  \left(  \did \zeta +  \dot{z}(t) \zeta + W'(z(t)) \zeta 
  +\tfrac12 \bbC'(z(t)) e {\colon} e \zeta -\teta(t) \zeta \right) \dd x \geq 0 
  \\
  &
  \qquad \qquad  \text{for all } 
  \zeta \in H_-^{\mathrm{s}}(\Omega) \ \foraa\, t \in (0,T);
  \end{aligned}
  \end{equation}
  \item[-]  the energy-dissipation inequality for damage evolution
  \begin{equation}
  \label{en-diss-ineq}
  \int_s^t \int_\Omega\left( \did {\dot z} + |\dot z|^2 \right)  \dd x \dd r
  + \nu\int_s^t  \ass (\dot{z}, \dot{z}) \dd r
   + \calG(z(t)) \leq  \calG(z(s)) 
  +  \int_s^t \int_\Omega \dot z \left({-}
  \tfrac12 \bbC'(z) e{:} e {+}\teta \right) \dd x,
  \end{equation}
  on sub-intervals $(s,t) \subset (0,T)$. 
  \end{itemize}
  \end{subequations}
 To our knowledge this formulation, inspired by the Karush-Kuhn-Tucker conditions,
was first proposed in the context of damage in \cite{HeiKra-1}. 
     \par
     Analogously, the plastic flow rule \eqref{flow-rule-plast-intro} can be weakly  formulated in terms of  the system of inequalities
     \begin{subequations}
     \begin{align}
    \label{1-sided-pl}
    &
    \Dip{z(t)}{\teta(t)}{\omega} \geq \int_\Omega \left( \sigma_\dev(t) {-} \dot{p}(t) \right) : \omega \dd x \quad \text{for all } \omega \in L^2(\Omega;\mt_\dev^{d\times d}) \ \foraa\, t \in (0,T),
    \\
    &
    \label{en-diss-pl}
     \int_s^t   \Dip{z(r)}{\teta(r)}{\dot{p}(r)}  +   \int_s^t\int_\Omega   |\dot{p}(r)|^2  \dd x \dd r \leq  \int_s^t\int_\Omega \sigma_\dev(r) : \dot{p}(r) \dd x \dd r 
          \end{align}
          \end{subequations}
          on sub-intervals $(s,t) \subset (0,T)$.   
   \paragraph{\emph{Entropic solutions for the non-regularized system.}}
We are now in the position to give our (extremely) weak solution concept for the initial-boundary value problem associated with system
\eqref{PDE-INTRO}, where,
in addition to the entropic formulation of the heat equation,
 the damage and plastic flow rules shall be formulated only through the 
Kuhn-Tucker type variational inequalities \eqref{1-sided-intro} and \eqref{1-sided-pl}, combined with the 
\emph{mechanical energy inequality} \eqref{uee-mech}  ahead. Observe that the latter corresponds to  the
\emph{sum} of the energy-dissipation inequalities for damage and plastic evolution \eqref{en-diss-ineq} and  \eqref{en-diss-pl}, with the weak formulation of the momentum balance tested by $(\dot{u}{-} \dot{w})$ and integrated in time.
\begin{definition}[Entropic solution to the thermoviscoelastoplastic damage system]
\label{def:entropic-sols}
$ \\ $ 
Given initial data $(u_0, \dot{u}_0, e_0,z_0, p_0,\teta_0)$ fulfilling \eqref{Cauchy-data}, we call a quintuple $(u,e,z,p,\teta)$
an \emph{entropic solution} to the Cauchy problem for system
 (\ref{PDE-INTRO}, \ref{BC-INTRO}),   if $(u,e,z,p,\teta)$  enjoy the summability and regularity properties \eqref{reg-u}--\eqref{reg-teta},
$(u,e,z,p)$ comply with the initial conditions \eqref{iniu}--\eqref{inip},  and if there hold
\begin{itemize}
\item[-]
the \emph{kinematic admissibility} condition \eqref{kin-admis};
\item[-]
the weak \emph{momentum balance} \eqref{w-momentum-balance};
\item[-]  the feasibility and  unidirectionality constraints \eqref{unidir-feasib}, joint with the  \emph{one-sided variational inequality}
  \eqref{1-sided-intro} for damage evolution;
  \item[-] the \emph{variational inequality} \eqref{1-sided-pl} for plastic evolution;
  \item[-] the mechanical energy inequality
  \begin{equation}
\label{uee-mech}
\begin{aligned}
& 
\frac{\rho}2 \int_\Omega |\dot{u}(t)|^2 \dd x +  \int_0^t\int_\Omega   \left(
 \bbD(z) \dot e: \dot e  + \did{\dot{z}} + |\dot{z}|^2 +  \dip{z}{\teta}{\dot p}  + |\dot p|^2 \right)  \dd x \dd r
+\calQ(e(t),z(t)) + \calG(z(t))
\\
&  \leq   \frac{\rho}2 \int_\Omega |\dot{u}(0)|^2 \dd x   + \calQ(e(0),z(0)) + \calG(z(0)) 
\\
 & \quad 
+ \int_0^t \pairing{}{H_\Dir^1 (\Omega;\R^d)}{\mathcal{L}}{\dot u{-} \dot w} \dd r  +  
 \int_0^t \int_\Omega  \left( \teta \bbC(z) \bbE : \dot e + \teta \dot{z} \right) \dd x \dd  r
 \\
& \quad   +\rho \left( \int_\Omega \dot{u}(t) \dot{w}(t) \dd x -  \int_\Omega \dot{u}(0) \dot{w}(0) \dd x - \int_0^t \int_\Omega \dot{u}\ddot w \dd x \dd r \right)  
  + 
\int_0^t \int_\Omega  \sigma : \sig{\dot w} \dd x \dd r, 
 \end{aligned}
\end{equation}
for every $t\in (0,T]$;
   \item[-]
 the \emph{strict positivity} \eqref{teta-strict-pos} of $\teta$ and 
  the \emph{entropy inequality}
 \begin{equation}
\label{entropy-ineq}
\begin{aligned}
  & \int_s^t \int_\Omega  \log(\teta) \dot{\varphi} \dd x \dd r -   \int_s^t \int_\Omega \left( \condu(\teta) \nabla \log(\teta) \nabla \varphi - \condu(\teta) \frac\varphi\teta \nabla \log(\teta) \nabla \teta\right)   \dd x \dd r  
  \\ 
& \leq
  \int_\Omega \log(\teta(t)) \varphi(t) \dd x -  \int_\Omega \log(\teta(s)) \varphi(s) \dd x  \\ & \quad
    - \int_s^t \int_\Omega \left( G+ 
    \mathbb{D}(z) \dot{e} : \dot{e} -\teta \mathbb{C}(z)\mathbb{E}  : \dot{e}
+ \did {\dot z} + |\dot z|^2 -\teta \dot z 
+ \dip{z}{\teta}{\dot{p}} + \dot{p}: \dot{p}
 \right) \frac{\varphi}\teta \dd x \dd r
 \\ & \quad   - \int_s^t \int_{\partial\Omega} g \frac\varphi\teta \dd x \dd r
\end{aligned}
\end{equation}
for all $\varphi $ in $L^\infty ([0,T]; W^{1,\infty}(\Omega)) \cap H^1 (0,T; L^{6/5}(\Omega))$  with $\varphi \geq 0$, 
for almost all $t\in (0,T)$, almost all $s\in (0,t)$, and for $s=0$ (with $\log(\teta(0))$ to be understood as $\log(\teta_0)$);
\item[-] the \emph{total energy inequality}
\begin{equation}
\label{total-uee}
\begin{aligned}
& 
\frac{\rho}2 \int_\Omega |\dot{u}(t)|^2 \dd x +\mathcal{E}(\teta(t), e(t), z(t)) 
\\
&  \leq \frac{\rho}2 \int_\Omega |\dot{u}(0)|^2 \dd x +\mathcal{E}(\teta(0), e(0),z(0))  + \int_0^t \pairing{}{H_\Dir^1 (\Omega;\R^d)}{\mathcal{L}}{\dot u{-} \dot w}  \dd r 
  +\int_0^t \int_\Omega G \dd x \dd r + \int_0^t \int_{\partial\Omega} g \dd S \dd r
\\
& \quad   +\rho \left( \int_\Omega \dot{u}(t) \dot{w}(t) \dd x -  \int_\Omega \dot{u}_0 \dot{w}(0) \dd x - \int_s^t \int_\Omega \dot{u}\ddot w \dd x \dd r \right)     + 
\int_0^t \int_\Omega \sigma: \sig{\dot w} \dd x \dd r
 \end{aligned}
 \end{equation}
 for almost all $t\in (0,T)$, almost all $s\in (0,t)$, and for $s=0$ (with $\teta(0)=\teta_0$).
\end{itemize}
\end{definition}
\begin{remark}
\label{rmk:clarification}
\upshape
A few comments on the above definition are in order:
\begin{enumerate}
\item While for weak energy solutions  it is possible to a posteriori deduce the validity of  mechanical and total energy \emph{balances} via suitable tests, 
here the upper energy inequalities \eqref{uee-mech} and \eqref{total-uee} have to be both claimed, as neither of them follows from the other items of the definition. 
\item
Observe that, subtracting the weak momentum balance \eqref{w-momentum-balance},  (legally) tested by $(\dot{u}{-}\dot{w})$  and integrated in time,
from the mechanical energy inequality \eqref{uee-mech}, it would be possible to deduce 
the \emph{joint energy-dissipation inequality for damage and plastic} evolution 
  \begin{equation}
  \label{joint-damage-plastic-endiss}
  \begin{aligned}
  &
   \int_0^t  \left( \Didv{\dot{z}(r)} {+}   \Dip{z(r)}{\teta(r)}{\dot{p}(r)}  \right) \dd r   +   \int_0^t\int_\Omega   |\dot{p}(r)|^2  \dd x \dd r 
   + \calG(z(t)) 
   \\
   &
   \leq  \calG(z(0)) 
  +  \int_0^t \int_\Omega \dot z \left({-}
  \tfrac12 \bbC'(z) e{:} e {+}\teta \right) \dd x
 + \int_0^t\int_\Omega \sigma_\dev(r) : \dot{p}(r) \dd x \dd r 
 \end{aligned}
 \end{equation}
only under the validity of  the chain rule
\begin{equation}
\label{ch-rule-wished}
\calQ(e(t),z(t)) - \calQ(e(0),z(0))  = \int_0^t \int_\Omega \left( \frac12 \bbC'(z) \dot{z} e{:} e {+} \bbC(z) \dot{e}{:} e \right) \dd x \dd r\,.
\end{equation}
However, \eqref{ch-rule-wished} can be only formally written: indeed, observe that the summability properties
$z\in H^1(0,T;L^2(\Omega))$ and $e \in L^\infty(0,T;L^2(\Omega;\mt_\sym^{d\times d}))$ do not ensure that  $\bbC'(z) \dot{z} e{:} e \in L^1(Q)$. 
\par
That is why,  in Definition \ref{def:entropic-sols} we  only claim the validity of the \emph{full} inequality \eqref{uee-mech}, which shall be obtained via lower semicontinuity arguments, by  passing to the limit in a discrete version of it.
\item Combining the information that $\teta \in L^2(0,T;H^1(\Omega)) \cap L^\infty (0,T;L^1(\Omega))$ with the strict positivity \eqref{teta-strict-pos} we infer that 
$\log(\teta) $ itself belongs to $ L^2(0,T;H^1(\Omega)) \cap L^\infty (0,T;L^1(\Omega))$.  \MMM The  regularity and summability requirements 
$\varphi \in L^\infty (0,T; W^{1,\infty}(\Omega)) \cap H^1 (0,T; L^{6/5}(\Omega))$ 
on  every admissible test function  for the entropy inequality \eqref{entropy-ineq}
 in fact guarantee 
  the integrals $\iint \log(\teta) \dot{\varphi} \dd x \dd r $ and  $\int_\Omega \log(\teta) \varphi \dd x$ are well defined since, in particular,  $L^{6/5}(\Omega))$ is the dual of $L^6(\Omega)$, which is the smallest Lebesgue space into which $H^1(\Omega)$ embeds in $d=3$. \EEE
 Furthermore,  with \eqref{entropy-ineq} we are also tacitly claiming the 
 summability properties
\[
\condu(\teta) | \nabla \log(\teta)|^2 \varphi \in L^1(Q), \qquad  \condu(\teta) \nabla \log(\teta) \in L^1(Q)\,.
\]
\item We refer to  \cite[Rmk.\ 2.6]{Rocca-Rossi} for some discussion on 
the consistency between the entropic (consisting of the entropy and total energy inequalities) and the classical formulations of the heat equation. 
\end{enumerate}
\end{remark}
\subsection{Existence results}
\label{ss:2.4}
We start by stating the existence of \emph{entropic} solutions to the (non-regularized) system (\ref{PDE-INTRO}, \ref{BC-INTRO})
under a mild growth condition on the thermal conductivity $\condu$. Observe that, with \eqref{strong-strict-pos} below  we will exhibit 
a precise lower bound for the  temperature in terms of quantities related to the material tensors 
$\bbC,\, \bbD,\, \bbE$. 
For shorter notation,  in the statement below, as well as in Thm.\ \ref{mainth:2} ahead,  we shall write
$(2.(\bbC,\bbD,\bbE))$ in place of \eqref{elast-visc-tensors-1}--\eqref{elast-visc-tensors-3} and \eqref{thermal-expansion}, 
   $(2.\mathrm{G})$ in place of \eqref{heat-source}, \eqref{dato-h}, and analogously for $(2.\mathrm{L})$, $(2.\mathrm{K})$, and
   $(2.\mathrm{W})$.
\begin{theorem}
\label{mainth:1}
Let $\nu=0$.
Assume
\eqref{Omega-s2},  $(2.(\bbC,\bbD,\bbE))$,   $(2.\mathrm{G})$,  $(2.\mathrm{L})$,  
 \eqref{Dirichlet-loading},   $(2.\mathrm{K})$,  and $(2.\mathrm{W})$.  
In addition,  suppose that 
\begin{equation}
\label{hyp-K}
\tag{2.$\condu_1$}
\begin{aligned}
& \text{the function }   \condu: \R^+ \to \R^+  \  \text{
is
 continuous and}
\\
& \exists \, c_0, \, c_1>0 \quad \exists\,   \mu>1   \ \
\forall\teta\in \R^+\, :\quad
c_0 (1+ \teta^{\mu}) \leq \condu(\teta) \leq c_1 (1+\teta^{\mu})\,.
\end{aligned}
\end{equation}
\par
Then, for every $(u_0, \dot{u}_0, e_0,z_0, p_0,\teta_0) $ satisfying \eqref{Cauchy-data} there exists an \emph{entropic} solution $(u,e,z,p,\teta)$ 
to the Cauchy problem for system (\ref{PDE-INTRO}, \ref{BC-INTRO})
 such that, in addition,
 \begin{enumerate}
 \item there exists $\zeta_*>0$ such that 
 \begin{equation}
 \label{strong-pos-z}
 z(x,t) \in [\zeta_*, 1 ] \qquad \text{for all } (x,t) \in  Q;
 \end{equation}
 \item
   $\teta$ complies with the positivity property 
\begin{equation}
\label{strong-strict-pos}
 \teta(x,t) \geq \bar\teta : = \left( C_* T + \frac1{\teta_*}\right)^{-1} \quad \text{for almost all $(x,t) \in Q$},
\end{equation}
where
$\teta_*>0$ is  from \eqref{initial-teta} and $C_*:= \frac{ \overline{C}^2 |\bbE|^2}{2 C_\bbD^1} $ with
$\overline{C} = \max_{z\in [0,1]} |\bbC(z)|$ and 
 $C_\bbD^1>0$ from \eqref{elast-visc-tensors};
\item
 there holds \MMM $\log(\teta) \in L^\infty(0,T;L^p(\Omega))$ for all $p\in [1,\infty)$, \EEE
  \begin{equation}
\label{further-logteta}
\begin{gathered}
\condu(\teta)\nabla \log(\teta) \in  L^{1+\bar\delta}(Q;\R^d)   \text{ with   $ \bar\delta = \frac{\alpha}\mu $ 
and $\alpha \in \MMM [0{\vee}(2{-}\mu), 1)$, \EEE and } \qquad  
\\ \condu(\teta)\nabla \log(\teta) \in L^1(0,T;X) \quad \text{with } X=
\left \{ \begin{array}{lll}
 L^{2-\eta}(\Omega;\R^d)  &   \text{ for all } \eta \in (0,1]  & \text{if } d=2,
\\
L^{3/2-\eta}(\Omega;\R^d)  &   \text{ for all } \eta \in (0,1/2]  & \text{if } d=3,
\end{array}
\right.
\end{gathered}
\end{equation}
\MMM (where $0{\vee}(2{-}\mu)=\max\{0,(2{-}\mu)\}$), \EEE 
so that the entropy inequality \eqref{entropy-ineq} in fact holds for all positive test functions $\varphi \in L^\infty ([0,T]; W^{1,d+\epsilon}(\Omega)) \cap H^1 (0,T; L^{6/5}(\Omega))$, for every $\epsilon>0$. 
\end{enumerate}
 \end{theorem}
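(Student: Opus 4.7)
The strategy is the classical one for coupled rate-dependent systems of this kind: construct approximate solutions by a carefully staggered time discretization of step $\tau$, derive uniform a priori estimates mirroring the continuous energy/entropy structure, and then pass to the limit exploiting compactness and (lower/upper) semicontinuity. At each time node $t_\tau^k$, I would solve the scheme in a decoupled fashion that respects the underlying gradient-flow-plus-balance structure: first determine $(z_\tau^k, p_\tau^k)$ from a joint variational problem associated with the damage and plastic flow rules (with the previous displacement and temperature frozen), using the convex/concave decomposition $W=\beta-\tfrac{\lambda_W}{2}|\cdot|^2$ from \eqref{decomp-W} to ensure coercivity; next solve the discrete momentum balance for $(u_\tau^k, e_\tau^k)$ via kinematic admissibility and the direct method; and finally solve a linear (in $\log$-variable) elliptic problem for $\teta_\tau^k$. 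Existence at each step follows from direct minimization / monotonicity arguments. Because we are in the unregularized case $\nu=0$, the damage flow rule will only be solved in its Karush--Kuhn--Tucker form \eqref{KKT}, never as a subdifferential inclusion in $H^{\mathrm s}(\Omega)^*$.

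Next I would derive the a priori estimates. The master estimate is the discrete mechanical energy inequality, obtained by testing the momentum balance by $(\utau{k}-\utau{k-1})-(\wtau{k}-\wtau{k-1})$, the damage flow rule by $\ztau{k}-\ztau{k-1}$ and the plastic rule by $\ptau{k}-\ptau{k-1}$, then summing; here the safe-load condition \eqref{safe-load} and the Dirichlet-loading regularity \eqref{Dirichlet-loading} are used exactly as in \cite{Rossi2016} to handle the boundary datum. Summing with the discrete heat equation tested by $1$ yields the total energy bound. The crucial additional estimate comes from testing the discrete heat equation by a negative power $-\teta^{\alpha-1}$ with $\alpha\in[0\vee(2-\mu),1)$: the growth condition \eqref{hyp-K} then produces a uniform bound on $\nabla\teta^{(\mu+\alpha)/2}$, hence $\teta\in L^2(0,T;H^1(\Omega))$ and the integrability \eqref{further-logteta} for $\condu(\teta)\nabla\log\teta$, following the scheme of \cite{FPR09, Rocca-Rossi}.

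The two positivity properties are delicate and done at the discrete level so they pass easily to the limit. For the temperature lower bound \eqref{strong-strict-pos}, I would test the discrete heat equation by $-(\teta_\tau^k-\bar\teta)^-/{\teta_\tau^k}^2$ (or comparison with the ODE $\dot y = -C_* y^2$ that $\bar\teta$ solves), using that the quadratic dissipative terms on the right-hand side of \eqref{heat-intro} are nonnegative and that the only term with potentially bad sign, $-\teta\,\bbC(z)\bbE\mathdot\dot e$, can be absorbed into $\mathbb D\dot e\mathdot\dot e$ via Young's inequality, which is precisely what fixes the constant $C_*=\overline C^2|\bbE|^2/(2 C_\bbD^1)$. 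Strict positivity of $z$ follows from the singularity $z^{2d}W(z)\to+\infty$ as $z\downarrow 0$ in \eqref{hyp-phi-1}: once $z$ is known to be bounded in $H^{\mathrm s}(\Omega)\hookrightarrow C^0(\overline\Omega)$ and $W(z)\in L^1(\Omega)$, the pointwise argument of \cite{Crismale-Lazzaroni} gives a uniform $\zeta_*>0$; the upper bound $z\le 1$ comes from $z_0\le 1$ and the unidirectionality $\dot z\le 0$ enforced by $\mathrm{dom}(\didname)=(-\infty,0]$.

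The passage to the limit is the main obstacle because of the quadratic terms in \eqref{flow-rule-dam-intro} and \eqref{heat-intro} and of the fact that, on the approximate level, $\teta_\tau$ is only weakly bounded in $L^2(0,T;H^1(\Omega))$ while $\dot e_\tau$, $\dot z_\tau$, $\dot p_\tau$ are only weakly bounded in the corresponding $L^2$ spaces. The entropic formulation is tailored to bypass this: all troublesome quadratic terms appear multiplied by the negative test function $-\varphi/\teta$ in \eqref{entropy-ineq} and by the nonpositive $\zeta$ in \eqref{1-sided-intro}, so weak lower semicontinuity of convex functionals suffices to obtain the correct sign in the limit. For strong convergence of $\dot e_\tau$ (needed in \eqref{uee-mech} and \eqref{total-uee}), I would combine Aubin--Lions with an Ehrling-type argument coming from testing the limit and discrete momentum balances by $\dot u - \dot w$ and comparing; similar strong convergence for $\dot p_\tau$ and $\dot z_\tau$ follows from the plastic and damage energy-dissipation inequalities. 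Finally, passage to the limit in the nonlinear conductivity $\condu(\teta_\tau)\nabla\teta_\tau$ exploits the extra $L^{1+\bar\delta}$ estimate from \eqref{further-logteta}, allowing one to upgrade the test class in the entropy inequality up to $W^{1,d+\epsilon}(\Omega)$ as stated. The mechanical and total energy inequalities are recovered by weak lower semicontinuity in their integrated form, which is the reason only inequalities (not balances) are asserted in Definition \ref{def:entropic-sols}.
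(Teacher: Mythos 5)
Your overall strategy matches the paper's — a carefully designed time discretization yielding discrete entropy and energy inequalities, followed by a compactness/semicontinuity limit passage — and several ingredients you list (the negative-power test of the discrete heat equation, the singularity of $W$ at $z=0$ for \eqref{strong-pos-z}, the sign of the test functions in the entropic and one-sided inequalities) are exactly right. However, there are two substantive deviations, and the second is a genuine gap.

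\textbf{Staggering versus coupling.} You propose a fully staggered scheme: $(z_\tau^k,p_\tau^k)$ first, then $(u_\tau^k,e_\tau^k)$, then $\teta_\tau^k$, each block solved by direct minimization or monotonicity. The paper decouples only the damage update \eqref{discr-dam-eq} (via the minimization problem \eqref{min-prob-4-z}), and solves $(u_\tau^k,e_\tau^k,p_\tau^k,\teta_\tau^k)$ as one coupled nonlinear elliptic system \eqref{pseudo-monot} by a pseudomonotone-operator existence theorem, after adding the $\gamma$-regularizations $-\tau\,\mathrm{div}(|\etau k|^{\gamma-2}\etau k)$ and $\tau|\ptau k|^{\gamma-2}\ptau k$ for coercivity and a conductivity truncation $\calT_M$ that is removed afterwards. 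The coupling is essential: the strict positivity \eqref{discr-strict-pos} relies on the temperature being \emph{implicit} in the thermal expansion term $-\teta\,\bbC(z)\bbE:\dot e$ of the discrete heat equation, so that \eqref{var-ineq-4-pos} can be compared with the \emph{implicit} Euler scheme for $\dot y=-C_*y^2$, which stays positive for any $\tau$. Consistency with the cancellations in the discrete energy inequality (cf.\ \eqref{est-mimick-2}) then forces the same implicit temperature in the discrete momentum balance and plastic flow rule. Your staggering leaves you either with an explicit temperature in the heat equation (the comparison then reduces to the explicit Euler scheme, which can go negative unless $\tau$ is small relative to the unknown $\|\teta_\tau^{k-1}\|_\infty$), or with a mismatch between the stress appearing in the heat and momentum equations, which kills the cancellations. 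Also, the heat equation block is not ``linear in $\log\teta$'' — $\condu(\teta)\nabla\teta$ is genuinely nonlinear — so it would still need a fixed-point argument.

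\textbf{Strong rate convergences are neither available nor needed.} You assert that strong convergences of $\dot e_\tau$, $\dot z_\tau$, $\dot p_\tau$ are ``needed in \eqref{uee-mech} and \eqref{total-uee}'' and propose to obtain them by comparing the limit and discrete tests of the momentum balance and flow rules. In the unregularized case $\nu=0$ this is impossible: the one-sided damage inequality \eqref{1-sided-intro} cannot be tested by $\dot z$, because $\dot z$ is merely in $L^2(Q)$ and not in $\spz$, so the $\limsup$-argument turning weak into strong convergence (Step~6 of the proof of Theorem~\ref{mainth:2}) is unavailable — this is precisely the content of Remark~\ref{rmk:1-from-2}. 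That is also why the plastic flow rule is obtained only as the variational inequality \eqref{1-sided-pl}, not pointwise as \eqref{plastic-flow-ptw}. Fortunately the strong rate convergences are also not needed for \eqref{uee-mech} and \eqref{total-uee}: their left-hand sides drop by weak lower semicontinuity, the mixed terms $\int\teta\,\bbC(z)\bbE:\dot e$ and $\int\teta\dot z$ on the right pass via the \emph{strong} convergence of $\teta_\tau$ in $L^h(Q)$ (from \eqref{aprio6-discr} and Aubin--Lions) against the \emph{weak} convergence of the rates, and $\int\sigma:\sig{\dot w}$ is linear in $\dot e$. As written, your argument — if it went through — would establish the mechanical and total energy \emph{balances} and the pointwise plastic flow rule, which is more than what is true for $\nu=0$.
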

\par
Under a more restrictive growth condition on $\condu$, we are able to establish the existence of \emph{weak energy} solutions
 for the regularized thermoviscoelastoplastic damage system. 
Let us also point out that we will be able to
enhance the temporal regularity of the temperature  and 
 obtain
 a  variational formulation of the heat equation with a wider class of test functions.
 We will also show the validity of  the entropy inequality \eqref{entropy-ineq}: this is a result on its own, as  \eqref{entropy-ineq} cannot 
 be inferred from the weak formulation  \eqref{eq-teta} of the heat equation, not even in the enhanced form established with Thm.\
 \ref{mainth:2}.
\begin{theorem}
\label{mainth:2}
Let $\nu>0$.
Assume
\eqref{Omega-s2},  $(2.(\bbC,\bbD,\bbE))$,   $(2.\mathrm{G})$,  $(2.\mathrm{L})$,  
 \eqref{Dirichlet-loading},   $(2.\mathrm{K})$,  and $(2.\mathrm{W})$.   In addition to \eqref{hyp-K}, suppose that 
\begin{equation}
\label{hyp-K-stronger}
\tag{2.$\condu_2$}
\begin{cases}
\mu \in (1,2) & \text{if } d=2,
\\
\mu \in \left(1, \frac53\right) & \text{if } d=3.
\end{cases}
\end{equation}
\par
Then, for every $(u_0, \dot{u}_0, e_0,z_0, p_0,\teta_0) $ satisfying \eqref{Cauchy-data} there exists a \emph{weak energy}  solution $(u,e,z,p,\teta)$ 
to the Cauchy problem for system 
(\ref{regularized-system}, \ref{more-reg-bc}) such that, in addition, $z$ fulfills \eqref{strong-pos-z},  $\teta$ complies with the positivity property 
\eqref{strong-strict-pos},
and with 
 \begin{equation}
 \label{further-k-teta}
 \hat{\condu}(\teta)  \in L^{1+\tilde{\delta}}(Q)  \text{ for some $\tilde\delta \in \left( 0, \tfrac13\right)$},
 \end{equation}  
 cf.\ \eqref{specific-tilde-delta} ahead,  where $\hat{\condu}$ is  a primitive of $\condu$.
 Therefore,
  \eqref{eq-teta} in fact holds for all test functions $\varphi \in W^{1,q_{\tilde \delta}}(\Omega)$, with $q_{\tilde \delta}= 1+\tfrac 1{\tilde \delta} > 4$. Ultimately, $\teta $ 
  has the enhanced regularity $\teta \in W^{1,1}(0,T;   W^{1,q_{\tilde \delta}}(\Omega)^*)$. 
  \par
  Finally,   $(u,e,z,p,\teta)$  comply with the 
  entropy inequality  \eqref{entropy-ineq} associated with the ``regularized'' heat equation \eqref{heat-regu} (i.e., featuring the additional term
  $-\int_s^t \int_\Omega \bar\nu \ass(\dot{z},\dot{z}) \frac{\varphi}\teta \dd x \dd r $ on the right-hand side),
  for almost all $t\in (0,T)$ and almost all $s\in (0,t)$.
 \end{theorem}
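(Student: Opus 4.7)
The plan is to construct weak energy solutions as limits of a time-discretization scheme for system (\ref{regularized-system}, \ref{more-reg-bc}), in the same spirit as for Theorem \ref{mainth:1}, but exploiting the regularizing term $\nu \As(\dot z)$ in the damage flow rule \eqref{visc-flow-rule-regu} and the stronger growth condition \eqref{hyp-K-stronger}. I would set up a semi-implicit time-discrete scheme on a partition of $[0,T]$ with step $\tau$, designed so that the discrete solutions comply with a discrete analogue of the mechanical energy inequality \eqref{mech-enbal} and of the entropy inequality. The convex/concave decomposition $W=\beta-\tfrac{\lambda_W}{2}|\cdot|^2$ from \eqref{decomp-W}, together with the convexity assumption \eqref{elast-visc-tensors-3} on $\bbC(\cdot,z)$, should allow each discrete step to be cast as a minimization problem for $(u,e,z,p)$ coupled with a uniquely solvable heat step, giving existence and strict positivity of the discrete temperature.

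The a priori estimates are the core of the argument. First I would derive the discrete mechanical energy estimate, which controls $\dot{u}$ in $L^2(\Omega)$, $e$ and $z$ in $L^\infty(0,T;L^2)$, $\dot{e},\dot{p}\in L^2(Q)$, and crucially, thanks to $\nu>0$, $\dot z$ in $L^2(0,T;\spz)$ (so $z \in H^1(0,T;\spz)$, giving \eqref{further-reg-z}). Second, I would test the discrete heat equation by $1$ to get $\teta\in L^\infty(0,T;L^1)$, and by a negative power $\teta^{-\alpha}$, as in \cite{FPR09,Rocca-Rossi}, exploiting \eqref{hyp-K} to produce an $L^2(0,T;H^1)$ bound on a suitable power of $\teta$ and, via comparison with a Cauchy-type ODE, the explicit lower bound \eqref{strong-strict-pos}. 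The feasibility $z\in[\zeta_*,1]$ in \eqref{strong-pos-z} would follow from \eqref{hyp-phi-1}, testing the discrete flow rule appropriately as in \cite{Crismale-Lazzaroni}, combined with the unidirectionality $\dot z\leq 0$ and $z_0\leq 1$.

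The hard part will be the enhanced regularity \eqref{further-k-teta} and the resulting BV-in-time estimate. Here the stronger condition \eqref{hyp-K-stronger} enters decisively: interpolating the $L^\infty(L^1)$ bound with the $L^2(H^1)$ bound on a power of $\teta$ yields $\hat\condu(\teta)\in L^{1+\tilde\delta}(Q)$ for some $\tilde\delta\in(0,\tfrac13)$ (whose precise exponent $\tilde\delta$ depends on $\mu$ and $d$ via Gagliardo--Nirenberg; the restrictions $\mu<2$ for $d=2$ and $\mu<5/3$ for $d=3$ are exactly what is required so that $q_{\tilde\delta}=1+1/\tilde\delta>4$ and the Sobolev embedding $W^{1,q_{\tilde\delta}}(\Omega)\hookrightarrow L^\infty(\Omega)$ holds, making all the right-hand-side integrals in \eqref{eq-teta} well-defined against such test functions). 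By comparison in the discrete heat equation, this will give a uniform $\BV(0,T;W^{1,q_{\tilde\delta}}(\Omega)^*)$ bound on $\teta$.

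Finally, I would pass to the limit as $\tau\downarrow 0$. The uniform estimates yield (up to subsequences) weak, weak-$*$ and (by Aubin--Lions/Helly-type arguments) strong convergences of the interpolants, sufficient to identify the limit of the subdifferential terms in the plastic and damage flow rules via standard monotonicity/lower semicontinuity arguments in $\spz^*$, using that $\dot z\in L^2(0,T;\spz)$. The quadratic dissipation terms on the right-hand side of \eqref{heat-regu} require strong $L^2$ convergence of $\dot e,\dot z,\dot p$, which I would obtain by passing to the limit in the discrete total energy balance (a consequence of the scheme design) together with lower semicontinuity of the dissipation and elastic energy functionals, concluding by the usual trick that the limsup of the dissipated energies equals the liminf. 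The entropy inequality is then recovered by passing to the lower limit in the discrete version obtained by testing the heat equation by $\varphi/\teta^\tau$ with $\varphi\geq 0$ of the required regularity, since the strict positivity \eqref{strong-strict-pos} passes to the limit and $\log(\teta^\tau)\to\log(\teta)$ strongly in $L^2(0,T;H^1(\Omega))$ by the $H^1$-estimate on $\teta^{(1-\alpha)/2}$ combined with the lower bound.
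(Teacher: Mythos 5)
Your overall strategy (time discretization, discrete energy/entropy inequalities, a priori estimates including the $\teta^{\alpha-1}$ test and the Gagliardo--Nirenberg/comparison argument for the $\BV$ bound, and then a limit passage driven by recovering the mechanical energy \emph{balance} to upgrade weak to strong convergences) matches the paper's. However, there is a genuine gap in the way you propose to organize the discrete step. You claim that each time step ``can be cast as a minimization problem for $(u,e,z,p)$ coupled with a uniquely solvable heat step, giving existence and strict positivity of the discrete temperature.'' The paper shows this does not work. The strict positivity \eqref{discr-strict-pos} rests on the estimate \eqref{var-ineq-4-pos}, which in turn relies on the inequality \eqref{expedient}: the viscous dissipation $\bbD(\ztau k)\Dtau{k}e:\Dtau{k}e$ and the thermal expansion coupling $-\tetau{k}\bbC((\ztau k)^+)\bbE:\Dtau{k}e$, \emph{both evaluated at the current step $k$ and both sitting on the right-hand side of the discrete heat equation}, must be combined via Young's inequality to produce the one-sided bound $\geq -C_*(\tetau k)^2$. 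If one freezes $\teta$ at the previous step in order to decouple a minimization problem for the mechanics, the thermal expansion term in the heat equation no longer matches the one in the momentum balance, the cancellation needed for the discrete total energy inequality \eqref{total-enid-discr} breaks, and the comparison ODE used for strict positivity changes from an implicit (unconditionally stable) to an explicit scheme. The paper therefore keeps $\tetau k$ implicit throughout the coupling terms, decouples only the $z$-step (as a genuine minimization), and solves the remaining $(u,e,p,\teta)$ block as a single system via pseudo-monotone operator theory (Lemma \ref{l:exist-approx-discr}), after truncating $\condu$ and the coupling, and adding the $\tau|\cdot|^{\gamma-2}$ regularizations precisely to obtain coercivity of the pseudo-monotone operator. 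This is not a cosmetic choice --- it is required.

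A secondary but real ordering issue: you propose to ``first derive the discrete mechanical energy estimate,'' but the right-hand side of the mechanical energy balance \eqref{mech-enbal} contains $\iint\teta\,\bbC(z)\bbE:\dot e$ and $\iint\teta\,\dot z$, which cannot be absorbed by Young's inequality without an a priori $L^2(Q)$-type bound on $\teta$. The paper's order is: discrete \emph{total} energy inequality first (where the $\teta$-coupling cancels against the heat equation), yielding $\teta\in L^\infty(L^1)$ and the energy bounds; then the negative-power test of the heat equation, giving $\teta\in L^2(H^1)\subset L^2(Q)$; and only then the mechanical energy inequality. Finally, a minor inaccuracy: the limit passage in the entropy inequality does not use strong $L^2(H^1)$ convergence of $\log\teta$ (only weak, cf.\ \eqref{cvT2}, \eqref{cvT4}); the quadratic term is handled by lower semicontinuity (Ioffe), which is what keeps the inequality sign.
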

\par
 The proof of Theorems  \ref{mainth:1} \&   \ref{mainth:2}
 shall be developed throughout Secs.\ \ref{s:3}--\ref{s:4} by passing to the limit in a carefully devised time discretization scheme,
 along the footsteps of the analysis previously developed in \cite{Rocca-Rossi, Rossi2016}.
 \par
 As we will illustrate in Remark \ref{rmk:1-from-2}, it would also be possible to prove the existence of entropic solutions to the  (Cauchy problem for the) thermoviscoelastoplastic system (\ref{PDE-INTRO}, \ref{BC-INTRO})  by an alternative method. Namely, we could pass to the limit  as the  regularization parameter $\nu \downarrow 0$ in the weak energy formulation of the regularized system, featuring a family $(\condu_\nu)_\nu$ of thermal conductivities
 fulfilling \eqref{hyp-K-stronger} and 
  suitably converging as $\nu \down 0$ to a function $\condu$ that only complies with  \eqref{hyp-K}. However,
 to avoid overburdening the paper,
  we have chosen not to develop this asymptotic analysis. 
  \subsection{Continuous dependence on the external and initial data \MMM in the case of a prescribed temperature profile\EEE}
  \label{ss:2.5}
  \noindent
Let us now confine the discussion to the regularized system \MMM to the case the temperature profile is prescribed. \EEE 
 Namely we consider the
the PDE system consisting of the momentum balance \eqref{mom-balance-intro-bis},  of the regularized damage flow rule 
\eqref{visc-flow-rule-regu}, and of the plastic flow rule 
\eqref{pl-flow-rule-bis},
 with a \emph{given} temperature 
\begin{equation}
\label{Theta-given}
\Theta \in L^2(Q;\R^+). 
\end{equation}
 In this context, weak energy solutions fulfill the weak momentum balance \eqref{w-momentum-balance},
 the subdifferential inclusion for damage evolution \eqref{dam-Hs-star-intro} (i.e.,\eqref{no-duality}),
  and 
  the pointwise formulation 
 \eqref{plastic-flow-ptw}  of the plastic flow rule. 
 \par
 We aim at providing 
 a continuous dependence estimate for weak energy  solutions  in terms of  the  initial and external  data, in particular obtaining their uniqueness.  
 To this end we shall have to introduce a further, quite strong simplification. Namely,  we shall assume that the plastic dissipation potential $\dipname$ neither depends on the temperature, nor on the damage variable, and we thus restrict to a functional
  \begin{equation}
  \label{H-simpler}
  \dipname: \Omega \times \mt_\dev^{d\times d} \to [0,+\infty) \text{ lower semicontinuous, convex, $1$-positively homog., and fulfilling 
  \eqref{linear-growth}.}
  \end{equation}
  Indeed, while the dependence of $\dipname$ on the fixed temperature profile could be kept, proving continuous dependence/uniqueness results in the case of a \emph{state-dependent} dissipation potential is definitely more arduous (cf.\ e.g.\ \cite{Brokate-Krejci-Schnabel}, 
  \cite{Mielke-Rossi07} for some results in the context of abstract hysteresis/rate-independent systems), and outside the scope of the present contribution. 
  Finally, for technical reasons we will have to strengthen the regularity of $\bbC$  and $\bbD$ and require that 
  \begin{equation}
\label{elast-visc-tensors-4} 
\tag{2.${\small (\bbC,\bbD)_4}$}
\begin{aligned}
&
\forall\, C_Z>0  \ 
\exists\, \tilde{L}_{\bbC}>0 \  \forall\, x\in \Omega \, :
 \ |z_1|,\, |z_2|  \leq C_Z \ \Rightarrow 
 \  | \bbC'(x,z_1){-} \bbC'(x,z_2) | \leq \tilde{L}_{\bbC} |z_1{-}z_2|\,,
\\
& 
\forall\, C_Z>0  \ 
\exists\, L_{\bbD}>0 \  \forall\, x\in \Omega \, :
 \ |z_1|,\, |z_2|  \leq C_Z \ \Rightarrow 
 \  | \bbD(x,z_1){-} \bbD(x,z_2) | \leq L_{\bbD} |z_1{-}z_2|\,.
\end{aligned}
\end{equation}
  \par
  In this context we have the following result, where we now write $(2.(\bbC,\bbD,\bbE))$ as a place-holder for 
  $(2.(\bbC,\bbD)_1) + (2.(\bbC,\bbD)_2) + (2.(\bbC,\bbD)_3) + (2.(\bbC,\bbD)_4) + (2.\bbE)$. 
  \begin{proposition}
  \label{prop:continuous-dependence}
  Let $\nu>0$.
Assume
\eqref{Omega-s2},
$(2.(\bbC,\bbD,\bbE))$, 
 $(2.\mathrm{W})$,  and \eqref{H-simpler}.  
 Let $(F_i,f_i,w_i)$  and $(u_i^0, \dot{u}_i^0, z_i^0, p_i^0)$, for $i=1,2$,  be two sets of external and initial data for the regularized viscoplastic damage system
  (\ref{mom-balance-intro-bis}, \ref{visc-flow-rule-regu}, \ref{pl-flow-rule-bis}), with boundary conditions
  \eqref{more-reg-bc}
  and 
  with  given temperature profiles $\Theta_i \in L^2(Q;\R^+)$, $i=1,2$. Suppose that
  the data  $(F_i,f_i,w_i)$  and $(u_i^0, \dot{u}_i^0, z_i^0, p_i^0)$ comply with 
  $(2.\mathrm{L})$, $(2.\mathrm{w})$, and \eqref{Cauchy-data}.
  Let $(u_i,e_i,z_i,p_i)$, $i=1,2$,  be corresponding weak energy  solutions to the initial-boundary value problem for system 
   (\ref{mom-balance-intro-bis}, \ref{visc-flow-rule-regu}, \ref{pl-flow-rule-bis}).
  \par
  Set  \MMM $P: = \max_{i=1,2}\{ \|e_i\|_{L^2(\Omega;\mt_\sym^{d\times d})} + \|z_i\|_{L^\infty(\Omega)}   \}.$
Then, there exists a positive constant $C_P$  depending on $P$ \EEE 
 such that 
  \begin{equation}
  \label{cont-dependence-estimate}
\begin{aligned}
&
\| u_1{-}u_2\|_{W^{1,\infty}(0,T;L^2(\Omega;\R^d)) \cap H^1(0,T;H^1(\Omega;\R^d))} + \|e_1{-}e_2\|_{H^1(0,T;L^2(\Omega;\mt_\sym^{d\times d})} 
\\
& \quad 
 + \|z_1{-}z_2\|_{H^1(0,T;\spz)}
   + \|p_1{-}p_2\|_{H^1(0,T;L^2(\Omega;\mt_\sym^{d\times d})}
   \\
   & \leq \MMM C_P \EEE  \Big( \| u_1^0{-} u_2^0\|_{H^1(\Omega;\R^{d\times d})} + \| \dot{u}_1^0{-} \dot{u}_2^0\|_{L^2(\Omega;\R^{d\times d})}
   + \| e_1^0{-} e_2^0\|_{L^2(\Omega;\mt_\sym^{d\times d})} 
   + \|z_1^0 {-} z_2^0 \|_{\spz} 
   \\
   & \quad \quad 
   + \| p_1^0{-} p_2^0\|_{L^2(\Omega;\mt_\sym^{d\times d})}
   + \|F_1{-}F_2\|_{L^2(0,T;H_\Dir^1(\Omega;\R^d)^*)} +  \|f_1{-}f_2\|_{L^2(0,T;H_{00,\Gdir}^{1/2}(\Gneu;\R^d)^*)}
   \\
   &
   \quad  \quad 
    + \|w_1{-}w_2\|_{H^1(0,T;H^1(\Omega;\R^d)) \cap W^{2,1}(0,T;L^2(\Omega;\R^d))}
   + \|\Theta_1{-}\Theta_2 \|_{L^2(Q)} 
   \Big) 
\end{aligned}
\end{equation} 
In particular, the initial boundary value problem for the regularized viscoplastic damage   system \MMM with prescribed temperature \EEE admits a unique solution. 
  \end{proposition}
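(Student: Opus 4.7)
With a slight abuse of notation, denote the differences $u := u_1 - u_2$, $e := e_1 - e_2$, $z := z_1 - z_2$, $p := p_1 - p_2$, and analogously $F, f, w, \Theta$, as well as the initial conditions. These differences solve homogeneous versions of the momentum balance \eqref{w-momentum-balance}, the damage inclusion \eqref{no-duality}, and the plastic flow rule \eqref{plastic-flow-ptw}, in which nonlinear coefficients like $\bbD(z_1)\dot e_1 - \bbD(z_2)\dot e_2$ must be split as $\bbD(z_1)\dot e + (\bbD(z_1) - \bbD(z_2))\dot e_2$, and similarly for the $\bbC$-contributions, the temperature term $\teta\bbC(z)\bbE$, and the damage driving force $\bbC'(z)e{:}e$. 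The plan is to test the three difference equations by $\dot u - \dot w \in H_\Dir^1(\Omega;\R^d)$ (admissible by kinematic admissibility of both solutions), by $\dot z \in \spz$, and by $\dot p$, respectively, and to sum the resulting identities over $(0,t)$.

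\textbf{Cancellations from monotonicity.} Thanks to the simplifying assumption \eqref{H-simpler}, the plastic subdifferentials $\pi_i \in \partial_{\dot p} \dipname(\dot p_i)$ are monotone in $\dot p$, so $(\pi_1{-}\pi_2){:}(\dot p_1{-}\dot p_2)\geq 0$; similarly, the monotonicity of $\partial\didname$ kills the subdifferential piece in the damage equation. Using $\sig{\dot u} = \dot e + \dot p$, the term $\int_\Omega \sigma_\dev {:}\dot p\,\dd x$ generated by testing the momentum balance cancels against its counterpart in the plastic flow rule. Introducing the natural energy of the difference system
\begin{equation*}
\Phi(t) := \tfrac{\rho}{2}\|\dot u(t) - \dot w(t)\|_{L^2}^2 + \tfrac12 \int_\Omega \bbC(z_1(t))\, e(t){:}e(t)\,\dd x + \tfrac12 \ass(z(t),z(t)) + \tfrac12 \|p(t)\|_{L^2}^2,
\end{equation*}
one arrives at an inequality of the form
\begin{equation*}
\Phi(t) + c\int_0^t \bigl(\|\dot e\|_{L^2}^2 + \|\dot z\|_{L^2}^2 + \nu\,\ass(\dot z,\dot z) + \|\dot p\|_{L^2}^2\bigr)\,\dd r \leq \Phi(0) + \int_0^t \mathcal{I}(r)\,\dd r + \mathcal{D},
\end{equation*}
where $c>0$ uses the uniform coercivity \eqref{elast-visc-tensors-1}, $\mathcal{I}$ lumps all the cross terms, and $\mathcal{D}$ the linear data contributions, estimated via Cauchy--Schwarz.

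\textbf{Main obstacle and resolution.} The hardest cross term is the difference of the damage driving force
\begin{equation*}
\bbC'(z_1)e_1{:}e_1 - \bbC'(z_2)e_2{:}e_2 = \bbC'(z_1)(e_1{+}e_2){:}e + \bigl(\bbC'(z_1) - \bbC'(z_2)\bigr)e_2{:}e_2,
\end{equation*}
paired against $\dot z$ in $L^1(\Omega)$. Since $e_i$ is only controlled in $L^\infty(0,T;L^2(\Omega;\mt_\sym^{d\times d}))$, no $L^2{-}L^2$ duality with $\dot z$ is available; the crucial point is that the viscous regularization $\nu\As(\dot z)$, together with $\mathrm{s}>d/2$, gives $\dot z \in L^2(0,T;\spz) \hookrightarrow L^2(0,T;\rmC^0(\overline\Omega))$, whence Young's inequality yields
\begin{equation*}
\Bigl|\int_\Omega \bbC'(z_1)(e_1{+}e_2){:}e\,\dot z\,\dd x\Bigr| \leq C_P \|e\|_{L^2}\|\dot z\|_{L^\infty} \leq \tfrac{\nu}{4}\|\dot z\|_{\spz}^2 + C_P' \|e\|_{L^2}^2,
\end{equation*}
using \eqref{elast-visc-tensors-2}. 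The second piece is handled via the refined Lipschitz bound \eqref{elast-visc-tensors-4} on $\bbC'$ combined with $\spz \Subset L^\infty(\Omega)$ applied to $z = z_1{-}z_2$. The remaining cross terms, arising from $(\bbD(z_1){-}\bbD(z_2))\dot e_2$ and $(\bbC(z_1){-}\bbC(z_2))e_2$ in the momentum and damage equations and from the temperature difference $\Theta$, are controlled in the same spirit using \eqref{elast-visc-tensors-4}, with the product $\Theta\,\bbC(z_1)\bbE{:}\sig{\dot u {-} \dot w}$ absorbed by Young against the dissipative $\bbD(z_1)\dot e{:}\dot e$ term.

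\textbf{Conclusion.} All cross terms are thus dominated by $\eta(\|\dot e\|_{L^2}^2 + \nu\|\dot z\|_{\spz}^2 + \|\dot p\|_{L^2}^2) + C_P\,\Phi(r)$, for suitably small $\eta>0$, plus integrable data contributions. Absorbing the dissipative pieces into the left-hand side and applying Gronwall's lemma to $\Phi$ together with integration of the remaining dissipation produces the bound \eqref{cont-dependence-estimate}; Korn's inequality \eqref{Korn} converts the $L^2$-control on $\sig{u-w} = e + p - \sig{w}$ into the asserted $H^1(\Omega;\R^d)$-bound on $u$. Uniqueness follows at once upon choosing identical data.
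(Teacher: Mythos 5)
Your proof follows essentially the same strategy as the paper's: subtract the equations, test by $\dot u - \dot w$, $\dot z$, $\dot p$, exploit monotonicity of $\partial\didname$ and $\partial_{\dot p}\dipname$ and the cancellation $\int_\Omega\sigma_\dev{:}\dot p$ forced by kinematic admissibility, absorb the quadratic damage driving-force cross term $\bbC'(z_1)(e_1{+}e_2){:}(e_1{-}e_2)\,(\dot z_1{-}\dot z_2)$ using the embedding $\spz\hookrightarrow\rmC^0(\overline\Omega)$ made available by the $\nu\As(\dot z)$ regularization, and close with Gronwall and Korn. This is exactly what the paper does.

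One small bookkeeping slip in your writeup: the energy $\Phi(t)$ you introduce includes $\tfrac12\|p_1(t)-p_2(t)\|_{L^2}^2$ and $\tfrac{\rho}{2}\|\dot u(t)-\dot w(t)\|_{L^2}^2$, but neither arises naturally from your tests. The plastic flow rule \eqref{pl-flow-rule-bis} has no $p$-elastic term, so testing its difference by $\dot p_1 - \dot p_2$ only produces the dissipative $\int_0^t\|\dot p_1-\dot p_2\|_{L^2}^2$, not a boundary term in $\|p(t)\|_{L^2}^2$ (the $L^\infty(0,T;L^2)$ control on $p_1-p_2$ is instead obtained a posteriori via $p(t)=p(0)+\int_0^t\dot p$); likewise testing $\rho\ddot u_1-\rho\ddot u_2$ against $\dot u-\dot w$ gives $\frac{d}{dt}\tfrac{\rho}{2}\|\dot u\|_{L^2}^2$ plus an extra term $-\rho\int\ddot u\,\dot w$ that must be integrated by parts and estimated separately, rather than $\frac{d}{dt}\tfrac{\rho}{2}\|\dot u-\dot w\|_{L^2}^2$. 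Correcting $\Phi$ to the quantities that actually appear (as in \eqref{est-tilde-u}--\eqref{est-tilde-p} of the paper) removes the inconsistency; the remainder of the argument then goes through unchanged.
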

\section{\bf Time discretization of the thermoviscoelastoplastic damage system(s)}
\label{s:3}
\noindent 
In all of the results of  this section, we will tacitly assume all of the conditions listed in Section \ref{ss:2.1}.
\subsection{The time discrete scheme}
\label{ss:3.1}
We will  consider a unified   discretization scheme for both the regularized  thermoviscoelastoplastic damage system 
(\ref{regularized-system}, \ref{more-reg-bc})
and for system (\ref{PDE-INTRO}, \ref{BC-INTRO}).
Therefore, within this section, the parameter $\nu$ 
modulating the viscous regularizing contribution to \eqref{visc-flow-rule-regu} shall be considered as \underline{$\nu\geq 0$}.
\par
Given
a partition of $[0,T]$ with constant time-step $\tau>0$ and
nodes $t_\tau^k:=k\tau$, $k=0,\ldots,K_\tau$,  we
approximate the data  $F$, $f$,  $G$, and $g$ 
 by local means:
\begin{equation}
\label{local-means} 
\begin{gathered}
\Ftau{k}:= 
\frac{1}{\tau}\int_{t_\tau^{k-1}}^{t_\tau^k}  F(s)\dd s\,,
\quad 
\ftau{k}:= 
\frac{1}{\tau}\int_{t_\tau^{k-1}}^{t_\tau^k}  g(s)\dd s\,,
\quad
 \Gtau{k}:= \frac{1}{\tau}\int_{t_\tau^{k-1}}^{t_\tau^k} G(s)
\dd s\,, \quad \gtau{k}:= \frac{1}{\tau}\int_{t_\tau^{k-1}}^{t_\tau^k} g(s)
\dd s
\end{gathered}
\end{equation}
for all $k=1,\ldots, K_\tau$.  From the terms $\Ftau{k}$ and $\ftau{k}$ one then defines the elements $ \Ltau{k}$, which are the local-mean approximations of $\calL$. 
Hereafter, given elements $(v_{\tau}^k)_{k=1,\ldots, K_\tau}$, 
we will use the notation
\begin{equation}
\label{discrete-notation}
\Dtau{k} v: = \frac{v_{\tau}^{k} - v_{\tau}^{k-1}}\tau, \qquad  \Ddtau{k} v: = \frac{\vtau{k} -2 \vtau{k-1} + \vtau{k-2}}{\tau^2}.
\end{equation}
\par
We construct discrete solutions to the (regularized) thermoviscoelastoplastic system   by recursively solving an elliptic system,  cf.\ the forthcoming Problem 
\ref{prob:discrete}, where the weak formulation of the discrete heat equation features the function space
 \begin{equation}
\label{X-space}
 \spt:= \{ \theta  \in H^1(\Omega)\, : \   \condu(\theta) \nabla \theta  \nabla v  \in L^1(\Omega)   \text{ for all } v \in H^1 (\Omega)\},
 \end{equation}
 and, for $k \in \{1,\ldots, K_\tau\}$,  the elliptic operator
 \begin{equation}
 \label{elliptic-k}
 {A}^k: \spt  \to  H^1(\Omega)^* \text{  defined by }\\
\pairing{}{H^1(\Omega)}{ {A}^k(\theta) }{v}:=
  \int_\Omega \condu(\theta) \nabla \theta \nabla v \dd x - \int_{\partial \Omega} \gtau{k} v \dd S\,.
\end{equation} 
Furthermore, for technical reasons (cf.\ Remark \ref{rmk:tech-comments} ahead), we shall  
 add the regularizing term
$-\tau \mathrm{div}(|\etau k|^{\gamma- 2} \etau k)$ to the discrete momentum equation, 
 as well as  $\tau |\ptau k|^{\gamma-2} \ptau k$ to the discrete  plastic flow rule, respectively. We will take   $\gamma>4$. 
 That is why, we will seek for discrete solutions with  $\etau k\in L^{\gamma} (\Omega;\mt_{\sym}^{d\times d}) $ 
  and $\ptau k  \in L^{\gamma} (\Omega;  \mt_{\dev}^{d\times d})$,  
  giving $\sig{\utau{k}} \in  L^{\gamma} (\Omega;\mt_{\sym}^{d\times d})$  by the kinematic admissibility condition and thus, via Korn's inequality
   \eqref{Korn}, $\utau k \in W_\Dir^{1,\gamma} (\Omega;\R^d)$.
   \par
     Because of these regularizations, it will be necessary to supplement the discrete system with  approximate initial data
\begin{subequations}
\label{approx-e_0}
\begin{align}
& 
\label{approx-e_0-1}
(e_\tau^0)_\tau  \subset   L^{\gamma} (\Omega;\mt_{\sym}^{d\times d})  & \text{ such that } \lim_{\tau\down 0} \tau^{1/\gamma} \| e_\tau^0\|_{L^{\gamma} (\Omega;\mt_{\sym}^{d\times d})} =0 &  \text{ and } e_\tau^0 \to e_0 \text{ in $L^{2} (\Omega;\mt_{\sym}^{d\times d})$},
\\
& 
\label{approx-e_0-2}
(p_\tau^0)_\tau  \subset   L^{\gamma} (\Omega;\mt_{\dev}^{d\times d})  & \text{ such that }  \lim_{\tau\down 0}   \tau^{1/\gamma} \| p_\tau^0\|_{L^{\gamma} (\Omega;\mt_{\dev}^{d\times d})} =0 &  \text{ and } p_\tau^0 \to p_0 \text{ in $L^{2} (\Omega;\mt_{\dev}^{d\times d})$}.
\end{align}
By consistency with  the kinematic admissibility condition at time $t=0$, we will also approximate the initial datum $u_0$ with a family 
$(u_\tau^0)_\tau  \subset   W^{1,\gamma} (\Omega;\R^d)$ such that 
\begin{equation}
\label{approx-e_0-3}
(u_\tau^0)_\tau  \subset   W^{1,\gamma} (\Omega;\R^d)   \text{ such that } \lim_{\tau\down 0} \tau^{1/\gamma} \| u_\tau^0\|_{W^{1,\gamma} (\Omega;\R^d)} =0   \text{ and } u_\tau^0 \to u_0 \text{ in $H^{1} (\Omega;\R^d)$}.
\end{equation}
\end{subequations}
\MMM The data $(u_\tau^0)_\tau $ may be constructed by a perturbation technique. \EEE
In connection with the 
regularization of the discrete momentum balance,  we will have to approximate the Dirichlet loading $w$ by a family $(w_\tau)_\tau \subset \mathbf{W} \cap 
W^{1,1}(0,T; W^{1,\gamma}(\Omega;\R^d))$, where we have used the place-holder
$ \mathbf{W}:=  L^1(0,T; W^{1,\infty} (\Omega;\R^d)) \cap W^{2,1} (0,T;H^1(\Omega;\R^d))  \\ \cap H^2(0,T; L^2(\Omega;\R^d))$. We will require that 
\begin{equation}
\label{discr-w-tau}
w_\tau \to w \text{ in } \mathbf{W} \text{ as } \tau \downarrow 0, \quad  \text{ as well as }  \quad
\exists\, \alpha_w \in \left(0,\frac1\gamma\right)  \text{ s.t. }    \sup_{\tau>0} \tau^{\alpha_w} \| \sig{\dot{w}_\tau}\|_{L^\gamma (\Omega;\mt_\sym^{d\times d})} \leq C<\infty\,.
\end{equation}
We will then consider the discrete data
\[
 \wtau{k}:=
\frac{1}{\tau}\int_{t_\tau^{k-1}}^{t_\tau^k} w_\tau(s)\dd s\,.
\]
\par
For technical reasons related to the proof of  Prop.\ \ref{prop:exist-discr} (cf.\ \eqref{expedient} ahead), it will be
expedient to  replace the argument of 
  the elasticity tensor $\bbC$ with its positive part. We will proceed in this way in the thermal expansion terms  contributing to the momentum balance and to the heat equation. 
%
Since we will ultimately prove that the discrete damage solutions are confined to the admissible interval $[0,1]$, 
cf.\ \eqref{discr-feasibility} in Prop.\ \ref{prop:aprio} ahead,  the restriction to the positive part in the argument of  $\bbC$ will ``disappear'' in the end. 
\par
Finally, in the discrete version of  the damage flow rule 
(where  we will stay with the notation \eqref{no-duality}), 
we will resort to the convex-concave decomposition $W (z)=\beta(z)- \tfrac{\lambda_W}2  |z|^2$ from \eqref{decomp-W}, 
 with  $\lambda_W>0$ and $\beta\in \mathrm{C}^2(\R^+)$  convex. 
 For shorter notation, in what follows  we will use the place-holders
 \[
 \begin{aligned}
 &
 \bsp: =  W_\Dir^{1,\gamma} (\Omega;\R^d)  \times L^\gamma (\Omega;\mt_\sym^{d\times d}) \times \spz\times 
L^\gamma (\Omega;\mt_\dev^{d\times d}) \times H^1(\Omega),
\\
&
 \rbsp : = W_\Dir^{1,\gamma} (\Omega;\R^d)  \times L^\gamma (\Omega;\mt_\sym^{d\times d}) \times 
L^\gamma (\Omega;\mt_\dev^{d\times d}) \times H^1(\Omega)
\end{aligned}
 \]
 to indicate the state spaces for the solutions to system \eqref{syst:discr} below.
%
\begin{problem}
\label{prob:discrete}
Let $\gamma>4$. Using the notation \eqref{discrete-notation} and  starting from data
\begin{align}\label{discr-Cauchy}
\utau{0}:=u_\tau^{0},\qquad
\utau{-1}:=u_{0}-\tau \dot{u}_\tau^0,  \qquad \etau{0}: = e_\tau^0,
\qquad \ztau{0}: = z_\tau^0, 
 \qquad \ptau{0}:=p_\tau^{0},\qquad  \tetau{0}:=\teta_{0}, 
\end{align}
for all $k=1,\ldots, K_\tau$, given 
 $(\utau{k-1},\etau{k-1},\ztau{k-1}, \ptau{k-1},\tetau{k-1}) \in \bsp
 $, find $\ztau k \in \spz$ 
 fulfilling 
 \begin{subequations}
\label{syst:discr}
\begin{itemize}
\item[-] the discrete damage flow rule
\begin{equation}
\label{discr-dam-eq}
\begin{aligned}
\omegatau k    +   \Dtau{k}z &  + \nu \As( \Dtau{k}z)   + \As (\ztau{k})+\beta'(\ztau k) -\lambda_W \ztau{k-1} 
\\
& \quad 
  = - \tfrac12 
  \bbC'(\ztau k)
   \etau{k-1} : \etau{k-1} + \tetau{k-1} \quad \text{in } \spz^*  \quad \text{with } 
  \omegatau{k} \in 
  \partial \Did{ \Dtau{k}z}.
  \end{aligned}
\end{equation}
\end{itemize}
Given $(\utau{k-1},\etau{k-1},\ztau{k-1},\ptau{k-1},\tetau{k-1}) \in \bsp$ and $\ztau k \in \spz$, 
 find 
 $(\utau{k}, \etau{k},  \ptau{k}, \tetau{k}) \in \rbsp$ fulfilling 
\begin{itemize}
\item[-] the kinematic admissibility $(\utau{k}, \etau{k}, \ptau{k}) \in \mathcal{A}(\wtau k)$ (in the sense of \eqref{kin-adm}); 
\item[-] the discrete momentum balance 
\begin{equation}
\label{discrete-momentum}
\rho\int_\Omega \Ddtau k u  v \dd x + \int_\Omega \sitau{k} : \sig v \dd x     = \pairing{}{H_\Dir^1(\Omega;\R^d)}{\Ltau k }{v} \quad
 \text{for all } v \in W_\Dir^{1,\gamma}(\Omega;\R^d),
\end{equation}
where we have used the place-holder
$
\sitau{k}:= \bbD(\ztau{k})    \Dtau k e  + \bbC(\ztau{k}) \etau k + \tau |\etau{k}|^{\gamma-2} \etau k- \tetau k{\bbC}((\ztau{k})^+)\bbE\,;
$
\item[-] the discrete plastic flow rule
\begin{equation}
\label{discrete-plastic}\zetau k  +  \Dtau kp + \tau |\ptau {k}|^{\gamma-2}\ptau k = \sidevtau{k} \quad \text{with }
 \zetau k \in \partial_{\dot p} \dip{\ztau{k}}{\tetau{k-1}}{\Dtau k p},  \qquad \aein \Omega;
\end{equation}
\item[-] $\tetau k \in \spt$ and  the discrete heat equation
\begin{equation}
\label{discrete-heat} \begin{aligned}
 \Dtau{k}\teta + A^k (\tetau{k})  &  = \Gtau{k} + 
  \bbD(\ztau{k})      \Dtau{k} e :  \Dtau{k} e -\tetau{k}  {\bbC}((\ztau{k})^+)\bbE :  \Dtau{k} e
  \\
  & \quad 
  + \did{\Dtau{k} z}  +   \left|  \Dtau{k} z \right|^2 + \bar{\nu} \ass  (\Dtau{k} z, \Dtau{k} z) - \tetau{k-1} \Dtau{k} z
  \\ & \quad 
  +
 \dip{\ztau{k}}{\tetau{k-1}}{ \Dtau{k} p} +  \left|  \Dtau{k} p \right|^2
   \quad \text{in } H^1(\Omega)^*;
\end{aligned} \end{equation} 
\end{itemize}
\end{subequations}
\end{problem}
\begin{remark}
\label{rmk:tech-comments}
\upshape
The discrete system \eqref{syst:discr} has been designed in such a way as to ensure the validity of
 a discrete total energy inequality, cf.\ \eqref{total-enid-discr} ahead.  The latter  will be proved by exploiting
  suitable cancellations 
   of the various terms contributing to  \eqref{syst:discr}, as well as 
  the  convex-concave decomposition
  \eqref{decomp-W} of $W$
   in the  discrete damage flow rule \eqref{discr-dam-eq}, where the contribution $\beta'(\ztau k)$ from the convex part has been kept implicit, 
   while the term $-\lambda_W \ztau{k-1}$ related to the concave part is explicit. 
The convexity of $z\mapsto {\bbC}(z)$ will also be a  key ingredient in the proof of  \eqref{total-enid-discr}, cf.\ the calculations in the proof of Lemma \ref{l:aprio-M}. 
\par
Several terms in  \eqref{syst:discr} have been kept \emph{implicit}, not only towards the validity   of \eqref{total-enid-discr}, but also in view of the strict positivity property \eqref{discr-strict-pos} ahead for the discrete temperature. Our proof  of \eqref{discr-strict-pos}  requires that $\tetau k$ is implicit in the  thermal expansion coupling term on the r.h.s.\ of the discrete heat equation,
cf.\ the calculations leading to \eqref{var-ineq-4-pos}, which also rely on the truncation of the elasticity tensor $\bbC$. 
 Therefore it has to be  implicit
in the corresponding terms in the discrete momentum balance and in the discrete plastic flow rule, which cannot  be thus  decoupled one from another. 
Instead, still compatibly with the proof of \eqref{discr-strict-pos},  the discrete damage flow rule  is decoupled from the other equations.
 This will greatly simplify the proof of existence of solutions to   \eqref{syst:discr}.
\par Because of this implicit character of the thermoviscoplastic subsystem, in order to prove the existence of solutions (to an approximate version of it),
we will have to resort to a (nonconstructive) existence result, of fixed point type, for elliptic systems
involving pseudo-monotone coercive operators. 
 The regularizing terms
 $-\tau \mathrm{div}(|\etau k|^{\gamma- 2} \etau k)$ and   $\tau |\ptau k|^{\gamma-2} \ptau k$, 
  guaranteeing enhanced integrability properties, have to ensure the coercivity of the pseudo-monotone operator
underlying the (approximate versions of  the) 
discrete momentum balance, plastic flow rule, and heat equation.
These terms will vanish in the limit as $\tau\downarrow 0$.
Let us point out that, thanks to them, the right-hand side  of the discrete heat equation is indeed an element in $H^1(\Omega)^*$.
Thus all the calculations 
performed in the proof of Proposition  \ref{prop:aprio}  and involving suitable tests of the discrete heat equation  will be rigorous. 
\end{remark}
\begin{proposition}[Existence of discrete solutions]
\label{prop:exist-discr}
Under  the growth condition \eqref{hyp-K} on $\condu$, Problem \ref{prob:discrete} admits a solution 
 $\{(\tetau{k}, \utau{k}, \etau{k}, \ptau{k})\}_{k=1}^{K_\tau}$. Furthermore, any solution  to  Problem \ref{prob:discrete} fulfills
\begin{equation}
\label{discr-strict-pos}
\tetau{k}\geq \bar{\teta}>0 \qquad \text{for all } k =1, \ldots, K_\tau  \text{ with  $\bar{\teta}$ from \eqref{strong-strict-pos}}\,.  
\end{equation}
\end{proposition}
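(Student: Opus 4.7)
The plan is to construct the discrete solutions at step $k$ recursively and in two decoupled substeps: first solve the damage relation \eqref{discr-dam-eq} for $\ztau k$ (which only involves data from step $k{-}1$), and then solve the implicitly coupled momentum--plastic--heat subsystem \eqref{discrete-momentum}--\eqref{discrete-heat} for $(\utau k,\etau k,\ptau k,\tetau k)$ with $\ztau k$ frozen. The pointwise lower bound \eqref{discr-strict-pos} is proved in the same induction on $k$, so as to enable the positivity of $\tetau{k-1}$ on the right-hand side of the next damage step.

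For the damage substep I recast \eqref{discr-dam-eq} as the Euler--Lagrange relation of the functional $\mathcal{J}_k:\spz\to(-\infty,+\infty]$,
\begin{equation*}
\begin{aligned}
\mathcal{J}_k(\zeta) & := \tau\,\Did{\tfrac{\zeta-\ztau{k-1}}\tau} + \tfrac{1}{2\tau}\|\zeta{-}\ztau{k-1}\|_{L^2}^2 + \tfrac{\nu}{2\tau}\ass\bigl(\zeta{-}\ztau{k-1},\zeta{-}\ztau{k-1}\bigr) + \tfrac12\ass(\zeta,\zeta) \\
&\quad + \int_\Omega\beta(\zeta)\,\dd x + \tfrac12\int_\Omega\bbC(\zeta)\etau{k-1}{:}\etau{k-1}\,\dd x - \int_\Omega(\lambda_W\ztau{k-1}{+}\tetau{k-1})\zeta\,\dd x.
\end{aligned}
\end{equation*}
Each summand is convex: the splitting \eqref{decomp-W} together with $(2.\mathrm{W}_2)$ makes $\beta$ convex, and the convexity assumption \eqref{elast-visc-tensors-3} on $z\mapsto\bbC(z)$ handles the elastic term; strict convexity comes from the quadratic $L^2$ and $\ass$ contributions; $\spz$-coercivity is furnished by $\ass(\zeta,\zeta) + \tfrac1\tau \|\zeta\|_{L^2}^2$; and the singularity $z^{2d}\beta(z)\to+\infty$ as $z\downarrow 0$ from \eqref{hyp-phi-1} forces finite-energy elements to satisfy $\zeta>0$ a.e. The direct method yields a unique minimizer $\ztau k\in\spz$, whose Euler--Lagrange inclusion is exactly \eqref{discr-dam-eq}.

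For the coupled thermo-visco-plastic substep I use the kinematic admissibility to eliminate $\ptau k=\sig{\utau k}-\etau k$ and split $\utau k=\wtau k+\tilde u$ with $\tilde u\in W^{1,\gamma}_\Dir(\Omega;\R^d)$, producing a three-field problem on $W^{1,\gamma}_\Dir(\Omega;\R^d)\times L^\gamma(\Omega;\mt_\sym^{d\times d})\times H^1(\Omega)$. The principal part combines the Leray--Lions-type operators of order $\gamma$ induced by the $\tau\,\mathrm{div}(|\etau k|^{\gamma-2}\etau k)$ and $\tau|\ptau k|^{\gamma-2}\ptau k$ regularizations (both monotone and coercive) with the heat operator $A^k$. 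Because $\condu$ has only polynomial growth I truncate $\condu(\tetau k)$ at height $M>0$ in the diffusion (and correspondingly in the thermal-expansion coupling); the resulting operator is then bounded, pseudo-monotone, and coercive (coercivity in the $\tetau k$-slot from $c_0\|\nabla\tetau k\|_{L^2}^2+\tau^{-1}\|\tetau k\|_{L^2}^2$), so Brezis' existence theorem for pseudo-monotone coercive operators delivers a solution $(\tilde u_M,\etau k_M,\tetau k_M)$. Uniform (in $M$) $L^\infty$ bounds on $\tetau k_M$, obtained by testing the truncated heat equation with $(\tetau k_M-L)^+$ for large $L$ and exploiting the $L^{\gamma/2}\subset L^2$ control of $|\Dtau ke|^2,|\Dtau kp|^2$ for fixed $\tau$, allow me to remove the truncation by taking $M$ above this bound.

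Strict positivity \eqref{discr-strict-pos} follows by induction on $k$, with $f(t):=(C_*t+1/\teta_*)^{-1}$ acting as a spatially constant discrete subsolution and the base case $\tetau 0=\teta_0\ge\teta_*\ge\bar\teta$ provided by \eqref{initial-teta}. Testing \eqref{discrete-heat} against $\psi:=(f_k-\tetau k)^+\in H^1_+(\Omega)$, the diffusion gives $-\int_{\{\tetau k<f_k\}}\condu(\tetau k)|\nabla\tetau k|^2\,\dd x\le 0$ (since $\nabla f_k\equiv 0$) and the boundary integral $-\int_{\partial\Omega}\gtau k\psi\,\dd S\le 0$ (since $g\ge 0$). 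On the right-hand side every contribution except the thermal-expansion coupling is pointwise nonnegative: in particular $-\tetau{k-1}\Dtau kz\ge 0$ combines the inductive $\tetau{k-1}>0$ with $\Dtau kz\le 0$ (enforced by $\mathrm{dom}(\didname)=(-\infty,0]$). The offending term is absorbed by Young's inequality
\begin{equation*}
-\tetau k\,\bbC((\ztau k)^+)\bbE{:}\Dtau ke \;\ge\; -\tfrac12\,\bbD(\ztau k)\Dtau ke{:}\Dtau ke\;-\;C_*(\tetau k)^2,
\end{equation*}
where the truncation $(\ztau k)^+\in[0,1]$ pins $|\bbC((\ztau k)^+)|\le\overline C$ and pins the constant $C_*=\overline C^2|\bbE|^2/(2C_\bbD^1)$ as in \eqref{strong-strict-pos}. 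Combining $\tetau{k-1}\ge f_{k-1}$ with the convex-decreasing subsolution identity $(f_k-f_{k-1})/\tau\le-C_*f_k^2$ and using $\tetau k=f_k-\psi$ on $\mathrm{supp}(\psi)$ reduces the test inequality to $\int_\Omega\psi^2\bigl[\tfrac1\tau+2C_*f_k-C_*\psi\bigr]\dd x\le 0$, which delivers $\psi\equiv 0$ on the set $\{\tetau k\ge 0\}$ (there $\psi\le f_k$, so the bracket is $\ge\tfrac1\tau+C_*f_k>0$). The main obstacle is ruling out $\{\tetau k<0\}$: I dispose of it by a preliminary nonnegativity step, testing against $(-\tetau k)^+$ and absorbing the resulting cubic contribution $C_*\int[(-\tetau k)^+]^3\,\dd x$ into $c_0\|\nabla(-\tetau k)^+\|_{L^2}^2+\tau^{-1}\|(-\tetau k)^+\|_{L^2}^2$ via Gagliardo--Nirenberg interpolation (recall $d\le 3$); the nontrivial branch of this inequality would require $\|(-\tetau k)^+\|$ to stay bounded away from zero on a scale incompatible with the $L^\infty$ control on $\tetau k$ available for fixed $\tau$, closing the induction.
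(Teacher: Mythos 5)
Your outline matches the paper's architecture at the top level: solve the damage relation by a minimization problem on $\spz$ (your functional $\mathcal{J}_k$ is exactly the paper's \eqref{min-prob-4-z}), then solve the implicitly coupled momentum--plastic--heat subsystem by pseudomonotone operator theory after truncating the heat-conductivity coefficient. But you deviate from the paper's proof in two significant places, and the second deviation has a genuine gap.

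First, the truncation removal. You propose to establish an $M$-uniform $L^\infty$ bound on $\tetau k_M$ and then pick $M$ larger than that bound, so that $\calT_M$ becomes inactive and the truncated solution is an honest solution. The paper does not prove $L^\infty$: its Lemma \ref{l:aprio-M} establishes only $M$-uniform $L^1$ and $H^1$ bounds for $\tetau k_M$ (plus weighted $L^\gamma$ bounds on $\etau k_M,\ptau k_M$), and then Lemma \ref{l:3.6} passes to the limit $M\to\infty$ with \emph{weak} $H^1(\Omega)$ convergence of $\tetaum k$ and strong $L^\gamma$ convergence of $(\utaum k,\etaum k,\ptaum k)$ obtained from a $\limsup$ argument. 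Your alternative is viable in principle—after the $M$-uniform energy estimates deliver $\tetau k_M\in L^6$ and $\Dtau ke_M,\Dtau kp_M\in L^\gamma$, the right-hand side of the truncated heat equation lands in $L^p$ with $p>d/2$ and a De~Giorgi/Moser iteration would give an $L^\infty$ bound depending only on those norms—but this is substantially more machinery than the weak limit passage, and your sketch skips both the bootstrap through the $M$-uniform energy estimate (which you still need, so the discrete total energy inequality cannot be bypassed) and the iteration itself. What you would gain is a cleaner statement at the discrete level: no limit passage in $M$.

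Second, the positivity. The paper derives the variational inequality \eqref{var-ineq-4-pos} and then compares $\tetau k$ with the recursively defined spatially constant sequence $\theta^k$ solving $\frac{\theta^k-\theta^{k-1}}\tau=-C_*(\theta^k)^2$, $\theta^0=\teta_*$; it is this implicit discrete subsolution, not the continuous profile $f(t_k)$, that drives the comparison, and the argument is referred to \cite[Lemma 4.4]{Rocca-Rossi}. Your Stampacchia-type test with $\psi=(f_k-\tetau k)^+$ is in the same spirit (your convexity inequality $(f_k-f_{k-1})/\tau\le -C_*f_k^2$ is correct), but your treatment of the set $\{\tetau k<0\}$ is where the gap sits. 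Testing against $(-\tetau k)^+$ and absorbing the cubic term by Gagliardo--Nirenberg gives, after Young's inequality, an alternative of the form: either $(-\tetau k)^+\equiv 0$ or $\|(-\tetau k)^+\|_{L^2}^{2q}\gtrsim 1/\tau$ (with $q=2$ in $d=3$, $q=1$ in $d=2$). To exclude the second branch you invoke ``the $L^\infty$ control on $\tetau k$ available for fixed $\tau$,'' but that $L^\infty$ bound itself grows as $\tau\downarrow 0$ (through $\|\Dtau ke\|_{L^\gamma}^2\sim\tau^{-2-2/\gamma}$), so it is not automatic that $\tau^{-1}$ outruns the $L^\infty$ bound raised to the power $2q$; the inequality can fail for the very range of $\tau$ where you want existence, and you have not tracked the constants to show otherwise. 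This is a circularity, not a technicality: the $L^\infty$ bound and the step-size smallness are interdependent in your scheme, whereas the paper's comparison with the positive sequence $\theta^k$ closes the induction with a bound $\tetau k\ge\theta^k\ge\bar\teta$ that is $\tau$-uniform and obtained without any separate nonnegativity step.
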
 
\subsection{Proof of Proposition \ref{prop:exist-discr}}
\label{ss:3.2}
First of all, let us point out that the \underline{strict positivity} \eqref{discr-strict-pos} ensues by the very same argument developed in the proof of \cite[Lemma 4.4]{Rocca-Rossi}. We shortly recapitulate it: 
 From the discrete heat equation \eqref{discrete-heat} we deduce the variational inequality
\begin{equation}
\label{var-ineq-4-pos}
\int_\Omega   \Dtau{k}\teta  w \dd x + \int_\Omega \condu(\tetau k) \nabla \tetau k \nabla w \dd x \geq - C_* \int_\Omega (\tetau k)^2 w \dd x \quad \text{ for all } w \in H_+^1(\Omega)\,.
\end{equation}
 with $C_*= \tfrac{ \overline{C}^2 |\bbE|^2}{2 C_\bbD^1} $. 
To establish \eqref{var-ineq-4-pos}, we estimate
\begin{equation}
\label{expedient}
\begin{aligned}
\bbD(\ztau{k})      \Dtau{k} e :  \Dtau{k} e -\tetau{k}  {\bbC}((\ztau{k})^+)\bbE :   \Dtau{k} e   &  \geq C_\bbD^1 | \Dtau{k} e |^2 - \overline{C} |\bbE| |\tetau{k}|| \Dtau{k} e |
\\ & 
\geq \frac{C_\bbD^1}2 | \Dtau{k} e |^2 - C_*|\tetau{k}|^2\,.
\end{aligned}
\end{equation}
For this,  we have used:  (1) the coercivity of $\bbD$ from \eqref{elast-visc-tensors-1},  (2) the fact that
$ 0 \leq (\ztau{k})^+ \leq 1 $ in $\Omega$ (due to $\ztau{k} \leq \ztau{k-1} \leq .... \leq z_0 \leq	 1$ by the unidirectionality enforced 
by the dissipation potential $\didname$ and condition  \eqref{initial-z} on $z_0$), so that 
 $|{\bbC}((\ztau{k})^+)|\leq \overline{C}: = \max_{z\in [0,1]} |\bbC(z)|$, and    (3) Young's inequality. 
 We also take into account the positivity of all the other terms on the right-hand side of \eqref{discrete-heat}: in particular, note that 
\begin{equation}
\label{quoted-before}
- \tetau{k-1} \Dtau{k}{z} \geq 0 \qquad \aein \, \Omega\,
\end{equation}
as we may suppose by induction  that $\tetau{k-1}>0$ a.e.\ in $\Omega$, whereas $ \Dtau{k}{z} \leq 0$ by unidirectionality. 
In view of \eqref{var-ineq-4-pos}, we may compare the elements $(\tetau k)_{k=1}^{K_\tau}$ with the  decreasing sequence $(\theta^k)_{k=1}^{K_\tau}$, recursively defined by
\[
\frac{\theta^k -\theta^{k-1}}{\tau} = - C_* (\theta^k)^2, \qquad \theta^0: = \teta_*>0\,,
\] 
and conclude, on the one hand,  that $\tetau k \geq \theta^k$ for all $k =1,\ldots, K_\tau$. On the other hand, the argument from 
\cite[Lemma 4.4]{Rocca-Rossi} yields that $\theta^k \geq  \ldots \ge \theta^{K_\tau} \geq  \bar\teta$ for all $k =1,\ldots, K_\tau$, which leads to  \eqref{discr-strict-pos}.
\par
In proving the \emph{existence} of solutions to  Problem \ref{prob:discrete}, we will perform the following steps:
\begin{itemize}
\item[\textbf{Step $1$:}]
While    the  discrete damage flow rule will be solved by a variational argument,
 we will approximate  the discrete thermoviscoplastic subsystem by truncating 
  the heat conductivity coefficient in the elliptic operator.
  On the one hand, this will allow us to apply the existence result from \cite{Roub05NPDE}, based on the theory of pseudomonotone operators, 
  for proving the existence of solutions. On the other hand, due to this truncation we will no longer be able 
to  exploit the growth of $\condu$
 in order to handle the thermal expansion term on the r.h.s.\ of \eqref{discrete-heat}. Therefore, in this term
  we  will replace  $\tetau k$ 
by its truncation $\calT_M(\tetau k)$. We shall do the same for the corresponding coupling terms in the discrete momentum balance and plastic flow rule. 
\item[\textbf{Step $2$:}] Existence of solutions to the approximate discrete thermoviscoplastic subsystem. 
\item[\textbf{Step $3$:}] A priori estimates on the discrete solutions, uniform with respect to the truncation parameter $M$
\item[\textbf{Step $4$:}]  Limit passage as $M\to\infty$.
\end{itemize}
\par
In completing Steps 2--4 we will most often have to adapt analogous arguments developed in \cite{Rocca-Rossi,Rossi2016}, to which we will refer for all details.
\EEE
\par
\noindent 
\textbf{Step $1$:  The approximate discrete system}  will 
feature the truncation operator 
\begin{equation}
\label{def-truncation-m} 
\mathcal{T}_M : \R \to \R, \qquad 
\mathcal{T}_M(r):= \left\{ \begin{array}{ll} -M &
\text{if } \  r <-M,
\\
r   & \text{if } \  |r| \leq M,
\\
M  & \text{if } \  r >M,
\end{array}
\right.
\end{equation}
where  we suppose that  $M\in \N\setminus\{0\}$. 
We thus introduce the  truncated heat conductivity
\begin{equation}
\label{def-k-m} \condu_M(r):= \condu(\mathcal{T}_M(r)) := \left\{ \begin{array}{ll} \condu(-M) & \text{if
} \  r <-M,
\\
\condu(r)   & \text{if } \  |r| \leq M,
\\
\condu(M) & \text{if } \ r >M,
\end{array}
\right.
\end{equation}
and, accordingly, the approximate elliptic operator
\begin{equation}
\label{M-operator}
{A}_M^k: H^1(\Omega)  \to H^1(\Omega)^*
  \text{  defined  by } \quad
\pairing{}{H^1(\Omega)} { {A}_M^k(\theta)}{v}:= \int_\Omega \condu_M(\theta) \nabla \theta  \nabla v \dd x -
\int_{\partial \Omega} \gtau{k}v \dd S.
\end{equation}
\par
We are now in the position to introduce the approximate discrete system \eqref{syst:approx-discr}. For notational simplicity, we will omit to indicate the dependence of the solution  quintuple 
on the index $M$.
\begin{problem}
\label{prob:approx-discrete}
Let $\gamma>4$. Starting from the discrete Cauchy data 
\eqref{discr-Cauchy},
for all $k=1,\ldots, K_\tau$, given 
 $(\utau{k-1},\etau{k-1},\ztau{k-1}, \ptau{k-1},\tetau{k-1}) \in \bsp,
 $
 find $\ztau k \in \spz$ 
 fulfilling the discrete damage flow rule \eqref{discr-dam-eq}. 
Given $(\utau{k-1},\etau{k-1},\ztau{k-1},\ptau{k-1},\tetau{k-1}) \in \bsp$ and $\ztau k \in \spz$, 
 find 
 $(\utau{k}, \etau{k},  \ptau{k}, \tetau{k}) \in \rbsp$ fulfilling 
 \begin{subequations}
\label{syst:approx-discr}
\begin{itemize}
\item[-] the kinematic admissibility $(\utau{k}, \etau{k}, \ptau{k}) \in \mathcal{A}(\wtau k)$;
\item[-] the approximate  discrete momentum balance 
\begin{equation}
\label{discrete-momentum-approx}
\rho\int_\Omega \Ddtau k u  v \dd x + \int_\Omega \simtau{k} : \sig v \dd x     = \pairing{}{H_\Dir^1(\Omega;\R^d)}{\Ltau k }{v} \quad
 \text{for all } v \in W_\Dir^{1,\gamma}(\Omega;\R^d),
\end{equation}
with the place-holder
$
\simtau{k}:= \bbD(\ztau{k})    \Dtau k e  + \bbC(\ztau{k}) \etau k + \tau |\etau{k}|^{\gamma-2} \etau k- \calT_M(\tetau k) \bbC((\ztau{k})^+)\bbE\,;
$
\item[-] the approximate discrete plastic flow rule
\begin{equation}
\label{discrete-plastic-approx}
\zetau k  +  \Dtau kp + \tau |\ptau {k}|^{\gamma-2}\ptau k \ni \simdevtau{k}, \quad \text{with }
 \zetau k \in \partial_{\dot p} \dip{\ztau{k}}{\tetau{k-1}}{\Dtau k p}   \qquad \aein \Omega;
\end{equation}
\item[-]  the approximate  discrete heat equation
\begin{equation}
\label{discrete-heat-approx} \begin{aligned}
 \Dtau{k}\teta + A_M^k (\tetau{k})  &  = \Gtau{k} + 
  \bbD(\ztau{k})      \Dtau{k} e :  \Dtau{k} e -\calT_M(\tetau{k}) {\bbC}((\ztau{k})^+)\bbE :  \Dtau{k} e 
  \\
  & \quad 
  + \did{\Dtau{k} z}  +   \left|  \Dtau{k} z \right|^2 + \bar{\nu} \ass  (\Dtau{k} z, \Dtau{k} z)  -  \tetau {k-1} \Dtau{k} z
  \\ & \quad 
  +
 \dip{\ztau{k}}{\tetau{k-1}}{ \Dtau{k} p} +  \left|  \Dtau{k} p \right|^2
   \quad \text{in } H^1(\Omega)^*.
\end{aligned} 
 \end{equation} 
\end{itemize}
\end{subequations}
\end{problem}

\par
\noindent 
\textbf{Step $2$:  Existence of solutions to  system \eqref{syst:approx-discr}:} 
we have the following result.
\begin{lemma}
\label{l:exist-approx-discr} 
Under  the growth condition \eqref{hyp-K}, there exists $\bar\tau>0$ such that  for $0<\tau< \bar \tau$ 
and  for every $k=1,\ldots, K_\tau$ there exists a solution $(\utau{k}, \etau{k},\ztau k, \ptau{k},\tetau k) \in
\bsp  $ to system \eqref{syst:approx-discr}. Furthermore, for any solution  $(\utau{k}, \etau{k},\ztau k, \ptau{k},\tetau k) $ the function
  $\tetau k$ complies with the positivity property \eqref{discr-strict-pos}. 
\end{lemma}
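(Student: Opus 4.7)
\smallskip
\noindent\emph{Proof plan.} The key structural observation is that the discrete damage flow rule \eqref{discr-dam-eq} is decoupled from the thermoviscoplastic block: its right-hand side only involves the known quantities $\ztau{k-1}$, $\etau{k-1}$, $\tetau{k-1}$. I would therefore first construct $\ztau k \in \spz$ as a minimizer of the functional
\[
\Phi_k(z) := \int_\Omega \Big( \tau\, \didname\bigl(\tfrac{z-\ztau{k-1}}{\tau}\bigr) + \tfrac{1}{2\tau}(z{-}\ztau{k-1})^2 + \beta(z) + \tfrac12\bbC(z)\etau{k-1}{:}\etau{k-1} - (\lambda_W \ztau{k-1}{+}\tetau{k-1})z \Big) \dd x + \tfrac12\ass(z,z) + \tfrac{\nu}{2\tau}\ass(z{-}\ztau{k-1},z{-}\ztau{k-1})
\]
on $\spz$ (the unidirectionality $z\leq \ztau{k-1}$ being encoded in $\didname$). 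Coercivity and weak lower semicontinuity follow from $z^{2d}\beta(z)\to+\infty$ in \eqref{decomp-W} and from the convexity of $\beta$ and of $z\mapsto\bbC(z)A{:}A$ granted by \eqref{elast-visc-tensors-3}; the Euler--Lagrange inclusion of $\Phi_k$ is precisely \eqref{discr-dam-eq}.

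With $\ztau k$ in hand I turn to the coupled thermoviscoplastic block \eqref{discrete-momentum-approx}--\eqref{discrete-heat-approx}. Using kinematic admissibility to eliminate $\ptau k = \sig\utau k - \etau k$, I would view the triple $(\utau k,\etau k,\tetau k) \in W_\Dir^{1,\gamma}(\Omega;\R^d)\times L^\gamma(\Omega;\mt_\sym^{d\times d})\times H^1(\Omega)$ as the unknown of a single inclusion $\calN_k(\utau k,\etau k,\tetau k) \ni \ell_k$ in the dual space. The operator $\calN_k$ gathers the Kelvin--Voigt term $\bbD(\ztau k)\Dtau k e$, the elastic term $\bbC(\ztau k)\etau k$, the $\gamma$-growth penalizations $\tau|\etau k|^{\gamma-2}\etau k$ and $\tau|\ptau k|^{\gamma-2}\ptau k$, the inertial contribution $\rho\Ddtau k u$ and the mass term $\Dtau k\teta$, the truncated elliptic operator $A_M^k$, the maximal-monotone multifunction $\partial_{\dot p}\dipx{\cdot}{\ztau k}{\tetau{k-1}}{\cdot}$, and the coupling $\calT_M(\tetau k)\bbC((\ztau k)^+)\bbE$. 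Thanks to the truncations $\calT_M$ and $\condu_M$, all non-monotone coupling terms become bounded and compact as maps from $\rbsp$ into its dual, while the $\tau$-penalties together with $A_M^k$ and the inertial mass terms supply coercivity. I would then invoke the pseudomonotone existence theorem of \cite{Roub05NPDE} to produce $(\utau k,\etau k,\tetau k)$, from which $\ptau k := \sig\utau k - \etau k \in L^\gamma(\Omega;\mt_\dev^{d\times d})$ recovers the plastic strain.

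The step I expect to be the main obstacle is the verification of pseudomonotonicity \emph{and} coercivity of $\calN_k$. Coercivity in the temperature variable comes from the diffusion part of $A_M^k$ (whose coefficient $\condu_M$ is bounded below by $c_0>0$ by \eqref{hyp-K}) together with the $L^2$-mass term; pseudomonotonicity of the thermal expansion coupling is ensured by the boundedness introduced by $\calT_M$, and the set-valued plastic term $\partial_{\dot p}\dipname$ is harmless since its image is contained in $B_{C_R}(0)$ by \eqref{bounded-subdiff}. The quadratic right-hand sides of the heat equation, namely $\bbD(\ztau k)\Dtau k e{:}\Dtau k e$, $|\Dtau k z|^2$, $\ass(\Dtau k z,\Dtau k z)$ and $|\Dtau k p|^2$, are absorbed into the coercivity of the corresponding $L^\gamma$-penalizations and of $A_M^k$ via Young's inequality, at the cost of requiring $\tau<\bar\tau$ with $\bar\tau$ depending only on $M$, the data and the tensors. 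Finally, the strict positivity $\tetau k\geq \bar\teta$ is inherited from the opening argument of Section~\ref{ss:3.2}: the variational inequality \eqref{var-ineq-4-pos} and the subsequent comparison with the sequence $(\theta^k)$ use only the discrete heat equation and the bound $|\bbC((\ztau k)^+)|\leq \overline{C}$, so they carry over verbatim to \eqref{syst:approx-discr}.
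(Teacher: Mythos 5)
Your plan is structurally the same as the paper's: first solve the (decoupled) damage flow rule \eqref{discr-dam-eq} by the direct method, as the Euler--Lagrange inclusion of a functional identical to the paper's \eqref{min-prob-4-z} (note that $\tau\,\did{\tfrac{z-\ztau{k-1}}{\tau}}=\did{z-\ztau{k-1}}$ by $1$-homogeneity); then recast the remaining coupled block as an inclusion $\partial\Psi_k+\mathscr{A}_k\ni\mathscr{B}_k$ for a coercive pseudomonotone operator and invoke the abstract existence result from \cite{Roub05NPDE} (the paper cites \cite[Cor.\ 5.17]{RoubNPDE2ed} through \cite[Lemma 3.4]{Rossi2016}); and obtain \eqref{discr-strict-pos} by the same comparison argument as in Section~\ref{ss:3.2}. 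Two details deserve a closer look. First, you eliminate $\ptau k=\sig{\utau k}-\etau k$ and keep $(\utau k,\etau k,\tetau k)$, whereas the paper eliminates $\etau k=\sig{\utau k+\wtau k}-\ptau k$ and keeps $(\utau k-\wtau k,\ptau k,\tetau k)$. The paper's choice is the more convenient one: the trace-free constraint $\ptau k\in\mt_\dev^{d\times d}$, imposed in the definition of $\rbsp$ and of $\mathcal{A}(\wtau k)$, is then encoded automatically (the difference of two symmetric fields is symmetric), and the convex dissipation $\Psi_k(p)=\dip{\ztau k}{\tetau{k-1}}{p-\ptau{k-1}}$ acts directly on the kept variable. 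In your parametrization you must \emph{additionally} restrict to the closed affine subspace $\{\mathrm{tr}(\sig{\utau k})=\mathrm{tr}(\etau k)\ \aein\ \Omega\}$ to guarantee that the recovered $\ptau k$ is deviatoric, a constraint you do not mention; without it the reconstructed $\ptau k$ need not belong to $L^\gamma(\Omega;\mt_\dev^{d\times d})$. Second, your assertion that $\bar\tau$ may depend on $M$ is a red flag: the subsequent passage to the limit $M\to\infty$ at fixed $\tau$ (Lemma~\ref{l:3.6}) requires $\bar\tau$ to be independent of $M$, and indeed the coercivity constant can be taken $M$-independent (only the lower-order constant $C$ in $\pairing{}{\rbspv}{\mathscr{A}_k(\cdot)}{\cdot}\geq c\|\cdot\|^2-C$ depends on $M$, since the truncated thermal-expansion coupling is merely bounded by $M$ and enters linearly). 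Also, the terms $|\Dtau k z|^2$ and $\ass(\Dtau k z,\Dtau k z)$ that you list as quadratic nonlinearities to be absorbed are in fact fixed data at this stage (they are determined once $\ztau k$ is found) and land in $\mathscr{B}_k^3$, not in the operator.
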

\begin{proof}
The positivity property \eqref{discr-strict-pos} (note that the constant providing the lower bound for $\tetau k$ is 
  independent of the truncation parameter $M$),
 follows from the analogue of estimate \eqref{var-ineq-4-pos}, with the  same  comparison argument. 
 Let us now address the existence of solutions. 
 \par
 First of all, we  find a solution $\ztau k$ to \eqref{discr-dam-eq}  via the minimum problem
 \begin{equation}
 \label{min-prob-4-z}
 \begin{aligned}
 &
 \mathrm{Min}_{z \in \spz} \left(\int_\Omega  \did{z{-}\ztau{k-1}} \dd x  + 
 \frac1{2\tau} \int_\Omega  |z{-}\ztau{k-1}|^2  \dd x + \frac{\nu}{2\tau} \ass(z{-}\ztau{k-1}, z{-}\ztau{k-1}) + \mathscr{F}_k(z)
 \right),
 \\
 &\quad \text{with } 
  \mathscr{F}_k(z): = \frac12 \ass (z,z) + \int_\Omega \left(\beta(z) {-} \lambda_W \ztau{k-1} z {+} \tfrac12 {\bbC}(z) \etau{k-1}{:}\etau{k-1} -\tetau{k-1}z \right) \dd x\,.
  \end{aligned}
 \end{equation}
 With the direct method in the calculus of variations, it is easy to check that \eqref{min-prob-4-z}, whose Euler-Lagrange equation is \eqref{discr-dam-eq},
  has a solution $\ztau{k}$.
 \par
Let us now briefly address the solvability of the approximate 
discrete thermoviscoplastic system \eqref{syst:approx-discr}, for fixed $k \in \{1,\ldots, K_\tau\}$ with
$\ztau{k}$ given. 
 To this end,
we reformulate system \eqref{syst:approx-discr} in the form 
\begin{equation}
\label{pseudo-monot}
\partial\Psi_k (\utau k -\wtau k ,  \ptau k,\tetau k) +\mathscr{A}_k (\utau k -\wtau k ,  \ptau k,\tetau k ) \ni \mathscr{B}_k \qquad \text{in } \rbspv^*,
\end{equation}
 with   the dissipation potential 
 $
 \Psi_k : \rbspv\to [0,+\infty) $ defined by $  \Psi_k (\tilde{u}, p,\teta)  = \Psi_k(p): =  \dip{\ztau{k}}{\tetau{k-1}}{p{-}\ptau{k-1}},
 $
  \MMM the space $ \rbspv:= W_\Dir^{1,\gamma} (\Omega;\R^d)  \times 
L^\gamma (\Omega;\mt_\dev^{d\times d}) \times H^1(\Omega) $,
 and the operator \MMM  $\mathscr{A}_k : \rbspv \to  \rbspv^* $ \EEE  given component-wise by 
\begin{subequations}
\label{oper-scrA}
\begin{align}
& 
\label{oper-scrA-1}
\begin{aligned}
\mathscr{A}_k^1 (\tilde u,p,\teta): =   \rho (\tilde u -\wtau k) - \mathrm{div}_{\Dir}\Big(  & \tau \bbD(\ztau{k}) \left( \sig{\tilde u + \wtau k} -p   \right) 
+\tau^2 {\bbC}(\ztau k)   \left( \sig{\tilde u + \wtau k} -p   \right) \\ & + \tau^3 \left|   \sig{\tilde u + \wtau k} -p    \right|^{\gamma-2}   \left( \sig{\tilde u + \wtau k} -p   \right) 
\\
& 
-\tau^2 \mathcal{T}_M(\teta) {\bbC}((\ztau{k})^+)\bbE \Big),
\end{aligned}
\\
& 
\label{oper-scrA-2}\begin{aligned}
\mathscr{A}_k^2(\tilde u, p,\teta): =  p + \tau^2 |p|^{\gamma-2} p
& 
- \Big(   \bbD (\ztau{k})\left( \sig{\tilde u + \wtau k} -p   \right) +\tau  {\bbC}(\ztau k)   \left( \sig{\tilde u + \wtau k} -p   \right) \\  & \quad  + 
\tau^2 \left|   \sig{\tilde u + \wtau k} -p    \right|^{\gamma-2}   \left( \sig{\tilde u + \wtau k} -p   \right) -\tau \mathcal{T}_M(\teta) {\bbC}((\ztau k)^+) \bbE \Big)_\dev,
\end{aligned}
\\
& 
\label{oper-scrA-3}
\begin{aligned}
\mathscr{A}_k^3 ( \tilde u,p,\teta): =
   \teta & + A_M^k(\teta)  
      -\frac1\tau \bbD(\ztau{k}) \left(\sig{\tilde u + \wtau k} -p \right){:} \left(\sig{\tilde u + \wtau k} -p \right)
      \\ & \quad
       -\frac2\tau \bbD(\ztau{k}) \left(\sig{\tilde u + \wtau k} -p \right){:}\etau{k-1} 
 + \mathcal{T}_M (\teta) {\bbC}((\ztau{k})^+)\bbE \left(\sig{\tilde u + \wtau k} -p - \etau{k-1} \right)
\\
&  \quad - \dip{\ztau{k}}{\tetau{k-1}}{p{-}\ptau{k-1}} -  \frac1\tau |p|^2   - \frac2\tau p : \ptau{k-1}, 
\end{aligned}
\end{align}
where $-\mathrm{div}_{\Dir}$  is defined by \eqref{div-Gdir}, 
 \end{subequations}
while the vector $\mathscr{B}_k \in \mathbf{B}^*$ on the right-hand side  of \eqref{pseudo-monot} has components
\begin{subequations}
\label{vector-B}
\begin{align}
\label{vect-B-1}
&
\mathscr{B}_k^1: = \Ltau k +2\rho\utau{k-1} - \rho\utau{k-1} - \mathrm{div}_{\Dir}(\tau \bbD(\ztau{k}) \etau{k-1}), 
\\
&
\label{vect-B-2}
\mathscr{B}_k^2: = \ptau{k-1} - (\bbD(\ztau{k}) \etau{k-1})_\dev,
\\
&
\label{vect-B-3}
\begin{aligned}
\mathscr{B}_k^3: = &  \Gtau k  + \frac1\tau \bbD(\ztau{k-1}) \etau{k-1}: \etau{k-1}
\\ & \quad 
 + 
\did{\ztau{k}{-} \ztau{k-1}} + 
\frac1\tau |\ztau{k}{-}\ztau{k-1}|^2 + \frac{\bar{\nu}}{\tau} \ass (\ztau{k}{-}\ztau{k-1},\ztau{k}{-}\ztau{k-1})   + \frac1\tau |\ptau{k-1}|^2 \,.
\end{aligned}
\end{align}  
 \end{subequations} 
The arguments therefore reduce to proving 
 the existence of a solution to the abstract subdifferential inclusion \eqref{pseudo-monot}.
 This follows from the very arguments developed in the proof of  \cite[Lemma 3.4]{Rossi2016},   to which we  refer for all details. 
 Let us only mention that the latter proof 
 is in turn based
 on  the existence result \cite[Cor.\ 5.17]{RoubNPDE2ed} 
 for elliptic systems featuring \MMM coercive \EEE pseudomonotone operators.
\MMM  In order to check coercivity of the operator  $\mathscr{A}_k : \rbspv \to  \rbspv^* $,   we  show
 that 
 \[
 \begin{aligned}
 &
 \exists\, c,\, C>0  \ \forall\, (\tilde u,p,\teta) \in \rbspv \, : 
 \\
 &
 \begin{aligned}
\pairing{}{\rbspv}{\mathscr{A}_k (\tilde u,p,\teta)}{(\tilde u,p,\teta)}  &   = \pairing{}{W_\Dir^{1,\gamma} (\Omega;\R^d) }{\mathscr{A}_k^1 (\tilde u,p,\teta)}{\tilde u} + \int_\Omega  \mathscr{A}_k^2 (\tilde u,p,\teta) : p \dd x + 
\pairing{}{H^1(\Omega)}{\mathscr{A}_k^3 (\tilde u,p,\teta)}{\teta} \\ &  \geq c\| (\tilde u,p,\teta)\|_{\rbspv}^2 - C\,.
\end{aligned}
\end{aligned}
 \]
  The calculations for  \cite[Lemma 3.4]{Rossi2016}    show the key role of the regularizing  terms
 $-\tau \mathrm{div}(|\etau k|^{\gamma- 2} \etau k)$  
 and   $\tau |\ptau k|^{\gamma-2} \ptau k$,
 added  to the discrete momentum equation and plastic flow rule,
  for proving the above  estimate. \EEE
\end{proof}
\par
\noindent 
\textbf{Step $3$:  A priori estimates on the solutions of  system \eqref{syst:approx-discr}:}  in order to 
pass to the limit as $M\to +\infty$ in Problem \ref{prob:approx-discrete}, for  \underline{fixed $k \in \{1,\ldots, K_\tau\}$ and $\ztau k$} solving the discrete damage flow rule
\eqref{discr-dam-eq},
  we need to establish suitable a priori estimates on a family
$(\utaum k, \etaum k,\ptaum k,\tetaum k)_{M}$
 of solutions to system \eqref{syst:approx-discr}. Along the footsteps of 
 \cite{Rossi2016},  we shall  derive them from a
 discrete version of the total energy inequality \eqref{total-uee}, cf.\ \eqref{discr-total-ineq} below,  featuring the discrete \MMM free \EEE energy
 (recall that the energy functional $\calE$ was defined in \eqref{stored-energy})
\begin{equation}
\label{discr-total-energy}
\calE_\tau (\teta,e,z,p) : = \calE(\teta,e,z)
 +\frac\tau\gamma \int_\Omega \left(|e|^\gamma + |p|^\gamma \right)  \dd x\,.
\end{equation}
\begin{lemma}
\label{l:aprio-M}
Assume \eqref{hyp-K}.
 Let $ k \in \{1,\ldots, K_\tau\} $ and $\tau \in (0,\bar \tau)$  be fixed.
Let $\ztau{k} \in \spz$ solve \eqref{discr-dam-eq}.
Then, the solution quadruple  $(\tetaum k, \utaum k, \etaum k,\ptaum k) $ to \eqref{syst:approx-discr} satisfies 
\begin{equation}
\label{discr-total-ineq}
\begin{aligned}
& \frac{\rho}2 \int_\Omega \left|\frac{\utaum k - \utau{k-1}}{\tau} \right|^2 \dd x + 
 \calE_\tau (\tetaum k,\etaum k, \ztau k, \ptaum k) \\  &  \leq  \frac{\rho}2 \int_\Omega \left| \frac{\utau{k-1} - \utau{k-2}}\tau \right |^2 \dd x +  \calE_\tau (\tetau {k-1},\etau {k-1},
 \ztau{k-1},
 \ptau{k-1})   
+ \tau \int_\Omega \Gtau k \dd x    + \tau \int_{\partial\Omega} \gtau{k} \dd x 
  \\ & \quad    + \tau \pairing{}{H_\Dir^{1}(\Omega;\R^d)}{\Ltau k}{\frac{\utaum k - \utau{k-1}}{\tau} {-} \Dtau kw}+ \tau \int_\Omega  \simtau {k} : \sig{\Dtau kw} \dd x   \\ & \quad  +\rho \int_\Omega \left(  \frac{\utaum k - \utau{k-1}}{\tau}  - \Dtau {k-1} u \right)   \Dtau kw \dd x\,.
\end{aligned}
\end{equation}
Moreover, there exists a constant $C>0$ such that  for all $M>0$
\begin{subequations}
\label{estimates-M-indep} 
\begin{align}
& 
\label{est-M-indep1}
\| \tetaum k\|_{L^1(\Omega)} + \| \utaum k \|_{L^2(\Omega;\R^d)} + \| \etaum k  \|_{L^2(\Omega;\mt_\sym^{d\times d})}  
\leq C,
\\
& \label{est-M-indep2}
\tau^{1/\gamma}   \| \utaum k \|_{W^{1,\gamma} (\Omega;\R^d)} 
+  \tau^{1/\gamma}   \| \etaum k \|_{L^\gamma(\Omega;\mt_\sym^{d\times d})}
+  \tau^{1/\gamma}   \| \ptaum k \|_{L^\gamma(\Omega;\mt_\dev^{d\times d})}  \leq C, 
\\ & 
\label{est-M-indep3}
\| \tetaum k \|_{H^1(\Omega)}  
 \leq C,
\\
& \label{est-M-indep4}
\| \zetau{k} \|_{L^\infty(\Omega;\mt_\dev^{d\times d})} \leq C,
\end{align}
\end{subequations}
where $\zetau k $ is a selection in $ \partial_{\dot p} \dip{\ztau k}{\tetau{k-1}}{\ptaum{k}{ -} \ptau{k-1}}$ fulfilling \eqref{discrete-plastic-approx}. 
\end{lemma}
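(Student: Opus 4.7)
The plan is to derive the discrete total energy inequality \eqref{discr-total-ineq} by testing each equation of \eqref{syst:approx-discr} by an appropriate variation and summing; the uniform-in-$M$ estimates \eqref{estimates-M-indep} should then follow. More precisely, I would test the momentum balance \eqref{discrete-momentum-approx} by the admissible variation $v:=(\utaum{k}{-}\utau{k-1}){-}(\wtau{k}{-}\wtau{k-1})\in W^{1,\gamma}_\Dir(\Omega;\R^d)$ (which vanishes on $\Gdir$ by the kinematic admissibility of $(\utaum{k},\etaum{k},\ptaum{k})$ and $(\utau{k-1},\etau{k-1},\ptau{k-1})$), the plastic flow rule \eqref{discrete-plastic-approx} by $\ptaum{k}{-}\ptau{k-1}$ (pointwise, then integrated over $\Omega$), the damage flow rule \eqref{discr-dam-eq} by $\ztau{k}{-}\ztau{k-1}\in\spz$ in duality, and the heat equation \eqref{discrete-heat-approx} by the constant $1\in H^1(\Omega)$; after multiplying by $\tau$ where needed and adding the four resulting relations, \eqref{discr-total-ineq} should emerge through a series of convexity estimates and cancellations.

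The summation relies on three ingredients. First, the pointwise inequality $a(a{-}b)\geq \tfrac12(a^2{-}b^2)$ yields the kinetic telescope and, via the symmetric bilinear form $e\mapsto\bbC(\ztau{k})e:e$, the elastic telescope $\tfrac12\int\bbC(\ztau{k})(\etaum{k}{:}\etaum{k}-\etau{k-1}{:}\etau{k-1})\dd x$; likewise convexity of $t\mapsto |t|^\gamma/\gamma$ produces the $\tau/\gamma$-telescope for the $L^\gamma$-regularizations, and $\ass(\ztau{k},\ztau{k}{-}\ztau{k-1})\geq\tfrac12(\ass(\ztau{k},\ztau{k}){-}\ass(\ztau{k-1},\ztau{k-1}))$ gives the $\mathrm{s}$-Laplacian telescope. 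Second, convexity of $\beta$ combined with the explicit treatment of the linear term $-\lambda_W\ztau{k-1}(\ztau{k}{-}\ztau{k-1})$ reconstructs the jump $\int(W(\ztau{k}){-}W(\ztau{k-1}))\dd x$ through the convex/concave decomposition \eqref{decomp-W}; convexity of $z\mapsto\bbC(\cdot,z)$ (assumption \eqref{elast-visc-tensors-3}), applied in the form \eqref{monotonicity}, controls the implicit term $-\tfrac12\int\bbC'(\ztau{k})\etau{k-1}{:}\etau{k-1}(\ztau{k}{-}\ztau{k-1})\dd x$ produced by the damage test so that its contribution exactly cancels the spurious $\bbC(\ztau{k})\etau{k-1}{:}\etau{k-1}$ piece left over from the elastic telescope, leaving the reconstructed mechanical energy evaluated jointly at $(\etaum{k},\ztau{k})$ and $(\etau{k-1},\ztau{k-1})$ (this is the pivotal role of \eqref{elast-visc-tensors-3} foreshadowed in Remark \ref{rmk:tech-comments}); the Euler identity for the $1$-positively homogeneous $\didname$ and $\dipname$ converts $\int\omegatau{k}(\ztau{k}{-}\ztau{k-1})\dd x$ and $\int\zetau{k}(\ptaum{k}{-}\ptau{k-1})\dd x$ into $\tau\int\did{\Dtau{k}z}\dd x$ and $\tau\int\dip{\ztau{k}}{\tetau{k-1}}{\Dtau{k}p}\dd x$ exactly. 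Third, because the thermal coupling is kept implicit in both the mechanical stress and in the heat equation, the terms $\int\mathcal{T}_M(\tetaum{k})\bbC((\ztau{k})^+)\bbE:(\etaum{k}{-}\etau{k-1})\dd x$ and $\int\tetau{k-1}(\ztau{k}{-}\ztau{k-1})\dd x$, as well as the dissipations $\bbD\Dtau{k}e{:}\Dtau{k}e$, $|\Dtau{k}z|^2$, $\nu\ass(\Dtau{k}z,\Dtau{k}z)$, $\did{\Dtau{k}z}$, $|\Dtau{k}p|^2$ and $\dip{\ztau{k}}{\tetau{k-1}}{\Dtau{k}p}$, cancel pairwise between the mechanical tests and the heat test; moreover $\int\simtau{k}{:}(\ptaum{k}{-}\ptau{k-1})\dd x = \int\simdevtau{k}{:}(\ptaum{k}{-}\ptau{k-1})\dd x$ since $\ptaum{k}{-}\ptau{k-1}\in\mt_\dev^{d\times d}$, so the plastic coupling between the momentum and plastic flow rule tests is consumed. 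What survives is precisely \eqref{discr-total-ineq}.

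Once \eqref{discr-total-ineq} is at hand, \eqref{est-M-indep1} and the $\tau$-weighted \eqref{est-M-indep2} follow by estimating the right-hand side through the safe-load assumption \eqref{safe-load}, Korn's inequality \eqref{Korn}, the bound \eqref{discr-w-tau} on $\wtau{k}$, and Young's inequality; the crucial point is that, thanks to the strict positivity \eqref{discr-strict-pos} already established at the beginning of the proof, $|\mathcal{T}_M(\tetaum{k})|\leq\tetaum{k}$ pointwise, so the truncation parameter $M$ does not enter the estimates, and a discrete Gronwall iteration over $k=1,\ldots,K_\tau$ closes the argument in view of \eqref{approx-e_0}. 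Estimate \eqref{est-M-indep4} on $\|\zetau{k}\|_{L^\infty}$ is immediate from the inclusion $\partial_{\dot p}\dip{\ztau{k}}{\tetau{k-1}}{\cdot}\subset B_{C_R}(0)$ of \eqref{bounded-subdiff}. The main obstacle is the $M$-uniform $H^1$-bound \eqref{est-M-indep3}: here I would test \eqref{discrete-heat-approx} by $\tetaum{k}$ itself; the lower bound $\condu_M\geq c_0$ from \eqref{hyp-K} produces the coercive contribution $c_0\|\nabla\tetaum{k}\|_{L^2(\Omega)}^2$, while the delicate term $\int\mathcal{T}_M(\tetaum{k})\bbC((\ztau{k})^+)\bbE{:}\Dtau{k}e\,\tetaum{k}\dd x$ is estimated by exploiting $|\mathcal{T}_M(\tetaum{k})|\leq\tetaum{k}$ (to avoid the spurious factor $M$), H\"older's inequality, the Sobolev embedding in $d\leq 3$, and interpolation with the $L^1$-control from \eqref{est-M-indep1}, so that part of $\|\nabla\tetaum{k}\|_{L^2}^2$ can be absorbed into the left-hand side; the remaining right-hand side terms are uniformly controlled by \eqref{est-M-indep1}--\eqref{est-M-indep2} and \eqref{est-M-indep4}. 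The overall blueprint follows that of \cite{Rocca-Rossi,Rossi2016}, from which further technical details can be borrowed.
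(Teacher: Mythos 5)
Your derivation of the discrete total energy inequality \eqref{discr-total-ineq} follows exactly the paper's route: the same four tests (momentum by $(\utaum{k}{-}\wtau{k}){-}(\utau{k-1}{-}\wtau{k-1})$, plastic flow rule by $\ptaum{k}{-}\ptau{k-1}$, damage flow rule by $\ztau{k}{-}\ztau{k-1}$, heat equation by $\tau$), the same convexity telescopes, the same role of the convex/concave splitting \eqref{decomp-W} and of the convexity assumption \eqref{elast-visc-tensors-3} on $z\mapsto\bbC(\cdot,z)$, and the same pairwise cancellations of dissipation and coupling terms between the mechanical tests and the heat test. That part is correct and essentially identical.

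Where you deviate is in the proof of the $M$-uniform $H^1$-bound \eqref{est-M-indep3}. The paper performs \emph{two} tests of the discrete heat equation — first by $\calT_M(\tetaum{k})$ and then by $\tetaum{k}$ — and explicitly invokes the \emph{superlinear growth} $\condu(\theta)\geq c_0(1+\theta^\mu)$ with $\mu>1$: the first test provides an $M$-independent control of $\|\nabla(\calT_M(\tetaum{k}))^{(\mu+2)/2}\|_{L^2}$, which is then used to bound the thermal coupling term in the second test. You instead test once by $\tetaum{k}$ and use only the constant lower bound $\condu_M\geq c_0$, absorbing the quadratic-in-$\tetaum{k}$ coupling term by interpolation. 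This can be made to work \emph{in this particular lemma} because the $M$-independent $L^\gamma(\Omega)$-bounds \eqref{est-M-indep2} on $\Dtau{k}e$ (with $\gamma>4$, hence $2\gamma'=2\gamma/(\gamma{-}1)<8/3<6$) make $\int\tetaum{k}^2|\Dtau{k}e|\dd x\lesssim\|\Dtau{k}e\|_{L^\gamma}\|\tetaum{k}\|_{L^{2\gamma'}}^2$, and Gagliardo--Nirenberg interpolation of $L^{2\gamma'}$ between $L^1$ and the $H^1$-Sobolev exponent gives an exponent on $\|\nabla\tetaum{k}\|_{L^2}$ strictly below $2$, so Young's inequality closes the absorption. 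However, you should be aware that the phrase ``part of $\|\nabla\tetaum{k}\|_{L^2}^2$ can be absorbed'' hides the real work: a naive H\"older--Sobolev estimate puts $\tetaum{k}^2|\Dtau{k}e|$ against the full $\|\tetaum{k}\|_{H^1}^2$, which cannot be absorbed into $c_0\|\nabla\tetaum{k}\|_{L^2}^2$; only the sub-quadratic interpolation exponent saves you, and this relies crucially on $\gamma>4$. So your route trades the paper's use of the growth exponent $\mu>1$ for heavier reliance on the $\gamma$-regularization that was added to the scheme. Both are valid here, but they are genuinely different arguments, and the paper's two-step test is the more robust of the two (it does not lean on the regularization that is destined to vanish as $\tau\downarrow 0$). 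One small further remark: the constant $C$ in \eqref{estimates-M-indep} is allowed to depend on $k$ and $\tau$, so no discrete Gronwall iteration over $k=1,\ldots,K_\tau$ is actually required at this stage; the single-step inequality \eqref{discr-total-ineq} (together with the previous-step data being fixed) already suffices, and the Gronwall argument over all $k$ belongs to Proposition \ref{prop:aprio}, not to this lemma.
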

\begin{proof} Inequality \eqref{discr-total-ineq} follows  by
testing 
\eqref{discr-dam-eq} by $\ztau{k} - \ztau{k-1}$, 
\eqref{discrete-momentum-approx} by  $(\utaum k {-}\wtau k) - (\utau {k-1} {-} \wtau {k-1})$, 
\eqref{discrete-plastic-approx} by $\ptaum k - \ptaum{k-1}$, and 
by multiplying  \eqref{discrete-heat-approx} by $\tau$ and integrating it in space. 
 We add  the resulting relations.
  We  develop  the following estimates  for the terms arising from the  test of the momentum balance \eqref{discrete-momentum-approx}
\begin{subequations}
\label{est-mimick}
\begin{align}
&
\label{est-mimick-1}
\begin{aligned}
&
\frac{\rho}{\tau^2} \int_\Omega \left(\utaum {k} {-} \utau{k-1} {-} (\utau{k-1}{ -} \utau{k-2} )   \right) (\utaum{k} {- }  \utau{k-1} )  \dd x \\ & \geq  \frac{\rho}2 \left\| \frac{\utaum k - \utau{k-1}}{\tau} \right\|_{L^2(\Omega)}^2 -  \frac{\rho}2 \left\| \frac{\utau {k-1} - \utau{k-2}}{\tau} \right\|_{L^2(\Omega)}^2,
\end{aligned}
\\
&    
\label{est-mimick-2}
\begin{aligned}
&
\int_\Omega \bbD(\ztau k) \left(\frac{\etaum k - \etau{k-1}}\tau
\right){:} \sig{\utaum k - \utau{k-1}} \dd x 
\\ &  = \tau \int_\Omega \bbD(\ztau k)   \frac{\etaum k - \etau{k-1}}\tau {:}\frac{\etaum k - \etau{k-1}}\tau  \dd x +\int_\Omega \bbD(\ztau k)
   \frac{\etaum k - \etau{k-1}}\tau{ :} (\ptaum k{ -} \ptau {k-1}) \dd x, 
\end{aligned}
\\
& 
\label{est-mimick-3}
\begin{aligned}
&
\int_\Omega {\bbC} (\ztau k) \etaum k {:}   \sig{\utaum k - \utau{k-1}} \dd x   
\\
&
 = \int_\Omega  {\bbC}(\ztau k)  \etaum {k} : (\etaum k - \etau{k-1}) +   {\bbC} (\ztau k)
\etaum {k} : (\ptaum k {-} \ptau{k-1}) \dd x 
\\  & \geq \int_\Omega \left(  \tfrac12 {\bbC}(\ztau k)  \etaum {k} {:}  \etaum {k}  - \tfrac12 {\bbC} (\ztau k) \etau{k-1}{:} \etau{k-1} +{\bbC}(\ztau k) \etaum {k} : (\ptaum k {-} \ptau{k-1}) \right) \dd x 
 \,, \end{aligned}
\\
& 
\label{est-mimick-4}
\begin{aligned}
&
\int_\Omega |\etaum k|^{\gamma-2} \etaum k   :   \sig{\utaum k - \utau{k-1}}  \dd x 
\\ & =  \int_\Omega |\etaum k|^{\gamma-2} \etaum k   :   (\etaum k {-} \etau{k-1})  \dd x + \int_\Omega   |\etaum k|^{\gamma-2} \etaum k   :   (\ptaum k {-} \ptau{k-1})  \dd x  \\ & 
 \geq\int_\Omega \left(  \tfrac1{\gamma} |\etaum k|^{\gamma} {-} \tfrac1{\gamma} |\etau {k-1}|^\gamma {+}  |\etaum k|^{\gamma-2} \etaum k   :   (\ptaum k{ -} \ptau{k-1})  \right) \dd x \,,
 \end{aligned}
 \end{align}
 \end{subequations}
 where have exploited the kinematic admissibility condition in  \eqref{est-mimick-2} and  \eqref{est-mimick-3}, 
 as well as elementary convexity inequalities to establish estimates \eqref{est-mimick-1}, \eqref{est-mimick-3}, and  \eqref{est-mimick-4}. 
 As for the terms arising from the test of the discrete damage flow rule \eqref{discrete-plastic-approx},  we observe that 
 \begin{subequations}
  \label{est-mimick-5}
  \begin{align}
   \label{est-mimick-5-1}
 &
 \pairing{}{\spz}{\omegatau{k}}{\ztau k-\ztau{k-1}} \geq \tau \Did{\Dtau {k}z},
 \\
 &
  \label{est-mimick-5-2}
  \begin{aligned}
  &
  \int_\Omega \Dtau {k}z (\ztau k-\ztau{k-1}) \dd x + \nu \pairing{}{\spz}{\As(\Dtau k z)}{\ztau{k}{-}\ztau{k-1}} \\ & = \frac1{\tau} |\ztau{k}{-}\ztau{k-1}|^2 + \frac{\nu}{\tau} 
 \ass (\ztau{k}{-}\ztau{k-1},\ztau{k}{-}\ztau{k-1}),
 \end{aligned}
 \\
 & 
  \label{est-mimick-5-3}
 \pairing{}{\spz}{\As(\ztau k)}{\ztau{k} {-} \ztau{k-1}} \geq \frac12 \ass (\ztau k,\ztau k) - \frac12 \ass (\ztau{k-1},\ztau{k-1}),
 \\
 &
  \label{est-mimick-5-4}
  \begin{aligned}
  &
 \int_\Omega \left(  \beta'(\ztau k)(\ztau{k} {-} \ztau{k-1}) {-} \lambda_W \ztau{k-1} (\ztau{k} {-} \ztau{k-1})  \right)   \dd x  
 \\
 & \geq \int_\Omega\left( \beta(\ztau k) {-} \beta(\ztau {k-1}) \right)  \dd x -\lambda_W \int_\Omega \left(\tfrac12  |\ztau {k}|^2 {-} \tfrac12  |\ztau {k-1}|^2  \right) \dd x 
  = \int_\Omega \left( W'(\ztau k) {-} W'(\ztau{k-1})  \right) \dd x\,,
 \end{aligned}
 \\
 & 
  \label{est-mimick-5-6}
 \int_\Omega \tfrac12 {\bbC}'(\ztau k)  (\ztau{k} {-} \ztau{k-1})  \etau{k-1}{:} \etau{k-1}  \dd x\geq \int_\Omega \tfrac12 {\bbC}(\ztau k)  \etau{k-1}{:} \etau{k-1}  \dd x - \int_\Omega \tfrac12 {\bbC}(\ztau{k-1})  \etau{k-1}{:} \etau{k-1}  \dd x\,,
 \end{align}
\end{subequations}
again by convexity arguments,
also relying on \eqref{elast-visc-tensors-3}.
 In particular, in \eqref{est-mimick-5-4} we 
 have  exploited  the convex-concave decomposition \eqref{decomp-W} of $W$.
 We now observe several cancellations. Indeed, 
 the two  terms on the  right-hand side of  \eqref{est-mimick-2} respectively
  cancel with the second term on the r.h.s.\ of  the heat equation
  \eqref{discrete-heat-approx}, multiplied by $\tau$, and with the analogous term deriving from   \eqref{discrete-plastic-approx}, tested by $\ptaum{k} - \ptau{k-1}$. 
 As for   \eqref{est-mimick-3},  the second term on its r.h.s.\ 
     cancels out with the first summand on the r.h.s.\ of  \eqref{est-mimick-5-6}; the third term on its r.h.s.\ cancels with the one deriving from  \eqref{discrete-plastic-approx}, and
      so does the third term on the r.h.s.\ of \eqref{est-mimick-4}. 
      Also the terms on the r.h.s.\ of \eqref{est-mimick-5-1} and \eqref{est-mimick-5-2} cancel out with the right-hand side of  
      \eqref{discrete-heat-approx} multiplied by $\tau$: in particular, observe that 
      \[
      \nu \pairing{}{\spz}{\As(\Dtau kz)}{\ztau k{-} \ztau {k-1}} = \bar{\nu}  \tau \int_\Omega  \ass(\Dtau kz,\Dtau kz) \dd x \,.
      \]
In fact, with the exception of $\tau \Gtau k$, all the terms on the r.h.s.\ of 
  \eqref{discrete-heat-approx}
cancel out.
Thus, straightforward calculations lead to  \eqref{discr-total-ineq}. 
\par
We refer to the proofs of  \cite[Lemma 3.5]{Rossi2016} and \cite[Lemma 4.4]{Rocca-Rossi} for all the detailed calculations leading to estimates \eqref{estimates-M-indep}: \MMM let us only mention that 
one has to first test   the discrete heat equation \eqref{discrete-heat} by $\calT_M(\tetaum k )$ and then by $\tetaum k $, and 
exploit the growth properties of $\condu$. \EEE
\end{proof}
\par
\noindent 
\textbf{Step $4$: Limit passage as $M\to+\infty$:} We again refer to \cite{Rossi2016} (cf.\ Lemma 3.6 therein) for the proof of the following result
on the limit passage
in the approximate discrete thermoviscoplastic subsystem \eqref{syst:approx-discr}. \MMM Let us only mention here that the strong convergences \eqref{conves-as-M-u}--\eqref{conves-as-M-p} arise from standard
 $\limsup$-arguments, developed by testing   the discrete momentum balance by $\utaum k - \wtau k$ 
 and the discrete plastic flow rule by $\ptaum k$.  \EEE 
\begin{lemma}
\label{l:3.6}
 Let $ k \in \{1,\ldots, K_\tau\} $  and $\tau\in (0,\bar\tau)$  be fixed. Under  the growth condition \eqref{hyp-K}, there exist a (not relabeled) subsequence
  of   $(\utaum k, \etaum k,\ptaum k,\tetaum k)_{M}$  and of $(\zetaum k)_M$,  a quadruple $(\utau k, \etau k, \ptau k,\tetau k) \in
  \rbsp$, with $\tetau k \in \spt$, and  a function
  $\zetau k \in   L^\infty(\Omega;\mt_\dev^{d\times d}) $,   such that the following convergences hold as $M\to\infty$
 \begin{subequations}
 \label{conves-as-M}
 \begin{align}
  \label{conves-as-M-u}
 &  \utaum k \to \utau  k  &&  \text{in } W_\Dir^{1,\gamma}(\Omega;\R^d),
 \\
  \label{conves-as-M-e}
 &  \etaum k \to \etau  k  &&  \text{in } L^{\gamma}(\Omega;\mt_\sym^{d\times d}),
 \\
  \label{conves-as-M-p}
 &  \ptaum k \to \ptau  k &&  \text{in } L^{\gamma}(\Omega;\mt_\dev^{d\times d}),
  \\
  \label{conves-as-M-zeta}
 &  \zetaum k \weaksto \zetau k  &&  \text{in } L^{\infty}(\Omega;\mt_\dev^{d\times d}),
 \\
  \label{conves-as-M-teta}
 &  \tetaum k \weakto \tetau k && \text{in }  \MMM H^1(\Omega), \EEE 
 \end{align}
\end{subequations}
and the quintuple $(\utau k, \etau k, \ptau k,\zetau k,\tetau k)$ fulfills system (\ref{discrete-momentum}, \ref{discrete-plastic}, \ref{discrete-heat}). 
\end{lemma}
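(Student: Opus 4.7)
The plan is to extract convergent subsequences from the $M$-uniform bounds of Lemma \ref{l:aprio-M}, then upgrade the weak convergences to strong ones wherever needed to pass to the limit in each equation of \eqref{syst:approx-discr}. From \eqref{estimates-M-indep}, along a (not relabeled) subsequence there exist limits $\utau k\in W_\Dir^{1,\gamma}$, $\etau k\in L^\gamma$, $\ptau k\in L^\gamma$, $\zetau k\in L^\infty$, $\tetau k\in H^1(\Omega)$, giving convergences \eqref{conves-as-M-u}--\eqref{conves-as-M-teta} in the weak (resp.\ weak-$*$) sense. By Rellich--Kondrachov, $\tetaum k\to\tetau k$ strongly in $L^p(\Omega)$ for every $p\in[1,6)$. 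Since $|\calT_M(\teta){-}\teta|\leq |\teta|\chi_{\{|\teta|>M\}}$, combined with \eqref{est-M-indep3} and Vitali's theorem, we deduce $\calT_M(\tetaum k)\to\tetau k$ strongly in $L^p(\Omega)$, which is enough to identify the limit of all coupling terms involving $\calT_M$ in the momentum balance, the plastic flow rule, and the heat equation.

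To pass to the limit in the $\gamma$-growth terms $\tau|\etaum k|^{\gamma-2}\etaum k$ and $\tau|\ptaum k|^{\gamma-2}\ptaum k$, and in the subdifferential inclusion for $\zetaum k$, we need strong convergence $\etaum k\to\etau k$ in $L^\gamma(\Omega;\mt_\sym^{d\times d})$ and $\ptaum k\to\ptau k$ in $L^\gamma(\Omega;\mt_\dev^{d\times d})$. Following the scheme of \cite{Rossi2016}, we test the approximate momentum balance \eqref{discrete-momentum-approx} by $\utaum k{-}\wtau k$ and the approximate plastic flow rule \eqref{discrete-plastic-approx} by $\ptaum k$, add the resulting identities, and use kinematic admissibility to rewrite everything in terms of $\etaum k$ and $\ptaum k$. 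Passing to the $\limsup$ as $M\to\infty$ and comparing with the analogous identity for the putative limit yields, via the strict monotonicity of the $\gamma$-Laplacian type operators $s\mapsto |s|^{\gamma-2}s$, the strong convergences \eqref{conves-as-M-e}, \eqref{conves-as-M-p}, and, through Korn's inequality \eqref{Korn} and the kinematic admissibility, also \eqref{conves-as-M-u}. Once these are established, the closure of the graph of $\partial_{\dot p}\dip{\ztau k}{\tetau{k-1}}{\cdot}$ under weak-$*$/strong convergence yields $\zetau k\in\partial_{\dot p}\dip{\ztau k}{\tetau{k-1}}{\Dtau k p}$.

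The main obstacle is the limit passage in the heat equation \eqref{discrete-heat-approx}. The quadratic right-hand side terms $\bbD(\ztau k)\Dtau k e:\Dtau k e$, $|\Dtau k z|^2$, $|\Dtau k p|^2$, and $\dip{\ztau k}{\tetau{k-1}}{\Dtau k p}$ are handled by the aforementioned strong $L^\gamma$-convergences (which imply strong $L^{\gamma/2}\hookrightarrow L^2$-convergence of the squared norms) together with \eqref{hypR-cont} and the $M$-independence of the coupling coefficients after identifying $\calT_M(\tetaum k)\to\tetau k$. The truly delicate point is the elliptic operator $A_M^k$: one must show $\condu_M(\tetaum k)\nabla\tetaum k\weakto \condu(\tetau k)\nabla\tetau k$ in an appropriate dual sense and, in particular, that $\tetau k\in\spt$ as defined in \eqref{X-space}. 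To this end, the strong $L^p$-convergence of $\tetaum k$, continuity of $\condu$, the growth bound \eqref{hyp-K}, and Vitali's theorem (together with the fact that eventually $\calT_M$ acts as the identity on any bounded level set) give $\condu_M(\tetaum k)\to\condu(\tetau k)$ strongly in suitable Lebesgue spaces. Combined with $\nabla\tetaum k\weakto\nabla\tetau k$ in $L^2$ and the integrability of $\condu(\tetau k)$ extracted from the $M$-uniform bounds, this allows one to test the limit elliptic term against any $v\in H^1(\Omega)$, concluding that $(\utau k,\etau k,\ptau k,\zetau k,\tetau k)$ satisfies (\ref{discrete-momentum}, \ref{discrete-plastic}, \ref{discrete-heat}).
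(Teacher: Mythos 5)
Your proposal is correct and follows essentially the same route as the paper, which in turn defers the details to the proof of \cite[Lemma 3.6]{Rossi2016}: the paper's only substantive hint is precisely the one you implement, namely that the strong convergences \eqref{conves-as-M-u}--\eqref{conves-as-M-p} come from a $\limsup$-argument obtained by testing the approximate momentum balance by $\utaum{k}-\wtau{k}$ and the approximate plastic flow rule by $\ptaum{k}$, using kinematic admissibility and the strict monotonicity of $s\mapsto|s|^{\gamma-2}s$ to upgrade weak to strong convergence in $L^\gamma$, and then Korn to get \eqref{conves-as-M-u}.

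One point worth stating more explicitly: the membership $\tetau{k}\in\spt$, i.e.\ $\condu(\tetau{k})\nabla\tetau{k}\nabla v\in L^1(\Omega)$ for all $v\in H^1(\Omega)$, does not follow from the $H^1$-bound \eqref{est-M-indep3} alone (for general $\mu>1$, the product $\condu(\tetau{k})\nabla\tetau{k}\nabla v$ need not be $L^1$ just from $\tetau{k}\in H^1$). What makes it work is the uniform-in-$M$ weighted gradient bound $\int_\Omega\condu_M(\tetaum{k})|\nabla\tetaum{k}|^2\dd x\leq C$ extracted by testing \eqref{discrete-heat-approx} by $\calT_M(\tetaum{k})$ --- the test that the paper flags in the proof sketch of Lemma \ref{l:aprio-M}. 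You allude to this under ``integrability of $\condu(\tetau{k})$ extracted from the $M$-uniform bounds'', but it would strengthen the argument to name that estimate, since it is what allows a Cauchy--Schwarz estimate in the weighted metric, and it is also what justifies, via Fatou, the weak identification of $\condu_M(\tetaum{k})\nabla\tetaum{k}$ with $\condu(\tetau{k})\nabla\tetau{k}$ beyond the regime $\mu<3$ where your direct H\"older argument would suffice.
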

With this result, \underline{we conclude the proof of Proposition \ref{prop:exist-discr}}. 
\QED

\section{\bf A priori estimates}
\label{s:aprio}
\noindent
Again, 
throughout this section we will tacitly assume all of the conditions listed in Section \ref{ss:2.1}.
We start by fixing some notation for the approximate solutions. 
\begin{notation}[Interpolants]
\upshape
For a given Banach space $X$ and a
$K_\tau$-tuple $( \mathfrak{h}_\tau^k )_{k=0}^{K_\tau}
\subset X$, we  introduce
 the left-continuous and  right-continuous piecewise constant, and the piecewise linear interpolants
\[
\left.
\begin{array}{llll}
& \pwc  {\mathfrak{h}}{\tau}: (0,T] \to X  & \text{defined by}  &
\pwc {\mathfrak{h}}{\tau}(t): = \mathfrak{h}_\tau^k,
\\
& \upwc  {\mathfrak{h}}{\tau}: (0,T] \to X  & \text{defined by}  &
\upwc {\mathfrak{h}}{\tau}(t) := \mathfrak{h}_\tau^{k-1},
\\
 &
\pwl  {\mathfrak{h}}{\tau}: (0,T] \to X  & \text{defined by} &
 \pwl {\mathfrak{h}}{\tau}(t):
=\frac{t-t_\tau^{k-1}}{\tau} \mathfrak{h}_\tau^k +
\frac{t_\tau^k-t}{\tau}\mathfrak{h}_\tau^{k-1}
\end{array}
\right\}
 \qquad \text{for $t \in
(t_\tau^{k-1}, t_\tau^k]$,}
\]
setting $\pwc  {\mathfrak{h}}{\tau}(0)= \upwc  {\mathfrak{h}}{\tau}(0)= \pwl  {\mathfrak{h}}{\tau}(0): = \mathfrak{h}_\tau^0$. 
We also introduce the piecewise linear interpolant    of the values
$\{ \Dtau  k {\mathfrak{h}}  = \tfrac{\mathfrak{h}_\tau^k - \mathfrak{h}_{\tau}^{k-1})}\tau\}_{k=1}^{K_\tau}$ (i.e.\  the
values taken by  the piecewise constant function $\pwl
{\dot{\mathfrak{h}}}{\tau}$), viz.
\[
\pwwll  {\mathfrak{h}}{\tau}: (0,T) \to X  \ \text{ defined by } \ \pwwll
{\mathfrak{h}}{\tau}(t) :=\frac{(t-t_\tau^{k-1})}{\tau}
\Dtau k  {\mathfrak{h}}  +
\frac{(t_\tau^k-t)}{\tau} \Dtau {k-1} {\mathfrak{h}}   \qquad \text{for $t \in
(t_\tau^{k-1}, t_\tau^k]$.}
\]
Note that $ {\partial_t \pwwll  {{\mathfrak{h}}}{\tau}}(t) =  \Ddtau k {\mathfrak{h}}  $ for $t \in
(t_\tau^{k-1}, t_\tau^k]$.

Furthermore, we   denote by  $\pwc{\mathsf{t}}{\tau}$ and by
$\upwc{\mathsf{t}}{\tau}$ the left-continuous and right-continuous
piecewise constant interpolants associated with the partition, i.e.
 $\pwc{\mathsf{t}}{\tau}(t) := t_\tau^k$ if $t_\tau^{k-1}<t \leq t_\tau^k $
and $\upwc{\mathsf{t}}{\tau}(t):= t_\tau^{k-1}$ if $t_\tau^{k-1}
\leq t < t_\tau^k $. Clearly, for every $t \in [0,T]$ we have
$\pwc{\mathsf{t}}{\tau}(t) \downarrow t$ and
$\upwc{\mathsf{t}}{\tau}(t) \uparrow t$ as $\tau\to 0$.
\end{notation}

It follows from conditions  \eqref{heat-source}, \eqref{dato-h}, and \eqref{data-displ}  that the piecewise constant
interpolants $(\pwc  G{\tau}
)_{\tau}$, $(\pwc  g{\tau} )_{\tau}$, and $(\pwc \calL{\tau})_\tau$   of the values 
$\Gtau{k}$,  $\gtau{k}$, and $\Ltau k $, cf.\  \eqref{local-means}, fulfill as $\tau \down
0$
\begin{subequations}
\label{convs-interp-data}
\begin{align}
\label{converg-interp-g}  & \pwc G{\tau}  \to G
  \text{ in $L^1(0,T;L^1(\Omega))\cap L^2(0,T;H^1(\Omega)^*)$,}
  \\
  \label{converg-interp-h}  & \pwc g{\tau}  \to g
  \text{ in $L^1(0,T;L^2(\partial\Omega))$,}
  \\
  \label{converg-interp-L}  & \pwc \calL{\tau} \to \calL \text{ in $ L^2(0,T; H_\Dir^1(\Omega;\R^d)^*).$} 
\end{align}
Furthermore, it follows from \eqref{Dirichlet-loading} and \eqref{discr-w-tau} that 
\begin{equation}
\label{converg-interp-w}
\begin{gathered}
\pwc w\tau \to w \quad \text{in  $L^1(0,T; W^{1,\infty} (\Omega;\R^d))$}, \qquad \pwl w\tau \to w \quad \text{in $W^{1,p} (0,T; H^1(\Omega;\R^d))$ for all } 1 \leq p<\infty, \\ \pwwll w\tau \to w \quad \text{in  $W^{1,1}(0,T; H^1(\Omega;\R^d) \cap H^1(0,T; L^2(\Omega;\R^d))$},
\\
\sup_{\tau>0}   \tau^{\alpha_w} \| \sig{\pwl{\dot w}{\tau}}\|_{L^\gamma(\Omega;\mt_\sym^{d\times d})} \leq C <\infty \text{ with } \alpha_w \in (0,\tfrac1\gamma)\,.
 \end{gathered}
\end{equation}
\end{subequations}
\par We  now reformulate  system \eqref{syst:discr} 
 in terms of the 
 interpolants of
  the   discrete solutions $(\utau k, \etau k,\ztau k, \ptau k,\tetau k)_{k=1}^{K_\tau}$. Therefore, we have
 for almost all $t\in (0,T)$
\begin{subequations}
\label{syst-interp}
\begin{align}
&
 \label{eq-u-interp}
 \begin{aligned}
\rho\int_\Omega  \partial_t\pwwll {\uu}{\tau}(t)  v \dd x + \int_\Omega \pwc\sigma \tau(t)  {:} \sig{v} \dd x  = \pairing{}{H_\Dir^{1}(\Omega;\R^d)}{\pwc\calL \tau(t)}{v}
\quad  \text{for all } v \in W_\Dir^{1,\gamma}(\Omega;\R^d)
\end{aligned}
\intertext{with the notation $\pwc\sigma\tau(t): = \bbD(\pwc z\tau(t)) \pwl {\dot e}\tau (t) + {\bbC}(\pwc z\tau (t))  \pwc e\tau (t)+ \tau |\pwc e\tau (t)|^{\gamma-2} \pwc e\tau (t)-
 \pwc\teta\tau (t) \overline{\bbC}((\pwc z\tau (t))^+) \bbE$,}
& 
\label{eq-z-interp}
\begin{aligned}
 \pwc \omega\tau(t) + \pwl {\dot z}{\tau}(t)   + \nu \As (\pwl{\dot z}{\tau}(t))   + \As(\pwc z\tau(t)) + \beta'(\pwc z\tau(t)) - \lambda_W \upwc z\tau(t)
 = -\frac12 \bbC'(\pwc z\tau) \upwc e\tau {:} \upwc e\tau + \upwc \teta\tau  \quad \text{ in } \spz^* 
\end{aligned}
\intertext{with $\pwc \omega\tau(t)  \in \partial\Did{\pwl {\dot{z}}\tau(t)}$ in $\spz^*$,}
& \label{eq-p-interp}
\begin{aligned}
\pwc\zeta\tau(t)+ \pwl{\dot p}\tau (t) + \tau | \pwc p\tau(t)|^{\gamma-2} \pwc p\tau(t) \ni (\pwc\sigma \tau(t)  )_\dev \quad \aein\, \Omega
  \end{aligned}
\intertext{ 
with  $\pwc\zeta\tau(t) \in \partial_{\dot p} \dip{\pwc z\tau(t)}{\upwc \teta\tau(t)}{ \pwl{\dot p}\tau(t) }$ a.e.\ in $\Omega$,}
&
\label{eq-teta-interp}
\begin{aligned} 
\partial_t \pwl \teta{\tau}(t) 
+   \mathcal{A}^{\frac{\bar{\mathsf{t}}_\tau(t)}{\tau}}( \pwc{\teta}\tau (t) )    = \pwc G{\tau}(t) &  + \bbD(\pwc z\tau(t)) \pwl{\dot e}\tau(t) {:}   \pwl{\dot e}\tau(t) 
- \pwc \teta\tau(t) \bbC((\pwc z\tau(t))^+) \bbE  : \pwl{\dot e}\tau(t)
\\
& + \did{\pwl {\dot z}\tau(t)} + |\pwl {\dot z}\tau(t)|^2 +\bar\nu \ass (\pwl {\dot z}\tau(t), \pwl {\dot z}\tau(t)) 
-\upwc \teta{\tau}(t) \pwl {\dot z}\tau(t)
\\
&
+
\dip{\pwc z\tau(t)}{\upwc \teta\tau(t)}{ \pwl{\dot p}\tau (t)} + |\pwl{\dot p}\tau (t)|^2  
 \quad \text{in $H^1(\Omega)^*$,}
\end{aligned}
\end{align}
\end{subequations}
\MMM cf.\ \eqref{elliptic-k} for the definition of the operator $A^{\frac{\bar{\mathsf{t}}_\tau(t)}{\tau}}$. \EEE
\par
Our next  result collects the discrete versions of the  entropy, total energy, and mechanical energy inequalities
satisfied by the approximate solutions. 
In order to state
 the discrete entropy inequality  \eqref{entropy-ineq-discr} below, 
 we need to 
 construct suitable approximations of the 
 test functions for the limiting  entropy inequality \eqref{entropy-ineq}. 
 Following \cite{Rossi2016}, we will in fact approximate \emph{positive}
 test functions $\varphi $ 
  with  $\varphi  \in \rmC^0 ([0,T]; W^{1,\infty}(\Omega)) \cap H^1 (0,T; L^{6/5}(\Omega)) $, which is 
   a slightly stronger  temporal regularity than that required by Def.\ \ref{def:entropic-sols}. 
   We set
     \begin{equation}
 \label{discrete-tests-phi}
   \varphi_\tau^k:= 
\frac{1}{\tau}\int_{t_\tau^{k-1}}^{t_\tau^k}  \varphi(s)\dd s \qquad \text{for } k=1, \ldots, K_\tau,
 \end{equation}
and consider the piecewise constant and  linear interpolants
$\pwc \varphi\tau$ and $\pwl \varphi\tau$ of the values
$(\varphi_\tau^k)_{k=1}^{K_\tau}$.
 We can show that 
\begin{equation}
\label{convergences-test-interpolants}
\pwc \varphi\tau  \to \varphi \quad
\text{ in } L^\infty (0,T; W^{1,\infty}(\Omega)) \text{ and }  \quad \partial_t \pwl \varphi\tau  \to \partial_t \varphi \quad \text{ in } L^2 (0,T; L^{6/5}(\Omega)).
\end{equation} 
\par
We are now in the position to give the discrete versions of the entropy and energy inequalities in which we will pass to the limit
to conclude the existence of \emph{weak energy} solutions to the regularized thermoviscoplastic system. 
The discrete total energy inequality \eqref{total-enid-discr} follows by adding up \eqref{discr-total-energy}. 
The proof of the other two inequalities can be obtained  
by trivially adapting the arguments for \cite[Prop.\ 4.8]{Rocca-Rossi}  and \cite[Lemma 4.2]{Rossi2016}. 
Their proof relies on  the following  \emph{discrete by-part integration} formula,
which we recall for later use, 
 holding
for all $K_\tau$-uples  $\{\mathfrak{h}_\tau^k \}_{k=0}^{K_\tau} \subset B,\, \{ v_\tau^k \}_{k=0}^{K_\tau} \subset B^*$ in a given Banach space $B$:
\begin{equation}
\label{discr-by-part} \sum_{k=1}^{K_\tau} \tau
\pairing{}{B}{\vtau{k}}{\Dtau{k}{\mathfrak{h}}} =
\pairing{}{B}{\vtau{K_\tau}}{\btau{K_\tau}}
-\pairing{}{B}{\vtau{0}}{\btau{0}} -\sum_{k=1}^{K_\tau}\tau\pairing{}{B}{\dtau{k}{v} }{ \btau{k-1}}\,.
\end{equation}
  \begin{lemma}[Discrete entropy, mechanical, and  total energy inequalities]
\label{lemma:discr-enid}
The interpolants of the   discrete solutions
 $(\utau{k}, \etau k, \ztau k, \ptau{k}, \tetau{k})_{k=1}^{K_\tau}$ to Problem \ref{prob:discrete}
 fulfill
 \begin{itemize}
 \item[-]
  the \emph{discrete} entropy inequality 
 \begin{equation}\label{entropy-ineq-discr}
\begin{aligned}
& \int_{\pwc{\mathsf{t}}{\tau}(s)}^{\pwc{\mathsf{t}}{\tau}(t)}
\int_\Omega \log(\upwc\teta\tau (r)) 
\pwl {\dot\varphi}\tau (r) \dd x \dd r - \int_{\pwc{\mathsf{t}}{\tau}(s)}^{\pwc{\mathsf{t}}{\tau}(t)}  \int_\Omega  \condu(\pwc\teta\tau (r)) \nabla \log(\pwc\teta\tau(r))  \nabla \pwc\varphi\tau(r)  \dd x \dd r \\ & 
\leq \int_\Omega \log(\pwc\teta\tau(t))
\pwc\varphi\tau(t) \dd x - \int_\Omega
\log(\pwc\teta\tau(s)) 
\pwc\varphi\tau(s) \dd x
 -  \int_{\pwc{\mathsf{t}}{\tau}(s)}^{\pwc{\mathsf{t}}{\tau}(t)}  \int_\Omega \condu(\pwc \teta\tau(r)) \frac{\pwc\varphi\tau(r)}{\pwc \teta\tau(r)}
\nabla \log(\pwc \teta\tau(r)) \nabla \pwc \teta\tau (r) \dd x \dd r\\ & 
\begin{aligned}
\quad 
 -
  \int_{\pwc{\mathsf{t}}{\tau}(s)}^{\pwc{\mathsf{t}}{\tau}(t)}   \int_\Omega \Big( \pwc G{\tau} (r)    & +
    \bbD(\pwc z\tau(r)) \pwl{\dot e}\tau (r) {:} \pwl{\dot e}\tau(r) - \pwc\teta\tau(r) \bbC((\pwc z\tau (r))^+) \bbE  {:}    \pwl{\dot e}\tau(r) 
   + \dip{\pwc z\tau(r)}{\upwc \teta\tau(r)}{ \pwl{\dot p}\tau(r)}  
   \\
   & + |\pwl{\dot p}\tau(r)|^2   + \did {\pwl{\dot z}\tau(r)} + |\pwl{\dot z}\tau(r)|^2 +\bar{\nu} \ass(\pwl{\dot z}\tau(r),\pwl{\dot z}\tau(r)) - \upwc \teta\tau(r) \pwl{\dot z}\tau(r)  
  \Big)
 \frac{\pwc\varphi\tau(r)}{\pwc\teta\tau(r)} \dd x \dd r 
 \end{aligned}
   \\ & \quad -  \int_{\pwc{\mathsf{t}}{\tau}(s)}^{\pwc{\mathsf{t}}{\tau}(t)}   \int_{\partial\Omega} \pwc g{\tau} (r)   \frac{\pwc\varphi\tau(r)}{\pwc\teta\tau(r)} \dd S \dd r
\end{aligned}
\end{equation}
for all $0 \leq s \leq t \leq T$ and
for all $\varphi \in \mathrm{C}^0 ([0,T]; W^{1,\infty}(\Omega)) \cap H^1 (0,T; L^{6/5}(\Omega)) $ with
$\varphi \geq 0$;
\item[-] the \emph{discrete} total energy inequality for all $ 0 \leq s \leq t
\leq  T$, viz.
\begin{equation}
\label{total-enid-discr}
\begin{aligned} & 
\frac{\rho}2 \int_\Omega 
|\pwwll {u}{\tau}(\pwc{\mathsf{t}}{\tau}(t))|^2 
\dd x +  \calE_\tau(\pwc\teta\tau(t),\pwc e\tau(t), \pwc z\tau(t), \pwc p\tau(t))  \\ & \leq \frac{\rho}2 \int_\Omega |\pwwll {u}{\tau}(\pwc{\mathsf{t}}{\tau}(s))|^2  \dd x 
 +  \calE_\tau(\pwc\teta\tau(s),\pwc e \tau(s), \pwc z\tau(s), \pwc p\tau(s)) +   \int_{\pwc{\mathsf{t}}{\tau}(s)}^{\pwc{\mathsf{t}}{\tau}(t)} \pairing{}{H_\Dir^1(\Omega;\R^d)}{\pwc \calL \tau (r) }{\pwl{\dot u}\tau (r) {-} \pwl{ \dot w}\tau (r)} \dd r
 \\ & \quad
 + \int_{\pwc{\mathsf{t}}{\tau}(s)}^{\pwc{\mathsf{t}}{\tau}(t)} \left( 
\int_\Omega  \pwc G\tau  \dd x + 
 \int_{\partial\Omega} \pwc g\tau  \dd S \right)  \dd r \\ & \quad 
 +\rho \left(  \int_\Omega \pwl{\dot u}\tau(t) \pwl{\dot w }\tau(t) \dd x -   \int_\Omega \pwl{\dot u}\tau(s)   \pwl{\dot w }\tau(s) \dd x 
  -   \int_{\pwc{\mathsf{t}}{\tau}(s)}^{\pwc{\mathsf{t}}{\tau}(t)} \int_\Omega \pwl{\dot u}\tau(r{-}\tau) \partial_t \pwwll{ w}{\tau} (r)  \dd x \dd r \right) 
  \\ & \quad   
   +   \int_{\pwc{\mathsf{t}}{\tau}(s)}^{\pwc{\mathsf{t}}{\tau}(t)}\int_\Omega \pwc \sigma\tau (r) {:} \sig{\pwl{\dot w}\tau (r) }  \dd x \dd r 
 \end{aligned}
\end{equation}
 with the discrete total energy functional $\calE_\tau$ from 
 \eqref{discr-total-energy}; 
 \item[-] the \emph{discrete} mechanical energy inequality  for all $ 0 \leq s \leq t
\leq  T$,  featuring the energy functionals $\calQ$ and $\calG$ from \eqref{stored-energy}
 \begin{equation}
\label{mech-ineq-discr}
\begin{aligned} & 
\frac{\rho}2 \int_\Omega | \pwwll {u}{\tau}(\pwc{\mathsf{t}}{\tau}(t))|^2 \dd x 
\\
&\quad +  \int_{\pwc{\mathsf{t}}{\tau}(s)}^{\pwc{\mathsf{t}}{\tau}(t)} \int_\Omega
 \left( \bbD(\pwc z\tau(r)) \pwl{\dot e}\tau(r) {:} \pwl{\dot e}\tau(r) 
+\did{\pwl{\dot z}\tau(r)} + |\pwl{\dot z}\tau(r)|^2  + \dip{\pwc z\tau(r)}{\upwc\teta\tau(r)}{
 \pwl{\dot p}\tau(r)} + |\pwl{\dot p}\tau(r)|^2  \right)  \dd x \dd r
 \\
 & \quad 
+ \int_{\pwc{\mathsf{t}}{\tau}(s)}^{\pwc{\mathsf{t}}{\tau}(t)}  \nu \ass (\pwl{\dot z}\tau(r), \pwl{\dot z}\tau(r)) \dd r  +\calQ(\pwc e\tau(t), \pwc z\tau(t)) + \calG(\pwc z\tau(t)) 
 + \frac{\tau}\gamma\int_\Omega \left(  |\pwc e\tau(t)|^\gamma +  |\pwc p\tau(t)|^\gamma \right) \dd x 
\\ &  \leq 
 \frac{\rho}2 \int_\Omega | \pwwll {u}{\tau}(\pwc{\mathsf{t}}{\tau}(s))|^2 \dd x + \calQ(\pwc e\tau(s), \pwc z\tau(s)) + \calG(\pwc z\tau(s)) 
 + \frac{\tau}\gamma\int_\Omega \left(  |\pwc e\tau(s)|^\gamma +  |\pwc p\tau(s)|^\gamma \right) \dd x 
\\ & \quad +
    \int_{\pwc{\mathsf{t}}{\tau}(s)}^{\pwc{\mathsf{t}}{\tau}(t)} \pairing{}{H_\Dir^1(\Omega;\R^d)}{\pwc \calL \tau (r) }{\pwl{\dot u}\tau (r){-} \pwl{\dot w}\tau (r)  } \dd r
    + \int_{\pwc{\mathsf{t}}{\tau}(s)}^{\pwc{\mathsf{t}}{\tau}(t)} \int_\Omega \pwc\teta\tau(r) \bbC (\pwc z\tau(r)) \bbE {:} \pwl{\dot e}\tau \dd x  \dd r
     \\ &  \quad
       + \int_{\pwc{\mathsf{t}}{\tau}(s)}^{\pwc{\mathsf{t}}{\tau}(t)} \int_\Omega \upwc\teta\tau(r)   \pwl {\dot z}\tau(r) \dd x \dd r 
      +\rho \int_\Omega \pwl{\dot u}\tau(t) \pwl{\dot w }\tau(t) \dd x -  \rho \int_\Omega \pwl{\dot u}\tau(s)   \pwl{\dot w }\tau(s) \dd x 
     \\ & \quad
   - \rho  \int_{\pwc{\mathsf{t}}{\tau}(s)}^{\pwc{\mathsf{t}}{\tau}(t)} \int_\Omega  \pwl{\dot u}\tau(r{-}\tau)  \partial_t \pwwll{w}\tau (r)  \dd x \dd r   +   \int_{\pwc{\mathsf{t}}{\tau}(s)}^{\pwc{\mathsf{t}}{\tau}(t)}\int_\Omega \pwc \sigma\tau (r) {:} \sig{\pwl{\dot w}\tau (r) }  \dd x \dd r\,.  
 \end{aligned}
\end{equation} 
 \end{itemize}
 \end{lemma}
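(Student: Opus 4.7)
The plan is to derive the three discrete inequalities in parallel by testing the equations of Problem \ref{prob:discrete} with suitable increments and exploiting convexity/concavity together with the discrete by-parts formula \eqref{discr-by-part}. Throughout, the positivity $\tetau k \geq \bar\teta>0$ from Proposition \ref{prop:exist-discr}, combined with the regularization terms $\tau |\etau k|^{\gamma-2}\etau k$ and $\tau |\ptau k|^{\gamma-2}\ptau k$, ensures that every product appearing in the manipulations is in $L^1(\Omega)$, so all calculations are rigorous at the discrete level.

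The \emph{discrete total energy inequality} \eqref{total-enid-discr} is the easiest: one just sums the single-step inequality \eqref{discr-total-ineq} of Lemma \ref{l:aprio-M} for $k$ ranging between the indices associated with $\pwc{\mathsf{t}}\tau(s)$ and $\pwc{\mathsf{t}}\tau(t)$.  The telescopic terms in $\calE_\tau$, $|{\pwwll u\tau}|^2$ on the left and the analogous quantities on the right collapse into the interpolant expressions stated in \eqref{total-enid-discr}; the sums $\sum \tau \Ltau k \cdot \Dtau k w$ and $\sum\tau \sitau k : \sig{\Dtau k w}$ reassemble into the corresponding time integrals against $\pwl{\dot u}\tau-\pwl{\dot w}\tau$ and $\pwl{\dot w}\tau$, respectively, while the inertial remainder involving $\rho \int_\Omega \pwl{\dot u}\tau \pwl{\dot w}\tau \dd x$ arises by applying \eqref{discr-by-part} to the sequences $(\Dtau k u)_k$ and $(\wtau k)_k$.

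For the \emph{discrete mechanical energy inequality} \eqref{mech-ineq-discr}, I would repeat exactly the calculations of Lemma \ref{l:aprio-M}, \emph{with the sole omission of the step in which the heat equation is tested by $\tau$ and integrated over $\Omega$}. Concretely, testing \eqref{discrete-momentum} by $(\utau k{-}\wtau k) - (\utau{k-1}{-}\wtau{k-1})$, \eqref{discr-dam-eq} by $\ztau k{-}\ztau{k-1}$ and \eqref{discrete-plastic} by $\ptau k{-}\ptau{k-1}$, the convexity chain-rule inequalities \eqref{est-mimick} and \eqref{est-mimick-5} (in particular, those based on \eqref{elast-visc-tensors-3} and the decomposition \eqref{decomp-W}) produce lower bounds for all the quadratic and power-type terms, while the cross-terms $\bbC(\ztau k)\etau k {:}(\ptau k{-}\ptau{k-1})$, $\bbD(\ztau k)\Dtau k e {:}(\ptau k{-}\ptau{k-1})$ and the two contributions involving $\tfrac12 \bbC(\ztau{k})\etau{k-1}{:}\etau{k-1}$ cancel with their counterparts coming from testing the plastic flow rule. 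The thermal expansion terms $\tetau k \bbC((\ztau k)^+)\bbE : \sig{\utau k{-}\utau{k-1}}$ and $\tetau{k-1} (\ztau k{-}\ztau{k-1})$ are now not cancelled by any heat-equation contribution and instead survive on the right-hand side as the $\pwc\teta\tau$- and $\upwc\teta\tau$-dependent integrals in \eqref{mech-ineq-discr}. Summation over the partition and the same discrete by-parts argument on the inertial remainder as above lead to the stated form.

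The \emph{discrete entropy inequality} \eqref{entropy-ineq-discr} is the main obstacle and requires the most care; the strategy is the discrete analogue of the formal derivation \eqref{formal-entropy-eq}, as in \cite[Prop.\ 4.8]{Rocca-Rossi} and \cite[Lemma 4.2]{Rossi2016}. The starting point is the pointwise concavity inequality
\[
\log(\tetau k)-\log(\tetau{k-1}) \geq \frac{\tetau k-\tetau{k-1}}{\tetau k} \qquad \text{a.e.\ in }\Omega,
\]
valid thanks to \eqref{discr-strict-pos}. I would then multiply the discrete heat equation \eqref{discrete-heat} by $\varphi_\tau^k/\tetau k$ (with $\varphi_\tau^k\geq 0$ by \eqref{discrete-tests-phi}) and integrate over $\Omega$: the discrete-time derivative term becomes
\[
\int_\Omega \Dtau k\teta \, \frac{\varphi_\tau^k}{\tetau k} \dd x \leq \int_\Omega \Dtau k{\log(\teta)}\, \varphi_\tau^k \dd x,
\]
while the Fourier term is rewritten via the identity $\nabla(\varphi_\tau^k/\tetau k) = \nabla \varphi_\tau^k/\tetau k - \varphi_\tau^k\nabla \log(\tetau k)/\tetau k$, producing exactly the two contributions involving $\condu(\pwc\teta\tau)\nabla\log(\pwc\teta\tau)$ in \eqref{entropy-ineq-discr}, and the boundary term $\int_{\partial\Omega}\gtau k \varphi_\tau^k/\tetau k\dd S$. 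Summing over the indices of the partition covering $(s,t)$ and applying \eqref{discr-by-part} with $\mathfrak{h}_\tau^k = \log(\tetau k)$ and $v_\tau^k = \varphi_\tau^k$ shifts the discrete time increment from $\log(\teta)$ onto $\varphi_\tau$, yielding the boundary terms $\int_\Omega \log(\pwc\teta\tau(\cdot))\pwc\varphi\tau(\cdot)\dd x$ and the time integral of $\log(\upwc\teta\tau)\pwl{\dot\varphi}\tau$. The delicate part is keeping track of the sign of every term: the nonnegativity of $\varphi_\tau^k$ preserves the direction of the chain-rule inequality, while the positivity of $\did{\Dtau k z}$, $|\Dtau k z|^2$, $\ass(\Dtau k z,\Dtau k z)$, and of $-\tetau{k-1}\Dtau k z$ (note that $\Dtau k z\leq 0$ by the unidirectionality constraint, cf.\ \eqref{quoted-before}) ensures that all quadratic source terms appear with the correct sign on the right-hand side of \eqref{entropy-ineq-discr}.
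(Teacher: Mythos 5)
Your proposal is correct and follows essentially the same route as the paper, which obtains \eqref{total-enid-discr} by summing the single-step inequality \eqref{discr-total-ineq} of Lemma \ref{l:aprio-M}, and refers for the other two inequalities to \cite[Prop.\ 4.8]{Rocca-Rossi} and \cite[Lemma 4.2]{Rossi2016}, whose strategies are exactly the ones you sketch. In particular, you correctly identify the three ingredients: telescoping the per-step total energy estimate; repeating the momentum/damage/plastic tests of Lemma \ref{l:aprio-M} while omitting the test of the heat equation by~$\tau$ so that the thermal cross terms $\tetau k \bbC((\ztau k)^+)\bbE : \Dtau k e$ and $\tetau{k-1}\Dtau k z$ survive on the right-hand side; and testing the discrete heat equation by $\varphi_\tau^k/\tetau k$, combining the concavity estimate $\Dtau k \log(\teta)\geq (\Dtau k\teta)/\tetau k$ (legitimated by $\tetau k\geq\bar\teta>0$) with the identity $\nabla(\varphi/\tetau k)=\nabla\varphi/\tetau k-\varphi\nabla\log(\tetau k)/\tetau k$ and the discrete by-parts formula \eqref{discr-by-part}. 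One small inaccuracy worth noting: your closing remark attributes a role, in the \emph{derivation} of \eqref{entropy-ineq-discr}, to the positivity of the quadratic source terms and of $-\tetau{k-1}\Dtau k z$; in fact the derivation uses the heat equation as an identity, so the sign of those sources is irrelevant there — the only sign-sensitive step is the concavity chain rule, preserved by $\varphi_\tau^k\geq 0$. The nonnegativity of the sources matters only later, in the limit passage, where it yields the upper-semicontinuity structure of the entropy inequality.
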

 \noindent 
 In fact, observe that $\pwwll {u}{\tau}(\pwc{\mathsf{t}}{\tau}(t)) = \pwl{\dot u}{\tau}(t)$  at  almost  all  $t\in (0,T)$ (i.e., at $t \in [0,T] \setminus \{ t_\tau^1, \ldots, t_\tau^{K_\tau}\}$, where  $ \pwl{\dot u}{\tau}(t)$  is defined. Using the interpolant $\pwwll {u}{\tau}$ in the discrete energy inequalities \eqref{total-enid-discr} and \eqref{mech-ineq-discr} allows us to write them at \emph{every} couple of time instants $ 0 \leq s \leq t \leq T$. 
\par
The \underline{main result of this section} collects  all the a priori estimates on the approximate solutions. 
\begin{proposition}
\label{prop:aprio} 
Assume \eqref{hyp-K}. Then, there exists a constant $\zeta_*>0$ such that 
\begin{equation}
\label{discr-feasibility}
\pwc z\tau(x,t), \, \pwl z \tau(x,t) \in [\zeta_*, 1] \quad \text{for every } (x,t) \in \Omega \times [0,T] \text{ and all } \tau>0
\end{equation}
and 
 there exists a constant $S>0$ such that for all $\tau>0$ the following estimates hold
\begin{subequations}
\label{aprio}
\begin{align}
& \label{aprioU1}
\|\pwc \uu{\tau}\|_{L^\infty(0,T;H^1(\Omega;\R^d))}
 \leq S,
\\
& \label{aprioU2}
\|\pwl \uu{\tau}\|_{H^1(0,T; H^1(\Omega;\R^d) )} + \|\pwl \uu{\tau}\|_{
W^{1,\infty}(0,T; L^2(\Omega;\R^d))}
 \leq S,
\\
& \label{aprioU3} \|\pwwll \uu{\tau}
\|_{L^2(0,T; H^1(\Omega;\R^d) )} +   \|\pwwll \uu{\tau}
\|_{L^\infty (0,T;  L^2(\Omega;\R^d))} + \|\pwwll \uu{\tau}
\|_{W^{1,\gamma/(\gamma-1)}(0,T;W^{1,\gamma}(\Omega;\R^d)^*)} \leq S,
\\
& \label{aprioE1}
\|\pwc
e{\tau}\|_{L^\infty(0,T;L^2(\Omega;\mt_\sym^{d\times d}))} \leq S,
\\
& \label{aprioE2}
\|\pwl
e{\tau}\|_{H^1(0,T;L^2(\Omega;\mt_\sym^{d\times d}))} \leq S,
\\
& \label{aprioE3}
\tau^{1/\gamma}\|\pwc
e{\tau}\|_{L^\infty(0,T;L^\gamma(\Omega;\mt_\sym^{d\times d}))} \leq S,
\\
& 
\label{aprioZ1}
\| \pwc z\tau\|_{L^\infty (0,T;\spz)} \leq S,
\\
&
\label{aprioZ2}
\| \pwl z\tau\|_{H^1(0,T;L^2(\Omega)} \leq S,
\\
& \label{aprioZ3}
\| \pwl z\tau\|_{H^1(0,T;\spz)} \leq S \qquad \text{ if } \nu>0,
\\
& \label{aprioZ4}
\| \pwc \omega \tau\|_{L^2(0,T;\spz^*)} \leq S  \qquad \text{ if } \nu>0,
\\
& \label{aprioP1}
\|\pwc
p{\tau}\|_{L^\infty(0,T;L^2(\Omega;\mt_\dev^{d\times d}))} \leq S,
\\
& \label{aprioP2}
\|\pwl
p{\tau}\|_{H^1(0,T;L^2(\Omega;\mt_\dev^{d\times d}))} \leq S,
\\
& \label{aprioP3}
\tau^{1/\gamma}\|\pwc
p{\tau}\|_{L^\infty(0,T;L^\gamma(\Omega;\mt_\dev^{d\times d}))} \leq S,
\\
& \label{aprioP4}
\| \pwc \zeta \tau\|_{L^\infty(Q; \mt_\dev^{d\times d})} \leq S,
\\
& \label{aprio6-discr}
 \|\pwc
\teta{\tau}\|_{L^{\infty}(0,T;L^{1}(\Omega))}+
 \|\pwc
\teta{\tau}\|_{L^2(0,T; H^1(\Omega))}    \leq S,
\\
&
\label{log-added}
\| \log(\pwc \teta{\tau}) \|_{L^\infty (0,T; L^p(\Omega))} +
\| \log(\pwc \teta{\tau}) \|_{L^2 (0,T; H^1(\Omega))}
 \leq S \quad \text{for all } 1 \leq p<\infty,
\\
&  \label{est-temp-added?-bis}
  \| (\pwc \teta\tau)^{(\mu+\alpha)/2} \|_{L^2(0,T; H^1(\Omega))} + \| (\pwc \teta\tau)^{(\mu-\alpha)/2} \|_{L^2(0,T; H^1(\Omega))} \leq C \quad \text{for all } \alpha \in  \MMM [0{\vee}(2{-}\mu), 1), \EEE
\\ 
& \label{aprio_Varlog}
\sup_{\varphi \in W^{1,d+\epsilon}(\Omega), \ \| \varphi \|_{W^{1,d+\epsilon}(\Omega)}\leq 1}
\mathrm{Var}(\pairing{}{W^{1,d+\epsilon}(\Omega)}{\log(\pwc\teta\tau)}{\varphi}; [0,T]) \leq S \quad 
\text{ for every } \epsilon>0
\end{align}
\MMM where we recall that $\gamma>4$ and  refer to \EEE   \eqref{var-notation} ahead for the definition of $\mathrm{Var}(\pairing{}{W^{1,d+\epsilon}(\Omega)}{\log(\pwc\teta\tau)}{\varphi}; [0,T]) $). 
Furthermore, if  $\condu$ fulfills \eqref{hyp-K-stronger}, there holds in addition
\begin{align}
\label{aprio7-discr}
&
\sup_{\tau>0} \| \pwl \teta{\tau} \|_{\mathrm{BV} ([0,T]; W^{1,\infty} (\Omega)^*)} \leq S.
\end{align}
\end{subequations}
\end{proposition}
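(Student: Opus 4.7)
The plan is to mimic, in the discrete setting and in the spirit of \cite{Rocca-Rossi, Rossi2016}, the chain of estimates suggested by the formal energetics of the system. I would organize the argument in seven interlocking steps, exploiting crucially the three discrete inequalities \eqref{entropy-ineq-discr}, \eqref{total-enid-discr}, \eqref{mech-ineq-discr} already supplied by Lemma \ref{lemma:discr-enid}.

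\textbf{Step 1: Feasibility \eqref{discr-feasibility}.} The upper bound $\pwc z\tau \leq 1$ is immediate from the unidirectionality built into $\didname$: each selection $\omegatau k\in\partial\Did{\Dtau kz}$ forces $\Dtau kz\leq 0$ a.e., whence $\ztau k\leq\ztau{k-1}\leq\ldots\leq z_0\leq 1$. For the strict positivity $\ztau k\geq\zeta_*>0$, I would combine the total energy bound from Step 2 below, which gives a uniform control of $\int_\Omega W(\ztau k)\dd x$, with the singular coercivity $z^{2d}W(z)\to+\infty$ as $z\down 0$ from \eqref{hyp-phi-1}: since $\spz\Subset\rmC^0(\overline\Omega)$, any vanishing of $\ztau k$ at a point would force $W(\ztau k)$ to blow up in $L^1$, contradicting the bound; the argument from \cite{Crismale-Lazzaroni} applies almost verbatim.

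\textbf{Step 2: Total energy estimate.} Starting from \eqref{total-enid-discr} on $[0,t]$, I would invoke the safe-load condition \eqref{safe-load} to rewrite $\pairing{}{}{\pwc\calL\tau}{\pwl{\dot u}\tau - \pwl{\dot w}\tau}$ in a form that, together with $\varrho\in W^{1,1}(0,T;L^2)$ and $\varrho_\dev\in L^1(0,T;L^\infty)$, gets estimated in terms of $\|\pwc e\tau\|_{L^2}$ and $\|\pwc p\tau\|_{L^1}$ without swallowing the $L^2$-norm of $\dot p$ (see \cite[Rmk.\ 4.4]{Rossi2016}). The boundary term $\int_{\pa\Omega}\pwc g\tau$ is controlled by \eqref{dato-h}, while the Dirichlet loading contributes a term bounded via \eqref{Dirichlet-loading} and \eqref{converg-interp-w}. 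The positivity of $\teta$ (ensured by \eqref{discr-strict-pos}) means $\calF(\pwc\teta\tau)=\|\pwc\teta\tau\|_{L^1}$. A discrete Gronwall argument then yields \eqref{aprioU2}$_2$, \eqref{aprioU3}$_2$, \eqref{aprioE1}, \eqref{aprioE3}, \eqref{aprioZ1}, \eqref{aprioP1}, \eqref{aprioP3} and the $L^\infty(L^1)$-part of \eqref{aprio6-discr}.

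\textbf{Step 3: Mechanical estimate.} From \eqref{mech-ineq-discr} the dissipated energies $\iint\bbD(\pwc z\tau)\pwl{\dot e}\tau:\pwl{\dot e}\tau$, $\iint|\pwl{\dot z}\tau|^2$, $\iint|\pwl{\dot p}\tau|^2$ and (when $\nu>0$) $\int\ass(\pwl{\dot z}\tau,\pwl{\dot z}\tau)$ are to be bounded by absorbing the thermal-coupling terms $\iint\pwc\teta\tau\bbC(\pwc z\tau)\bbE\!:\!\pwl{\dot e}\tau$ and $\iint\upwc\teta\tau\pwl{\dot z}\tau$ via Young's inequality: on the right one uses $\|\pwc\teta\tau\|_{L^2(L^2)}$ (to be closed against the $L^2$-estimate for $\teta$ in Step 4, through a bootstrap) together with the boundedness of $\bbC,\,\bbE$. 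This produces \eqref{aprioE2}, \eqref{aprioZ2}, \eqref{aprioZ3}, \eqref{aprioP2}, and (via Korn \eqref{Korn} and the kinematic decomposition) \eqref{aprioU1}, \eqref{aprioU2}$_1$. The bound \eqref{aprioP4} on $\pwc\zeta\tau$ is read off from \eqref{bounded-subdiff}.

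\textbf{Step 4: Temperature estimate.} Following \cite{FPR09,Rocca-Rossi}, I would test the discrete heat equation \eqref{eq-teta-interp} by $(\pwc\teta\tau)^{\alpha-1}$ with $\alpha\in[0\vee(2-\mu),1)$. Monotonicity of the discrete time-derivative together with $\condu(\teta)\sim\teta^\mu$ yields, after a convex-combination/Young treatment of the terms arising from $-\int\nabla(\pwc\teta\tau)^{\alpha-1}\cdot\condu(\pwc\teta\tau)\nabla\pwc\teta\tau$, the gradient bound \eqref{est-temp-added?-bis} for $(\pwc\teta\tau)^{(\mu\pm\alpha)/2}$. Interpolation with the $L^\infty(L^1)$-bound from Step 2 then gives \eqref{aprio6-discr}. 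Since $\pwc\teta\tau\geq\bar\teta>0$, bound \eqref{log-added} for $\log(\pwc\teta\tau)$ is immediate from \eqref{aprio6-discr} together with pointwise $|\log \pwc\teta\tau|\leq \log^+\pwc\teta\tau + |\log\bar\teta|$.

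\textbf{Step 5: Time regularity.} For the BV bound \eqref{aprio7-discr}, under the stronger condition \eqref{hyp-K-stronger} I would use \eqref{eq-teta-interp} to represent $\pa_t\pwl\teta\tau$ and estimate it in $W^{1,\infty}(\Omega)^*$: the diffusive term $-\dive(\condu(\pwc\teta\tau)\nabla\pwc\teta\tau)$ is controlled by $\hat\condu(\pwc\teta\tau)\in L^1$ via integration by parts, while the quadratic right-hand side is $L^1(Q)$ by Steps 3–4; the thermal-coupling term $\pwc\teta\tau\bbC(\pwc z\tau)\bbE:\pwl{\dot e}\tau$ is the delicate one and requires H\"older/Sobolev interpolation — this is where $\mu<5/3$ in $d=3$ enters, to ensure integrability against a test function in $W^{1,\infty}$ after using the bound on $(\pwc\teta\tau)^{(\mu+\alpha)/2}$ in $L^2(H^1)\hookrightarrow L^{10/3}$ for $d=3$. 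For the variation estimate \eqref{aprio_Varlog} I would insert test functions of the form $\log(\pwc\teta\tau)\varphi$ in the entropy inequality \eqref{entropy-ineq-discr} and use the discrete by-parts formula \eqref{discr-by-part}; admissibility of such test functions is guaranteed by $W^{1,d+\epsilon}(\Omega)\hookrightarrow L^\infty(\Omega)$, and the resulting $L^1$-in-time bound on the relevant dualities gives the total variation.

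The principal obstacle — and the step deserving the most care — is the coupled bootstrap between Steps 3 and 4: the $L^2$-norm of $\pwc\teta\tau$ (needed to absorb the thermal-coupling terms in the mechanical inequality) itself relies on estimates coming from the heat equation, whose right-hand side contains quantities one wishes to bound via the mechanical inequality. The discrete nature of the scheme and the positivity \eqref{discr-strict-pos} are what permit the Gronwall-type closure; the singular exponent $\alpha\in[0\vee(2-\mu),1)$ is chosen precisely to make the resulting exponents on $\teta$ compatible with $H^1$-Sobolev embeddings in $d\leq 3$.
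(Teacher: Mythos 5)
Your outline follows the paper's proof step by step: total energy inequality, feasibility of $z$, $\teta$-estimate via testing by $(\pwc\teta\tau)^{\alpha-1}$, mechanical inequality, entropy inequality for the variation bound, comparison for the BV bound. The ingredients and the discrete inequalities invoked all match. What is off is the interaction you see between your Steps~3 and~4: you single out a ``coupled bootstrap'' between the mechanical and temperature estimates as the principal obstacle and claim the temperature estimate ``relies on estimates coming from the heat equation, whose right-hand side contains quantities one wishes to bound via the mechanical inequality.'' This is not the case, and the paper performs no such bootstrap. The temperature estimate is \emph{self-contained}, given only the $L^\infty(L^1)$-bound on $\pwc\teta\tau$ from the total energy inequality. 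When the discrete heat equation is tested by $(\pwc\teta\tau)^{\alpha-1}$ with $\alpha\in(0,1)$, every non-negative source term $\bbD(\pwc z\tau)|\pwl{\dot e}\tau|^2$, $\did{\pwl{\dot z}\tau}$, $|\pwl{\dot z}\tau|^2$, $\bar\nu\,\ass(\pwl{\dot z}\tau,\pwl{\dot z}\tau)$, $\dipname$, $|\pwl{\dot p}\tau|^2$, $-\upwc\teta\tau\pwl{\dot z}\tau$ gets multiplied by the positive weight $(\pwc\teta\tau)^{\alpha-1}$ and thus lands on the \emph{helpful} side of the inequality: nothing there needs to be bounded, let alone by the mechanical estimate. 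The single cross term $-\pwc\teta\tau\bbC(\pwc z\tau)\bbE{:}\pwl{\dot e}\tau\,(\pwc\teta\tau)^{\alpha-1}$ is absorbed via Young into the term $\tfrac{C_{\bbD}^1}{4}|\pwl{\dot e}\tau|^2(\pwc\teta\tau)^{\alpha-1}$ already on the left, exactly as in \eqref{expedient}. Hence the correct order is: total energy inequality $\Rightarrow$ feasibility of $z$ $\Rightarrow$ temperature estimate ($L^2(H^1)$ and hence $L^2(Q)$) $\Rightarrow$ mechanical inequality, and the thermal-coupling terms in the mechanical inequality are absorbed against the \emph{already available} $\|\pwc\teta\tau\|_{L^2(Q)}$. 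There is no Gronwall-type coupling between the two estimates to close; the circularity in your plan is an artifact of doing Step~3 before Step~4.

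A secondary inaccuracy: for \eqref{aprio_Varlog} you propose to ``insert test functions of the form $\log(\pwc\teta\tau)\varphi$'' into the entropy inequality. Such a test function would be $\tau$-dependent and so inadmissible, and is not needed: $\log(\pwc\teta\tau)$ is already built into the entropy inequality through the terms $\int_\Omega\log(\pwc\teta\tau(\cdot))\,\pwc\varphi\tau(\cdot)\dd x$ at the endpoints. The variation bound follows from writing the discrete entropy inequality on each sub-interval $[\mathsf{s}_{i-1},\mathsf{s}_i]$ of a partition of $[0,T]$ with a fixed $\varphi\in W^{1,d+\epsilon}(\Omega)$ of unit norm (using $\pm\varphi^+$ and $\pm\varphi^-$ as needed), summing over $i$, and estimating the remaining terms uniformly through the a priori bounds already obtained.
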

We will not develop all the calculations leading to estimates 
\eqref{aprio}, but rather only give a sketch of the proof, referring to the proofs of \cite[Prop.\ 4.10]{Rocca-Rossi} and \cite[Prop.\ 4.3]{Rossi2016} for all 
details. 
Nevertheless, let us  mention in advance the main ingredients of the various calculations:
\begin{enumerate}
\item  The starting point in the derivation of the a priori estimates is 
 the discrete total energy inequality \eqref{total-enid-discr}. 
Indeed, conditions \eqref{data-displ}, \eqref{safe-load}, and \eqref{Dirichlet-loading} allow us to suitably estimate the terms on the right-hand side of \eqref{total-enid-discr},
based on the calculations from the proof of  \cite[Prop.\ 4.3]{Rossi2016}.
We thus deduce from \eqref{total-enid-discr} the uniform energy bound
\begin{equation}
\label{unif-energy-bound}
\sup_{t\in [0,T]} \calE_\tau (\pwc \teta\tau(t), \pwc e\tau(t), \pwc z\tau(t), \pwc p \tau(t)) \leq S.
\end{equation}
In view of the  coercivity
properties of the discrete energy $\calE_\tau$, \eqref{unif-energy-bound}
yields estimates  \eqref{aprioE1}, \eqref{aprioE3}, \eqref{aprioP3}, and the first of \eqref{aprio6-discr}.
\MMM We also establish   \eqref{aprioZ1}.
 We further \EEE 
infer a  bound for the kinetic energy term in \eqref{total-enid-discr}, which leads to the 
first of \eqref{aprioU3}. 
\item Estimate \eqref{unif-energy-bound}  in particular implies that $\sup_{t\in [0,T]} \int_\Omega \beta(\pwc z\tau(t)) \dd x \leq S$. Exploiting the coercivity condition
$z^{2d} \beta(z) \to +\infty$ as $z \down 0$, which in turn originates from \eqref{hyp-phi-1}, and repeating an argument from \cite[Lemma 3.3]{Crismale-Lazzaroni}, 
we thus conclude  the feasibility condition \eqref{discr-feasibility}. 
\item The crucial estimate for $\teta$ in $L^2(0,T;H^1(\Omega))$  ensues from testing the discrete heat equation \eqref{eq-teta-interp} by a suitable negative power of $\pwc \teta\tau$, 
as suggested in \cite{FPR09}, cf.\ also the proof of \cite[Prop.\ 4.10]{Rocca-Rossi}.
\item The \emph{dissipative estimates} \eqref{aprioU2}, \eqref{aprioE2}, \eqref{aprioZ2} \& \eqref{aprioZ3}, as well as \eqref{aprioP2}, derive from the discrete mechanical energy 
inequality \eqref{mech-ineq-discr}. \MMM We then establish \eqref{aprioU1}. \EEE
\item The total variation-type estimate \eqref{aprio_Varlog} 
 is deduced from the discrete entropy inequality \eqref{entropy-ineq-discr}  with the very same calculations as in the proofs of 
 \cite[Prop.\ 4.10]{Rocca-Rossi} and \cite[Prop.\ 4.3]{Rossi2016}.
 \item The enhanced $\BV$-estimate \eqref{aprio7-discr} derives from a comparison argument in the discrete heat equation  \eqref{eq-teta-interp}, again following the proofs of 
  \cite[Prop.\ 4.10]{Rocca-Rossi} and \cite[Prop.\ 4.3]{Rossi2016}.
  \end{enumerate}
  \noindent
  We will now give a \underline{sketch of the proof of Proposition \ref{prop:aprio}:}
  \par
  \noindent
  \textbf{Step $1$: First a priori estimate.} We write the discrete total energy inequality \eqref{total-enid-discr} for $s=0$ and $t\in (0,T]$. 
  We estimate 
    the terms on its right-hand side resorting to conditions \eqref{Cauchy-data}
and  \eqref{approx-e_0}
on the initial data $(u_\tau^0,e_\tau^0, z_0,p_\tau^0,\teta_0), $ which ensure a bound for the kinetic energy term $\int_\Omega |\pwl{\dot u}\tau(0)|^2 \dd x $, as well as the estimate
$\sup_{\tau>0} \calE_\tau(\teta_0,e_\tau^0, z_0,p_\tau^0) \leq C$. 
We use  the safe load condition \eqref{safe-load}, condition \eqref{Dirichlet-loading} on the Dirichlet loading,  as well as estimates \eqref{converg-interp-g}, \eqref{converg-interp-h}, and \eqref{converg-interp-w} to handle all the other terms on the r.h.s.\ of \eqref{total-enid-discr},
arguing in the very same way as throughout the proof of \cite[Prop.\ 4.3]{Rossi2016}.
In turn, we observe that, due to the coercivity property \eqref{elast-visc-tensors-1}, 
 and by 
the (strict) positivity \eqref{discr-strict-pos}  of $\pwc \teta\tau$, there holds
\[
\begin{aligned}
\calE_\tau (\pwc \teta\tau(t), \pwc e\tau(t), \pwc z\tau(t), \pwc p \tau(t)) \geq c \Big( &  \| \pwc \teta\tau(t)\|_{L^1(\Omega)} + \| \pwc e\tau(t)\|_{L^2(\Omega;\mt_\sym^{d\times d})}^2 
+ \ass (\pwc z \tau(t), \pwc z\tau(t)) + \int_\Omega W(\pwc z \tau(t)) \dd x 
\\
& \quad 
+\tau  \| \pwc e\tau(t)\|_{L^\gamma(\Omega;\mt_\sym^{d\times d})}^\gamma +\tau  \| \pwc p\tau(t)\|_{L^\gamma(\Omega;\mt_\dev^{d\times d})}^\gamma \Big) -C
\end{aligned}
\]
Combining these facts, and repeating the very same calculations from the proof of  \cite[Prop.\ 4.3]{Rossi2016}, we arrive at the following estimate
\begin{equation}
\label{very1calc}
\begin{aligned}
&
\begin{aligned}
\int_\Omega | \pwl{\dot u}\tau (t)|^2 \dd x &  +  \| \pwc\teta\tau(t)\|_{L^1(\Omega)} +   \| \pwc e\tau(t)\|_{L^2(\Omega;\mt_\sym^{d\times d})}^2  + \tau
\| \pwc e\tau(t)\|_{L^\gamma(\Omega;\mt_\sym^{d\times d})}^\gamma 
\\
& 
+ \ass (\pwc z \tau(t), \pwc z\tau(t)) + \int_\Omega W(\pwc z \tau(t)) \dd x 
+
  \tau 
\| \pwc p\tau(t)\|_{L^\gamma(\Omega;\mt_\dev^{d\times d})}^\gamma
\end{aligned}
\\ & 
  \leq   C + 
C  \int_{0}^{\pwc{\mathsf{t}}{\tau}(t)}   \|  \pwl{\dot \varrho}\tau (r)  \|_{L^2(\Omega;\mt_\sym^{d\times d})}  \|   \upwc e{\tau}(r)\|_{L^2(\Omega;\mt_\sym^{d\times d})} \dd r +
  C  \int_{0}^{\pwc{\mathsf{t}}{\tau}(t)}  \| (\pwc \varrho\tau (r))_\dev \|_{L^\infty (\Omega;\mt_\dev^{d\times d})} \| \pwc \teta\tau(r) \|_{L^1(\Omega)} \dd r\\ & 
  \quad 
+  C  \int_{0}^{\pwc{\mathsf{t}}{\tau}(t)-\tau}   \|  \partial_t\pwwll{w}\tau(s+\tau) \|_{L^2(\Omega;\R^d)}  \|  \pwl{\dot u}\tau(s) \|_{L^2(\Omega;\R^d)}   \dd  s 
\\ &  \quad 
+  C   \int_{0}^{\pwc{\mathsf{t}}{\tau}(t)    }   \left( \| \sig{\partial_t \pwwll w\tau(r)}\|_{L^2(\Omega;\mt_\sym^{d\times d}) }+   \|  \sig{\pwl{\dot w}\tau (r) }   \|_{L^2(\Omega;\mt_\sym^{d\times d}) } \right)    \| \pwc e\tau(r) \|_{L^2(\Omega;\mt_\sym^{d\times d}) }  \dd r \\ & \quad+   C   \int_{0}^{\pwc{\mathsf{t}}{\tau}(t)}  \|   \sig{\pwl{\dot w}\tau (r) } \|_{L^\infty (\Omega;\mt_\sym^{d\times d})} \| \pwc\teta\tau(r) \|_{L^1(\Omega)}  \dd r  \,,
\end{aligned}
\end{equation}
where $\pwc \varrho\tau$ and  $\pwl \varrho\tau$ denote the piecewise constant/linear interpolants of the local means of the safe load function $\varrho$. 
Then, taking into account that  $W$ is bounded from below (cf.\ \eqref{hyp-phi-1}),  and applying  the Gronwall Lemma, we arrive at the energy bound \eqref{unif-energy-bound}, joint with the estimate
$\|\pwl{\dot u}\tau \|_{L^\infty(0,T; L^2(\Omega;\R^d))} \leq C$. 
Estimates \eqref{aprioU2}(2), \eqref{aprioU3}(2),   
  \eqref{aprioE1}, \eqref{aprioE3}, \eqref{aprioP3}, and  \eqref{aprio6-discr}(1) ensue.
  Observe that the latter implies the first bound in \eqref{log-added}, \MMM taking into account the elementary estimate
   \[
 \forall\, p\in [1,\infty) \ \exists\, C_p>0 \ \forall\, \theta>0 \, : \quad |\log(\theta)|^p \leq \theta + \frac1\theta + C_p\,.
 \]
 and the previously proved \eqref{strong-strict-pos}.\EEE
 We also infer that 
\begin{equation}
\label{by-product-of-first}
\sup_{t\in [0,T]} \ass (\pwc z\tau(t), \pwc z\tau(t)) + \sup_{t\in [0,T]} \left| \int_\Omega W(\pwc z\tau(t)) \dd x \right| \leq C\,.
\end{equation}
\MMM Observe that $0< \pwc z\tau \leq 1$, with the upper bound due to  the fact that, by unidirectionality of the evolution
of damage,
for every $x \in \Omega$ we have   $\pwc z\tau (x,t) \leq \pwc z\tau (x,0) = z_0(x) \leq 1$, where the last inequality ensues from
 \eqref{initial-z}.  Then, in view of \eqref{by-product-of-first}, we conclude
 \eqref{aprioZ1}. 
 \EEE
Finally, estimate \eqref{aprioP4} follows from the fact that $\pwc \zeta \tau \in \partial_{\dot p} \dip{\pwc z\tau}{\upwc \teta \tau}{\pwl{\dot p}\tau} \subset B_{C_R}(0) $ a.e.\ in $\Omega 
\times (0,T)$ by \eqref{bounded-subdiff}.
\par
\noindent 
\textbf{Step $2$: ad \eqref{discr-feasibility}.} 
The upper bound  $\pwc z\tau \leq 1$ follows from  To obtain the lower bound, 
we repeat the argument from the proof of \cite[Lemma 3.3]{Crismale-Lazzaroni}: due to the coercivity condition \eqref{hyp-phi-1}, 
for every $M>0$ there exists $\tilde{\zeta}>0 $ such that for all $0<z \leq \tilde \zeta$ there holds $W(z) \geq Mz^{-2d}$. 
Now, by contradiction suppose that $\pwc z\tau$ does not comply with the lower bound in   \eqref{discr-feasibility}. Then, 
corresponding to $\tilde\zeta$, 
there exist $\tilde\tau>0$, $t\in [0,T]$, and $x \in \Omega$ such that $\pwc z{\tilde\tau} (x,t) < \frac{\tilde \zeta}2$.
We now use that  $\pwc z{\tilde\tau}(\cdot, t) \in \spz \subset \mathrm{C}^{0,1/2}(\overline\Omega)$, which yields that 
\[
\exists\, \tilde{C}>0 \ \  \forall\, x, y\in \Omega\, : \quad |\pwc z{\tilde\tau}(x,t) {-} \pwc z{\tilde\tau}(y,t)| \leq \tilde{C}|x{-}y|^{1/2}\,,
\]
to deduce that $\pwc z{\tilde\tau}(y,t) <\tilde\zeta$ for every $y \in B_{\tilde r}(0)$, with $\tilde{r} = \left(\tfrac{\delta}{2\tilde C}\right)^2$. Hence
\[
 \int_\Omega W(\pwc z{\tilde\tau}(t)) \dd x \geq
  M  \int_{B_{\tilde r}(0)} (\pwc z{\tilde\tau}(t))^{-2d} \dd x \geq M (\tilde\delta)^{-2d} |B_{\tilde r}(0)| = M \frac{\omega_d}{(2\tilde C)^{2d}},
\]
with $\omega_d$ the Lebesgue measure of the unit ball in $\R^d$. Since $M$ is arbitrary, this contradicts estimate \eqref{by-product-of-first}.
We thus conclude 
 the lower bound for  the piecewise constant interpolant $\pwc z\tau$. The analogous statement
immediately  follows 
for the interpolant $\pwl z\tau$, since it is given by a convex combination of the values of $\pwc z\tau$.
\par
\noindent 
\textbf{Step $3$: Second a priori estimate.} We test the discrete heat equation 
\eqref{discrete-heat} by $(\tetau{k})^{\alpha-1}$, with $\alpha \in (0,1)$, thus obtaining   
\begin{equation}
\label{ad-est-temp1}
\begin{aligned}
& 
\begin{aligned}
\int_\Omega \Big(  \Gtau{k} {+} \bbD (\ztau k)  \Dtau{k} e :  \Dtau{k} e    {+}  & \did{\Dtau k z} {+}   \left|  \Dtau{k} z \right|^2 {+} \bar{\nu} \ass   (\Dtau{k} z,\Dtau{k} z) 
\\
&
{+} \dip{\ztau k}{\tetau{k-1}}{\Dtau{k} p}  {+}  \left|  \Dtau{k} p \right|^2\Big)  (\tetau{k})^{\alpha-1} \dd x
\end{aligned}
  \\ & \quad
- \int_\Omega \condu(\tetau k) \nabla \tetau k \nabla  (\tetau{k})^{\alpha-1} \dd  x + 
\int_{\partial\Omega} \gtau k  (\tetau{k})^{\alpha-1} \dd S  
\\
& 
\leq \int_\Omega  \left( \frac1{\alpha}\frac{(\tetau k)^\alpha - (\tetau {k-1})^\alpha}{\tau} {+} 
\tetau{k} \bbC(\ztau k) \bbE : \Dtau k e (\tetau{k})^{\alpha-1} {+} \tetau{k-1} \Dtau{k} z (\tetau{k})^{\alpha-1}  \right)  \dd  x,
\end{aligned}
\end{equation}
where we have used  the concavity of the function
$\psi(\teta)=\tfrac1{\alpha}\teta^\alpha$, leading to the  estimate  $(\tetau{k} {-} \tetau{k-1}) (\tetau k)^{\alpha-1} \leq  \psi(\tetau k)  - \psi (\tetau {k-1})$.  Note that we have omitted the positive part $(\ztau k)^+$ in the argument of $\bbC$ in the thermal expansion term, since $\ztau k >0$ on $\Omega$ by virtue of  \eqref{discr-feasibility}. 
Therefore,  multiplying by $\tau$, summing over the index $k$,   neglecting some  positive terms on the left-hand side of \eqref{ad-est-temp1}
and observing that
$ \tetau{k-1} \Dtau{k} z (\tetau{k})^{\alpha-1}  \leq 0 $ a.e.\ in $\Omega$ since $\tetau{k-1},\, \tetau k>0$ while $\Dtau k z \leq 0$, 
  we obtain
 for all $t \in (0,T]$
 \begin{equation} 
 \label{ad-est-temp2}
 \begin{aligned}
 &
 \frac{4(1-\alpha)}{\alpha^2} \int_0^{\pwc{\mathsf{t}}{\tau}(t)} \int_\Omega \condu(\pwc \teta{\tau}) |\nabla ((\pwc \teta{\tau})^{\alpha/2}) |^2 \dd x \dd s  +   C_\bbD^1  \int_0^{\pwc{\mathsf{t}}{\tau}(t)}  \int_\Omega   
 | \pwl {\dot e}\tau|^2 (\pwc \teta \tau)^{\alpha-1}  
 \dd x \dd s
\\
&
\leq
 \frac1\alpha\int_\Omega  (\pwc \teta\tau(t) )^{\alpha} \dd x -    \frac1\alpha\int_\Omega  (\teta_0)^{\alpha} \dd x +\int_0^{\pwc{\mathsf{t}}{\tau}(t)}  \int_\Omega   \pwc \teta\tau(t) \bbC(\pwc z\tau) \bbE : \pwl{\dot e}\tau(t)  ( \pwc \teta\tau(t) )^{\alpha-1}  \dd  x 
 \doteq
 I_1+I_2+I_3\,.
 \end{aligned}
 \end{equation}
 Next, we use that  
 \[
  I_1  \leq \frac1\alpha \| \pwc \teta\tau\|_{L^\infty (0,T; L^1(\Omega))} + C \leq C
 \]
 via Young's inequality, using that  $\alpha \in (0,1)$, and taking into account the previously obtained bound  \eqref{aprio6-discr}(1). 
 We have $I_2 \leq 0$, whereas for $I_3$ we observe  that 
 \begin{equation}
 \label{bound-bbC}
 \exists\, C>0 \ \forall\, (x,t) \in \Omega \times [0,T] \, \quad
 |\bbC(\pwc z\tau(x,t))| \leq C\,.
 \end{equation}
  by the continuity of the function $\bbC$, combined with the previously obtained
 property \eqref{discr-feasibility}. Therefore, also taking into account that $\bbE \in L^\infty (\Omega;\mathrm{Lin}(\mt_\sym^{d\times d}))$, we obtain 
   \begin{equation}
 \label{est-temp-I3}
 I_3 \leq 
\frac{ C_\bbD^1 }4   \int_0^{\pwc{\mathsf{t}}{\tau}(t)}  \int_\Omega  | \pwl {\dot e}\tau|^2 (\pwc \teta \tau)^{\alpha-1}   \dd x \dd s + C  \int_0^{\pwc{\mathsf{t}}{\tau}(t)}  \int_\Omega
(\pwc \teta \tau)^{\alpha+1}   \dd x \dd s\,.
 \end{equation}
 Absorbing the first term on the right-hand side of \eqref{est-temp-I3} into  the left-hand side of \eqref{ad-est-temp2} and taking into account the coercivity condition \eqref{hyp-K} on $\condu$,  
we infer from  \eqref{ad-est-temp2} that 
 \begin{equation}
  \label{ad-est-temp3}
 c   \int_0^{\pwc{\mathsf{t}}{\tau}(t)} \int_\Omega  | \nabla  (\pwc \teta \tau)^{(\mu+\alpha)/2} |^2 \dd x \dd s \leq C +  C  \int_0^{\pwc{\mathsf{t}}{\tau}(t)}  \int_\Omega
(\pwc \teta \tau)^{\alpha+1}   \dd x \dd s\,.
 \end{equation}
 From now on, 
 we can repeat the calculations developed in   \cite[(3.8)--(3.12)]{Rocca-Rossi} (and based on techniques from   \cite{FPR09}),  for the analogous estimate. We refer to \cite{Rocca-Rossi} for all the detailed calculations, \MMM leading to an estimate for $ (\pwc \teta \tau)^{(\mu+\alpha)/2}$ in $L^2(0,T;H^1(\Omega))$, i.e.\ the first of \eqref{est-temp-added?-bis}.
 From this bound, relying on the constraint $\alpha \geq 2-\mu$ (cf.\ again  \cite{Rocca-Rossi}  for all details)
 we deduce 
 estimate   \eqref{aprio6-discr}(2).  The latter   in turns yields  \eqref{log-added}(2). 
The second  of  \eqref{est-temp-added?-bis} ensues from
$  (\pwc \teta\tau)^{(\mu-\alpha)/2} \leq  (\pwc \teta\tau)^{(\mu+\alpha)/2}  +1 $ a.e.\ in $Q$
 and 
from
  \[
\int_\Omega |\nabla  (\pwc \teta\tau)^{(\mu-\alpha)/2}|^2 \dd x  = C  \int_\Omega  ( \pwc \teta\tau)^{\mu-\alpha - 2} | \nabla \pwc \teta \tau|^2 \dd x \leq \frac{C}{\bar\teta^{2\alpha}} 
 \int_\Omega  ( \pwc \teta\tau)^{\mu+\alpha - 2} | \nabla \pwc \teta \tau|^2 \dd x \leq C
 \]
where we have used that $\alpha\geq 0$, the strict positivity of $\pwc\teta\tau$, and the previously obtained estimate for   $(\pwc \teta \tau)^{(\mu+\alpha)/2}$. \EEE 
\par\noindent
\textbf{Step $4$: Third a priori estimate.} We consider the mechanical energy inequality \eqref{mech-ineq-discr} written for $s=0$.   Since most of the terms on  its right-hand side
can be handled   by the very same calculations developed for the right-hand side terms of \eqref{total-enid-discr}, we refer to  the proof of \cite[Prop.\ 4.3]{Rossi2016} for all details and only mention
how to estimate the third and the fourth integral terms. 
We use that 
\begin{align}
&
 \int_{0}^{\pwc{\mathsf{t}}{\tau}(t)} \int_\Omega \pwc\teta\tau \bbC(\pwc z\tau) \bbE {:} \pwl{\dot e}\tau \dd x  \dd r \leq   \delta \int_0^{\pwc{\mathsf{t}}{\tau}(t)} \int_\Omega | \pwl{\dot e}\tau |^2 \dd x \dd r 
 + C_\delta  \| \pwc \teta \tau \|_{L^2(0,T; L^2(\Omega))}^2
 \nonumber
\intertext{
via Young's inequality,
 the previously obtained bound \eqref{bound-bbC}, and the fact that $|\bbE(x) | \leq C$, and that} 
 &
 \int_{0}^{\pwc{\mathsf{t}}{\tau}(t)} \int_\Omega \upwc \teta\tau \pwl{\dot {z}}\tau \dd x \dd r \leq \delta  \int_0^{\pwc{\mathsf{t}}{\tau}(t)} \int_\Omega
 | \pwl{\dot {z}}\tau|^2 \dd x \dd r + C_\delta   \| \upwc \teta \tau \|_{L^2(0,T; L^2(\Omega))}^2
 \nonumber
 \end{align}
  with the constant $\delta>0$ chosen in such a way as to absorb the terms $\iint   | \pwl{\dot e}\tau |^2$ and
$  \iint   | \pwl{\dot z}\tau |^2$ 
    into the left-hand side of  \eqref{mech-ineq-discr}. 
  Since $ \| \pwc \teta \tau \|_{L^2(Q)},\,  \| \upwc \teta \tau \|_{L^2(Q)}  \leq C$ thanks to  \eqref{aprio6-discr},  we conclude a uniform bound 
for all   
 the  terms on the right-hand side of  \eqref{mech-ineq-discr}. Therefore, estimates 
  \eqref{aprioE2},  \eqref{aprioZ2},  \eqref{aprioZ3}, 
  and \eqref{aprioP2} ensue. 
  We then obtain  \eqref{aprioU2}(1) and \eqref{aprioU3}(1)  via   kinematic admissibility. Furthermore, 
  \eqref{aprioP2} clearly implies estimate
\eqref{aprioP1}, and then  \eqref{aprioU1} again by kinematic admissibility. 
\par
It follows from  \eqref{aprioE1}, \eqref{aprioE2}, \eqref{aprioE3}, and  \eqref{aprio6-discr}, also taking into account the bound  \eqref{bound-bbC} and its analogue for $\bbD(\pwc z\tau)$, 
 that the stresses $(\pwc \sigma\tau )_\tau$ are uniformly bounded in
$L^{\gamma/(\gamma{-}1)}(Q; \mt_\sym^{d\times d})$. Therefore, also taking into account \eqref{converg-interp-L},    a comparison argument in the discrete momentum balance  \eqref{eq-u-interp} yields  
\eqref{aprioU3}(3).
\par
Finally, estimate \eqref{aprioZ4} ensues from a comparison argument in \eqref{eq-z-interp}. 
\par\noindent 
\textbf{Step $5$: Fourth a priori estimate. } Let us now shortly sketch the argument for \eqref{aprio_Varlog}.
The very same calculations as in the proof of \cite[Prop.\ 4.10]{Rocca-Rossi}
lead us to deduce, from  the discrete  entropy inequality
\eqref{entropy-ineq-discr} written on
the generic sub-interval  $[\mathsf{s}_{i-1},\mathsf{s}_i]$ of a partition $0 =\mathsf{s}_0 < \mathsf{s}_1 < \ldots < \mathsf{s}_J =T$ of  $[0,T]$,
 the following estimate  for the total variation of $\log(\pwc\teta\tau)$:
\begin{equation}
\label{genialata}
\begin{aligned} & 
\sum_{i=1}^J \left|  \pairing{}{W^{1,d+\epsilon}(\Omega)}{\log(\pwc\teta\tau (\mathsf{s}_{i}))  - \log(\pwc\teta\tau (\mathsf{s}_{i-1})) }{\varphi} \right| 
\\ & \leq  \sum_{i=1}^J   
\int_\Omega (\log(\pwc\teta\tau (\mathsf{s}_{i}))  - \log(\pwc\teta\tau (\mathsf{s}_{i-1}))) |\varphi| \dd x
+ \Lambda_{i,\tau} (|\varphi|) + |\Lambda_{i,\tau} (\varphi^+)| + |\Lambda_{i,\tau} (\varphi^-)| 
\end{aligned} 
\end{equation}
for all $\varphi \in W^{1,d+\epsilon}(\Omega)$, with $\epsilon>0$ arbitrary.  Here we  have used the place-holder
 \begin{equation}
 \label{pl-ho-varphi}
\begin{aligned}
&
\Lambda_{i,\tau} (\varphi) :=
   \int_{\pwc{\mathsf{t}}{\tau}(\mathsf{s}_{i-1})}^{\pwc{\mathsf{t}}{\tau}(\mathsf{s}_i)}  \int_\Omega  \condu(\pwc\teta\tau) \nabla \log(\pwc\teta\tau) \nabla \varphi \dd x \dd r
  + \int_{\pwc{\mathsf{t}}{\tau}(\mathsf{s}_{i-1})}^{\pwc{\mathsf{t}}{\tau}(\mathsf{s}_i)} \int_\Omega \bbC (\pwc z\tau) \bbE  {:} \pwl{\dot e}\tau  \varphi  \dd x \dd r
\\
& \ -  \int_{\pwc{\mathsf{t}}{\tau}(\mathsf{s}_{i-1})}^{\pwc{\mathsf{t}}{\tau}(\mathsf{s}_i)} \int_\Omega  \condu(\pwc\teta\tau) \frac{\varphi}{\pwc\teta\tau} \nabla (\log(\pwc\teta\tau)) \nabla \pwc\teta\tau \dd x \dd r -  \int_{\pwc{\mathsf{t}}{\tau}(\mathsf{s}_{i-1})}^{\pwc{\mathsf{t}}{\tau}(\mathsf{s}_i)}   \int_{\partial\Omega} \pwc g{\tau}  \frac{\varphi}{\pwc\teta\tau} \dd S \dd r
 \\
 & \ - \int_{\pwc{\mathsf{t}}{\tau}(\mathsf{s}_{i-1})}^{\pwc{\mathsf{t}}{\tau}(\mathsf{s}_i)} \int_\Omega \left(\pwc G\tau+
  \bbD(\pwc z\tau)  \pwl{\dot e}\tau{:}  \pwl{\dot e}\tau + \did{\pwl{\dot z}\tau} + |\pwl{\dot z}\tau|^2  + \bar\nu \ass (\pwl{\dot z}\tau, \pwl{\dot z}\tau) 
  -\upwc \teta\tau \pwl{\dot z}\tau 
  + \dip{\pwc z\tau}{\upwc\teta\tau}{\pwl{\dot p}\tau}
 + | \pwl {\dot p}\tau|^2  \right)
 \frac{\varphi}{\pwc\teta\tau} \dd x \dd r.
\end{aligned}
\end{equation}
The second, third, and fourth terms on the r.h.s.\ of \eqref{genialata} can be estimated by  adapting  the  computations  from the proof of 
\cite[Prop.\ 4.10]{Rocca-Rossi}, taking into account the previously obtained bounds.
 All in all, from \eqref{genialata} we obtain that 
\[
\sum_{i=1}^J \left|  \pairing{}{W^{1,d+\epsilon}(\Omega)}{\log(\pwc\teta\tau (\mathsf{s}_{i}))  - \log(\pwc\teta\tau (\mathsf{s}_{i-1})) }{\varphi} \right|  \leq 
\int_\Omega \left( \log(\pwc\teta\tau(T)) - \log(\teta_0) \right) |\varphi| \dd x + C \leq C
\]
for every $\varphi \in W^{1,d+\epsilon}(\Omega) $  with $ \|\varphi\|_{ W^{1,d+\epsilon}(\Omega)} \leq 1$, where for the last estimate we have used the bound for $\log(\pwc\teta\tau)$ in
$L^\infty (0,T; L^1(\Omega))$ from \eqref{log-added}.  Thus, \eqref{aprio_Varlog} ensues.
\par\noindent 
\textbf{Step $6$: Fifth a priori estimate:} We now assume  the stronger condition \eqref{hyp-K-stronger}.
We multiply the discrete heat equation \eqref{discrete-heat} by a test function  $\varphi \in W^{1,\infty}(\Omega)$ and  integrate in space.  
We thus obtain for a.a.\ $t\in (0,T)$
\begin{equation}
\label{analog-7th-est}
\left|\int_\Omega \pwl{\dot \teta}\tau (t)  \varphi \dd x \right| 
\leq \left| \int_\Omega \condu(\pwc \teta\tau(t)) \nabla  \pwc \teta\tau(t) \nabla \varphi \dd x \right| + \left| \int_\Omega \pwc J{\tau}(t) \varphi \dd x \right|
  + \left| \int_{\partial\Omega} \pwc g\tau(t)  \varphi \dd S\right| \doteq I_1+I_2+I_3\,,
\end{equation}
with 
$\pwc J{\tau}: =
   \pwc G\tau + \bbD(\pwc z\tau)   \pwl {\dot e}\tau  : \pwl {\dot e}\tau 
     -\pwc\teta\tau \bbC(\pwc z \tau) \bbE : \pwl {\dot e}\tau
     + \did{\pwl{\dot z}\tau} + |\pwl{\dot z}\tau|^2 + \bar\nu \ass (\pwl{\dot z}\tau,\pwl{\dot z}\tau)
     -\upwc \teta\tau \pwl{\dot z}\tau
    + \dip{\pwc z\tau}{\upwc\teta\tau}{ \pwl {\dot p}\tau}  +  \left|  \pwl {\dot p}\tau  \right|^2$. 
   With the very same calculations as in the proof of \cite[Prop.\ 4.10]{Rocca-Rossi}, relying on  \eqref{converg-interp-g},  
    \eqref{converg-interp-h},  on  \eqref{hyp-K-stronger}, and on the previously obtained estimates 
 \eqref{aprioE2}, \eqref{aprioZ2},  \eqref{aprioZ3}, \eqref{aprioP2},     \eqref{aprio6-discr},  and  \eqref{est-temp-added?-bis}, 
 we infer that 
 \[
I_1+I_2+I_3 \leq \calL_\tau(t) \| \varphi \|_{W^{1,\infty} (\Omega;\R^d)}
 \]
 for a family $(\mathcal{L}_\tau)_\tau$ \MMM that \EEE  is uniformly bounded in $L^1(0,T)$. 
 Hence, 
 estimate \eqref{aprio7-discr}
 follows. This concludes the proof of Proposition \ref{prop:aprio}. 
\QED

\section{\bf Proofs of Theorems \ref{mainth:1} and   \ref{mainth:2}}
\label{s:4}

\noindent
We start by fixing the compactness properties of a family
\[
(\pwc u{\tau_k}, \pwl u{\tau_k},   \pwwll  u{\tau_k},  \pwc e{\tau_k}, \upwc e{\tau_k},  \pwl e{\tau_k}, \pwc z{\tau_k}, \pwl z{\tau_k},  \pwc p{\tau_k}, \pwl p{\tau_k},
 \pwc\teta{\tau_k},  \upwc\teta{\tau_k}, \pwl{\teta}{\tau_k},  \pwc \omega{\tau_k}, \pwc \zeta{\tau_k})_k,
\]
 of approximate solutions
 in the following result, where we again tacitly assume the validity of
the conditions from Sec.\ \ref{ss:2.1}.
We will only distinguish the case where we only require \eqref{hyp-K}, from that where \eqref{hyp-K-stronger}   is also imposed and we are thus in the position to enhance the convergence properties of the temperature variables by virtue of the additional estimate \eqref{aprio7-discr}.
\begin{lemma}[Compactness]
\label{l:compactness}
Assume \eqref{hyp-K}. Then, for any sequence $\tau_k \downarrow 0$ there exist a (not relabeled) subsequence and a seventuple $(u, e, z, p,\teta, \omega,\zeta)$ such that the following convergences hold
\begin{subequations}
\label{convergences-cv}
\begin{align}
&
\label{cvU1}
 \pwl \uu{\tau_k} \weaksto \uu  &&  \text{ in $H^1(0,T;  H^1(\Omega;\R^d)) \cap
W^{1,\infty}(0,T;L^2(\Omega;\R^d))$,}
\\
&
\label{cvU2}
  \pwc \uu{\tau_k},\, \upwc \uu{\tau_k}  \to \uu &&
 \text{ in $L^\infty(0,T;H^{1-\epsilon}(\Omega;\R^d)) $ for all $\epsilon \in (0,1]$,}
 \\
 &
 \label{cvU3}
  \pwl \uu{\tau_k} 
\to \uu  && \text{ in $\mathrm{C}^0([0,T];H^{1-\epsilon}(\Omega;\R^d))  $ for all $\epsilon \in (0,1]$,}
 \\
  &
 \label{cvU3-bis}
\pwwll {u}{\tau_k}(\pwc{\mathsf{t}}{\tau_k}(t)) \weakto \dot{\uu}(t)  && 
\text{ in $L^2(\Omega;\R^d)$ for every $t\in [0,T]$,}
\\
 &
 \label{cvU3-ter}
\pwwll {u}{\tau_k}(\pwc{\mathsf{t}}{\tau_k}(t))=\pwl {\dot u}{\tau_k}(t) \weakto \dot{\uu} (t) && 
\text{ in $H^1(\Omega;\R^d)$ for almost all  $t\in (0,T)$,}
\\
&
\label{cvU4}
\partial_t \pwwll {\uu}{\tau_k} \weakto \ddot u &&
 \text{ in $L^{\gamma/(\gamma-1)}(0,T;W^{1,\gamma}(\Omega;\R^d)^*) $,}
 \\
&
\label{cvE1}
\pwc e{\tau_k}, \, \upwc e{\tau_k}  \weaksto e && \text{ in $L^\infty(0,T; L^2(\Omega;\mt_\sym^{d\times d}))$,}
\\
&
\label{cvE2}
\pwl e{\tau_k} \weakto e && \text{ in $H^1(0,T; L^2(\Omega;\mt_\sym^{d\times d}))$,}
\\
&
\label{cvE3-bis}
\pwl e{\tau_k} \to e && \text{ in $\rmC_{\mathrm{weak}}^0([0,T]; L^2(\Omega;\mt_\sym^{d\times d}))$,}
\\
&
\label{cvE3}
\tau^\beta |\pwc e{\tau_k}|^{\gamma-2}  \pwc e{\tau_k} \to 0 && \text{ in $L^{\infty}(0,T; L^{\gamma/(\gamma{-}1)}(\Omega; \mt_\sym^{d\times d}))$ for all } \beta > 1-\frac1\gamma,
 \\
 &
 \label{cvZ1}
 \pwc z{\tau_k}, \pwl z{\tau_k} \weaksto z && \text{ in $L^\infty (0,T;\spz)$},
 \\
 & 
 \label{cvZ2}
 \pwl{ z}{\tau_k} \weakto z && \text{ in $H^1(0,T;L^2(\Omega))$}, 
 \\
 & 
 \label{cvZ3}
 \pwl {z}{\tau_k} \weakto z && \text{ in $H^1(0,T;\spz)$ if $\nu>0$},
 \\
 & 
 \label{cvZ4}
  \pwc z \tau,\, \pwl z\tau \to z && \text{ in $L^\infty(0,T;\mathrm{C}^0(\overline\Omega))$},    
 \\
 & 
 \label{cvZ5}
 \pwc {\omega}{\tau_k} \weakto \omega && \text{ in $L^2(0,T;\spz^*)$ if $\nu>0$},
 \\
&
\label{cvP1}
\pwc p{\tau_k} \weaksto p && \text{ in $L^\infty(0,T; L^2(\Omega;\mt_\sym^{d\times d}))$,}
\\
&
\label{cvP2}
\pwl p{\tau_k} \weakto p && \text{ in $H^1(0,T; L^2(\Omega;\mt_\sym^{d\times d}))$,}
\\
&
\label{cvP3-bis}
\pwl p{\tau_k} \to p && \text{ in $\rmC_{\mathrm{weak}}^0([0,T]; L^2(\Omega;\mt_\dev^{d\times d}))$,} \\
&
\label{cvP3}
\tau^\beta|\pwc p{\tau_k}|^{\gamma-2}  \pwc p{\tau_k} \to 0 && \text{ in $L^{\infty}(0,T;   L^{\gamma/(\gamma{-}1)}(\Omega; \mt_\dev^{d\times d}))$ for all } \beta > 1-\frac1\gamma,
\\
 &
 \label{cvT1}
 \pwc \teta{\tau_k}, \, \upwc \teta{\tau_k}\ \weakto \teta && \text{ in $L^2 (0,T; H^1(\Omega))$},
 \\
&
\label{cvT2}
\log(\pwc\teta{\tau_k}),\,  \log(\upwc\teta{\tau_k})   \weaksto  \log(\teta)  &&
\text{ in } L^2 (0,T; H^1(\Omega))  \cap  L^\infty (0,T; W^{1,d+\epsilon}(\Omega)^*)   \quad \text{for every } \epsilon>0,
\\
&
\label{cvT4}
 \log(\pwc\teta{\tau_k}(t)),\,   \log(\upwc\teta{\tau_k}(t))  \weakto \log(\teta(t))   &&   \text{ in $H^1(\Omega)$ for almost all $t \in (0,T)$,}  
\\
&
\label{cvT5}
\pwc\teta{\tau_k},\,  \upwc\teta{\tau_k}\to \teta &&  \text{  in $L^h(Q)$
for all $h\in [1,8/3) $ if $d=3$ and all $h\in [1, 3)$ if $d=2$,}
\\
& 
\label{cvZ}
\pwc\zeta{\tau_k} \weaksto \zeta && \text{ in $L^\infty (Q;\mt_\dev^{d \times d})$}.
\end{align}
\end{subequations}
The functions $z$ and   $\teta$  also fulfill \eqref{strong-pos-z}, with $\zeta_*$ from \eqref{discr-feasibility}, and 
\begin{equation}
\label{additional-teta}
\teta \in L^\infty (0,T; L^1(\Omega)) \text{ and }
 \teta \geq \bar{\teta} \text{ a.e.\ in $Q$}
 \end{equation}
  with $\bar{\teta}$  from \eqref{discr-strict-pos}.
\par
Furthermore, under condition \eqref{hyp-K-stronger} we also have $\teta \in \mathrm{BV} ([0,T]; W^{1,\infty} (\Omega)^*)$, and
\begin{subequations}
\begin{align}
& \label{cvT6}
\pwc\teta{\tau_k} ,\, \upwc\teta{\tau_k}  \to \teta && \text{ in } L^2 (0,T; Y) \text{ for all $Y$ such that $H^1(\Omega) \Subset Y \subset W^{1,\infty} (\Omega)^*$},
\\
&
\label{cvT7}
\pwc\teta{\tau_k}(t),\,  \upwc\teta{\tau_k}(t) \weaksto \teta(t) && \text{ in } W^{1,\infty} (\Omega)^* \text{ for all } t \in [0,T].
\end{align}
\end{subequations}
\end{lemma}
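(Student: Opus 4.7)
The plan is to derive Lemma \ref{l:compactness} as a more-or-less routine consequence of the a priori estimates in Proposition \ref{prop:aprio}, combined with (i) Banach--Alaoglu to extract weakly/weakly-$*$ convergent subsequences, (ii) Aubin--Lions--Simon type compactness to upgrade to strong convergence in the intermediate spaces, and (iii) Helly's selection principle in the dual setting to handle the temperature. The ``ingredients'' are standard and well documented (e.g.\ in \cite{Rocca-Rossi,Rossi2016}), so the burden is mostly organizational: one must show that all the different interpolants $\pwc{v}{\tau}, \upwc{v}{\tau}, \pwl{v}{\tau}, \pwwll{v}{\tau}$ of a single variable converge to a \emph{common} limit.

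First I would apply Banach--Alaoglu to the reflexive weak estimates in \eqref{aprio} to extract weak/weak-$*$ subsequential limits for each interpolant; the elementary bound $\| \pwc v\tau - \pwl v\tau \|_{L^p(0,T;X)} \leq \tau^{1/p} \| \pwl{\dot v}\tau \|_{L^p(0,T;X)}$, available whenever a linear-interpolant time-derivative estimate is known, forces piecewise constant and piecewise linear interpolants to share the same limit. This immediately yields \eqref{cvU1}, \eqref{cvU4}, \eqref{cvE1}, \eqref{cvE2}, \eqref{cvZ1}--\eqref{cvZ3}, \eqref{cvZ5}, \eqref{cvP1}, \eqref{cvP2}, \eqref{cvT1}, the $L^2(0,T;H^1)$ part of \eqref{cvT2}, and \eqref{cvZ}. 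The convergences \eqref{cvE3} and \eqref{cvP3} are direct: from \eqref{aprioE3} and \eqref{aprioP3} one has $\tau^{(\gamma-1)/\gamma}\|\,|\pwc e \tau|^{\gamma-2}\pwc e\tau\|_{L^\infty(L^{\gamma/(\gamma-1)})} \leq C$, so the prefactor $\tau^{\beta-(\gamma-1)/\gamma}$ vanishes for $\beta > 1-1/\gamma$.

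Next I would install the strong convergences. For $u$ and its interpolants the Aubin--Lions--Simon lemma applied with $H^1(\Omega) \Subset H^{1-\epsilon}(\Omega) \subset L^2(\Omega)$, together with \eqref{aprioU2}--\eqref{aprioU3}, yields \eqref{cvU2}--\eqref{cvU3}; the pointwise weak convergences \eqref{cvU3-bis}--\eqref{cvU3-ter} follow by the Ascoli-type argument based on uniform continuity in the weak topology combined with the $W^{1,\gamma/(\gamma-1)}(0,T;W^{1,\gamma}(\Omega;\R^d)^*)$ bound and \eqref{aprioU2}. For $\pwl e \tau, \pwl p \tau$ bounded in $H^1(0,T;L^2)$ the Ascoli argument in the weak topology  gives the $\rmC^0_{\mathrm{weak}}$ convergences \eqref{cvE3-bis}, \eqref{cvP3-bis}. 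The convergence \eqref{cvZ4} exploits the embedding $\spz \Subset \mathrm{C}^0(\overline\Omega)$ (valid since $\mathrm{s} > d/2$), combined with the time-derivative bound \eqref{aprioZ2} and Aubin--Lions; the feasibility \eqref{strong-pos-z} with the same $\zeta_*$ as in \eqref{discr-feasibility} is then preserved by uniform convergence.

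The main obstacle—and the only genuinely non-mechanical step—is the temperature. Here the BV estimate \eqref{aprio_Varlog} for $\log(\pwc \teta\tau)$ in $W^{1,d+\epsilon}(\Omega)^*$, paired with the $L^2(0,T;H^1(\Omega))$ bound from \eqref{log-added}, allows us to apply a Helly-type selection (extracting a subsequence such that $\log(\pwc \teta{\tau_k})(t)$ converges weakly-$*$ in $W^{1,d+\epsilon}(\Omega)^*$ at every $t$) and subsequently an Aubin--Lions argument with a BV-in-time framework (cf., e.g., \cite[Thm.~6.1]{Rocca-Rossi}) to deduce strong convergence of $\log(\pwc \teta{\tau_k})$ in, say, $L^2(0,T;L^2(\Omega))$, hence almost-everywhere convergence on $Q$. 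Since $\pwc \teta{\tau_k} = \exp(\log(\pwc \teta{\tau_k})) \geq \bar\teta$ by \eqref{discr-strict-pos}, this gives a.e.\ convergence of $\pwc\teta{\tau_k}$ to a limit $\teta \geq \bar\teta$, identifying both the weak limit in $L^2(0,T;H^1)$ with $\teta$ and the weak limit of $\log(\pwc\teta{\tau_k})$ with $\log(\teta)$; the pointwise-in-$t$ weak convergence \eqref{cvT4} also follows from Helly coupled with \eqref{log-added}. To upgrade to \eqref{cvT5} one interpolates the $L^\infty(0,T;L^1(\Omega))$ bound with the $L^2(0,T;L^{2^*}(\Omega))$ bound (where $2^*=6$ if $d=3$, any finite exponent if $d=2$), obtaining uniform integrability of $\pwc\teta{\tau_k}^h$ for $h<8/3$ in $d=3$ and $h<3$ in $d=2$, and combines this with a.e.\ convergence via Vitali. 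Finally, under the stronger hypothesis \eqref{hyp-K-stronger} the enhanced BV estimate \eqref{aprio7-discr} is available directly for $\pwl \teta\tau$, so Helly and Aubin--Lions  give \eqref{cvT6}--\eqref{cvT7}.
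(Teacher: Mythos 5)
Your proposal follows essentially the same route as the paper's proof (a priori estimates from Proposition \ref{prop:aprio}, Banach--Alaoglu, Aubin--Lions--Simon, Helly-type selection), and the bulk of the convergences is established correctly; the exponent computations for \eqref{cvE3}, \eqref{cvP3}, \eqref{cvT5} all check out, and the ``shared limits'' device via the $\tau$-smallness of $\pwc v\tau-\pwl v\tau$ is the standard and correct mechanism.

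There is, however, one step where the reasoning as written would fail: the pointwise-in-time weak convergences \eqref{cvU3-ter} and \eqref{cvT4}. You lump \eqref{cvU3-bis} and \eqref{cvU3-ter} together under ``the Ascoli-type argument \ldots combined with the $W^{1,\gamma/(\gamma-1)}(0,T;W^{1,\gamma}(\Omega;\R^d)^*)$ bound and \eqref{aprioU2}.'' The Ascoli argument only gives strong convergence of $\pwwll u{\tau_k}$ in $\rmC^0([0,T];X)$ for spaces $X$ with $L^2(\Omega)\Subset X\subset W^{1,\gamma}(\Omega;\R^d)^*$, and hence (combined with the $L^\infty(0,T;L^2)$ bound) pointwise weak convergence in $L^2(\Omega;\R^d)$ for every $t$ --- that is \eqref{cvU3-bis}. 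It does \emph{not} yield pointwise weak $H^1$-convergence: $H^1(\Omega)$ sits on the wrong side of $L^2(\Omega)$ in this compactness chain. Nor does a bare $L^2(0,T;H^1)$ bound imply that $(\pwl{\dot u}{\tau_k}(t))_k$ is bounded in $H^1(\Omega;\R^d)$ for a.a.\ $t$ (consider concentrating-bump sequences $g_k=\sqrt{k}\chi_{I_k}$ with $|I_k|=1/k$: they are uniformly bounded in $L^2(0,T;H^1)$ yet pointwise unbounded on a set of full measure). Upgrading from pointwise convergence in the dual space $\bsY^*$, guaranteed by the $\Var$-type bound and Helly, to pointwise weak convergence for a.a.\ $t$ in the smaller space $\bsV$, using the $L^p(0,T;\bsV)$ bound, is precisely the content of Theorem \ref{th:mie-theil}, and it requires a genuinely separate argument beyond Ascoli. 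You do invoke the right result from \cite{Rocca-Rossi} for the temperature, so the tool is in your hands; but \eqref{cvU3-ter} needs it too (with $\ell_k=\pwwll u{\tau_k}\circ\pwc{\mathsf{t}}{\tau_k}$, $\bsV=H^1(\Omega;\R^d)$, $\bsY=W^{1,\gamma}(\Omega;\R^d)$), and the same clarification applies to your treatment of \eqref{cvT4}, where ``Helly coupled with \eqref{log-added}'' should be read as an application of Theorem \ref{th:mie-theil}, not as a plain combination of two independent observations.
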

\noindent
\begin{proof}[Sketch of the proof]
Convergences \eqref{cvZ1}--\eqref{cvZ5} follow from standard weak and strong compactness arguments, the latter based, e.g., on the Aubin-Lions type compactness
tools from \cite{simon86}.
\par
Let us  comment on \eqref{cvU3-bis}: It follows from estimate \eqref{aprioU3} and the aforementioned compactness results that the sequence  $(\pwwll{u}{\tau_k})_k$ admits a subsequence  converging to some limit $v$, weakly$^*$  in $L^2(0,T;H^1(\Omega))
 \cap L^\infty(0,T;L^2(\Omega)) \cap W^{1,\gamma/(\gamma-1)}(0,T;W^{1,\gamma}(\Omega;\R^d)^*)$, and strongly in 
 $\rmC^0([0,T];X)$ for any space $X$ with $L^2(\Omega) \Subset X$. Therefore, 
 for every $t\in [0,T]$ we have that $\pwwll {u}{\tau_k}(\pwc{\mathsf{t}}{\tau_k}(t))  \to v(t)$ in $X$. We combine this with the information that 
 $(\pwwll{u}{\tau_k})_k$  is bounded in $ L^\infty(0,T;L^2(\Omega)) $ to extend the latter  statement to weak convergence in $L^2(\Omega)$. The identification $v = \dot{u}$ follows from the estimate 
\[
 \| \pwwll \uu{\tau_k} - \pwl {\dot \uu}{\tau_k}\|_{L^\infty
(0,T;  W^{1,\gamma}(\Omega;\R^d)^* )}\leq \tau_k^{1/\gamma}  \| \partial_t \pwwll
 {\uu}{\tau_k} \|_{L^{\gamma/(\gamma{-}1)} (0,T;W^{1,\gamma}(\Omega;\R^d)^*)} \leq S {\tau_k}^{1/\gamma}.
\]
As for \eqref{cvU3-ter},  we apply the compactness result 
stated in Theorem \ref{th:mie-theil} below,
 with the choices
$\ell_k =  \pwwll \uu{\tau_k}  \circ \pwc{\mathsf{t}}{\tau_k}$, $\bsV = H^1(\Omega;\R^d)$, and $\bsY = W^{1,\gamma}(\Omega;\R^d)$. 
We thus deduce (cf.\ \eqref{weak-ptw-B} ahead) that 
\[
 \pwwll \uu{\tau_k}(\pwc{\mathsf{t}}{\tau_k} (t)) \weakto \dot{u}(t)  \text{ in } H^1(\Omega;\R^d) \ \foraa\, t \in (0,T). 
\]
Hence, \eqref{cvU3-ter} ensues, taking into account that $ \pwwll \uu{\tau_k} \circ \pwc{\mathsf{t}}{\tau_k}  \equiv \pwl{\dot u}{\tau_k}$ a.e.\ in $(0,T)$.
\par
 For  all the other convergence statements, the reader is referred to the proof of \cite[Lemma 4.6]{Rossi2016}: let us only mention that the pointwise convergences 
 \eqref{cvT4} follow from 
  Theorem 
  \ref{th:mie-theil}, as well. 
  \end{proof}
  \par
  We refer to \cite{Rocca-Rossi} and \cite{Rossi2016} (for a slight refinement of) the proof of the following compactness result.
\begin{theorem}
\label{th:mie-theil}
Let $\bsV$ and $\bsY$ be two (separable) reflexive Banach spaces  such that
$\bsV \subset \bsY^*$ continuously. Let
 $(\ell_k)_k \subset L^p
(0,T;\bsV) \cap \mathrm{B} ([0,T];\bsY^*)$ be  bounded  in $L^p
(0,T;\bsV) $ and suppose in addition  that
\begin{align}
\label{ell-n-0}
&
\text{$(\ell_k(0))_k\subset \bsY^*$ is bounded},
\\
&
\label{BV-bound}
\exists\, C>0 \  \ \forall\, \varphi \in \overline{B}_{1,\bsY}(0)  \ \  \forall\, k \in \N\, : \quad
 \mathrm{Var}(\pairing{}{\bsY}{\ell_k}{ \varphi}; [0,T] )  \leq C,
\end{align}
where, for  given $\ell \in
\mathrm{B}([0,T];\bsY^*)$ and $\varphi \in \bsY$ we set
\begin{equation}
\label{var-notation}
\begin{aligned}
\mathrm{Var}(\pairing{}{\bsY}{\ell}{ \varphi}; [0,T] ) : =  \sup \{  \sum_{i=1}^J
\left |\pairing{}{\bsY}{\ell(\mathsf{s}_{i})}{ \varphi} - \pairing{}{\bsY}{\ell(\mathsf{s}_{i-1})}{ \varphi}  \right|
\, :  \
0 =\mathsf{s}_0 < \mathsf{s}_1 < \ldots < \mathsf{s}_J =T \} \,.
\end{aligned}
\end{equation}
\par
Then, there exist  a  (not relabeled) subsequence
 $(\ell_{k})_k$
 and a function $\ell \in L^p (0,T;\bsV) \cap L^\infty (0,T; \bsY^*) $ 
 such that  as $k\to \infty$
 \begin{align}
 \label{weak-LpB}
 &
 \ell_{k} \weaksto \ell \quad \text{ in } L^p (0,T;\bsV) \cap L^\infty (0,T;\bsY^*),
 \\
\label{weak-ptw-B}
&
\ell_{k}(t) \weakto \ell(t) \quad \text{ in } \bsV\quad \foraa t \in (0,T).
\end{align}
\par
Furthermore, for almost all $t \in (0,T)$ and any sequence $(t_k)_k \subset [0,T]$ with $t_k \to t$ there holds 
\begin{equation}
\label{enhSav}
 \ell_{k}(t_k) \weakto \ell(t)  \qquad \text{ in $\bsY^*$. }
 \end{equation}
\end{theorem}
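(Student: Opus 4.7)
The strategy is a Helly-type selection argument in the weak$^\ast$ topology of $\bsY^*$, combined with Banach--Alaoglu in $L^p(0,T;\bsV)$, and a final compatibility step to match the two limits.

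The plan is to first promote the hypotheses into a uniform $L^\infty(0,T;\bsY^*)$ bound. For every $t\in[0,T]$ and every $\varphi\in \overline{B}_{1,\bsY}(0)$ one has
\[
|\pairing{}{\bsY}{\ell_k(t)}{\varphi}|\le |\pairing{}{\bsY}{\ell_k(0)}{\varphi}|+\mathrm{Var}(\pairing{}{\bsY}{\ell_k}{\varphi};[0,T])\le C'
\]
by \eqref{ell-n-0} and \eqref{BV-bound}; taking the supremum over $\varphi$ yields $\sup_k\|\ell_k\|_{L^\infty(0,T;\bsY^*)}\le C'$. Combined with the boundedness in $L^p(0,T;\bsV)$, standard Banach--Alaoglu (using separability and reflexivity of $\bsV$ and $\bsY$) extracts a subsequence and a limit $\ell$ for which \eqref{weak-LpB} holds, after identifying the two weak$^\ast$ limits on the common test class $L^{p'}(0,T;\bsY)\cap L^{1}(0,T;\bsV^*)$.

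Next I would carry out the Helly selection. Fix a countable dense set $\{\varphi_j\}\subset\bsY$. For each fixed $j$ the scalar functions $t\mapsto\pairing{}{\bsY}{\ell_k(t)}{\varphi_j}$ are uniformly bounded and have uniformly bounded variation on $[0,T]$, hence classical Helly yields a subsequence converging pointwise on $[0,T]$ to a BV scalar limit $F_j$. A diagonal extraction produces a single subsequence for which convergence holds simultaneously for all $j$. By density of $\{\varphi_j\}$ and the uniform bound in $\bsY^*$, one concludes that $\ell_k(t)\weaksto \widetilde\ell(t)$ in $\bsY^*$ for every $t\in[0,T]$, for some $\widetilde\ell:[0,T]\to\bsY^*$ with $\|\widetilde\ell\|_{L^\infty(0,T;\bsY^*)}\le C'$.

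The crux is now the compatibility step: to show that $\widetilde\ell=\ell$ a.e., and to upgrade pointwise weak$^\ast$ convergence in $\bsY^*$ to pointwise weak convergence in $\bsV$ a.e., thereby obtaining \eqref{weak-ptw-B}. For this I would test the weak$^\ast$ convergence in $L^p(0,T;\bsV)$ against functions of the form $\chi_{[a,b]}\otimes\varphi_j$ with $\varphi_j\in\bsY\subset\bsV^*$ (recall $\bsV\subset\bsY^*$, so duality pairs through), obtaining
\[
\int_a^b\pairing{}{\bsY}{\ell_k(t)}{\varphi_j}\,\dd t\longrightarrow \int_a^b\pairing{}{\bsY}{\ell(t)}{\varphi_j}\,\dd t ,
\]
and comparing with the bounded-pointwise-convergent scalar sequence $\pairing{}{\bsY}{\ell_k(\cdot)}{\varphi_j}$ (whose integral converges to $\int_a^b\pairing{}{\bsY}{\widetilde\ell(t)}{\varphi_j}\,\dd t$ by dominated convergence) gives $\ell=\widetilde\ell$ a.e. Finally, the uniform $L^p(0,T;\bsV)$ bound implies $\ell_k(t)\in\bsV$ with $\liminf_k\|\ell_k(t)\|_\bsV<\infty$ at Lebesgue points, so along a further (possibly $t$-dependent) subsequence $\ell_k(t)$ admits a weak $\bsV$-limit, which must coincide with $\widetilde\ell(t)=\ell(t)$; by uniqueness of the accumulation point the whole sequence converges weakly in $\bsV$ at a.e.\ $t$, giving \eqref{weak-ptw-B}.

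For the enhanced statement \eqref{enhSav}, I would use the uniform BV bound one more time: for any $t_k\to t$ and any $\varphi\in\overline{B}_{1,\bsY}(0)$,
\[
|\pairing{}{\bsY}{\ell_k(t_k)-\ell_k(t)}{\varphi}|\le \mathrm{Var}(\pairing{}{\bsY}{\ell_k}{\varphi};I_k)\,,
\]
where $I_k$ is the interval between $t$ and $t_k$. At any continuity point $t$ of the (monotone, bounded) variation function $V(s):=\sup_k\mathrm{Var}(\pairing{}{\bsY}{\ell_k}{\varphi};[0,s])$ — and this is a.e.\ $t$ — the right-hand side vanishes uniformly in $\varphi$, so $\ell_k(t_k)-\ell_k(t)\weaksto 0$ in $\bsY^*$, and combining with the already-established $\ell_k(t)\weaksto \ell(t)$ yields \eqref{enhSav}. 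The main technical obstacle I anticipate is the compatibility step in the third paragraph, where one must carefully reconcile weak$^\ast$ convergence in $\bsY^*$ at every $t$ with weak$^\ast$ convergence in $L^p(0,T;\bsV)$, since these involve different dualities; the dense separating class $\{\chi_{[a,b]}\otimes\varphi_j\}$ with $\varphi_j\in\bsY$ is what makes the identification go through, using the continuous inclusion $\bsV\hookrightarrow\bsY^*$ in a crucial way.
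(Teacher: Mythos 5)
The paper does not prove Theorem~\ref{th:mie-theil}; it only cites \cite{Rocca-Rossi} and \cite{Rossi2016}, so there is no in-paper argument to compare against. Your overall strategy (Banach--Alaoglu for \eqref{weak-LpB}, scalar Helly plus a diagonal argument for a candidate pointwise weak$^*$ limit $\widetilde\ell$ in $\bsY^*$, then a compatibility step against $\chi_{[a,b]}\otimes\varphi_j$ to identify $\widetilde\ell=\ell$ a.e.) is the natural one and is almost certainly the structure used in the cited references; the first three paragraphs of your plan are sound.

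There is, however, a genuine gap in the last sentence of your third paragraph, the passage from a $t$-dependent subsequence to the claim that ``the whole sequence converges weakly in $\bsV$ at a.e.\ $t$.'' From $\sup_k\|\ell_k\|_{L^p(0,T;\bsV)}<\infty$ Fatou gives only $\liminf_k\|\ell_k(t)\|_\bsV<\infty$ a.e.; this does \emph{not} bound the full sequence in $\bsV$ at $t$, and uniqueness of accumulation points only forces full-sequence convergence when the full sequence lies in a (weakly) compact, in particular bounded, set. To see why the step cannot be fixed by soft reasoning, consider $\ell_k:=\xi_k\chi_{A_k}$ with $\xi_k\in\bsV$, $\|\xi_k\|_{\bsY^*}$ bounded, $\xi_k\weakto 0$ in $\bsY^*$, $\|\xi_k\|_\bsV^p\sim k$, and $A_k\subset(0,T)$ intervals with $|A_k|\sim 1/k$ arranged (dyadically, say) so that a.e.\ $t$ lies in infinitely many $A_k$, while $0\notin A_k$: then \eqref{ell-n-0}, \eqref{BV-bound} and the $L^p(0,T;\bsV)$ bound all hold with $\ell\equiv 0$, and $\ell_k(t)\weaksto 0$ in $\bsY^*$ for every $t$, yet $(\ell_k(t))_k$ is unbounded in $\bsV$ for a.e.\ $t$ and therefore does not converge weakly in $\bsV$. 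What your argument (and Helly) actually delivers is weak$^*$ convergence in $\bsY^*$ for \emph{every} $t\in[0,T]$ together with $\ell(t)\in\bsV$ a.e.; to genuinely obtain weak $\bsV$-convergence of the whole (sub)sequence at a.e.\ $t$ one needs an additional hypothesis (e.g.\ a uniform $L^\infty(0,T;\bsV)$ bound) or must accept a further, $t$-dependent extraction.

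A smaller point concerns \eqref{enhSav}: the function $V(s):=\sup_k\mathrm{Var}(\pairing{}{\bsY}{\ell_k}{\varphi};[0,s])$ does not dominate the increments $\mathrm{Var}(\pairing{}{\bsY}{\ell_k}{\varphi};[t,t_k])$, since for increasing families $\sup_k a_k-\sup_k b_k\le\sup_k(a_k-b_k)$ goes the wrong way. The standard repair is to apply scalar Helly to the monotone functions $s\mapsto\mathrm{Var}(\pairing{}{\bsY}{\ell_k}{\varphi_j};[0,s])$ (for a countable dense $\{\varphi_j\}$) to extract a further subsequence and pointwise limits $v_j$, and then argue at the common continuity points of the $v_j$'s, which form a set of full measure.
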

\par 
For expository reasons, 
in developing the proofs of our existence results
 we will
reverse the order with which we have presented them. More precisely, we will  start with  the
\textbf{\underline{proof of Theorem  \ref{mainth:2}}}
and develop the existence of weak energy solutions and, in addition, establish the validity of the entropy inequality. 
 Let us consider a null sequence $(\tau_k)_k$ and, correspondingly, a sequence
\[
(\pwc u{\tau_k}, \pwl u{\tau_k},   \pwwll  u{\tau_k},  \pwc e{\tau_k}, \upwc e{\tau_k},  \pwl e{\tau_k}, \pwc z{\tau_k}, \pwl z{\tau_k},  \pwc p{\tau_k}, \pwl p{\tau_k},
 \pwc\teta{\tau_k},  \upwc\teta{\tau_k}, \pwl{\teta}{\tau_k},  \pwc \omega{\tau_k}, \pwc \zeta{\tau_k})_k,
\]
of solutions to the approximate thermoviscoelastoplastic damage system  \eqref{syst-interp}
along which  convergences \eqref{convergences-cv} to a seventuple $(u,e,z,p,\teta, \omega,\zeta)$ hold.
 Exploiting them we will pass to the limit 
in the time-discrete version of the momentum balance. In order to take the limit  of the damage and  plastic flow rules, and of the temperature equation, we will need to enhance the convergence properties 
of the approximate solutions. We will do so by establishing the validity of the mechanical energy balance. Therefrom  we will strengthen some weak convergence properties, derived by compactness, via standard ``limsup''-arguments.
 We will thus show that the quintuple $(u,e,z,p,\teta)$ 
is a weak energy solution of the (initial-boundary value problem for the) regularized  thermoviscoelastoplastic damage system. We will also deduce the validity of the entropy inequality.
\par
\noindent
\emph{Step $0$: ad the initial conditions \eqref{initial-conditions} and the kinematic admissibility \eqref{kin-admis}}. 
We use the uniform  convergences \eqref{cvU3}, \eqref{cvE3-bis}, \eqref{cvZ4},  and \eqref{cvP3-bis}, as well as the pointwise convergences
 \eqref{cvU3-bis}, 
\eqref{cvT7},
 to pass to the limit in the discrete initial conditions \eqref{discr-Cauchy}, also taking into account convergences \eqref{approx-e_0}
  for the approximate initial data $(e_{\tau_k}^0, p_{\tau_k}^0)_k$.
  Also exploiting  \eqref{converg-interp-w}
  for $(\pwc w{\tau_k})_k$, we  pass to the limit in the discrete version of the kinematic admissibility condition. We thus conclude \eqref{kin-admis}. 
  \par
 \par\noindent
\emph{Step $1$: ad the momentum balance \eqref{w-momentum-balance}.} 
Combining convergence \eqref{cvZ4} with the uniform continuity of the mappings
 $z\in [0,1]\mapsto \bbC(z),\, \bbD(z)$, 
we infer that 
\begin{equation}
\label{cv_CC_DD}
\bbC(\pwc z{\tau_k}) \to \bbC(z), \quad
\bbD(\pwc z{\tau_k}) \to \bbD(z)  \quad \text{ in $L^\infty(0,T; \mt_\sym^{d\times d})$.}
\end{equation}
 Therefore, in view of 
convergences \eqref{cvE1}--\eqref{cvE3} and \eqref{cvT1} we have that 
\begin{equation}
\label{cvS}
\pwc \sigma{\tau_k} = \bbD(\pwc z{\tau_k})  \pwl{\dot e}{\tau_k} + \bbC(\pwc z{\tau_k})  \pwc e{\tau_k} 
+ \tau |\pwc e{\tau_k}|^{\gamma-2} \pwc e{\tau_k}  - \pwc{\teta}{\tau_k} \bbC(\pwc z{\tau_k}) \bbE  \weakto \sigma = \bbD(z) \dot e +\bbC(z) e - \teta  \bbC(z)\bbE  
\end{equation}
 in 
$L^{\gamma/(\gamma-1)} (Q;\mt_\sym^{d\times d})$.
 With this stress convergence, with convergence \eqref{cvU4} 
 for $(\partial_t \pwwll u{\tau_k})_k$, 
 and with \eqref{converg-interp-L} for $(\pwc \calL{\tau_k})_k$, we pass to the limit in the  \MMM integrated version of the discrete momentum balance \eqref{eq-u-interp}. With a localization argument we 
  \EEE conclude
  that $(u,e, z,\teta)$ fulfill \eqref{w-momentum-balance} with test functions in $W_\Dir^{1,\gamma}(\Omega;\R^d)$. \MMM Taking into account that $\sigma =  \bbD(z) \dot e +\bbC(z) e - \teta  \bbC(z)\bbE \in L^2(0,T;L^2(\Omega;\mt_\sym^{d\times d}))$,  \EEE
 a  comparison argument in  \eqref{w-momentum-balance}  yields  that $\ddot{u} \in L^2(0,T; H_{\Dir}^1(\Omega;\R^d)^*)$, whence \eqref{reg-u},
 and by density we conclude that 
 \eqref{w-momentum-balance} holds with test functions in $H_{\Dir}^1(\Omega;\R^d)$. We have thus established   the momentum
 balance. 
 \par
 \noindent 
 \emph{Step $2$: ad the entropy inequality \eqref{entropy-ineq}.} 
  Let us fix a positive test  function 
  $\varphi \in \rmC^0 ([0,T]; W^{1,\infty}(\Omega)) \cap H^1(0,T; L^{6/5}(\Omega))$ 
  for \eqref{entropy-ineq}, and approximate it with the discrete
   test functions from \eqref{discrete-tests-phi}: 
   their interpolants $\pwc \varphi\tau, \, \pwl \varphi\tau$ 
   converge to $\varphi$ in the sense of 
    \eqref{convergences-test-interpolants}.
    We now take the limit of 
     the discrete entropy inequality \eqref{entropy-ineq-discr},
     tested by  $\pwc \varphi\tau, \, \pwl \varphi\tau$.
     \par
      We pass to the limit of the first integral term on the left-hand side of
        \eqref{entropy-ineq-discr} relying on convergence \eqref{cvT2} for $\log(\upwc\teta{\tau_k})$.
For the second integral term, 
we  prove that 
\begin{equation}
\label{weak-nabla-logteta}
\condu (\pwc\teta{\tau_k}) \nabla \log(\pwc\teta{\tau_k}) \weakto 
\condu(\teta) \nabla \log(\teta) \qquad \text{ in } 
  L^{1+\bar\delta}(Q;\R^d)   \text{ with   $ \bar\delta = \frac{\alpha}\mu $ 
and  \MMM $\alpha \in [0{\vee}(2{-}\mu), 1)$, \EEE}  \end{equation}
 by repeating the very same arguments from the proofs
of \cite[Thm.\ 1]{Rocca-Rossi} and \cite[Thm.\ 1]{Rossi2016}, to which we refer the reader for all details. 
\par
To take the limit in the right-hand side terms of
 \eqref{entropy-ineq-discr}, for the first two integrals we use
 the pointwise
  convergence \eqref{cvT4}  at almost all $t\in (0,T)$ and almost all $s\in (0,t)$,  combined with \eqref{convergences-test-interpolants} for 
  $(\pwc \varphi{\tau_k})$.
   A lower semicontinuity argument also based on the Ioffe theorem \cite{Ioff77LSIF} and on convergences \eqref{convergences-test-interpolants},   \eqref{cvT2},   and \eqref{cvT5} gives that
\[
\begin{aligned} 
\limsup_{k\to\infty} \left(  -  \int_{\pwc{\mathsf{t}}{\tau_k}(s)}^{\pwc{\mathsf{t}}{\tau_k}(t)}  \int_\Omega \condu(\pwc \teta{\tau_k}) \frac{\pwc\varphi{\tau_k}}{\pwc \teta{\tau_k}}
\nabla \log(\pwc \teta{\tau_k})  \nabla \pwc \teta{\tau_k} \dd x \dd r \right) 
 & = - \liminf_{k\to\infty} \int_{\pwc{\mathsf{t}}{\tau_k}(s)}^{\pwc{\mathsf{t}}{\tau_k}(t)}  \int_\Omega \condu(\pwc \teta{\tau_k}) \pwc\varphi{\tau_k} | \nabla  \log(\pwc \teta{\tau_k}) |^2 \dd x \dd r
\\
  & \leq  -  \int_s^t \int_\Omega \condu(\teta) \varphi
|\nabla \log(\teta)|^2\dd x \dd r.
\end{aligned}
\] 
which allows us to deal with  the third integral term on the r.h.s.\ of   \eqref{entropy-ineq-discr}. 
For the limit passage in the  fourth and fifth integral terms on the r.h.s.\ of 
 \eqref{entropy-ineq-discr}, we preliminarily observe that  
 \begin{equation}
 \label{strong-1-teta}
 \frac1{\pwc\teta{\tau_k}} \to \frac1\teta \qquad \text{ in } L^p(Q) \quad \text{for all } 1 \leq p<\infty.
\end{equation}
 This follows from the pointwise convergence $\tfrac1{\pwc\teta{\tau_k}} \to \tfrac1\teta$ a.e.\ in $Q$,
 combined with the 
 Dominated Convergence Theorem, since $\left| \tfrac1{\pwc\teta{\tau_k}} \right| \leq \tfrac1{\bar\teta}$ 
 by \eqref{strong-strict-pos}. 
Furthermore, since $\nabla \left( \tfrac1{\pwc\teta{\tau_k}} \right) =
 \tfrac{\nabla  \pwc\teta{\tau_k}}{|\pwc\teta{\tau_k}|^2}$, combining \eqref{strong-strict-pos} with estimate 
 \eqref{aprio6-discr} we infer that the sequence  $\left( \tfrac1{\pwc\teta{\tau_k}} \right)_k$ is  bounded in $L^2(0,T;H^1(\Omega))$. 
 All in all, we have 
  \begin{equation}
 \label{weak-1-teta}
  \frac1{\pwc\teta{\tau_k}} \weakto \frac1\teta \qquad \text{ in } L^2(0,T;H^1(\Omega)). 
\end{equation}
 Therefore, 
we can now  take the $\limsup_{k\to \infty}$ 
of the fourth integral combining convergences
   \eqref{converg-interp-g} for $(\pwc G{\tau_k})_k$, \eqref{convergences-test-interpolants}
   for $(\pwc \varphi{\tau_k})$, and 
    \eqref{cvE2}, \eqref{cvP2}, \eqref{cvZ2}, \eqref{cvZ3}, \eqref{cvZ4}, \eqref{cvT5}, \eqref{cv_CC_DD}, 
 \eqref{strong-1-teta}, and again resorting to the Ioffe theorem. Finally, 
 the limit passage in the fifth integral term ensues from 
 \eqref{converg-interp-h} for $(\pwc h{\tau_k})_k$,  \eqref{convergences-test-interpolants}, and  
 \eqref{weak-1-teta}.  We thus conclude  the validity \eqref{entropy-ineq}, tested by 
 functions  $\varphi \in \rmC^0 ([0,T]; W^{1,\infty}(\Omega)) \cap H^1(0,T; L^{6/5}(\Omega))$, on the interval $(s,t)$ for almost 
 all $t\in (0,T)$ and almost all $s\in (0,t)$.
 \par
 With the very same argument as in the proof of \cite[Thm.\ 1]{Rossi2016},
  we establish the summability properties  \eqref{further-logteta} for  $\condu(\teta) \nabla \log(\teta)$.
  In view of  \eqref{further-logteta}, the entropy inequality \eqref{entropy-ineq}
   makes sense for all positive test functions
   $\varphi $ in $H^1(0,T; L^{6/5}(\Omega)) \cup L^\infty(0,T;W^{1,d+\epsilon}(\Omega))$ with $\epsilon>0$.
    Therefore, with a density argument we conclude it for this larger test space.
 \par\noindent 
\emph{Step $3$:  preliminary considerations for the damage and  plastic flow rules.}
  Convergences \eqref{cvP2}--\eqref{cvP3}, \eqref{cvZ}, and \eqref{cvS} ensure that the functions $(\teta, e,p,\zeta)$ fulfill 
\begin{equation}
\label{prelim-flow-rule}
\zeta +\dot p \ni \sigma_\dev \qquad \aein\, Q.
\end{equation}
To conclude  the plastic flow rule \eqref{plastic-flow-ptw}, it thus  remains to show that 
\begin{equation}
\label{identification-zeta}
\zeta \in \partial_{\dot p} \dipx{x}{z}{\teta}{ \dot p} \qquad \text{ a.e.\ in $Q$,}
\end{equation}
which is equivalent (cf.\ the considerations at the beginning of Sec.\ \ref{ss:2.3}),  to
\[
\begin{cases}
 \zeta : \eta \leq  \dipx{x}{z}{\teta}{ \eta}  \quad \text{for all } \eta \in \mt_\dev^{d\times d}
\\
 \zeta : \dot{p}  \geq \dipx{x}{z}{\teta}{ \dot p}
\end{cases}
 \qquad \text{ a.e.\ in $Q$,}
\]
by the  
$1$-homogeneity of the functional $\dot{p} \mapsto \dipx{x}{z}{\teta}{ \dot p}$. In fact, we will prove the (equivalent) relations
\begin{subequations}
\label{def-subdiff}
\begin{align}
& 
\label{def-subdiff-a}
\iint_Q \zeta{:} \eta\, \dd x \dd t \leq \int_0^T \Dip{z(t)}{\teta(t)}{ \eta(t)} \dd t \quad \text{for all } \eta \in L^2(Q;\mt_{\dev}^{d\times d}), 
\\
& 
\label{def-subdiff-b}
   \iint_Q \zeta{:} \dot{p} \,\dd x \dd t  \geq \int_0^T \Dip{z(t)}{\teta(t)}{ \dot{p}(t)} \dd t,
   \end{align}
\end{subequations}
featuring the plastic dissipation potential $\Dipname$ from \eqref{plastic-dissipation-functional}. 
With this aim, we will 
pass to the limit  
in the inequalities  satisfied at level $k$ using  the discrete subdifferential inclusion \eqref{eq-p-interp}, namely
\begin{subequations}
\label{def-subdiff-k}
\begin{align}
\label{def-subdiff-k-1}
&
\iint_Q  \pwc\zeta{\tau_k} {:} \eta \dd x \dd t \leq \int_0^T   \Dip{\pwc z{\tau_k}(t)}{\upwc\teta{\tau_k}(t)}{  \eta(t)} \dd t && \text{for all } \eta \in L^2(Q;\mt_{\dev}^{d\times d}), \\  
\label{def-subdiff-k-2}
&
\iint_Q  \pwc\zeta{\tau_k}  {:}  \pwl { \dot p}{\tau_k}  \dd x \dd t  \geq \int_0^T   \Dip{\pwc z{\tau_k}(t)}{\upwc\teta{\tau_k}(t)}{  \pwl {\dot{p}}{\tau_k}(t)} \dd t.  &&
\end{align}
\end{subequations}
In order to take the limit of \eqref{def-subdiff-k-1},
we use conditions \eqref{hypR} on the dissipation metric $\mathrm{H}$.    The strong convergences
\eqref{cvZ4} for $\pwc z{\tau_k}$  and 
 \eqref{cvT5} for  $\upwc\teta{\tau_k}$, combined with the  continuity property \eqref{hypR-cont},  ensure that 
 for every test function $\eta \in L^2(Q;\mt_\sym^{d\times d})$ there holds
 $\dip{\pwc z{\tau_k}}{\upwc \teta{\tau_k}}{\eta} \to \dip z\teta\eta$ almost everywhere in $Q$. Using that 
$\dip{\pwc z{\tau_k}}{\upwc \teta{\tau_k}}{\eta} \leq C_R |\eta|$ a.e.\ in $Q$ thanks to \eqref{linear-growth}, we conclude
via the Dominated Convergence Theorem that 
\begin{equation}
\label{dominated-quoted-later}
\lim_{k\to\infty} \iint_0^T  \Dip{\pwc z{\tau_k}(t)}{\upwc\teta{\tau_k}(t)}{  \eta(t)} \dd t =  
\int_0^T \Dip{z(t)}{\teta(t)}{  \eta(t)} \dd t \quad \text{ for every } \eta \in L^2(Q;\mt_\sym^{d\times d})\,.
\end{equation}
The limit passage on the left-hand side of \eqref{def-subdiff-k-1} is guaranteed by 
 convergence \eqref{cvZ} for $(\pwc\zeta{\tau_k})_k$. Hence, \eqref{def-subdiff-a} follows. 
 As for \eqref{def-subdiff-k-2}, 
   we use  \eqref{hypR-lsc}, the convexity of the map $\dot p \mapsto \dip{z}{\teta}{ \dot p}$
   and convergences \eqref{cvZ4}, 
     \eqref{cvP2}, and \eqref{cvT5},  to conclude via the Ioffe theorem \cite{Ioff77LSIF} that 
\begin{equation}\label{lscR}
\liminf_{k\to \infty} \int_0^T \Dip{\pwc z{\tau_k}(t)}{\upwc \teta{\tau_k}(t)}{ \pwl {\dot p}{\tau_k}(t)} \dd t \geq \int_0^T \Dip{z(t)}{\teta(t)}{ \dot{p}(t)} \dd t\,.
\end{equation}
It thus remains to prove that 
\begin{equation} \label{limsup-cond-p}
\limsup_{k\to \infty}  \iint_Q   \pwc\zeta{\tau_k} {:} \pwl { \dot p}{\tau_k} \dd x \dd t
 \leq  \iint_Q   \zeta {:} { \dot p} \dd x \dd t.
\end{equation}
\par
Analogously, in order to  pass to the limit in the discrete damage flow rule  and establish 
the subdifferential inclusion \eqref{dam-Hs-star-intro}, we need to identify the weak limit 
$\omega$ of $(\pwc \omega{\tau_k})_k$ as an element of  the subdifferential $\subd \Did{\dot{z}} \subset \spz^*$. 
We will also need to pass to the limit in the quadratic term $ -\tfrac12 \bbC'(\pwc z{\tau_k}) \upwc e{\tau_k} :  \upwc e{\tau_k} $ on the right-hand side
 of \eqref{eq-z-interp}, which will require establishing a suitable strong convergence for $(
 \upwc e{\tau_k} )_k$. 
 \par
 All of  these issues  will be addressed in the following steps: preliminarily, in Step $4$  we will pass to the limit in the discrete mechanical energy inequality and obtain the  upper mechanical energy estimate \eqref{uee-mech}. 
 Secondly, in Step $5$ we will prove the one-sided damage variational inequality \eqref{1-sided-intro}. 
  Finally, in Step $6$
 we will combine \eqref{1-sided-intro} with the mechanical energy inequality to establish 
 the mechanical energy balance \eqref{mech-enbal} and, moreover, the desired 
  \eqref{limsup-cond-p} 
   together with further convergence properties. 
  Hence, in Step $6$ and $7$ we will conclude the validity of the plastic and damage flow rules. 
 \par
 \noindent 
 \emph{Step $4$: ad the mechanical energy inequality.}
 We pass to the limit in  the discrete mechanical energy inequality \eqref{mech-ineq-discr}, written on the interval $(0,t)$ with
 $t \in (0,T]$.  As for the left-hand side, we use the pointwise weak convergence \eqref{cvU3-bis} for 
 $(\pwwll{u}{\tau_k}(\pwc {\mathsf{t}}{\tau_k}(\cdot))_k$ to take the $\liminf$ of   the first integral.
In view of convergences \eqref{cvE2}
 for $(\pwl {\dot e}{\tau_k})_k$
 and \eqref{cv_CC_DD}  for $(\bbD(\pwc z{\tau_k}))_k$ we also have 
 \[
 \liminf_{k\to\infty} \int_{0}^{\pwc{\mathsf{t}}{\tau}(t)} \int_\Omega
 \bbD(\pwc z{\tau_k}(r)) \pwl{\dot e}{\tau_k}(r) {:} \pwl{\dot e}{\tau_k}(r)  \dd x \dd r \geq 
 \int_0^t \int_\Omega \bbD(z(r)) \dot{e}(r) {:} \dot{e}(r) \dd x \dd r\,.
 \]
 We pass to the limit in the other dissipative contributions $\iint \did{\pwl{\dot z}{\tau_k}} \dd x \dd r, \ldots ,
 \iint |\pwl{\dot p}{\tau_k}|^2 \dd x \dd r$ invoking convergences \eqref{cvZ2}, \eqref{cvZ3}, \eqref{cvP2}. We also resort
   to the previously established  $\liminf$-inequality \eqref{lscR}.
  We use that 
  \begin{equation}
  \label{stability-e}
  \| \pwc e{\tau_k} {-}  \pwl e{\tau_k}\|_{L^\infty(0,T; L^2(\Omega;\mt_\sym^{d\times d})}\leq 
  \tau_k^{1/2} \|  \pwl {\dot e}{\tau_k}\|_{L^2(0,T; L^2(\Omega;\mt_\sym^{d\times d})} \leq S \tau_k^{1/2}
  \end{equation}
  to deduce from \eqref{cvE3-bis} that 
  \begin{equation}
  \label{pointwise-weak-e}
   \pwc e{\tau_k}(t) \weakto e(t) \quad \text{ in $L^2(\Omega;\mt_\sym^{d\times d})$ for every $t \in [0,T]$}.
  \end{equation}
  In the same way, we prove that 
  \begin{equation}
  \label{pointwise-strong-z}
   \pwc z{\tau_k}(t) \weakto z(t)  \quad \text{ in $\spz$,} \qquad 
  \pwc z{\tau_k}(t) \to z(t) \quad \text{ in $\rmC^0(\overline\Omega)$ for every $t \in [0,T]$.}
  \end{equation}  
  Therefore, again via the Ioffe theorem  we have that 
  $\liminf_{k\to\infty} \calQ( \pwc e{\tau_k}(t),  \pwc z{\tau_k}(t)) \geq \calQ(e(t),z(t))$;
  by the lower semicontinuity of the functional $\calG$ we also infer
  $\liminf_{k\to\infty} \calG(\pwc z{\tau_k}(t)) \geq \calG(z(t))$
   \MMM (where the functionals $\calQ$ and $\calG$ are from \eqref{stored-energy}). \EEE
  \par
  In order to pass to the limit on the right-hand side of  \eqref{mech-ineq-discr}, 
 we use convergences \eqref{approx-e_0}     to deal with the terms on 
  the left-hand side involving the  approximate initial data. 
  Convergences \eqref{converg-interp-L}, \eqref{converg-interp-w},  and \eqref{cvU1} allow us to take the limit
  of the fifth integral term on the right-hand side. As for the sixth and seventh ones, we combine the 
  strong convergences \eqref{cvT5} and
   \eqref{cv_CC_DD}  with the weak ones \eqref{cvE2}, \eqref{cvZ2}. 
   We handle the limit passage in the eighth, ninth, and tenth terms 
   by using convergences \eqref{cvU1} and \eqref{cvU3-bis} for $(\pwwll { u}{\tau_k}(\pwc {\mathsf{t}}{\tau_k}(\cdot))_k$,
   and  \eqref{converg-interp-w} for $(\pwl {\dot w}{\tau_k})_k$ and  $(\partial_t\pwwll {w}{\tau_k})_k$.
   Finally, the limit passage in the eleventh term results from  \eqref{converg-interp-w}
    combined with the stress convergence \eqref{cvS} and with \eqref{cvE3}.
    In fact, we use that 
    \[
     \int_{0}^{\pwc{\mathsf{t}}{\tau}(t)} \int_\Omega  \tau |\pwc e{\tau_k}|^{\gamma-2} \pwc e{\tau_k} : \sig{\pwl {\dot w}{\tau_k}} \dd x \dd r
     = \int_{0}^{\pwc{\mathsf{t}}{\tau}(t)} \int_\Omega  \tau^{1-\alpha_w} |\pwc e{\tau_k}|^{\gamma-2} \pwc e{\tau_k} : \tau^{\alpha_w}\sig{\pwl {\dot w}{\tau_k}} \dd x \dd r  \to 0
     \] 
     as $k\to\infty$ thanks to \eqref{converg-interp-w} and  \eqref{cvE3}.
    Therefore,
    \begin{equation}
    \label{houston}
     \lim_{k\to\infty}   \int_{0}^{\pwc{\mathsf{t}}{\tau}(t)} \int_\Omega 
     \pwc \sigma{\tau_k} {:} \sig{\pwl {\dot w}{\tau_k}} \dd x \dd t 
     = \int_0^t \int_\Omega \sigma {:} \sig{\dot w} \dd x \dd t\,.
    \end{equation}
    \par
    We have thus established the mechanical energy inequality  \eqref{uee-mech} on the interval $(0,t)$, for every $t\in (0,T]$.
         In fact, we have shown that 
    \begin{equation}
    \label{mech-chain}
    \begin{aligned}
    \text{l.h.s.\ of \eqref{uee-mech} on $(0,t)$} &  \leq 
    \liminf_{k\to\infty}  \text{l.h.s.\ of \eqref{mech-ineq-discr} on $(0,t)$} 
    \leq 
     \limsup_{k\to\infty}  \text{l.h.s.\ of \eqref{mech-ineq-discr} on $(0,t)$} 
     \\
     & \quad 
    \leq  \lim_{k\to\infty}  \text{r.h.s.\ of \eqref{mech-ineq-discr} on $(0,t)$} 
    =  \text{r.h.s.\ of \eqref{uee-mech} on $(0,t)$}\,.
    \end{aligned}
    \end{equation}
    Let us mention that the very same convergence arguments as in the above lines also give the upper total energy inequality on 
     $(0,t)$. 
  \par
  \noindent
  \emph{Step $5$: ad the one-sided variational inequality \eqref{1-sided-intro}.}
  Using the $1$-homogeneity of $\didname$ (cf.\ again Sec.\ \ref{ss:2.2}),  we reformulate the discrete damage flow rule
  as the system
  \begin{subequations}
  \label{Karush-Kuhn-Tucker}
  \begin{align}
  &
  \label{KKT1}
\Did{\zeta}  +  \nu \ass( \pwl{\dot z}{\tau},\zeta) {+} \ass(\pwc z\tau,\zeta) + \int_\Omega 
 \left(
 \pwl{\dot z}{\tau} {+}\beta'(\pwc z\tau)
  {-} \lambda_W \upwc z\tau   \right)\zeta \dd x \geq \int_\Omega 
 \left( {-}\tfrac12 \bbC'(\pwc z\tau) \upwc e\tau{:} \upwc e\tau {+} \upwc \teta\tau \right) \zeta \dd x \quad 
 \text{for all } \zeta \in H_-^{\mathrm{s}}(\Omega),
 \\
  &
  \label{KKT2} 
  \Did{\pwl {\dot z}{\tau}}   +
  \nu \ass(\pwl{\dot z}{\tau}, \pwl{\dot z}{\tau}) +\ass (\pwc z{\tau}, \pwl{\dot z}{\tau}) 
  +\int_\Omega \left( |\pwl{\dot z}{\tau}|^2
  {+}\beta'(\pwc z\tau)
 \pwl{\dot z}\tau {-} \lambda_W \upwc z\tau \pwl{\dot z}{\tau} \right) \dd x 
 \leq 
  \int_\Omega 
 \left( {-}\tfrac12 \bbC'(\pwc z\tau) \upwc e\tau{:} \upwc e\tau {+} \upwc \teta\tau \right) \pwl{\dot z}\tau \dd x\,.
  \end{align}
  \end{subequations}
  We now pass to the limit in \eqref{KKT1}, integrated along the interval $(0,t)$, using convergences
   \eqref{cvZ1}--\eqref{cvZ4}, as well as \eqref{cvT5}. 
   Let us only comment on the fact that, since $\beta \in \rmC^2(\R^+)$ and 
   the functions $(\pwc z{\tau_k})_k$, taking  values  in  the interval $[\zeta_*,1]$ by \eqref{discr-feasibility}, 
   converge to $z$ uniformly in $Q$, there holds
   \begin{equation}
   \label{unif-beta-conv}
   \beta'(\pwc z{\tau_k}) \to \beta'(z) \qquad \text{uniformly in }Q\,.
   \end{equation}
   Moreover, by the Ioffe Theorem  (recall that 
   $\zeta \leq 0$ in $\Omega$ and the positivity property \eqref{posbbC} of $\bbC'$),
  we have 
  \[
  \liminf_{k\to\infty}
   \int_0^t \int_\Omega  {-}\tfrac12 \bbC'(\pwc z{\tau_k}) \upwc e{\tau_k}{:} \upwc e{\tau_k} \zeta \dd x  \dd r
   \geq \int_0^t  \int_\Omega {-}\tfrac12 \bbC'(z) e{:} e \zeta \dd x \dd r\,.
  \]
  We have thus established
  \[
 \int_0^t  
 \left(
 \Did{\zeta}  {+} \nu \ass( \dot z,\zeta) {+} \ass(z,\zeta)   \right) \dd  r + \int_0^t  \int_\Omega 
 \left(
 \dot z   {+}\beta'(z)
  {-} \lambda_W z  \right)   \zeta \dd x  \dd r \geq \int_0^t  \int_\Omega 
 \left( {-}\tfrac12 \bbC'(z) e{:}e {+} \teta \right) \zeta \dd x \dd r  \quad 
  \]
for all $  \zeta \in H_-^{\mathrm{s}}(\Omega).$ 
 A localization argument yields the one-sided variational inequality \eqref{1-sided-intro}. 
 \par\noindent
 \emph{Step $6$: enhanced convergence properties and plastic flow rule.}   
 Taking the test function $\zeta = \dot{z}$ in  \eqref{1-sided-intro}, which is admissible since $\dot{z}(t) \in 
 H_-^{\mathrm{s}}(\Omega)$ for almost all $t \in (0,T)$, and integrating in time, yields 
 the converse of inequality \eqref{en-diss-ineq},
 namely
   \begin{equation}
 \label{lee-z}
 \int_0^t \left(  \Did{\dot{z}} \dd r {+}  \int_\Omega |\dot{z}|^2 \dd x \dd r
   {+} \nu  \ass (\dot{z}, \dot{z})  \right) \dd r 
   + \calG(z(t)) + \int_\Omega  \tfrac12 \bbC'(z) e {:}e \dot{z} \dd x \dd r   \geq  \calG(z(0))  + 
   \int_0^t \int_\Omega   \teta  \dot{z}  \dd x \dd r\,,
 \end{equation}
 where we have used the chain rule identity
 \[
 \int_0^t \left( \ass (z,\dot{z}) {+} \int_\Omega W'(z) \dot{z} \dd x \right) \dd r = \calG(z(t))  - 
 \calG(z(0))\,. 
 \]
 We now consider 
 \eqref{def-subdiff-a}, with the test function $\eta = \chi_{(0,t)} \dot{p}$ (and $\chi_{(0,t)}$ the characteristic
 function of $(0,t)$), 
 and deduce that 
    \begin{equation}
 \label{lee-p}
 \begin{aligned}
 \int_0^t \Dip{z(r)}{\teta(r)}{ \dot{p}(r)} \dd r   & \geq  
 \int_0^t \int_\Omega \zeta{:} \dot{p} \dd x \dd r
 \\ & 
  =  \int_0^t \int_\Omega \left( \bbD(z) \dot{e} {+} \bbC(z) e {-}\teta \bbC(z) \bbE \right) {:} \dot{p} \dd x \dd r 
  -\int_0^t \int_\Omega |\dot{p}|^2 \dd x \dd r \,.
  \end{aligned}
    \end{equation}
 Finally, we test the momentum balance by $\dot{u}-\dot{w}$, integrate 
 on $(0,t)$,  and add the resulting relation with \eqref{lee-z} and \eqref{lee-p}. We  observe that 
 some terms cancel out and repeat the very  same calculations as those leading to \eqref{mech-enbal}. 
 We thus conclude that the converse of the  mechanical energy inequality \eqref{uee-mech} holds.
 This establishes  the mechanical energy \emph{balance}  \eqref{mech-enbal} on the interval $(0,t)$. 
 \par
 Furthermore, we continue the chain of inequalities \eqref{mech-chain}  and conclude that 
\[
    \begin{aligned}
    \text{l.h.s.\ of \eqref{mech-enbal} on $(0,t)$} &  \leq 
    \liminf_{k\to\infty}  \text{l.h.s.\ of \eqref{mech-ineq-discr} on $(0,t)$} 
    \leq 
     \limsup_{k\to\infty}  \text{l.h.s.\ of \eqref{mech-ineq-discr} on $(0,t)$} 
     \\ & 
    \leq  \lim_{k\to\infty}  \text{r.h.s.\ of \eqref{mech-ineq-discr} on $(0,t)$} 
    =  \text{r.h.s.\ of \eqref{mech-enbal} on $(0,t)$} 
    \stackrel{(!)}{=}  \text{l.h.s.\ of \eqref{mech-enbal} on $(0,t)$}\,.
    \end{aligned}
    \]
    Therefore, all inequalities hold as equalities, the lower and upper limits coincide. Moreover,
     taking into account the $\liminf$-inequalities previously observed in Step $3$, 
     with a standard argument  
    we conclude  that \emph{each} of the terms on the left-hand side  of 
\eqref{mech-ineq-discr} does converge to its analogue  on the left-hand side of \eqref{mech-enbal}.
Thus, we have  in particular
\begin{subequations}
\label{we-do-have-convergences}
\begin{align}
&
\label{wdhc-1}
\lim_{k\to\infty}\int_\Omega |\pwl{\dot{u}}{\tau_k}(t)|^2 \dd x = \int_\Omega |\dot{u}(t)|^2 \dd x\,,
\\
&
\label{wdhc-2}
 \lim_{k\to\infty} \int_{0}^{\pwc{\mathsf{t}}{\tau}(t)} \int_\Omega
 \bbD(\pwc z{\tau_k}(r)) \pwl{\dot e}{\tau_k}(r) {:} \pwl{\dot e}{\tau_k}(r)  \dd x \dd r =
 \int_0^t \int_\Omega \bbD(z(r)) \dot{e}(r) {:} \dot{e}(r) \dd x \dd r\,,
 \\
 &
\label{wdhc-3}
 \lim_{k\to\infty} \int_{0}^{\pwc{\mathsf{t}}{\tau}(t)}  \Did{\pwl{\dot{z}}{\tau_k}(r)} \dd r =
  \int_0^t \Did{\dot{z}(r)} \dd x\,,
  \\
 &
\label{wdhc-4}
 \lim_{k\to\infty} \int_{0}^{\pwc{\mathsf{t}}{\tau}(t)} \int_\Omega  |\pwl{\dot{z}}{\tau_k}(r)|^2  \dd x \dd r =
 \int_0^t \int_\Omega |\dot{z}(r)|^2 \dd x \dd r\,,
\\
 &
\label{wdhc-5}
 \lim_{k\to\infty} \int_{0}^{\pwc{\mathsf{t}}{\tau}(t)}  \bar{\nu} \ass (\pwl{\dot{z}}{\tau_k}(r), 
 \pwl{\dot{z}}{\tau_k}(r)) \dd r = \int_0^t \bar{\nu} \ass(\dot{z}(r), \dot{z}(r)) \dd r,
 \\
 &
 \label{wdhc-6}
 \lim_{k\to\infty} \int_{0}^{\pwc{\mathsf{t}}{\tau}(t)}  \Dip{\pwc z{\tau_k}(r)}{\upwc\teta{\tau_k}(r)}{  
 \pwl {\dot p}{\tau_k}(r)} \dd r=  
\int_0^t \Dip{z(r)}{\teta(r)}{  \dot{p}(r)} \dd r\,,
\\
 &
\label{wdhc-7}
 \lim_{k\to\infty} \int_{0}^{\pwc{\mathsf{t}}{\tau}(t)} \int_\Omega  |\pwl{\dot{p}}{\tau_k}(r)|^2  \dd x \dd r =
 \int_0^t \int_\Omega |\dot{p}(r)|^2 \dd x \dd r\,.
 \end{align}
\end{subequations}
In particular,  from \eqref{wdhc-2}, repeating the very same arguments from the proof  of \cite[Thm.\ 5.3]{LRTT}, which substantially rely on the 
uniform positive definiteness of the tensor $\bbD$, we infer that 
\begin{equation}
\label{strong-cv-dote}
\pwl{\dot e}{\tau_k} \to \dot{e} \qquad \text{ in } L^2(Q; \mt_\sym^{d\times d})\,.
\end{equation}
Then, $\pwl e{\tau_k} \to e $ in $\rmC^0([0,T]; L^2(\Omega; \mt_\sym^{d\times d}))$. 
In view of \eqref{stability-e} we then infer that 
\begin{equation}
\label{strong-conv-e-needed}
\pwc e{\tau_k}, \,  \upwc e{\tau_k} \to e \quad \text{ in } L^\infty (0,T;L^2(\Omega; \mt_\sym^{d\times d}))\,.
\end{equation}
Let us also record that  \eqref{wdhc-4} \&  \eqref{wdhc-5}
yield
\begin{equation}
\label{strong-cv-dotz}
\pwl{\dot z}{\tau_k} \to \dot{z}  \text{ in } L^2(Q) \text{ and }
\pwl{\dot z}{\tau_k} \to \dot{z} \text{ in } L^2(0,T; \spz) \text{ if } \nu >0,
\end{equation}
which gives, by dominated convergence, 
\begin{equation}
\label{strong-cv-R}
\did{\pwl{\dot z}{\tau_k}} \to \did{\dot{z}} \text{ in } L^1(Q).
\end{equation}
Finally, \eqref{wdhc-7} gives
\begin{equation}
\label{strong-cv-dotp}
\pwl{\dot p}{\tau_k} \to \dot{p}  \text{ in } L^2(Q;\mt_\dev^{d\times d})\,,
\end{equation}
from which we deduce, taking into account convergences \eqref{cvZ4}, \eqref{cvT5}, the continuity of $\dipname$, and repeating the same arguments leading to \eqref{dominated-quoted-later}, that 
\begin{equation}
\label{strong-cv-H}
\dip{\pwc z{\tau_k}}{\upwc \teta{\tau_k}}{\pwl{\dot p}{\tau_k} } \to \dip z\teta{\dot p} \text{ in } L^1(Q).
\end{equation}
Furthermore, in view of the strong convergence \eqref{strong-cv-dotp}, the $\limsup$-inequality \eqref{limsup-cond-p} immediately follows.  This establishes the plastic flow rule.
\par
\noindent
\emph{Step $7$: ad the damage  flow rule.}
It follows from \eqref{elast-visc-tensors-2} and \eqref{cvZ4} that 
$\bbC'(\pwc z{\tau_k}) \weaksto \bbC'(z)$ in $L^\infty(Q;\mt_\sym^{d\times d})$.
Combining this with 
\eqref{strong-conv-e-needed} we find that 
\[
\bbC'(\pwc z{\tau_k}) \upwc e{\tau_k} : \upwc e{\tau_k}  \weakto \bbC'(z) e:e \qquad \text{in }
\MMM L^1(Q)\,. \EEE
\]
\MMM Combining this with convergences
 \eqref{cvZ1}--\eqref{cvZ5},
 \eqref{cvT5}, and \eqref{unif-beta-conv},  we are thus able to pass to the limit in an integrated version of the 
 discrete damage flow rule \eqref{eq-z-interp}, with test functions in $L^\infty(0,T;\spz)$ (which embeds continuously into $L^\infty(Q)$). 
 With a localization argument we then deduce that \EEE
the quadruple $(e,z,\teta,\omega)$ complies with 
 \begin{equation}
 \label{almost-dam}
 \omega +\dot z+ \nu \As(\dot z) + \As(z) + W'(z) = -\frac12 \bbC'(z) e:e + \teta \qquad \text{ in } \spz^* \quad
 \aein\, (0,T).
 \end{equation}
 In order to conclude the damage flow rule \eqref{dam-Hs-star-intro}, it remains to show
  that $\omega(t) \in \subd \Did{z(t)} $ for almost all $t\in (0,T)$, with $\calR$ from \eqref{abstract-dissip-potential}.
  In fact, this is equivalent to showing that 
 \begin{equation}
 \label{integral-identification}
 \omega \in \subd \overline{\calR}(\dot{z}) \ 
   \text{ with }  \  \overline{\calR}:
  L^2(0,T; \spz) \to [0,+\infty]  \ \text{ given by } \ 
 \overline{\calR}(v): =\int_0^T \Did{v(t)} \dd t
 \end{equation}
and $\subd \overline{\calR}:    L^2(0,T; \spz) \rightrightarrows   L^2(0,T; \spz^*)$ its subdifferential 
in the sense of convex analysis. Now, \eqref{integral-identification} directly  follows by 
passing to the limit in
its discrete version $\pwc\omega{\tau_k} \in \subd \overline{\calR}(\pwl{\dot{z}}{\tau_k})$, 
combining the 
weak convergence
\eqref{cvZ5} for $\pwc \omega{\tau_k}$
with the strong one 
\eqref{strong-cv-dotz} for $\pwl{\dot z}{\tau_k}$ and using
 the strong-weak closedness of the graph of $\partial\overline{\calR}$. 
This concludes the proof of the damage flow rule \eqref{dam-Hs-star-intro}. 
 \par
\noindent
\emph{Step $8$: ad the temperature equation.}
  We  pass to the limit in the approximate temperature equation \eqref{eq-teta-interp}, 
with the test functions from \eqref{discrete-tests-phi}
approximating  a fixed test function $
\varphi\in \rmC^0([0,T]; W^{1,\infty}(\Omega))\cap
H^1(0,T;L^{6/5}(\Omega))  $. 
Using formula \eqref{discr-by-part},  we integrate by parts in time   the term 
 \[
 \int_0^t \int_\Omega  \pwl{\dot \teta}{\tau_k} \pwc{\varphi}{\tau_k}\dd x \dd s
 = - \int_0^t \int_\Omega \upwc{\teta}{\tau_k} \pwl{\varphi}{\tau_k}\dd x \dd s 
 + \int_\Omega \pwc{\teta}{\tau_k}(t) \pwc{\varphi}{\tau_k}(t)\dd x - 
  \int_\Omega \pwc{\teta}{\tau_k}(0) \pwc{\varphi}{\tau_k}(0)\dd x
  \]
and for taking its limit  we exploit convergences \eqref{convergences-test-interpolants},  as 
well as  \eqref{cvT1} for $\upwc \teta{\tau_k}$   and  the pointwise  \eqref{cvT7} for $\pwc \teta{\tau_k}$. 
  For the limit passage in the term
   $\int_0^t \int_\Omega \condu(\pwc \teta{\tau_k}) \nabla
     \pwc \teta{\tau_k} \nabla \pwc \varphi{\tau_k} \dd x \dd t$, repeating the very same arguments 
     from the proofs of \cite[Thm.\ 2]{Rocca-Rossi} and \cite[Thm.\ 2]{Rossi2016}, \MMM which  in turn rely on the growth \eqref{hyp-K-stronger}
     and on estimates \eqref{est-temp-added?-bis}, \EEE
      we show that  
    \begin{equation}
    \label{specific-tilde-delta}
    \condu(\pwc \teta{\tau_k}) \nabla  \pwc \teta{\tau_k} 
    \weakto \condu(\teta) \nabla \teta \quad \text{ in $L^{1+\tilde\delta}(Q;\R^d)$, with } \quad
    \tilde{\delta} = \frac{2-3\mu+3\alpha}{3(\mu-\alpha+2)} \in \left(0,\frac13 \right)
    \end{equation}
    (cf.\ \eqref{further-k-teta}).
   Finally, observe that $\condu(\teta) \nabla \teta = \nabla (\hat{\condu}(\teta))$
      thanks to \cite{Marcus-Mizel}.  Since $\hat{\condu}(\teta)$
       itself is a function in $L^{1+\tilde\delta}(Q) $ (for $d=3$, this follows from the fact that $\hat{\condu}(\teta) \sim \teta^{\mu+1} \in L^{h/(\mu+1)}(Q)$ for  every $1 \leq h<\frac83$), we conclude \eqref{further-k-teta}.
 The limit passage on the r.h.s.\ of  the discrete heat equation \eqref{eq-teta-interp} results from   \eqref{converg-interp-g}, 
 \eqref{converg-interp-h}, 
 and from  the previously established strong convergences  \eqref{cvT6},  \eqref{cv_CC_DD},  \eqref{strong-cv-dote},   
 \eqref{strong-cv-dotz}--\eqref{strong-cv-H}. 
 All in all, we have established that the  limit quintuple $(u,e,p,z,\teta)$ complies
  with 
   \begin{equation}
\begin{aligned}
\label{eq-teta-interm}   &
\pairing{}{W^{1,\infty}(\Omega)}{\teta(t)}{\varphi(t)}
-\int_0^t\int_\Omega \teta \varphi_t \dd x \dd s     +\int_0^t \int_\Omega \condu(\teta) \nabla \teta\nabla\varphi \dd
x \dd s  
\\ &= \int_\Omega \teta_0 \varphi(0) \dd x  +
\int_0^t\int_\Omega \left(G{+} \bbD(z) \dot{e} {:} \dot{e}  
{-}\teta \bbC(z) \bbE {:}\dot e {+} \did{\dot z} {+} |\dot z|^2 {+} \bar \nu \ass (\dot{z}, \dot{z}) {-}\teta \dot{z} + \dip{z}{\teta}{\dot{p}} {+} |\dot{p}|^2 \right) 
\varphi \dd x \dd s
\\
&
\quad  
  +  \int_0^t \ \int_{\partial\Omega} h \varphi \dd S \dd s\,.
\end{aligned}
\end{equation}
for all   test functions
 $
\varphi\in \rmC^0([0,T]; W^{1,\infty}(\Omega))\cap
H^1(0,T;L^{6/5}(\Omega))  $
  and for all  $ t\in (0,T]. $
\par
Clearly, testing  \eqref{eq-teta-interm}  with functions $\varphi \in W^{1,\infty}(\Omega)$ independent of the time variable,
and repeating the very same arguments from the proof of \cite[Thm.\ 2]{Rossi2016}, we conclude that $\teta $ is in
$W^{1,1}(0,T;W^{1,\infty}(\Omega)^*)$ and that it complies with \eqref{eq-teta}. 
Again arguing as for  \cite[Thm.\ 2]{Rossi2016},
we can enlarge the space of test functions to $W^{1,q_{\tilde \delta}}(\Omega)$, with $q_{\tilde\delta} = 1+\tfrac1{\tilde\delta}$ and $\tilde \delta $ from 
\eqref{specific-tilde-delta}, and improve the regularity of $\teta$ to $W^{1,1}(0,T;W^{1,q_{\tilde \delta}}(\Omega)^*)$. 
We have thus completed the proof of Theorem \ref{mainth:2}. 
\QED
\par
\noindent 
\textbf{\underline{Proof of Theorem  \ref{mainth:1} (Entropic solutions):}} It is immediate to check that 
Steps $0$--$5$ of the proof of Thm.\    \ref{mainth:2} 
do not rely on the more stringent growth condition \eqref{hyp-K-stronger} on $\condu$. Therefore, they carry over
in the setting of  Theorem \ref{mainth:1}. We thus immediately establish the validity 
of the kinematic admissibility,
of the weak momentum balance,  of the one-sided damage variational inequality \eqref{1-sided-intro},  of the entropy inequality, and of the upper mechanical energy estimate \eqref{uee-mech}.
\par
From \eqref{def-subdiff-a} and \eqref{prelim-flow-rule} we also infer the plastic variational inequality \eqref{1-sided-pl}. The upper total energy estimate \eqref{total-uee} 
 follows from 
passing to the limit in \eqref{total-enid-discr} with the very same arguments used for obtaining  the mechanical energy inequality. 
\par
This concludes the proof.
\QED

\begin{remark}
\label{rmk:1-from-2}
\upshape
A few more comments on the proof of Theorem \ref{mainth:1} are in order.
\begin{enumerate}
\item Step $6$ in the proof of Thm.\ \ref{mainth:2} (and, consequently, Steps $7$ \& $8$) does not carry over to the setting of Thm.\ \ref{mainth:1}. Indeed, testing the one-sided variational inequality for damage \eqref{1-sided-intro} by $\dot{z}$ is no longer possible, as, setting $\nu=0$ we have lost the information that $\dot z \in \spz$. Hence, all the $\limsup$-arguments 
from Step $7$, which
strengthened the convergences of the approximate solutions, cannot be repeated. 
\item As already hinted at the end of Sec.\ \ref{ss:2.4}, it would be possible to establish the existence of entropic solutions to the (non-regularized) thermoviscoelastoplastic damage  system by passing to the limit as $\nu\down 0$ in its regularized version, featuring a family $(\condu_\nu)_\nu$ of heat conductivity functions converging to some $\condu$ that only complies with \eqref{hyp-K}. 
In fact,   the a priori estimates \eqref{aprioU1}--\eqref{aprio_Varlog}, performed on the time-discrete version of the regularized system, are inherited by the weak energy solutions: in particular, it is easy to check that the bounded variation estimate \eqref{aprio_Varlog}  is preserved by  lower semicontinuity arguments. Then,
a close perusal of Steps $0$--$5$ reveals that all the arguments performed for the time-discrete to continuous limit  carry over to the limit passage $\nu \downarrow 0$. 
\end{enumerate}
\end{remark}
\section{Proof of Proposition \ref{prop:continuous-dependence}} 
\label{s:6}
\noindent
\MMM Throughout this section we will work under the strongly simplifying condition that $\mathrm{H}$ neither depends on the damage variable nor on the temperature. \EEE 
Let $(u_i,e_i,z_i,p_i)$, $i=1,2$, be two (weak energy) solutions 
to the regularized viscolelastoplastic damage system  with a given temperature profile $\Theta$, supplemented with  initial data
$(u_i^0,e_i^0, z_i^0, p_i^0)$ and external data $(\calL_i,w_i)$, $i=1,2$ (where $\calL_i$ subsume the volume forces  $F_i$ and applied tractions $f_i$). 
 We will use the place-holders
\[
\begin{aligned}
&
\tilde{u}: = u_1-u_2, \quad \tilde{e}: = e_1-e_2, \quad \tilde{z}: = z_1-z_2,  \quad \tilde{p}: = p_1-p_2\,,\quad \tilde{\sigma}: = \sigma_1-\sigma_2\,,
\end{aligned}
\]
where  $\sigma_i: = \bbD(z_i) \dot{e}_1 + \bbC(z_i) e_i -\Theta_i \bbC(z_i) \bbE$ for $i=1,2$. Throughout the proof, we will also often use the short hand $\| \cdot\|_{L^p}$ for  $\| \cdot\|_{L^p(\Omega)}$,  $\| \cdot\|_{L^p(\Omega;\R^d)}$ and so forth.  
\par
We preliminarily observe that, at fixed $t\in (0,T)$ (which we omit) there holds 
\begin{equation}
\label{prelim-sigma}
\begin{aligned}
&
\| \tilde{\sigma}\|_{L^2(\Omega;\mt_\sym^{d\times d})}    \leq \|  \bbD(z_1) \partial_t \tilde{e} \|_{L^2(\Omega;\mt_\sym^{d\times d})}
+ \| \left( \bbD(z_1){-} \bbD(z_2) \right) \dot{e}_2  \|_{L^2(\Omega;\mt_\sym^{d\times d})}
+  \| \bbC(z_1)  \tilde{e} \|_{L^2(\Omega;\mt_\sym^{d\times d})}
\\
&
\quad 
+ \| \left( \bbC(z_1){-} \bbC(z_2) \right) {e}_2  \|_{L^2(\Omega;\mt_\sym^{d\times d})}
+ \| \Theta_1 \left( \bbC(z_1){-} \bbC(z_2)\right) \bbE  \|_{L^2(\Omega;\mt_\sym^{d\times d})} 
+ \| (\Theta_1{-}\Theta_2) \bbC(z_2) \bbE  \|_{L^2(\Omega;\mt_\sym^{d\times d})} 
\\
& \quad  \leq 
M_1\left( 
\| \partial_t \tilde{e} \|_{L^2(\Omega;\mt_\sym^{d\times d})}+ \| \tilde{e} \|_{L^2(\Omega;\mt_\sym^{d\times d})}+
 \|\tilde{z}\|_{L^\infty(\Omega)}+
 \|\Theta_1\|_{L^2(\Omega)} \|\tilde{z}\|_{L^\infty(\Omega)}+
  \|\Theta_1{-}\Theta_2 \|_{L^2(\Omega)} \right)\,.
\end{aligned}
\end{equation}
The last estimate follows from $(2.(\bbC,\bbD,\bbE))$, with a constant $M_1$ only depending   on the norms $\|z_1\|_{L^\infty}$, $\|z_2\|_{L^\infty}$.   In fact, 
$\|   \bbD(z_1)\|_{L^\infty},\, \|   \bbC(z_1)\|_{L^\infty}$ are estimated by a constant  depending on $\|z_1\|_{L^\infty}$. Analogously,  thanks to \eqref{elast-visc-tensors-4}, 
the Lipschitz estimates $\| \bbD(z_1){-} \bbD(z_2)\|_{L^\infty} \leq C \| z_1{-}z_2\|_{L^\infty}$ and 
$\| \bbC(z_1){-} \bbC(z_2)\|_{L^\infty} \leq \tilde{C} \| z_1{-}z_2\|_{L^\infty}$ hold with constants depending on  $\|z_1\|_{L^\infty}$, $\|z_2\|_{L^\infty}$. 
\par
In order to prove 
\eqref{cont-dependence-estimate}, we start by subtracting the weak momentum balance \eqref{w-momentum-balance} for $u_2$ from that for $u_1$, test the resulting relation by $\partial_t \tilde{u} -\partial_t (w_1{-}w_2)$, and integrate on an arbitrary time interval $(0,t)$. Elementary calculations (cf.\ \eqref{intermediate-mech-enbal}) lead to
\begin{equation}
\label{est-tilde-u}
\begin{aligned}
& 
\frac{\rho}2 \int_\Omega |\partial_t\tilde{u}(t)|^2 \dd x + \int_0^t \int_\Omega \tilde{\sigma} : \sig{\partial_t\tilde{u}} \dd x \dd r 
\\
&  = \frac{\rho}2 \int_\Omega |\dot{u}_1^0 {-}  \dot{u}_2^0 |^2 \dd x  + \int_0^t \pairing{}{H_\Dir^1 (\Omega;\R^d)}{\mathcal{L}_1 {-} \mathcal{L}_2 }{\partial_t \tilde{u} {-} \partial_t (w_1{-}w_2)} \dd r  +
\int_0^t \int_\Omega  \tilde{\sigma}  : \sig{\partial_t ( w_1{-}w_2)} \dd x \dd r 
\\
&\quad  +\rho \left( \int_\Omega \partial_t\tilde{u}(t) \partial_t(w_1{-}w_2)(t) \dd x -  \int_\Omega (\dot{u}_1^0 {-}  \dot{u}_2^0)  \partial_t(w_1{-}w_2)(0)
\dd x - \int_0^t \int_\Omega \partial_t\tilde{u}\partial_{tt} (w_1{-}w_2) \dd x \dd r \right)
\\
& \doteq I_1+I_2+I_3+I_4\,.
\end{aligned}
\end{equation}
We estimate
\[
\begin{aligned}
&
\left| I_1 \right|  \leq C \| \dot{u}_1^0 {-}  \dot{u}_2^0\|_{L^2(\Omega;\R^d)}^2,
\\
&
\left| I_2   \right| \leq \frac12 \| \mathcal{L}_1 {-} \mathcal{L}_2  \|_{L^2(0,T;H^1(\Omega;\R^d)^*)}^2 + \frac12 \|w_1 {-}w_2\|_{H^1(0,T;H^1(\Omega;\R^d))}^2,
\\
&
\begin{aligned}
\left| I_3   \right| \leq &  \eta \int_0^t  \left( 
\| \partial_t \tilde{e} \|_{L^2(\Omega;\mt_\sym^{d\times d})}^2{+} \| \tilde{e} \|_{L^2(\Omega;\mt_\sym^{d\times d})}^2{+}
 \|\tilde{z}\|_{L^\infty(\Omega)}^2{+}\|\Theta_1\|_{L^2(\Omega)}^2  \|\tilde{z}\|_{L^\infty(\Omega)}^2 \right)  \dd r 
 + C \| \Theta_1{-} \Theta_2\|_{L^2(Q)}^2
 \\& \quad 
 +C_\eta  \|w_1 {-}w_2\|_{H^1(0,T;H^1(\Omega;\R^d))}^2,
 \end{aligned}
 \\
 &
 \begin{aligned}
\left|  I_4  \right|  \leq C  \| \dot{u}_1^0 {-}  \dot{u}_2^0\|_{L^2(\Omega;\R^d)}^2
 + \frac{\rho}8 \int_\Omega |\partial_t\tilde{u}(t)|^2 \dd x &+ C\|w_1 {-}w_2\|_{W^{1,\infty}(0,T;L^2(\Omega;\R^d))}^2 
  \\ &  +\rho\int_0^t\| \partial_{tt} (w_{1}{-}w_2) \|_{L^2(\Omega;\R^d)}\|  \partial_t\tilde{u}\|_{L^2(\Omega;\R^d)} \dd r,
 \end{aligned}
\end{aligned}
\]
where the bound for $I_3$ follows from \eqref{prelim-sigma}, and the constant $\eta>0$  therein,  on which $C_\eta>0$ depends,  will be chosen suitably. 
As for the second term on the left-hand side of \eqref{est-tilde-u}, 
it follows from the kinematic admissibility condition that  
\[
 \int_0^t \int_\Omega \tilde{\sigma} : \sig{\partial_t\tilde{u}} \dd x \dd r  =  \int_0^t \int_\Omega \tilde{\sigma} : \partial_t \tilde{e} \dd x \dd r  +  \int_0^t  \int_\Omega \tilde{\sigma} : \partial_t \tilde{p} \dd x \dd r\doteq I_5 +I_6\,.
\]
We mention in advance that $I_6$ will cancel out with a term arising from the test of the plastic flow rules. As for $I_5$, we have that 
(cf.\ \eqref{prelim-sigma})
 \[
 \begin{aligned}
 I_5 &  =  \int_0^t \int_\Omega \bbD(z_1) \partial_t \tilde{e} : \partial_t \tilde{e} \dd x \dd r  +   \int_0^t \int_\Omega \left(\bbD(z_1){-} \bbD(z_2) \right) \dot{e}_2 : \partial_t \tilde{e}  \dd x  \dd r
 \\
 & \quad +  \int_0^t \int_\Omega \bbC(z_1)  \tilde{e} : \partial_t \tilde{e} \dd x \dd r +   \int_0^t \int_\Omega \left(\bbC(z_1){-} \bbC(z_2) \right) e_2 : \partial_t \tilde{e}  \dd x    \dd r 
 \\
 & \quad +\int_0^t \int_\Omega \Theta_2 (\bbC(z_2) {-} \bbC(z_1)) \bbE : \partial_t \tilde{e} \dd x  \dd r 
 +\int_0^t \int_\Omega (\Theta_2{-}\Theta_1)  \bbC(z_1) \bbE : \partial_t \tilde{e} \dd x  \dd r 
 \\
 &
  \doteq I_{5,1}+ I_{5,2}+ I_{5,3} +I_{5,4} +I_{5,5} +I_{5,6}\,.
 \end{aligned}
 \]
 By the uniform positive definiteness of $\bbD$ we have 
 \[
 I_{5,1} \geq C_{\bbD}^1 \int_0^t \int_\Omega |\partial_t \tilde{e}|^2 \dd x \dd r,
 \]
 while the other terms, to be moved to the right-hand side of estimate \eqref{est-tilde-u}, can be estimated with analogous arguments as for \eqref{prelim-sigma}. Namely, 
 \[
 \begin{aligned}
 &
 \left| I_{5,2} \right| \leq M_2 \int_0^t  \| \tilde{z}\|_{L^\infty(\Omega)}\| \partial_t \tilde{e}   \|_{L^2(\Omega;\mt_\sym^{d\times d})} \dd r
 \leq \varrho \int_0^t \| \partial_t \tilde{e}   \|_{L^2(\Omega;\mt_\sym^{d\times d})}^2 \dd r + C(\varrho,M_2) \int_0^t  \| \tilde{z}\|_{L^\infty(\Omega)}^2 \dd r,
 \\
 &
 \left| I_{5,3} \right| \leq   M_2  \int_0^t  \| \partial_t \tilde{e}   \|_{L^2(\Omega;\mt_\sym^{d\times d})} \|  \tilde{e}   \|_{L^2(\Omega;\mt_\sym^{d\times d})} \dd r \leq  \varrho \int_0^t \| \partial_t \tilde{e}   \|_{L^2(\Omega;\mt_\sym^{d\times d})}^2 \dd r 
 +C(\varrho,M_2) \int_0^t \|  \tilde{e}   \|_{L^2(\Omega;\mt_\sym^{d\times d})}^2 \dd r,
 \\
 &
 \left| I_{5,4} \right| \leq   M_2 \int_0^t  \| \tilde{z}\|_{L^\infty(\Omega)}\| \partial_t \tilde{e}   \|_{L^2(\Omega;\mt_\sym^{d\times d})} \dd r
 \leq \varrho \int_0^t \| \partial_t \tilde{e}   \|_{L^2(\Omega;\mt_\sym^{d\times d})}^2 \dd r + C(\varrho,M_2) \int_0^t  \| \tilde{z}\|_{L^\infty(\Omega)}^2 \dd r,
 \\
 &
 \begin{aligned}
 \left| I_{5,5} \right|  & \leq  M_2 \int_0^t  \| \tilde{z}\|_{L^\infty(\Omega)} \| \Theta\|_{L^2(\Omega)} \| \partial_t \tilde{e}   \|_{L^2(\Omega;\mt_\sym^{d\times d})} \dd r \\ & \leq   \varrho \int_0^t \| \partial_t \tilde{e}   \|_{L^2(\Omega;\mt_\sym^{d\times d})}^2 \dd r  + 
  C(\varrho,M_2) \int_0^t \| \Theta\|_{L^2(\Omega)}^2  \| \tilde{z}\|_{L^\infty(\Omega)}^2 \dd r,
    \end{aligned}
  \\
  & \begin{aligned}
 \left| I_{5,6} \right| \leq   \varrho \int_0^t \| \partial_t \tilde{e}   \|_{L^2(\Omega;\mt_\sym^{d\times d})}^2 \dd r  + 
  C(\varrho,M_2) \int_0^t \| \Theta_1{-}\Theta_2\|_{L^2(\Omega)}^2 \dd r.
  \end{aligned}
 \end{aligned}
 \]
where  the constant $M_2$ depends on $\|z_1\|_{L^\infty},$ $  \|z_2\|_{L^\infty}, $ and $\|e_2\|_{L^2}$. The positive constant $\varrho$ (note that   $C(\varrho,M_2)$ depends on $\varrho$ and $M_2$), will be specified later. 
 \par
 Next, we subtract the subdifferential inclusion \eqref{no-duality} for $z_2$ from that for $z_1$, test the resulting relation by $\partial_t \tilde{z}$, and integrate on $(0,t)$. We thus obtain
 \begin{equation}
\label{est-tilde-z}
\begin{aligned}
&
  \int_0^t  \int_\Omega |\partial_t \tilde{z}|^2 \dd x  \dd r +\nu \int_0^t \ass( \partial_t \tilde{z}, \partial_t \tilde{z}) \dd r + \frac12 \ass (\tilde{z}(t), \tilde{z}(t)) 
  \\
  & = \frac12 \ass (z_1^0{-}z_2^0, z_1^0{-}z_2^0)  -
  \int_0^t \pairing{}{\spz}{\omega_1{-}\omega_2}{\partial_t \tilde{z}} \dd r  +    \int_0^t  \int_\Omega\left(  W'(z_2) {-}  W'(z_1)\right) \partial_t \tilde{z} \dd x \dd r
  \\
  & \qquad
   +    \int_0^t  \int_\Omega \left( \tfrac12 \bbC'(z_2) |e_2|^2 {-}\tfrac12 \bbC'(z_1) |e_1|^2 \right)  \partial_t \tilde{z} \dd x \dd r 
 \doteq I_7+I_8+I_9+I_{10}\,,
 \end{aligned}
 \end{equation}
 with $\omega_i(t) \in \partial\Did{\dot{z}_i(t)}$ for almost all $t\in (0,T)$.
 Observe that $I_7\leq \| z_1^0{-}z_2^0\|_{\spz}^2$, while $I_8\leq 0$  by monotonicity and 
 \[
 \begin{aligned}
 &
 \left| I_9 \right| \leq M_3 \int_0^t \|\tilde z \|_{L^2(\Omega)} \|\partial_t \tilde z \|_{L^2(\Omega)}  \dd r \leq \frac14 \int_0^t  \|\partial_t \tilde z \|_{L^2(\Omega)}^2 \dd r +   C \int_0^t \|\tilde z \|_{L^2(\Omega)}^2 \dd r\,, 
 \\
 &
 \begin{aligned}
 I_{10}  & =  \int_0^t  \int_\Omega \tfrac12 \bbC'(z_2) (e_2{+}e_1) \tilde{e} \partial_t \tilde{z} \dd x \dd r + \int_0^t \int_\Omega \tfrac12 \left( \bbC'(z_2) {-} \bbC'(z_1) \right) |e_1|^2   \partial_t \tilde{z}  \dd x \dd r 
 \\
 & 
 \leq M_4 \int_0^t \|\tilde{e}\|_{L^2(\Omega;\mt_\sym^{d\times d})} \| \partial_t \tilde{z} \|_{L^\infty(\Omega)}
 \dd r  + M_4 \int_0^t \|\tilde{z}\|_{L^\infty(\Omega)}  \| \partial_t \tilde{z} \|_{L^\infty(\Omega)} \dd r
 \\
 & \leq \lambda \int_0^t  \| \partial_t \tilde{z} \|_{L^\infty(\Omega)}^2 \dd r + C(\lambda,M_4) \int_0^t \|\tilde{e}\|_{L^2(\Omega;\mt_\sym^{d\times d})}^2 \dd r + C(\lambda,M_4) \int_0^t \|\tilde{z}\|_{L^\infty(\Omega)}^2 \dd r.
\end{aligned}
 \end{aligned}
 \]
 The constant $M_3$ depends on  $\|z_1\|_{L^\infty},$ $  \|z_2\|_{L^\infty}, $
 and on the constants $\zeta_*^1,\, \zeta_*^2 >0$ such that for $i=1,2$ there holds $\zeta_*^i\leq z_i (x,t)  \leq 1$  for every $(x,t)\in Q$, observing that the restriction of $W$ to $[\min\{ \zeta_*^1, \zeta_*^2\}, 1]$ is of class $\mathrm{C}^2$. 
 The constant $M_4$ depends on $\|z_1\|_{L^\infty},$ $  \|z_2\|_{L^\infty}, $
  $\|e_1\|_{L^2}$,  and $\|e_2\|_{L^2}$. The positive constant $\lambda$ will be specified later; $C(\lambda,M_4) $ depends on $\lambda$ and $M_4$. 
 \par
 Finally, we subtract the pointwise plastic flow rule \eqref{plastic-flow-ptw} for $p_2$ from that for $p_1$, test the resulting  relation by 
 $\partial_t\tilde{p}$, and integrate in time. This leads to
  \begin{equation}
\label{est-tilde-p}
 \begin{aligned}
  \int_0^t\int_\Omega |\partial_t \tilde{p}|^2 \dd x \dd r &  = \int_0^t \int_\Omega (\zeta_2{-}\zeta_1) \partial_t\tilde{p} \dd x \dd r + \int_0^t\int_\Omega\tilde{\sigma}_\dev \partial_t \tilde{p} \dd x \dd r \doteq I_{10}+I_6
    \end{aligned}
 \end{equation}
with $\zeta_i  \in \partial \mathrm{H}(\dot{p}_i)$ a.e.\ in $Q$
fulfilling the plastic flow rules for $i=1,\, 2$. 
 By monotonicity  \MMM (it is crucial that $\mathrm{H}$ does not depend on the other variables), \EEE we clearly have $I_{10} \leq 0$, while the second integral coincides with $I_6$. 
 \par
In the end, we sum up \eqref{est-tilde-u}, \eqref{est-tilde-z}, and \eqref{est-tilde-p}. Taking into account all of the previous calculations, the cancellation of the last term on the right-hand side of  \eqref{est-tilde-p} with that arising from the test of the momentum balance, 
and choosing the constants $\eta$, $\varrho$, and $\lambda$ in such a way as to absorb the term 
$ \iint_{Q} |\partial_t \tilde{e}|^2  $ into its analogue on the left-hand side, and to absorb  $ \int_0^t  \|\partial_t \tilde{z}\|_{L^\infty}^2  $ into
$ \int_0^t \ass( \partial_t \tilde{z}, \partial_t \tilde{z}) \dd r$, 
 we obtain
\[
\begin{aligned}
&
c\left( \int_\Omega |\partial_t\tilde{u}(t)|^2 \dd x +  \int_0^t \int_\Omega |\partial_t \tilde{e}|^2 \dd x \dd r +
  \int_0^t  \int_\Omega |\partial_t \tilde{z}|^2 \dd x  \dd r + \int_0^t \ass( \partial_t \tilde{z}, \partial_t \tilde{z}) \dd r \right) 
 \\
 & \quad
 + \frac12 \ass (\tilde{z}(t), \tilde{z}(t)) +  \int_0^t\int_\Omega |\partial_t \tilde{p}|^2 \dd x \dd r
 \\
 & 
 \begin{aligned}
 \leq 
   C \Big(
 \| \dot{u}_1^0 {-}  \dot{u}_2^0\|_{L^2(\Omega;\R^d)}^2 + \| z_1^0 {-}z_2^0\|_{\spz}^2 &  + 
 \| \mathcal{L}_1 {-} \mathcal{L}_2  \|_{L^2(0,T;H^1(\Omega;\R^d)^*)}^2  + \|\Theta_1{-}\Theta_2\|_{L^2(Q)}^2
 \\
 & +
  \|w_1 {-}w_2\|_{H^1(0,T;H^1(\Omega;\R^d)) \cap W^{1,\infty}(0,T;L^2(\Omega;\R^d))}^2 \Big)
  \end{aligned}
  \\
  &
  \begin{aligned}
   \quad 
  + C \int_0^t  \| \tilde{e} \|_{L^2(\Omega;\mt_\sym^{d\times d})}^2 \dd r 
&  +C \int_0^t (1 {+} \|\Theta_1\|_{L^2(\Omega)}^2{+} \|\Theta_2\|_{L^2(\Omega)}^2)  \| \tilde{z} \|_{\spz}^2 \dd r 
\\
& 
  +\rho \int_0^t\| \partial_{tt} (w_{1}{-}w_2) \|_{L^2(\Omega;\R^d)}\|  \partial_t\tilde{u}\|_{L^2(\Omega;\R^d)} \dd r,
  \end{aligned}
\end{aligned}
\]
where we have also used the continuous embedding $\spz \subset L^\infty(\Omega)$. 
Hence, applying the Gronwall Lemma we obtain 
a continuous dependence estimate  in terms of the norms 
\[
\| \partial_t \tilde{u}\|_{L^\infty(0,t;L^2(\Omega;\R^d))}\,, \
\| \partial_t \tilde{e}\|_{L^2(0,t;L^2(\Omega;\mt_\sym^{d \times d}))}\,, \
\|\partial_t \tilde{z}\|_{L^2(0,t;\spz)}\,,  \  \| \tilde{z}\|_{L^\infty(0,t;\spz)}\,, \
\| \partial_t \tilde{p}\|_{L^2(0,t;L^2(\Omega;\mt_\sym^{d \times d}))}\,.
\]
Then, estimate \eqref{cont-dependence-estimate} follows by \MMM easy \EEE calculations.  This concludes the proof.
\QED

\MMM
\paragraph{\bf Acknowledgments.} I am very grateful to the two anonymous referees for reading this paper very carefully and offering several suggestions that have helped improving it. \EEE



\bibliographystyle{alpha}
\bibliography{ricky_lit.bib} 
 \end{document}